 	 \newcommand{\tikzmath}[2][]
     {\vcenter{\hbox{\begin{tikzpicture}[#1]#2
                     \end{tikzpicture}}}
     }
\newcommand{\figurerescalefactor}{1}
\newtheorem{theorem}{Theorem}
\newtheorem{corollary}[theorem]{Corollary}
\newtheorem{thm}{Theorem}[section]
\newtheorem*{thm*}{Theorem}
\newtheorem{prop}[thm]{Proposition}
\newtheorem{lem}[thm]{Lemma}
\newtheorem{cor}[thm]{Corollary}
\newtheorem*{que*}{Question}
\theoremstyle{definition}
\newtheorem{defn}[thm]{Definition}
\newtheorem{rem}[thm]{Remark}
\newtheorem{obs}[thm]{Observation}
\newtheorem{ex}[thm]{Example}
\newcommand{\mc}[1]{\mathcal{#1}}
\newcommand{\mcA}{\mathcal{A}}
\newcommand{\mcB}{\mathcal{B}}
\newcommand{\mcC}{\mathcal{C}}
\newcommand{\mcD}{\mathcal{D}}
\newcommand{\mcE}{\mathcal{E}}
\newcommand{\mcF}{\mathcal{F}}
\newcommand{\mcI}{\mathcal{I}}
\newcommand{\mcP}{\mathcal{P}}
\newcommand{\mcS}{\mathcal{S}}
\newcommand{\mcR}{\mathcal{R}}
\newcommand{\mbf}[1]{\mathbf{#1}}
\newcommand{\bbt}{\mathbf{t}}
\newcommand{\IN}{\mathbb{N}}
\newcommand{\IZ}{\mathbb{Z}}
\newcommand{\IQ}{\mathbb{Q}}
\newcommand{\IR}{\mathbb{R}}
\newcommand{\mrm}[1]{\mathrm{#1}}
\newcommand{\Hom}{\operatorname{Hom}}
\newcommand{\Id}{\operatorname{Id}}
\newcommand{\id}{\mathrm{id}}
\newcommand{\Fun}{\operatorname{Fun}}
\newcommand{\Mon}{\operatorname{Mon}}
\newcommand{\Mor}{\operatorname{Mor}}
\newcommand{\Obj}{\operatorname{Obj}}
\newcommand{\Aut}{\operatorname{Aut}}
\DeclareMathOperator*{\colim}{colim}
\DeclareMathOperator*{\hocolim}{hocolim}
\newcommand{\pr}{\operatorname{pr}}
\newcommand{\Fin}{\operatorname{Fin}}
\newcommand{\Map}{\mathrm{Map}}
\newcommand{\SP}{\mathrm{SP}}
\newcommand{\Cob}{h\mathrm{Bord}}
\newcommand{\Cobtwo}{h\mathrm{Bord}_2}
\newcommand{\tCob}{\mathrm{Bord}}
\newcommand{\ICob}{\underline{h}\mathrm{Bord}}
\newcommand{\chilez}{{\chi\le0}}
\newcommand{\Cobn}{h\mathrm{Bord}_2^{\chilez}}
\newcommand{\Cobnpb}{h\mathrm{Bord}_2^{\chilez, \pb}}
\newcommand{\ICobn}{\ICob_2^{\chilez}}
\newcommand{\Csp}{{h\mathrm{Csp}}}
\newcommand{\ICsp}{\mathrm{Csp}}
\newcommand{\pb}{{\partial_+}}
\newcommand{\nc}{{\rm nc}}
\newcommand{\Cut}{\mathrm{Cut}}
\newcommand{\Cc}{\overline{\mathrm{Cut}}^{\con}}
\newcommand{\red}{{\rm red}}
\newcommand{\cl}{{\rm cl}}
\newcommand{\con}{{\rm con}}
\newcommand{\Sub}{\mathrm{Sub}}
\newcommand{\Fil}{\mathrm{Fil}}
\newcommand{\Diff}{\mathrm{Diff}}
\newcommand{\sSet}{\mathrm{sSet}}
\newcommand{\J}{\mathcal{J}}
\newcommand{\SubJ}{\smallint_{\J}\mathrm{Sub}}
\newcommand{\Subpi}{\mathrm{Sub}}
\newcommand{\Subtop}{\mathrm{Sub}^{\rm top}}
\newcommand{\Homeo}{\mathrm{Homeo}}
\newcommand{\Ar}{\mathrm{Ar}}
\newcommand{\amalglim}{\operatornamewithlimits{\amalg}}
\newcommand{\rmF}{\mathrm{F}}
\newcommand{\Top}{\mathrm{Top}}
\newcommand{\Set}{\mathrm{Set}}
\newcommand{\Cat}{\mathrm{Cat}}
\newcommand{\Gpd}{\mathrm{Gpd}}
\newcommand{\Kan}{\mathrm{Kan}}
\newcommand{\Ch}{\mathrm{Ch}}
\newcommand{\val}{\operatorname{val}}
\newcommand{\ol}[1]{\overline{#1}}
\newcommand{\UL}[1]{\underline{#1}}
\newcommand{\gle}[1]{\langle #1 \rangle}
\newcommand{\blank}{\underline{\ \ }}
\newcommand{\qand}{\quad \text{and} \quad}
\newcommand{\cd}{\bullet}
\newcommand{\oh}{\tfrac{1}{2}}
\newcommand{\oth}{\tfrac{1}{3}}
\newcommand{\tth}{\tfrac{2}{3}}
\newcommand{\GC}{\mathsf{GC}}
\newcommand{\fGC}{\mathsf{fGC}}
\newcommand{\ga}{\alpha}
\newcommand{\gb}{\beta}
\newcommand{\gc}{\gamma}
\newcommand{\gC}{\Gamma}
\newcommand{\gd}{\delta}
\newcommand{\gD}{\Delta}
\newcommand{\gi}{\iota}
\newcommand{\gl}{\lambda}
\newcommand{\gL}{\Lambda}
\newcommand{\gs}{\sigma}
\newcommand{\gS}{\Sigma}
\newcommand{\gt}{\theta}
\newcommand{\go}{\omega}
\newcommand{\gO}{\Omega}
\newcommand{\gp}{\varphi}
\newcommand{\ot}{\otimes}
\newcommand{\wh}[1]{\widehat{#1}}
\newcommand{\op}{{\rm op}}
\newcommand{\oneloop}{\tikzmath{
    \draw (0,0) arc (-90:-450:.13cm);
    \filldraw (0,0) circle (.03cm);
    }}
\title{The surface category and tropical curves}
\author{Jan Steinebrunner}
\begin{document}

\maketitle

\begin{abstract}
    We compute the classifying space of the surface category $\Cobtwo$ whose objects are closed oriented $1$-manifolds and whose morphisms are diffeomorphism classes of oriented surface bordisms, and show that it is rationally equivalent to a circle.
    It is hence much smaller than the classifying space of the topologically enriched surface category $\tCob_2$ studied by Galatius-Madsen-Tillmann-Weiss.
    
    However, we also show that for the wide subcategory $\Cobn \subset \Cobtwo$ 
    that contains all morphisms without disks or spheres, 
    the classifying space $B\Cobn$ is surprisingly large.
    Its rational homotopy groups contain the homology of 
    all moduli spaces of tropical curves $\gD_g$ as a summand.
    
    The technical key result shows that a version of positive boundary surgery 
    applies to a large class of discrete symmetric monoidal categories, 
    which we call \emph{labelled cospan categories}.
    We also use this to show that the $(2,1)$-category of cospans of finite sets 
    has a contractible classifying space.
\end{abstract}

\section{Introduction}

The main object of study of this paper is the surface category $\Cobtwo$.
Objects in $\Cobtwo$ are closed oriented $1$-manifolds and 
morphisms are compact oriented surface cobordisms, up to boundary-preserving diffeomorphisms.
This category is best known as the 
source of $2$-dimensional topological field theories.

Motivated by Segal's work on conformal field theories \cite{Seg04},
Madsen and Tillmann \cite{MT01} define a topological category $\mcC_2$, 
where the space of morphisms $M \to N$ is the moduli space of surfaces with boundary $M^- \amalg N$.
This category has proven to be a useful tool for studying the moduli spaces $B\Diff(\Sigma_g)$ in the large genus limit $g \to \infty$.
In their seminal paper \cite{GMTW06} Galatius, Madsen, Tillmann, and Weiss
compute the classifying space of $\tCob_2$ as $B(\tCob_2) \simeq \Omega^{\infty-1}\mathit{MTSO}_2$,
and deduce an alternative proof of the Mumford conjecture, which was originally proven in \cite{MW07}.

In this paper we compute the classifying space $B(\Cobtwo)$ and some closely related spaces.
We will refer to $\Cobtwo$ as the \emph{truncated} surface category in order to distinguish
it from the topological surface category $\tCob_2$. 
Indeed, the functor $\tCob_2 \to \Cobtwo$ that sends a surface cobordism to its diffeomorphism class
identifies $\Cobtwo$ as the homotopy-category of $\tCob_2$.
However, this functor is very far from being an equivalence:
it collapses each of the moduli spaces $B\Diff(\Sigma_{g,k})$ to a point.
The category $\Cobtwo$ 
does not ``know'' about diffeomorphisms, 
but only about the combinatorics of how surfaces can be glued.

The above might suggest that the classifying space of the truncated surface category $\Cobtwo$ 
is much simpler than its topological analogue $\tCob_2$.
Indeed, our Theorem \ref{theorem:ICob2} implies that $B(\Cobtwo)$ is rationally equivalent to a circle.
However, we will see that this is only the case because the disk morphism $D^2:\emptyset \to S^1$ 
yields a trivial nullhomotopy of a certain obstruction category $\mcF_g$ described below.
We hence restrict our attention to the wide subcategory $\Cobn \subset \Cobtwo$
where cobordisms are not allowed to have connected components that are disks or spheres.
For this subcategory we show:

\begin{theorem}\label{theorem:rational-homotopy}
    The rational homotopy groups of $B(\Cobn)$ are
    \[
        \pi_*^\IQ B(\Cobn) \cong 
        \IQ\gle{\ga} \oplus \IQ\gle{\rho_1, \rho_2, \dots }
        \oplus \bigoplus_{g \ge 2} 
        H_*( \gS^2 \gD_{g}; \IQ )
    \]
    where $|\ga| = 1$, $|\rho_i| = 4i+2$, and $\gD_g$ is the moduli space of tropical curves of genus $g$ and volume $1$.
\end{theorem}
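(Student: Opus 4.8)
\emph{Plan of proof.}
The plan is to trade the geometric category $\Cobn$ for a purely combinatorial one and then run positive boundary surgery. A boundary‑preserving diffeomorphism class of a surface bordism is completely determined by its set of connected components, the genus of each component, and the way the boundary circles are distributed among the incoming and outgoing $1$‑manifolds. This presents $\Cobn$ as a \emph{labelled cospan category}: objects are finite sets (parametrising the circles of a $1$‑manifold), a morphism $S \to S'$ is a cospan $S \to T \leftarrow S'$ of finite sets together with a genus $g_t \in \IN$ for each $t \in T$ subject to $\chi \le 0$ on each component, composition is pushout of cospans (adding genera along identified circles, with a $+1$ each time a gluing closes up a handle), and a closed surface is recorded as an element of $T$ in the image of neither $S$ nor $S'$. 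Granting this dictionary, $B\Cobn$ is the classifying space of this labelled cospan category.

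I would then feed this into the positive boundary surgery theorem for labelled cospan categories (the main technical result). It should (i) reduce to the wide subcategory $\Cobnpb$ in which every component of a bordism meets the outgoing $1$‑manifold, which it shows induces an equivalence — or the relevant group completion — on classifying spaces, and (ii) identify $B\Cobnpb$, which is an infinite loop space because its $\pi_0$ is trivial, with $\Omega^\infty$ of an explicit spectrum built from the genus‑labelling data. The contractibility of $B\Csp(\Fin)$ enters here as the base case that collapses the genus‑free part of this spectrum.

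It then remains to compute the rational homotopy of this spectrum, which I would organise by genus. At the circle, the endomorphism monoid in $\Cobnpb$ is $(\IN,+)$, generated by the one‑holed torus, contributing the class $\ga$ in degree $1$ via $B(\IN,+) \simeq S^1$. For each fixed $g$, the space of ways to present a closed genus‑$g$ surface as an iterated sequence of circle‑cuts, together with the scaling/time coordinate, is — using the standard description of $\gD_g$ as the geometric realisation of the category of stable weighted graphs of genus $g$ — a double suspension $\gS^2 \gD_g$ (one suspension from the bar‑degree of a closed‑surface morphism $\emptyset\to\emptyset$, one from the volume coordinate), yielding the summands $H_*(\gS^2\gD_g;\IQ)$. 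Finally, stabilising over $g$ by inverting handle attachment, the surgery bookkeeping retains only the induced action on $H_1$ with its intersection form, which exhibits the stable summand as (a cover of) a symplectic algebraic $K$‑theory type space, whose rational homotopy is freely generated by the higher symplectic characteristic classes $\rho_i$ in degrees $4i+2$. Assembling these three pieces and reading off rational homotopy groups — for an infinite loop space, just $\pi_*^\IQ$ of the spectrum — gives the stated formula.

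I expect the main obstacle to be twofold. First, verifying that the positive boundary surgery theorem genuinely applies to $\Cobn$ with its $\chi\le 0$ constraint and its closed components, and — crucially — extracting from it an explicit enough target rather than an abstract equivalence. Second, and I think harder, the genus‑by‑genus analysis: pinning down the cut category of a closed genus‑$g$ surface as exactly $\gS^2\gD_g$ (getting both suspensions and the symmetric‑$\gD$‑complex structure right), and correctly separating the unstable $\gS^2\gD_g$ summands from the stable symplectic summand — in particular accounting for why no degree‑$2$ generator appears among the $\rho_i$. Once these two points are in place, the rest — the combinatorial dictionary, the $\pi_0$ computation, and the rationalisation — should be routine.
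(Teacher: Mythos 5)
Your broad strategy — present $\Cobn$ as a labelled cospan category, run positive boundary surgery, and then compute genus by genus — matches the paper's. You also correctly identify the origins of $\alpha$ (from $B\Cobnpb\simeq S^1$) and of the $\Sigma^2\Delta_g$ summands (from categories of genus‑$g$ factorisations/cuts, which rationally realise $\Delta_g$). However, the proposal has two substantive errors in how these pieces fit together.

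\textbf{The surgery theorem gives a fiber sequence, not an equivalence.} You assert that positive boundary surgery shows the inclusion $\Cobnpb\hookrightarrow\Cobn$ induces an equivalence on classifying spaces, and that all the interesting structure then lives inside $B\Cobnpb$. This cannot be right, and indeed it contradicts the rest of your own plan: if $B\Cobnpb\to B\Cobn$ were an equivalence, then $B\Cobn\simeq S^1$ and there would be no room for the $\Delta_g$ contributions you later invoke. What the surgery theorem actually delivers (after replacing $\Cobn$ by its rationally equivalent refinement $\ICobn$) is a fiber sequence of infinite loop spaces
\[
B\bigl(\Cobnpb\bigr)\longrightarrow B\bigl(\ICobn\bigr)\longrightarrow Q\Bigl(\bigvee\nolimits_{g\ge 1} S^2\, B\mcF_g(\Cobn)\Bigr),
\]
together with a splitting of the left‑hand map, so that $B(\ICobn)\simeq S^1\times Q(\bigvee_{g\ge1}\Sigma^2 B\mcF_g)$. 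The $\Sigma^2\Delta_g$ terms are the \emph{cofiber}, measuring precisely the failure of $B\Cobnpb\to B\Cobn$ to be an equivalence. Relatedly, the contractibility of $B\mcF(\Csp)$ is not a ``base case'' for this computation — it is what makes $B\Csp^\pb\to B\ICsp$ an equivalence for $\Csp$ and $\Cob_d$ (where the disk morphism supplies the contracting homotopy), and it is exactly what \emph{fails} for $\Cobn$ and thereby produces the nontrivial answer.

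\textbf{The $\rho_i$ do not come from symplectic $K$‑theory.} Your account of the degree $4i+2$ classes — inverting handle attachment, extracting the action on $H_1$ with its intersection form, and landing in a symplectic $K$‑theory type space — is not what happens, and there is no stabilisation over $g$ anywhere in the argument. The $\rho_i$ arise from the genus‑$1$ factorisation category $\mcF_1(\Cobn)$. A stable metric graph of genus $1$ with no marked vertices is just a metric circle, whose homeomorphism group is homotopy equivalent to $O(2)$; hence $B\mcF_1\simeq BO(2)$, with $H^*(BO(2);\IQ)\cong\IQ[p_1]$, $|p_1|=4$. The corresponding summand of the cofiber is $Q(\Sigma^2 BO(2))$, whose rational homotopy is $\tilde H_{*-2}(BO(2);\IQ)$, giving exactly one class $\rho_i$ in each degree $4i+2$ for $i\ge1$. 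This also resolves the concern you raise about the missing degree‑$2$ generator: it is simply the vanishing of $\tilde H_0(BO(2))$ — nothing subtle about separating ``stable'' from ``unstable'' contributions, since there is no stable piece. (The coincidence of degrees $4i+2$ with Borel's symplectic classes is a red herring.)

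With these two corrections — replace ``equivalence'' by ``split fiber sequence'' and replace the symplectic $K$‑theory story by $B\mcF_1\simeq BO(2)$ — your plan agrees with the paper's proof.
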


One dimension below, the homotopy type of $B(\Cob_1)$ was determined in \cite[Theorem A]{Stb21} as 
\[
    B(\Cob_1) \simeq \Omega^{\infty-2}(\mathit{MTSO}_2 \to H\mathbb{Z}) ,
\]
which is also larger than the corresponding $B(\tCob_1) \simeq \Omega^\infty \mathbb{S}$ that we know from \cite{GMTW06}.
In higher dimension, $d\ge 3$ the author does not expect $B(\Cob_d)$ to have a meaningful description:
for example, the main theorem of \cite{Juhsz2018} suggests that already $B(\Cob_3)$ is extremely complicated.

\subsection*{The classifying space of the surface category}
Since $\Cobtwo$ is a symmetric monoidal category its classifying space $B\Cobtwo$ 
admits the structure of an infinite loop space. 
It will be useful to keep track of this structure and all our computations respect it.
The only previously known computation regarding $B(\Cobtwo)$ is a theorem
of Tillmann \cite{Til96}, which shows 
that there is an equivalence of infinite loop spaces $B(\Cobtwo) \simeq S^1 \times X$, 
where $X$ is some simply connected infinite loop space.
It was conjectured \cite[Conjecture 5.3]{JT13} that $X$ is contractible,
and more generally that the classifying spaces
of $\Cobtwo$ and several related categories should be $1$-types.
One of our main theorems states that this is \emph{almost} true:
\begin{theorem}\label{theorem:ICob2}
    There is an equivalence of infinite loop spaces $B(\ICob_2) \simeq S^1$.
\end{theorem}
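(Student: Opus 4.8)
The plan is to pin down the $S^1$ as the target of a single additive invariant of cobordisms, and then to use the positive-boundary-surgery technology for labelled cospan categories to show that $B(\ICob_2)$ carries no other homotopy. For the first point, recall that the Euler characteristic is additive under gluing along closed $1$-manifolds (because $\chi(S^1)=0$) and vanishes on identity cobordisms. Hence
\[
  F(W)\ :=\ \tfrac12\bigl(\chi(W)+|\partial_{\rm in}W|-|\partial_{\rm out}W|\bigr)\ =\ \#\pi_0(W)-g(W)-|\partial_{\rm out}W|
\]
(where $g(W)$ is the sum of the genera of the components of $W$) is always an integer and defines a symmetric monoidal functor $F\colon\ICob_2\to\mc{B}\IZ$, with $\mc{B}\IZ$ the one-object symmetric monoidal groupoid with morphism group $(\IZ,+)$. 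Since $B(\mc{B}\IZ)=K(\IZ,1)\simeq S^1$ with its standard infinite loop structure, and since $B(\ICob_2)$ is connected and hence already an infinite loop space, $F$ induces a map of infinite loop spaces $BF\colon B(\ICob_2)\to S^1$; the theorem reduces to showing that $BF$ is a weak equivalence, i.e. that $\pi_*B(\ICob_2)$ vanishes in degrees $\ne 1$ and that $BF$ is an isomorphism on $\pi_1$.

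To compute $\pi_*B(\ICob_2)$ I would first realise $\ICob_2$ as a labelled cospan category. Cutting a surface cobordism $W\colon M\to N$ into connected components presents it as a cospan of finite sets $\pi_0 M\to\pi_0 W\leftarrow\pi_0 N$ together with a label on each element of $\pi_0 W$ recording the homeomorphism type of that component; composition is pushout of cospans followed by the genus correction that records the first homology created by the pushout. One checks that this presentation satisfies the axioms of a labelled cospan category, symmetric monoidally. The main technical theorem of the paper --- positive boundary surgery for labelled cospan categories --- then applies: it identifies $B(\ICob_2)$ with the classifying space of the positive-boundary subcategory, and, running the same semisimplicial resolution that proves $B(\ICsp(\Fin))$ contractible, the ``underlying cospan of finite sets'' contributes only a point. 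What remains is an explicit combinatorial object assembled from the label category of connected compact surfaces together with the obstruction categories $\mcF_g$ that govern the genus-$g$ strata.

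The decisive point is that, for $\ICob_2$, each $\mcF_g$ is contractible, and this is where the disk morphism $D^2\colon\emptyset\to S^1$ does its work: capping an outgoing boundary circle with a disk is a natural transformation from the identity functor to a functor factoring through a category with a terminal object, and this provides a contracting homotopy of $\mcF_g$ --- the trivial nullhomotopy of the obstruction category alluded to in the introduction. This step has no analogue in $\Cobn$, where disks and spheres are forbidden and $B\mcF_g$ instead retains the homotopy type $\gS^2\gD_g$, producing the extra summands of Theorem~\ref{theorem:rational-homotopy}. Once all the $\mcF_g$ are contracted the only surviving contribution to $\pi_*B(\ICob_2)$ is the $\IZ$ in degree $1$ detected by $F$; hence $BF$ is a weak equivalence and $B(\ICob_2)\simeq S^1$ as infinite loop spaces. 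The rational statement $B(\Cob_2)\simeq_\IQ S^1$ then follows by comparing $\ICob_2$ to its truncation $\Cob_2$ along a rational equivalence.

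I expect the main obstacle to be the second step: checking that $\ICob_2$ genuinely satisfies the hypotheses of the positive-boundary-surgery theorem while keeping the symmetric monoidal structure in view throughout, and then extracting from the resolution --- with enough precision to conclude --- both that the finite-set part is a point and that each $\mcF_g$ is annihilated by the disk, so that exactly $\mc{B}\IZ$ is left. The disk-contraction is the conceptual heart but is formally short; the lengthy and delicate part is the surgery bookkeeping and the analysis of the semisimplicial resolution.
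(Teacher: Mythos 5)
Your proposal reconstructs the paper's architecture: realise $\Cob_2$ as the weighted cospan category $\Csp(\IN,1)$ and $\ICob_2$ as $\ICsp(\Cob_2)$; apply the decomposition plus surgery theorem to get the fibre sequence $B(\Cob_2^\pb)\to B(\ICob_2)\to Q\bigl(\bigvee_g S^2(B\mcF_g)\bigr)$; kill the base by showing each $B\mcF_g(\Cob_2)$ is contractible via capping with $D^2$; and package the answer as a splitting detected by a half-Euler-characteristic invariant $F$, which up to sign is exactly the functor $\Phi'$ the paper constructs in Proposition~\ref{prop:computing-pb}. The disk-contraction of the factorisation categories is indeed the conceptual heart, and you identify it correctly.

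There is, however, a genuine gap in the last step. Once the surgery theorem and the contractibility of the $\mcF_g$ identify $B(\ICob_2)$ with $B(\Cob_2^\pb)$, you conclude that ``the finite-set part is a point \dots\ so that exactly $\mathcal{B}\IZ$ is left.'' That inference does not go through as stated: there is no product or fibre-sequence decomposition of $B(\Cob_2^\pb)$ into a ``finite-set part'' $B(\Csp^\pb)\simeq *$ and a ``label part'' $B\IN\simeq S^1$, so knowing $B(\Csp^\pb)$ is contractible by itself tells you nothing about $B(\Cob_2^\pb)$. What the paper actually does (Proposition~\ref{prop:computing-pb}, generalising Tillmann) is construct, for an arbitrary weighted cospan category, an explicit endofunctor $\Phi$ of the positive-boundary subcategory landing in the one-object subcategory on $S^1$ (respectively~$*$), together with a natural transformation $\rho\colon\Id\Rightarrow\Phi$ given by the genus-zero connected cobordism $\rho_M\colon M\to S^1$; this exhibits $B(\Cob_2^\pb)$ as a deformation retract of $B\IN\simeq S^1$. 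Your functor $F=-\Phi'$ provides the map to $S^1$, but to show it is an equivalence you still need this retraction argument (or something equivalent); it is not a formal consequence of the cospan-level statement. This is precisely where the positive-boundary computation earns its keep, and it should be carried out rather than cited as a byproduct of the $\Csp$ case.
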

Here $\ICob_2$ is a $(2,1)$-category that we will think of as a refinement
of the truncated cobordism category in which closed components are counted ``properly''.
In general it is defined as follows:
\begin{defn}
    For all $d\ge 0$ the $2$-category $\ICob_d$ has 
    as objects closed oriented $(d-1)$-manifolds and
    morphisms $W:M \to N$ are compact oriented cobordisms from $M$ to $N$.
    A $2$-morphism $\ga:W \Rightarrow W'$ is a bijection $\ga:\pi_0(W) \cong \pi_0(W')$
    such that there exists a diffeomorphism $\gp:W \cong W'$
    that is the identity on the boundary and satisfies $\pi_0(\gp) = \ga$.
\end{defn}
It is important to note that the diffeomorphism $\gp$ is not part of 
the data of the $2$-morphism $\ga:W \Rightarrow W'$.
There are always at most finitely many $2$-morphisms $W \Rightarrow W'$
and if $W$ has no closed components, then there is at most one.
From this it follows that the quotient map 
$B(\ICob_d) \to B(\Cob_d)$ is a rational equivalence.
For $d=2$ we construct an equivalence of infinite loop spaces:
    \[
    B(\Cobtwo) \simeq B(\ICob_2) \times \tau_{\ge 3}Q\left(\bigvee\nolimits_{g \ge 0}S^2\right).
    \]
Here $\tau_{\ge 3} Q(\bigvee_{g\ge0} S^2)$ denotes the $2$-connected cover of the free infinite loop space on $\bigvee_{g\ge0} S^2$.
Together with Theorem \ref{theorem:ICob2} this identifies the mysterious infinite loop spaces $X$ in Tillmann's theorem.
\begin{corollary}
    There is an equivalence of infinite loop spaces 
    \[B(\Cob_2) \simeq S^1 \times
     \tau_{\ge 3}Q\left(\bigvee\nolimits_{g \ge 0}S^2\right).
    \]
\end{corollary}
In particular, $X$ is rationally trivial and can be thought of as an ``error-term'' coming from the fact that in $\Cobtwo$ we are not counting the closed components ``properly''.

\subsection*{The surface category without disks and the moduli space of tropical curves}
The above result seems to suggest that
-- as long as one looks at the slightly refined $\ICob_2$ --
the classifying space of the surface category is not very interesting.
However, in the proof of Theorem \ref{theorem:ICob2} we will see 
that this is only the case because several interesting subspaces of $B(\ICob_2)$
happen to admit a null-homotopy. 
This uses the fact that for any surface cobordism $W:M \to N$ 
and point $p \in W$ one may decompose $W = W' \cup_{S^1} D^2$ where $W'$ 
is obtained from $W$ by removing a ball around $p$.
If one restricts to the following subcategory,
this is no longer possible and the homotopy type becomes more interesting.

\begin{defn}
    The subcategory $\Cobn \subset \Cobtwo$ is defined to contain all objects,
    but only those morphisms $W:M \to N$ where
    no connected component of $W$ is a disk or a sphere.
\end{defn}

Equivalently, one can require that every component $V \subset W$
has non-positive Euler characteristic, i.e.\ $\chi(V) \le 0$.
Using this reformulation one checks that 
$\Cobn$ is closed under both composition and disjoint union.%
\footnote{
    One could also consider another, even larger subcategory 
    $\Cobn \subset \Cobtwo^{\rm no\ disks} \subset \Cobtwo$ where spheres are allowed.
    This simply splits as a product $\Cobtwo^{\rm no\ disks} \cong \Cobn \times \IN$,
    where the second factor is $\IN$ thought of as a category with one object, counting the number of spheres.
}
By passing to this subcategory we uncover the rich homotopical structure 
that was lurking behind the seemingly simple statement of Theorem \ref{theorem:ICob2}:

\begin{theorem}\label{theorem:BICobn}
    There is an equivalence of infinite loop spaces
    \[
        B(\ICobn) 
        \simeq S^1 \times Q(\gS^2 BO(2))
        \times Q\left(\bigvee\nolimits_{g \ge 2} \gS^2 B\J_g\right).
    \]
\end{theorem}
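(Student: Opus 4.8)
The plan is to deduce Theorem~\ref{theorem:BICobn} from the same machinery that yields Theorem~\ref{theorem:ICob2}, namely the positive-boundary surgery result for labelled cospan categories. The key is to identify $\ICobn$ -- or rather a localization of it -- with (the group completion of) a suitable labelled cospan category, and then to analyze the resulting infinite loop space by a scanning/group-completion argument. Concretely, I would first recall that a surface cobordism $W\colon M\to N$ with $\chi(V)\le 0$ on every component is, up to the data remembered by $\ICobn$, determined by its underlying ``cospan of $\pi_0$'' together with a labelling: each connected component $V$ carries the diffeomorphism class of a connected surface with marked boundary circles partitioned into incoming and outgoing, and the monoid of such labels decomposes by genus and by the two boundary-incidence slots. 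The $2$-morphisms in $\ICobn$ are trivial here (no closed components once we are in the positive-boundary situation), so $\ICobn$ sits inside the framework of discrete symmetric monoidal categories to which the positive-boundary surgery theorem applies.

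Next I would run positive-boundary surgery: the inclusion of the subcategory on objects that are nonempty (or the relevant cofinal piece) induces an equivalence on classifying spaces after the standard ``add a pair of pants'' cofinality argument, reducing $B(\ICobn)$ to the classifying space of a category whose morphisms can be glued freely along boundaries. In that regime the classifying space becomes the group completion of a monoid of components, so $B(\ICobn)$ is an infinite loop space of the form $Q$ of a wedge indexed by the ``irreducible'' labels. The circle factor $S^1$ comes, as in Theorem~\ref{theorem:ICob2}, from the $1$-dimensional piece detecting the number of boundary circles mod the gluing relations (equivalently the Euler-characteristic/boundary count), exactly the $\IQ\gle{\ga}$ class. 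The remaining factors come from the spaces of connected surfaces: a connected surface with at least one boundary circle on each side and $\chi\le 0$ is classified, up to the diffeomorphism data seen by $\ICobn$, by its genus together with an ordering/orientation datum on its $\ge 1$ incoming and $\ge 1$ outgoing circles; after the double suspension $\gS^2$ produced by the two boundary slots, genus-$0$ connected surfaces contribute $\gS^2 BO_2$ (the $O_2$ recording the symmetries of an annulus-like piece, i.e.\ the framing/orientation data on the two boundary circles of the genus-$0$ building block), while genus-$g$ connected surfaces for $g\ge 2$ contribute $\gS^2 BJ_g$, where $J_g$ is the group of ``combinatorial symmetries'' of a genus-$g$ surface with marked boundary -- precisely the group that will turn out (in the proof of Theorem~\ref{theorem:rational-homotopy}) to have rational homology computing $H_*(\gD_g;\IQ)$. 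Genus $1$ is absorbed into the $g=0$ or $\ga$ contribution by a separate low-genus check.

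I would then assemble these pieces: the monoidal structure makes $B(\ICobn)$ a product of the group completions of the individual label-monoids, giving $S^1 \times Q(\gS^2 BO_2) \times Q\big(\bigvee_{g\ge 2}\gS^2 BJ_g\big)$, with the infinite-loop-space structure inherited from disjoint union. Checking that the three factors split off as infinite loop spaces (rather than merely as spaces) uses that each is detected by a symmetric monoidal functor out of $\ICobn$ -- respectively the boundary-count functor to $\IZ$, the ``genus-$0$ part'' functor, and the genus-$g$ part functors -- so the splitting is by maps of $E_\infty$-spaces.

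The main obstacle I expect is the identification of the genus-$g$ building block and its symmetry group $J_g$: one must show that the relevant piece of the labelled cospan category -- connected genus-$g$ surfaces with one incoming and one outgoing boundary circle, modulo the equivalence relation ``boundary-fixing diffeomorphism up to $\pi_0$'' -- has classifying space $BJ_g$ with $J_g$ the correct group, and crucially that the surgery/gluing moves do not introduce further relations collapsing this. This is where the combinatorics of how genus can be ``cut up'' by pairs of pants enters, and it is exactly the point at which tropical curves appear: the bar construction on the gluing monoid for the genus filtration is modelled by the moduli space of tropical curves $\gD_g$, so verifying that $\gS^2 BJ_g \simeq \gS^2 \gD_g$ after the reductions above (equivalently, identifying $BJ_g$ rationally with $\gD_g$) is the technical heart and will require the explicit combinatorial model developed for Theorem~\ref{theorem:rational-homotopy}.
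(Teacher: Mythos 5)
Your overall architecture is right: you correctly identify that the result follows from the decomposition--and--surgery theorem for labelled cospan categories, that the positive-boundary piece supplies the $S^1$ factor, and that the remaining factors come from factorisation categories which must then be identified with combinatorial models of graphs. The splitting as an infinite loop space also works by the mechanism you describe (proposition \ref{prop:computing-pb} provides the $E_\infty$ retraction onto $BA \simeq S^1$). However, the middle of your argument reflects a genuine misunderstanding of what the wedge in the decomposition theorem is indexed over, and this would derail the proof.

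The wedge $\bigvee_g S^2(B\mcF_g)$ in theorem \ref{theorem:decomposition-and-surgery} is indexed not by ``genus-$g$ building blocks with one incoming and one outgoing boundary'' but by \emph{connected endomorphisms of the unit} $g\colon 1_\mcC \to 1_\mcC$, i.e.\ by \emph{closed} connected surfaces. In $\Cobn$ these are exactly the closed genus-$g$ surfaces with $g \ge 1$ (the sphere being excluded). The $\mcF_g$ are then factorisation categories: factorisations $\emptyset \to M \to \emptyset$ of a closed genus-$g$ surface, not spaces of surfaces with prescribed boundary. Consequently, your statement that ``genus-$0$ connected surfaces contribute $\gS^2 BO_2$'' and that ``genus $1$ is absorbed into the $g=0$ or $\ga$ contribution'' is backwards: the $Q(\gS^2 BO_2)$ factor is precisely $Q(S^2(B\mcF_1^{\chilez}))$, coming from the genus-$1$ closed surface (the torus), and there is no genus-$0$ contribution because the sphere is not a morphism of $\Cobn$. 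Concretely, $\mcF_1^\chilez$ is identified (via the topological graph category $\mcJ_1$) with the automorphism groupoid of the metric circle $(S^1,0)$, whose automorphism group is homotopy equivalent to $O(2)$ --- this is why $BO_2$ appears, not because of framing data on boundary circles of an annulus. The double suspension $S^2(-)$ also does not come from ``two boundary slots'' of a building block; it is the unreduced double suspension appearing intrinsically in the decomposition theorem, encoding the cone coordinates on the space of cuts.

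Relatedly, $J_g$ is not ``the group of combinatorial symmetries of a genus-$g$ surface with marked boundary''; it is a \emph{category} of stable weighted graphs (the CGP category with the terminal one-vertex graph removed). The technical heart you anticipate --- identifying $B\mcF_g^\chilez$ with $BJ_g$ for $g\ge 2$ --- is indeed the content of theorem \ref{thm:computing-mcF}, but the proof goes through an intermediate category $\int_\mcJ \Sub'$ of topological graphs with admissible filtrations, plus a contractibility result for the poset of filtrations via height functions. Your picture of ``cutting up a surface by pairs of pants'' and a ``gluing monoid for the genus filtration'' does not match this mechanism and would not straightforwardly give the correct answer, in part because it loses track of the fact that the objects being factored are \emph{closed} surfaces, not surfaces with one boundary circle on each side.
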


The spaces $B\J_g$ are the classifying spaces of certain finite categories 
of stable graphs, which have appeared in the study of tropical curves.
In fact, we construct a rational homology isomorphism $B\J_g \to \Delta_g$,
where $\Delta_g$ is the moduli space of tropical curves of genus $g$ and volume $1$.
The rational homology of $\gD_g$ was described by \cite{CGP18}
in terms of Kontsevich's commutative graph complex. 
Following \cite{Wil15} we let $\fGC_{0,\rm conn}$ denote the graph complex
of connected graphs without tadpoles, where the degree of a graph
is given by its number of edges. 
Combining these results we obtain:
\begin{corollary}\label{corollary:pi_*ICobn}
    The rational homotopy groups of $B(\ICobn)$, or equivalently of $B(\Cobn)$, are:
    \begin{align*}
        \pi_*^\IQ B(\Cobn) 
        &\cong  \IQ\gle{\alpha} \oplus H_{*-1}( \fGC_{0, \rm conn} ).
    \end{align*}
\end{corollary}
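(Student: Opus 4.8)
**Proof proposal for Corollary \ref{corollary:pi_*ICobn}.**

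The plan is to combine the three inputs cited in the preceding paragraphs: Theorem \ref{theorem:BICobn}, which identifies $B(\ICobn)$ as a product of an $S^1$ with free infinite loop spaces on suspensions; the Cohen–Gitler–Penkava-style description of $H_*(\gD_g;\IQ)$ via the commutative graph complex; and the standard splitting of rational homotopy of a free infinite loop space. First I would record that, since $B(\ICobn)\to B(\Cobn)$ is a rational equivalence (as noted in the excerpt after the definition of $\ICob_d$), it suffices to compute $\pi_*^\IQ B(\ICobn)$.

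Next I would apply Theorem \ref{theorem:BICobn}. Rationally, $\pi_*^\IQ$ of a product is the direct sum of the factors, and for a free infinite loop space $Q(X)$ one has $\pi_*^\IQ Q(X)\cong \widetilde H_*(X;\IQ)$. Hence
\[
    \pi_*^\IQ B(\ICobn)
    \cong \IQ\gle{\ga}
    \oplus \widetilde H_*(\gS^2 BO_2;\IQ)
    \oplus \bigoplus_{g\ge 2} \widetilde H_*(\gS^2 BJ_g;\IQ).
\]
The suspension isomorphism gives $\widetilde H_*(\gS^2 Y;\IQ)\cong \widetilde H_{*-2}(Y;\IQ)$. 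Since $BO_2$ has the rational homology of $\IC P^\infty$, concentrated in even degrees with one generator in each degree $\ge 2$, the term $\widetilde H_{*-2}(BO_2;\IQ)$ contributes a polynomial-type class in each even degree $\ge 4$; together with $\IQ\gle{\ga}$ in degree $1$ this accounts for the "$\IQ\gle{\ga}$ plus a class in each degree $4i+2$" part of the Corollary, matching the $\rho_i$ of Theorem \ref{theorem:rational-homotopy}. For the remaining summands I would use the rational homology isomorphism $BJ_g\to\gD_g$ constructed earlier, so $\widetilde H_{*-2}(BJ_g;\IQ)\cong \widetilde H_{*-2}(\gD_g;\IQ)$, and then invoke \cite{CGP18} to rewrite $\bigoplus_{g\ge2}\widetilde H_{*-2}(\gD_g;\IQ)$ as the homology of $\fGC_{0,\mathrm{conn}}$ in the appropriate degree, following the conventions of \cite{Wil15} in which the degree of a graph is its number of edges.

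The only real bookkeeping obstacle is getting the degree shifts to line up: one must track the double suspension in Theorem \ref{theorem:BICobn}, the reduced-versus-unreduced homology, and the precise grading convention of $\fGC_{0,\mathrm{conn}}$ (edges versus the geometric dimension of $\gD_g$, which differ by a shift), and then check that the $BO_2$-summand is exactly the part of $\fGC_{0,\mathrm{conn}}$ not of "connected genus $\ge 2$" type — equivalently, that the graph complex as stated already absorbs the $\IC P^\infty$-contribution. Concretely, I expect the final identification to take the form "$\pi_*^\IQ B(\Cobn)\cong \IQ\gle{\ga}\oplus H_{*-1}(\fGC_{0,\mathrm{conn}})$", where the degree-$(*-1)$ appears because a graph with $e$ edges sits in $\widetilde H_{2e-\varepsilon}(\gD_g)$ for the relevant $\varepsilon$ and the extra suspension contributes the remaining shift; I would verify this against low-degree cases (e.g.\ the theta graph at $g=2$, which should reproduce the known class in $\widetilde H_6(\gD_2)$ and hence a generator of $\pi_7^\IQ$) to pin down the convention. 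Everything else is formal manipulation of rational homotopy of infinite loop spaces.
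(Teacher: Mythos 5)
Your overall strategy is the paper's: rationalize via $B(\ICobn)\simeq_\IQ B(\Cobn)$, apply $\pi^\IQ_* Q(X)\cong\widetilde H_*(X;\IQ)$ and the $\gS^2$ shift to Theorem~\ref{theorem:BICobn}, use the rational homology equivalence $BJ_g\to\gD_g$, and invoke \cite{CGP18} together with the edge-degree convention of \cite{Wil15}; the genus-$1$ part of $\fGC_{0,\mathrm{conn}}$ absorbs the $BO_2$-summand, and the shift to $H_{*-1}$ comes from the double suspension combined with the offset between $\widetilde H_*(\gD_g)$ and the graph-complex grading.

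There is, however, a concrete error in the middle: $BO_2$ does \emph{not} have the rational homology of $\IC P^\infty$. It is $BSO_2\simeq\IC P^\infty$ that does; $H_*(BO_2;\IQ)$ is the $\IZ/2$-invariants of $\IQ[c_1]$, and since complex conjugation acts by $-1$ on $c_1$ this gives $H_*(BO_2;\IQ)\cong\IQ[p_1]$ with $|p_1|=4$, concentrated in degrees $0,4,8,\dots$. Hence $\widetilde H_{*-2}(BO_2;\IQ)$ has a generator exactly in degrees $4i+2$, $i\ge1$, and \emph{not} in ``each even degree $\ge 4$'' as you wrote --- your own sentence then contradicts this by claiming it matches the $\rho_i$ of Theorem~\ref{theorem:rational-homotopy}, which sit in degrees $4i+2$. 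Once the $BO_2$ computation is repaired the bookkeeping closes: these classes correspond to the cycles $C_{4i+1}$, the only cycles that survive the orientation (edge-ordering) sign in $\fGC_0$, which generate the genus-$1$ homology of $\fGC_{0,\mathrm{conn}}$, and together with the $g\ge 2$ summands from \cite{CGP18} you obtain $\pi^\IQ_*B(\Cobn)\cong\IQ\gle{\ga}\oplus H_{*-1}(\fGC_{0,\mathrm{conn}})$ as stated.
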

Here $\alpha$ is a generator of $\pi_1 B(\Cobn) \cong \IZ$.
The identification of the rational homotopy of $\Cobn$ 
with the homology of the graph complex leads to exponential lower bounds on its dimension:
as observed in \cite{CGP18} it follows from Willwacher's computation 
$H^0(\GC_2) = \mathfrak{grt}_1$ in \cite{Wil15} that 
$\dim_\IQ H_{2g}(\fGC_{0,\rm conn}) \ge \dim_\IQ H_0(\GC_2^{g\rm-loop}) > 1.3^g$ 
for $g\gg 0$.
To better understand the connection between $\Cobn$ and the $\Delta_g$,
we will construct an explicit comparison map,
illustrated in figure \ref{fig:mu}.
Let $\SP^\infty(X)$ denote the infinite symmetric power on a based space $X$; 
this is the free topological commutative monoid on $X$.
By \cite{DT58} we have $\pi_k(\SP^\infty(X)) \cong \tilde{H}_k(X; \IZ)$.
\begin{theorem}\label{theorem:the-map}
    There is an explicit continuous map of partially defined commutative topological monoids
    \[
        \mu: B(\Cobn) \longrightarrow
        \SP^\infty\left(\Sigma^2 \gD_2 \vee \Sigma^2 \gD_3 \vee \Sigma^2 \gD_4 \vee \dots \right)
    \]
    such that up to rational homotopy equivalence 
    it corresponds to the projection to the third factor in Theorem \ref{theorem:BICobn}.
    In particular, the map on homotopy groups 
    $\pi_* B(\Cobn) \to \bigoplus_{g \ge 2} \tilde{H}_{*-2}(\gD_g)$
    rationally agrees with the map from Theorem \ref{theorem:rational-homotopy}.
\end{theorem}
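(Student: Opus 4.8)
The plan is to build $\mu$ by hand on the model $B(\Cobn)=|N_\bullet\Cobn|$ for the classifying space, and then to match it, rationally, against the splitting of Theorem~\ref{theorem:BICobn}. A point of $|N_\bullet\Cobn|$ is a chain of composable cobordisms $\underline W=\big(M_0\xrightarrow{W_1}M_1\xrightarrow{W_2}\cdots\xrightarrow{W_k}M_k\big)$ in $\Cobn$ together with barycentric coordinates $t=(t_0,\dots,t_k)\in\Delta^k$, with $t_i$ attached to the object $M_i$. Gluing the $W_i$ along the $M_i$ produces a compact oriented surface $\Sigma(\underline W)$ carrying the $k-1$ disjoint ``cut circles'' $M_1,\dots,M_{k-1}$ and with boundary $M_0\amalg M_k$. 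For each connected component $V\subseteq\Sigma(\underline W)$ I form the weighted dual graph of the cut system inside $V$: one vertex for each piece $W_i\subseteq V$, weighted by the genus of $W_i$ after capping its boundary circles, and one edge for each cut circle in $V$, with the edge from $M_i$ assigned length $t_i$. This graph has a well-defined genus $g(V)$, equal to the genus of $V$ after capping; components with $g(V)\le1$ (cylinders, tori, and their relatives) are discarded, and for $g(V)\ge2$ I stabilise the graph -- contracting destabilising low-valence weight-$0$ vertices and adding their lengths to the neighbours -- to obtain a genuine metrised stable graph of genus $g(V)$, whose class I record as a point $\tau(V)\in\Sigma^2\gD_{g(V)}$, using the two barycentric directions $t_0,t_k$ attached to the non-cut objects $M_0,M_k$ as the two suspension coordinates (so $\tau(V)$ is the basepoint whenever $t_0$ or $t_k$ vanishes, or the stabilised graph has no edges). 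Summing over components in the free commutative monoid gives
\[
    \mu(\underline W,t)\ =\ \sum_{V\,:\,g(V)\ge2}\tau(V)\ \in\ \SP^\infty\Big(\textstyle\bigvee_{g\ge2}\Sigma^2\gD_g\Big).
\]

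To see that $\mu$ is well defined and continuous, note that the outer face maps $d_0,d_k$ delete the piece $W_1$ or $W_k$, an inner face map $d_i$ deletes a cut circle and merges the two neighbouring pieces -- exactly an edge contraction, i.e.\ a face map of the symmetric $\Delta$-complex underlying $\gD_{g(V)}$ -- and the degeneracies insert a cylinder piece, which the stabilisation discards; so the two ``spare'' coordinates $t_0,t_k$ are precisely what is needed for $\tau$, and hence $\mu$, to be simplicial, and compatibility with disjoint union holds because disjoint union becomes addition in $\SP^\infty$. Continuity amounts to the statement that $\tau$ varies continuously as a cut circle becomes parallel to another cut circle or to a boundary circle and as simplex coordinates tend to zero, which is what the stabilisation and basepoint conventions are designed to guarantee. (The hypothesis that $\Cobn$ has no disks or spheres enters here in that every $W_i$ has $\chi(W_i)\le0$, so no cut produces a disk or sphere piece; it is of course also what makes $B(\Cobn)$ large enough for $\mu$ to be interesting.) Granting this, $\mu$ is a continuous map of partially defined commutative topological monoids.

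It remains to identify $\mu$ rationally with the projection to the third factor of Theorem~\ref{theorem:BICobn}. That splitting is assembled, through the positive-boundary surgery argument for labelled cospan categories, from the classifying spaces of the obstruction categories $\mcF_g$: the $g$-th summand is $B\mcF_g\simeq\Sigma^2 BJ_g$ together with the rational homology equivalence $BJ_g\to\gD_g$, and there is in particular an inclusion $\iota_g\colon\Sigma^2 BJ_g\to B(\Cobn)$ of this summand described by explicit families of cut systems on closed genus-$g$ surfaces. Since the target is rationally a product of Eilenberg--MacLane spaces and $\mu$ is a map of commutative monoids, hence an $H$-map, identifying $\mu$ rationally reduces to computing the induced map on $\pi_*^\IQ B(\Cobn)$, which I would do on the generators coming from the three factors of Theorem~\ref{theorem:BICobn}: by construction $\mu\circ\iota_g$ is the double suspension of $BJ_g\to\gD_g$ into the $g$-th wedge summand and the basepoint into the others, while $\mu$ kills the $S^1$ factor and the $Q(\Sigma^2 BO_2)$ factor (which records the capped-genus-$1$, i.e.\ torus, contributions) because no component with $g(V)\le1$ contributes to $\tau$. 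As $B(\ICobn)\to B(\Cobn)$ is a rational equivalence, this shows that rationally $\mu$ is the projection onto $Q(\bigvee_{g\ge2}\Sigma^2 BJ_g)$ followed by $Q\big(\bigvee_{g\ge2}\Sigma^2(BJ_g\to\gD_g)\big)$ and the natural rational equivalence $Q(\bigvee_{g\ge2}\Sigma^2\gD_g)\simeq\SP^\infty(\bigvee_{g\ge2}\Sigma^2\gD_g)$. The statement about homotopy groups then follows from $\pi_*\SP^\infty(X)\cong\tilde H_*(X;\IZ)$, the decomposition of $\pi_*^\IQ B(\Cobn)$ in Theorem~\ref{theorem:rational-homotopy}, and the fact that $BJ_g\to\gD_g$ is a rational homology isomorphism.

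The main obstacle is not constructing $\mu$ but verifying that it represents the abstract summand \emph{on the nose}, rather than that summand modified by a contribution landing in the $S^1$ or $Q(\Sigma^2 BO_2)$ factors: the splitting of Theorem~\ref{theorem:BICobn} comes from an inductive surgery argument and is not a priori geometric, so the matching forces one to pin down the normalisation of $\tau$ -- precisely which boundary data become the two suspension coordinates, and how cylindrical pieces, capped-genus-$1$ pieces and degenerate simplices are discarded -- so that $\tau$ is strictly natural for exactly the combinatorial operations used in that argument, and then to check that the capped-genus-$\le1$ and $\alpha$-contributions vanish exactly. This, together with the continuity verification across the degenerate configurations mentioned above, is where the real work lies.
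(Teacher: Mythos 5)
Your overall strategy---parametrise a point of $B(\Cobn)$ by a chain of composable cobordisms with barycentric weights, extract a weighted metric graph from each suitable component of the glued surface, and sum over components in $\SP^\infty$---matches the paper's Definition~\ref{defn:mu}. But your explicit formula for $\tau$ contains two defects that prevent $\mu$ from being well defined.

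The suspension coordinates must depend on the component. You take them to be the outermost barycentric coordinates $t_0, t_k$, uniformly for all $V$; the paper instead uses the sums $t_0 + \dots + t_{a_U-1}$ and $t_{b_U+1} + \dots + t_n$, where $a_U, b_U$ record the first and last object $M_i$ that the closed component $U$ meets. Your choice fails to be simplicial at the outer face $d_0$: take a closed component $U$ disjoint from $W_1$ (say $U \subset W_3 \cup W_4$). Setting $t_0 = 0$ sends $\tau(U)$ to the basepoint in your formula; but $d_0$ leaves $U$ untouched, the outermost coordinate of the resulting simplex is $t_1$, and the formula applied afterwards gives $[G_U, t_1, t_k]$, which is generically not the basepoint. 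The component-dependent sums transform correctly under all face maps, and together with the normalisation $t_i/k_i$ of the edge lengths (absent from your formula, so your edge lengths and suspension coordinates do not sum to $1$) they are exactly what lands in the coordinate system $[(G,w,d), a, b]$ on the unreduced double suspension $\Sigma^2\gD_g$.

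You also cap off components with boundary rather than discarding them. The paper sums only over \emph{closed} components of $W_1 \cup \dots \cup W_n$: this is forced because the abstract splitting is assembled from the simplicial groupoid $\Cut_\cd$, whose $n$-simplices by definition have $A_0 = A_n = \emptyset$. Capped components touching $M_0$ or $M_n$ contribute extra terms whose vanishing you would have to prove, and they do not vanish in general.

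Finally, the paper does not identify $\mu$ with the abstract splitting by locating explicit inclusions $\Sigma^2 BJ_g \to B(\Cobn)$ and checking on generators---no such inclusions are constructed. It instead assembles the commutative diagram~\ref{eqn:mu-diagram} of partially defined commutative monoids, factoring $\mu$ through $|\pi_0\Cut_\cd^{(g\ge 2)}|$ and comparing it to $|B\Cut_\cd^{(g\ge 2)}|$ via the sum-over-components map of Lemma~\ref{lem:map-that-sums} and the map $\Phi: B\mcF_g^\chilez \to \gD_g$. You correctly flag in your last paragraph that pinning down the normalisation of $\tau$ and the matching are the real work; but the specific choices you propose make the first fail, and the second would require the intermediate diagram rather than an $H$-map argument.
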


\begin{figure}[ht]
    \centering
    \def\svgwidth{.9\linewidth}
    \small
    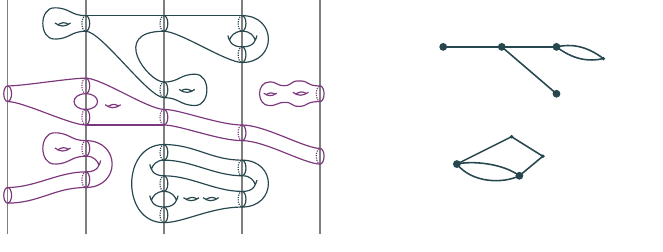
    \caption{An example of how the map $\mu$ (defined in \ref{defn:mu})
    can be evaluated on a $4$-simplex in $B(\Cobn)$.
    The $4$-simplex is parametrised by $(t_0, t_1, t_2, t_3, t_4) \in [0,1]^5$ with $\sum t_i = 1$.
    The double-suspension $\Sigma^2\gD_g$ is given by triples $[(G,w,d), a, b]$
    where $a,b \in [0,1]$ with $a+b \le 1$ 
    and $(G,w,d)$ a stable metric graph of genus $g$ and volume $1-a-b$.
    This is identified with the base-point if $a=0$ or $b=0$.
    To evaluate $\mu$ we sum over closed components $V$ of the diagram,
    discarding components with boundary. 
    To each $V$ we assign a stable metric graph 
    with an edge of length $t_i/k_i$ for every time $V$ intersects the $i$th vertical line,
    where $V$ intersects the $i$th line $k_i$ times.
    To be precise, valence $2$ genus $0$ vertices should be deleted and the length
    of their adjacent edges added.
    The coordinates $(a,b)$ in the suspension $\Sigma^2\gD_g$ are given 
    by the sum of the $t_i$ ``before'' and ``after'' $V$, respectively.}
    \label{fig:mu}
\end{figure}

\subsection*{Labelled cospan categories}

The techniques used to prove the main theorems stated so far apply 
in much greater generality than just to $\Cobtwo$ and its subcategories.
In fact, the main property of $\Cobtwo$ that we are using is
that we can talk about connected components of objects and morphisms.
One way of formalising this is to say that $\pi_0$ defines 
a symmetric monoidal functor
\[
    \pi_0:\Cobtwo \to \Csp, \quad M \mapsto \pi_0(M), \quad
    ([W]:M \to N) \mapsto [\pi_0(M) \to \pi_0(W) \leftarrow \pi_0(N)]
\]
from $\Cobtwo$ to the category $\Csp$ of cospans in finite sets.
This category $\Csp$ has as objects finite sets $A$ and 
as morphisms $A \to B$ it has isomorphism classes of cospans $[A \to X \leftarrow B]$.
Two cospans are isomorphic if there is a bijection $\gs:X \cong X'$
compatible with the maps from $A$ and $B$.
We equip $\Csp$ with the symmetric monoidal structure coming from disjoint union.

Since connected surfaces are uniquely determined by their genus
and number of boundary components,
giving a morphism $M \to N$ in $\Cobtwo$ amounts to the same data 
as giving a cospan $[\pi_0(M) \to X \leftarrow \pi_0(N)]$
and a labelling $g:X \to \IN$ that encodes the genus of each component.
It will be a very useful perspective to think of $\Cobtwo$ 
as a category of cospans labelled by natural numbers 
as indicated in figure \ref{fig:Cob2=N-labelled-Csp}.
\begin{figure}[ht]
    \centering
    \small
    \def\svgwidth{.55\linewidth}
\begingroup%
  \makeatletter%
  \providecommand\color[2][]{%
    \errmessage{(Inkscape) Color is used for the text in Inkscape, but the package 'color.sty' is not loaded}%
    \renewcommand\color[2][]{}%
  }%
  \providecommand\transparent[1]{%
    \errmessage{(Inkscape) Transparency is used (non-zero) for the text in Inkscape, but the package 'transparent.sty' is not loaded}%
    \renewcommand\transparent[1]{}%
  }%
  \providecommand\rotatebox[2]{#2}%
  \newcommand*\fsize{\dimexpr\f@size pt\relax}%
  \newcommand*\lineheight[1]{\fontsize{\fsize}{#1\fsize}\selectfont}%
  \ifx\svgwidth\undefined%
    \setlength{\unitlength}{96.75028763bp}%
    \ifx\svgscale\undefined%
      \relax%
    \else%
      \setlength{\unitlength}{\unitlength * \real{\svgscale}}%
    \fi%
  \else%
    \setlength{\unitlength}{\svgwidth}%
  \fi%
  \global\let\svgwidth\undefined%
  \global\let\svgscale\undefined%
  \makeatother%
  \begin{picture}(1,0.52946192)%
    \lineheight{1}%
    \setlength\tabcolsep{0pt}%
    \put(0,0){\includegraphics[width=\unitlength,page=1]{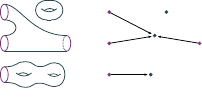}}%
    \put(0.76322178,0.12844344){\color[rgb]{0,0,0}\makebox(0,0)[lt]{\lineheight{1.25}\smash{\begin{tabular}[t]{l}$2$\end{tabular}}}}%
    \put(0.83901669,0.44084482){\color[rgb]{0,0,0}\makebox(0,0)[lt]{\lineheight{1.25}\smash{\begin{tabular}[t]{l}$1$\end{tabular}}}}%
    \put(0.00194096,0.00426962){\color[rgb]{0,0,0}\makebox(0,0)[lt]{\lineheight{1.25}\smash{\begin{tabular}[t]{l}$M \xrightarrow{\qquad\ W\ \qquad} N$ \end{tabular}}}}%
    \put(0.50868034,0.00426962){\color[rgb]{0,0,0}\makebox(0,0)[lt]{\lineheight{1.25}\smash{\begin{tabular}[t]{l}$\pi_0(M) \xrightarrow{\quad} \pi_0(W) \xleftarrow{\quad} \pi_0(N)$ \end{tabular}}}}%
  \end{picture}%
\endgroup%

    \caption{A morphism in $\Cobtwo$ can be thought of as a cospan of finite sets labelled in $\IN$.}
    \label{fig:Cob2=N-labelled-Csp}
\end{figure}
Formalising this leads to the definition of labelled cospan categories.
The reader is referred to section \ref{sec:labelled-cospans}, 
which serves as an introduction to this paper from the perspective of labelled cospan categories.
\begin{defn}
    A \emph{labelled cospan category}%
    \footnote{
        It turns out that labelled cospan categories encode the same data as coloured properads, see theorem \ref{conj:labelled-Csp=properads}, which was conjecture in the first version of this article and then proven in \cite{BH22, BS22}.
    }
    is a symmetric monoidal category $\mcC$
    together with a symmetric monoidal functor $\pi:\mcC \to \Csp$ 
    satisfying four axioms, which ensure that every object 
    and morphism in $\mcC$ uniquely decomposes as the disjoint union 
    of its connected components. (See definition \ref{defn:labelled-cospans} for details.)
\end{defn}

In the study of topologically enriched cobordism categories 
a key theorem is that the inclusion of the positive boundary
subcategory $\tCob_d^\pb \subset \tCob_d$ induces an equivalence on classifying spaces.
(See \cite[Theorem 6.1]{GMTW06} and \cite[Theorem 3.1]{GRW14}.)
Here the positive boundary subcategory contains all objects
and all those cobordisms $W:M \to N$ where each component $V \subset W$ 
intersects $N$ non-trivially, i.e.\ has positive boundary.
We can define a positive boundary subcategory $\mcC^\pb \subset \mcC$
for any labelled cospan category $(\mcC \to \Csp)$ by allowing those
morphisms $f:x \to y$ where the right-hand arrow in the cospan
$[\pi(x) \to \pi(f) \leftarrow \pi(y)]$ is surjective.

For truncated cobordism categories the positive boundary category 
is often much easier to compute.
For example we have $B(\Cob_1^\pb) \simeq QS^0$ 
by \cite[Corollary 4.3.2]{Rap14},
$B(\Cobtwo^\pb)  \simeq S^1$ by \cite{Til96},
and $B(\Cobtwo^{\chi\le0, \pb}) \simeq S^1$ and $B(\Csp^\pb) \simeq *$ 
by proposition \ref{prop:computing-pb}.
However, we cannot generally expect the inclusion 
of the positive boundary category to be an equivalence.
After all $B(\Cob_1)$ is more complicated than $QS^0$ by \cite{Stb21}
and $B(\Cobn)$ is more complicated than $S^1$ by Theorem \ref{theorem:BICobn}.

The key theorem about the classifying spaces of labelled cospan categories
will describe the failure of $B(\mcC^\pb) \to B(\mcC)$ to be an equivalence.  
For this theorem to be stated in its most natural form we 
replace $\mcC$ by a $(2,1)$-category $\ICsp(\mcC)$,
in analogy with how we replaced $\Cobtwo$ by $\ICob_2$.
This recovers the previous construction as $\ICob_2 \simeq \ICsp(\Cobtwo)$
and just like before it does not change the classifying space 
up to rational equivalence: $B(\ICsp(\mcC)) \simeq_\IQ B(\mcC)$.

\begin{theorem}[Decomposition and Surgery Theorem]\label{theorem:decomposition-and-surgery}
    Let $(\mcC \to \Csp)$ be a labelled cospan category
    that admits surgery (definition \ref{defn:admits-surgery})
    and assume that $B(\mcC)$ is group-complete.
    Then there is a fiber sequence of infinite loop spaces:
    \[
        B(\mcC^\pb) \longrightarrow B(\ICsp(\mcC)) \longrightarrow
        Q\left(\bigvee\nolimits_g S^2 (B \mcF_g(\mcC)) \right).
    \]
    Here the wedge runs over all connected morphisms $g:1_\mcC \to 1_\mcC$,
    $\mcF_g(\mcC)$ denotes the category of non-trivial factorisations
    $(g:1_\mcC \to x \to 1_\mcC)$ from definition \ref{defn:F_g},
    and $S^2$ denotes the unreduced double-suspension.
\end{theorem}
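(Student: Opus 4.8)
The plan is to realize the fiber sequence as the outcome of a ``scanning'' or ``group-completion'' argument applied to the symmetric monoidal functor $\pi:\mcC\to\Csp$, combined with a positive-boundary surgery argument in the spirit of \cite{GMTW06, GRW14}, but carried out entirely at the level of discrete (and $(2,1)$-)categories. First I would set up the three infinite loop spaces as the $K$-theory spectra (or rather the $E_\infty$-spaces obtained by group-completing the classifying spaces) of the symmetric monoidal categories $\mcC^\pb$, $\ICsp(\mcC)$, and a suitable third category whose classifying space computes $Q(\bigvee_g S^2 B\mcF_g(\mcC))$. The key structural input is the decomposition of objects and morphisms into connected components guaranteed by the labelled-cospan axioms: this lets one filter morphisms in $\ICsp(\mcC)$ by ``how far'' a component is from having positive right-boundary, i.e.\ by the non-trivial factorisations $g:1_\mcC\to x\to 1_\mcC$ through closed-off pieces. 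The category $\mcF_g(\mcC)$ of such factorisations is exactly the obstruction to a component being collapsible onto $\mcC^\pb$.

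Concretely, I would proceed in the following steps. \textbf{Step 1:} Reduce to the connected case. Using that $B$ of a symmetric monoidal category is the free $E_\infty$-space on the classifying space of its connected part (after group completion), and that every morphism in $\mcC$ decomposes uniquely into connected morphisms, show that both $B(\ICsp(\mcC))$ and the fiber term are built as $Q$ of wedges indexed by connected morphisms $g$; this explains the $Q(\bigvee_g S^2(\cdot))$ shape and reduces the fiber sequence to a statement about the ``$g$-isotypic'' summand for each connected closed morphism $g:1_\mcC\to 1_\mcC$. \textbf{Step 2:} For a fixed $g$, build an explicit square (in fact a pushout-along-a-cofibration at the level of spectra) whose cofiber identifies $S^2 B\mcF_g(\mcC)$ as the obstruction. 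The unreduced double-suspension $S^2$ should arise geometrically as in figure \ref{fig:mu}: a closed component $V$ sitting ``inside'' a simplex of $B(\ICsp(\mcC))$ records (a) the factorisation datum, giving a point of $B\mcF_g(\mcC)$, and (b) two suspension coordinates $(a,b)$ measuring the ``time before'' and ``time after'' $V$. \textbf{Step 3:} Invoke the surgery hypothesis (definition \ref{defn:admits-surgery}) to show that after group completion the inclusion $B(\mcC^\pb)\to B(\ICsp(\mcC))$ becomes, on each $g$-summand, the inclusion of the ``positive-boundary part'', with homotopy cofiber $S^2 B\mcF_g(\mcC)$. This is where surgery is used exactly as in \cite{GRW14}: a morphism that is not positive-boundary can be surgered, at the cost of inserting the factorisation data, to a positive-boundary one, and the space of ways to do this is contractible, so that the only residual information is the factorisation datum itself and its position (the suspension coordinates). \textbf{Step 4:} Assemble the $g$-summands, use group-completeness of $B(\mcC)$ to identify $B(\ICsp(\mcC))$ with its own group completion, and apply the group-completion theorem / recognition principle to upgrade the sequence of $E_\infty$-spaces to a fiber sequence of infinite loop spaces.

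The main obstacle I expect is \textbf{Step 3}: making the surgery argument work at the discrete categorical level rather than the topologically-enriched one. In the enriched setting one has a wealth of contractible spaces of surgery data (Morse functions, handle attachments, parametrised families) that make the fibers of the relevant maps contractible; here we only have diffeomorphism classes, so the analogue must be a combinatorial statement that the nerve of the ``category of surgeries'' relating a given morphism to its positive-boundary replacements is contractible. Verifying this contractibility — presumably the content of ``$\mcC$ admits surgery'' — and checking that it glues compatibly as $g$ varies and as one composes morphisms, so that the resulting map of simplicial spaces is a quasi-fibration, is the technical heart of the proof. A secondary subtlety is bookkeeping the passage $\mcC\leadsto\ICsp(\mcC)$: one must check that refining to the $(2,1)$-category does not disturb the surgery argument and that the claimed rational equivalence $B(\ICsp(\mcC))\simeq_\IQ B(\mcC)$ is compatible with the fiber sequence, so that the theorem transfers back to $\mcC$ itself.
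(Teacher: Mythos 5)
Your overall intuitions are sound — the ``before/after'' suspension coordinates in Step~2 match the paper's picture exactly, and in Step~3 you correctly anticipate that the technical heart is showing that the space of discrete surgery data is contractible (this is precisely what the ``admits surgery'' axioms are engineered to guarantee, via a flag-complex argument). But the way you assemble the pieces in Step~1 rests on a false premise, and this propagates through Steps~2--3.

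The claim that $B(\ICsp(\mcC))$ (or $B$ of any symmetric monoidal category) is the free $E_\infty$-space on its ``connected part'' is not true: the fact that every \emph{object and morphism} decomposes uniquely into connected pieces does not make the category a free symmetric monoidal groupoid, because there are non-isomorphism morphisms between objects with different numbers of components, and these impose relations. For instance, $B(\Cob_2)\simeq S^1\times X$ is not $Q$ of anything obvious, and $B(\mcC^\pb)$ is generally not free either. Consequently there is no ``$g$-isotypic summand'' of $B(\ICsp(\mcC))$ to compare with $S^2B\mcF_g$, and the reduction to a fixed $g$ in Steps~2--3 has no foundation. The $Q(\bigvee_g \cdots)$ shape arises for a more specific reason: the paper builds an auxiliary simplicial groupoid $\Cut_\cd$ that records, for each chain of composable morphisms, only the closed components that hide ``in the middle'', and it is $\Cut_\cd$ that is level-wise a \emph{free} symmetric monoidal groupoid on its connected cuts — not $\ICsp(\mcC)$ itself. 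The freeness lives one level deeper, inside the nerve, and only for this auxiliary gadget.

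The missing idea is the intermediate object: the simplicial subset $N_\cd^\nc\mcC\subset N_\cd\mcC$ of $n$-simplices $W:[n]\to\mcC$ whose total composite $W(0\to n)$ has no closed components. The proof then has two logically independent halves. First, a purely formal \emph{decomposition} step: every simplex of $N_\cd\ICsp(\mcC)$ splits canonically as (no-closed-components part) $\amalg$ (cut part), and a pre-additivity/retraction criterion in special $\Gamma$-spaces turns this level-wise splitting into a fiber sequence $|N_\cd^\nc\mcC|\to B(\ICsp(\mcC))\to |B\Cut_\cd|$ after realisation; one then identifies $|B\Cut_\cd|\simeq Q(\bigvee_g S^2 B\mcF_g)$ by the freeness of $\Cut_\cd$ and the suspension-coordinate picture you describe. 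Second, an entirely separate \emph{surgery} step: a direct combinatorial homotopy, implemented by surgery paths in a representing $1$-complex and using the axioms of definition~\ref{defn:admits-surgery} to make the local surgeries commute, shows that the inclusion $N_\cd\mcC^\pb\hookrightarrow N_\cd^\nc\mcC$ is an equivalence on realisations. Your proposal tries to do both at once on a per-$g$ basis, which cannot work; with the intermediate object $|N_\cd^\nc\mcC|$ inserted and the two halves separated, your Steps~2 and~3 become the right ingredients for the first and second halves respectively.
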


The condition of admitting surgery is satisfied by all truncated cobordism categories $\Cob_d$ with $d \ge 2$
as well as by $\Csp$ and various labelled cospan categories 
that one can construct by labelling cospans in an abelian monoid.
The proof of Theorem \ref{theorem:decomposition-and-surgery} takes up the majority of this paper
(section \ref{sec:decomposition} and \ref{sec:surgery}).
Once Theorem \ref{theorem:decomposition-and-surgery} is established
most other computations amount to determining the homotopy types
of the factorisation categories $\mcF_g(\mcC)$.
In the case of $\mcC = \Csp$ we show that $B\mcF(\Csp)$ is contractible.
This concludes the computation started in \cite{Stb19}.

\begin{theorem}\label{theorem:BICsp}
    The classifying space of the $(2,1)$-category of cospans $\ICsp$ 
    is contractible and the classifying space of its homotopy category
    is $B(\Csp) \simeq \tau_{\ge 3} Q(S^2)$.
\end{theorem}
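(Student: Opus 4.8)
The plan is to deduce this as a direct consequence of the Decomposition and Surgery Theorem (Theorem \ref{theorem:decomposition-and-surgery}) applied to $\mcC = \Csp$, together with the computation $B(\Csp^\pb) \simeq *$ from Proposition \ref{prop:computing-pb}. First I would check that $(\Csp \to \Csp)$, with $\pi = \id$, is a labelled cospan category that admits surgery and that $B(\Csp)$ is group-complete; the first two are presumably verified in section \ref{sec:labelled-cospans} (the identity functor trivially satisfies the four axioms, and the "admits surgery" condition is claimed in the text to hold for $\Csp$), while group-completeness of $B(\Csp)$ should follow from the fact that $B(\Csp) \simeq \tau_{\ge 3}Q(S^2)$ was essentially the content of \cite{Stb19}, or can be established directly since every object of $\Csp$ is invertible up to the symmetric monoidal structure being grouplike on $\pi_0$ — more carefully, one uses that the empty set together with the cospan structure makes $B(\Csp)$ connected, hence automatically group-complete as an $E_\infty$-space with $\pi_0 = 0$.

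With these hypotheses in place, Theorem \ref{theorem:decomposition-and-surgery} yields a fiber sequence of infinite loop spaces
\[
    B(\Csp^\pb) \longrightarrow B(\ICsp) \longrightarrow Q\left(\bigvee\nolimits_g S^2 (B\mcF_g(\Csp))\right),
\]
where the wedge runs over connected morphisms $g: 1 \to 1$ in $\Csp$, i.e. cospans $[* \to X \leftarrow *]$ with $X$ a finite set and the legs hitting (the components containing) the two marked points. Since $B(\Csp^\pb) \simeq *$, the fiber is contractible and hence $B(\ICsp) \simeq Q(\bigvee_g S^2(B\mcF_g(\Csp)))$. So the theorem reduces to the single claim that $B\mcF_g(\Csp)$ is contractible for every connected $g$.

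The main step, then, is to analyze the factorisation category $\mcF_g(\Csp)$ of non-trivial factorisations $(1 \to x \to 1)$ of a fixed connected cospan $g$, and show its classifying space is contractible. A connected morphism $1_{\Csp} \to 1_{\Csp}$ is a cospan on a connected finite set, so up to isomorphism it is determined by very little data; a factorisation through $x$ amounts to choosing a finite set with compatible maps, and "non-trivial" rules out the two obvious degenerate factorisations. I expect the right approach is to exhibit an explicit contraction: find a natural transformation (a zig-zag of natural transformations) from the identity functor of $\mcF_g(\Csp)$ to a constant functor, for instance by showing $\mcF_g(\Csp)$ has an initial or terminal object after the non-triviality constraint is correctly understood, or that it is filtered, or that it admits a zig-zag connecting any factorisation to a canonical "maximally refined" one obtained by inserting a singleton in the middle. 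This is the part that requires genuine work and is the main obstacle: one must carefully track how the cospan legs and the labelling-by-nothing (since $\Csp$ labels in the trivial monoid) interact with composition in the middle object, and rule out any unexpected $\pi_1$ or higher homotopy. Once $B\mcF_g(\Csp) \simeq *$ is established, the wedge of double-suspensions $\bigvee_g S^2(*)$ collapses — $S^2$ of a point is $S^2$, so one still needs to observe that the wedge over all connected $g$ of copies of $S^2$ is precisely $\bigvee_{g \ge 0} S^2$ (indexed, say, by the number of extra components or by whatever invariant enumerates connected cospans $1 \to 1$), giving $B(\ICsp) \simeq Q(\bigvee_g S^2)$. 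Finally, comparing with the product decomposition $B(\Cob_2) \simeq B(\ICob_2) \times \tau_{\ge 3}Q(\bigvee_{g} S^2)$ — or rather the analogous statement for $\Csp$ in place of $\Cob_2$ — and using that $B(\ICsp)$ is simply connected (indeed $2$-connected, since it is $Q$ of a wedge of $2$-spheres) forces $B(\ICsp) \simeq *$: here one must check that the enumeration of connected cospans $1 \to 1$ makes the indexing set empty or that the relevant $S^2$-summands are killed, so that in fact the wedge is over the empty set and $B(\ICsp) \simeq Q(\emptyset) = *$. The statement about $B(\Csp) = h\ICsp$ then follows from the rational-equivalence remark $B(\ICsp(\mcC)) \simeq_\IQ B(\mcC)$ upgraded integrally via the product decomposition: $B(\Csp) \simeq B(\ICsp) \times \tau_{\ge3}Q(\bigvee_g S^2) \simeq * \times \tau_{\ge 3}Q(S^2) \simeq \tau_{\ge 3}Q(S^2)$, recovering the computation of \cite{Stb19}.
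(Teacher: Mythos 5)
Your overall strategy matches the paper's: apply Theorem \ref{theorem:decomposition-and-surgery} to $\Csp$, use $B(\Csp^\pb)\simeq *$ from Proposition \ref{prop:computing-pb}, and reduce to showing that each factorisation category $B\mcF_g(\Csp)$ is contractible. Your preliminary checks (that $\Csp=\Csp(0,0)$ is a labelled cospan category, admits surgery, and has group-complete classifying space because $B\Csp$ is connected) are also fine. However, there is a decisive error in how you conclude from the contractibility of the $B\mcF_g(\Csp)$.

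You write that ``$S^2$ of a point is $S^2$'' and then go hunting for a reason the wedge should be indexed by the empty set. Both halves of that are wrong. In this paper $S^2(-)$ denotes the \emph{unreduced} double suspension of an \emph{unbased} space, defined as $S^2(X)=(S^1\amalg D^2\times X)/\sim$ with $(a,x)\sim a$ for $a\in S^1\subset D^2$. For $X=*$ this gives $D^2$, which is contractible; more generally, for any non-empty contractible $X$ one has $S^2(X)\simeq *$. (You are perhaps thinking of $\Sigma^2(X_+)$, which would give $S^2$ for $X=*$, but that is a different functor; the paper's remark after the decomposition theorem explicitly identifies $S^2(X)$ with the reduced suspension $\Sigma^2 X$ for non-empty $X$, and $\Sigma^2(*)=*$.) So the error term $Q\bigl(\bigvee_g S^2(B\mcF_g(\Csp))\bigr)$ is contractible \emph{because each summand is contractible and non-empty}, not because the indexing set is empty. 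Indeed $G=\Hom^\con_\Csp(1_\Csp,1_\Csp)$ is \emph{not} empty: it has exactly one element, the isomorphism class of $[\emptyset\to *\leftarrow\emptyset]$, and $\mcF_g(\Csp)$ is non-empty (take $M=*$ with the two obvious cospans). Once the convention is fixed, your fiber sequence immediately gives $B\ICsp\simeq B\Csp^\pb\simeq *$; there is no need for any further argument about indexing sets or a supplementary product decomposition, and the one you invoke does not rescue your miscalculation.

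Two further remarks. First, you never actually prove $B\mcF_g(\Csp)\simeq *$, only gesture at possible strategies (initial/terminal objects, filtered, zig-zag). The category has no initial or terminal object and is not filtered; the paper's Lemma \ref{lem:F(Csp)-contractible} instead builds an auxiliary Grothendieck construction $\mcF'\to\mcF(\Csp)$ over $\mcF(\Csp)$ with contractible fibers $\Fin^{\neq\emptyset,\mathrm{inj},\op}_{/\partial_0}$ and then exhibits an explicit natural transformation from the forgetful functor to a functor factoring through the contractible $\Fin^{\neq\emptyset,\mathrm{inj}}$; that argument is genuinely different from what you sketch. Second, for the statement $B(\Csp)\simeq\tau_{\ge3}Q(S^2)$, the right tool is the fiber sequence $\tau_{\ge2}Q(\bigvee_G S^1)\to B(\ICsp(\mcC))\to B(\mcC)$ established at the end of Proposition \ref{prop:decomposition}: with $B\ICsp\simeq *$ and $|G|=1$, the cofiber/delooping of $\tau_{\ge 2}QS^1$ is $\tau_{\ge3}QS^2$. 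Your appeal to the product decomposition stated in the introduction for $\Cob_2$ is not obviously available for $\Csp$ without extra justification, and in any case you would still need to identify the indexing set $G$ as a singleton to land on $\tau_{\ge3}Q(S^2)$ rather than $\tau_{\ge3}Q(\bigvee_g S^2)$, which you do not do.
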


We further show that $B\mcF_{[W]}(\Cob_d)$ is contractible for any closed $d$-manifold $W$,
$d \ge 2$.
This makes use of the disk morphisms $D^d: \emptyset \to S^{d-1}$,
which were not available in $\Cobn$.
This implies:
\begin{theorem}\label{theorem:BCobd}
    For any dimension $d\ge2$ 
    the classifying space of $\ICob_d$ is equivalent to the classifying space 
    of its positive boundary subcategory:
    $
        B(\Cob_d^\pb) \simeq B(\ICob_d)
    $.
\end{theorem}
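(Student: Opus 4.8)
The strategy is to apply the Decomposition and Surgery Theorem (Theorem~\ref{theorem:decomposition-and-surgery}) to the labelled cospan category $\Cob_d$ and to show that the base of the resulting fiber sequence is contractible. First I would verify the two hypotheses of that theorem for $\mcC = \Cob_d$: that $\Cob_d$ admits surgery (this is asserted for all truncated cobordism categories in the sentence following the theorem), and that $B(\Cob_d)$ is group-complete. Group-completeness should follow from the fact that, up to disjoint union, every object is already invertible-up-to-cobordism in the relevant sense, or more directly from the infinite loop space structure on $B(\Cob_d)$ combined with a standard swindle/Eilenberg argument; in any case this is the kind of input one expects to be routine for cobordism categories. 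Applying the theorem then yields a fiber sequence of infinite loop spaces
\[
    B(\Cob_d^\pb) \longrightarrow B(\ICob_d) \longrightarrow
    Q\Bigl(\bigvee\nolimits_{[W]} S^2 \bigl(B\mcF_{[W]}(\Cob_d)\bigr)\Bigr),
\]
where the wedge runs over connected closed $d$-manifolds $W$, i.e.\ over connected morphisms $\emptyset \to \emptyset$ in $\Cob_d$.

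The heart of the argument is then the claim, stated in the excerpt just before Theorem~\ref{theorem:BCobd}, that $B\mcF_{[W]}(\Cob_d)$ is contractible for every closed $d$-manifold $W$ with $d \ge 2$. By definition $\mcF_{[W]}(\Cob_d)$ is the category of non-trivial factorisations of the morphism $[W]:\emptyset \to \emptyset$, that is, factorisations $\emptyset \to N \to \emptyset$ in $\Cob_d$ with $N \ne \emptyset$, together with the evident morphisms between them. The plan here is to exploit the disk morphism $D^d:\emptyset \to S^{d-1}$ (available since $d \ge 2$) to produce a contracting homotopy on the nerve. Given any point $p$ in the interior of a factorising cobordism, removing an open ball around $p$ exhibits a refinement of the factorisation that introduces a new separating $(d-1)$-sphere, and composing/capping with $D^d$ relates different factorisations. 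Concretely, I would look for an object of $\mcF_{[W]}(\Cob_d)$ — plausibly the factorisation through $S^{d-1}$ via $\emptyset \xrightarrow{D^d} S^{d-1} \xrightarrow{W \setminus \mathring{D}^d} \emptyset$ — that is either initial or final, or at least connected to the identity functor by a zig-zag of natural transformations, so that $B\mcF_{[W]}(\Cob_d) \simeq *$; this is the standard move for killing classifying spaces of categories with a (co)cone point. The subtlety is that $\mcF_{[W]}$ remembers $\pi_0$-data but not diffeomorphisms, so one must check that the relevant factorisations and their morphisms behave well combinatorially — this should be exactly where the labelled cospan formalism does the bookkeeping.

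Once $B\mcF_{[W]}(\Cob_d) \simeq *$ for all $[W]$, the total space of the wedge is $Q\bigl(\bigvee_{[W]} S^2(*)\bigr) = Q\bigl(\bigvee_{[W]} *\bigr) \simeq *$, so the base of the fiber sequence is contractible and the map $B(\Cob_d^\pb) \to B(\ICob_d)$ is an equivalence, which is the claimed statement. I expect the main obstacle to be the contractibility of $\mcF_{[W]}(\Cob_d)$: verifying that the disk-capping construction assembles into an honest natural transformation (or zig-zag thereof) on the category of non-trivial factorisations, rather than just an ad hoc surgery on individual cobordisms, requires care about composability and about the role of the extra $2$-morphism data in $\ICob_d$ versus $\Cob_d$. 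The group-completeness hypothesis and the identification of the wedge of $S^2(*)$'s with a point are by comparison bookkeeping, and the input that $\Cob_d$ admits surgery is quoted from earlier in the paper.
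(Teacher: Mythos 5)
Your high-level strategy matches the paper exactly: apply the Decomposition and Surgery theorem (Corollary~\ref{cor:decomposition-and-surgery}) to $\Cob_d$, and reduce to showing $B\mcF_{[W]}(\Cob_d) \simeq *$ for every closed connected $d$-manifold $W$. The paper does verify the surgery and group-completeness hypotheses as you quote (for group-completeness the relevant criterion is Lemma~\ref{lem:BmcC-group-like}, satisfied since $M \amalg M^- \to \emptyset$ exists), so up to that point the proposal is sound.

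The gap is in the mechanism you propose for contracting $\mcF_{[W]}(\Cob_d)$. The factorisation $\emptyset \xrightarrow{D^d} S^{d-1} \xrightarrow{W \setminus \mathring{D}^d} \emptyset$ is neither initial nor terminal in $\mcF_{[W]}$, and there is no zig-zag of natural transformations on $\mcF_{[W]}$ itself from the identity to a constant functor: the issue is not really the $\pi_0$-vs-diffeomorphism bookkeeping you gesture at, but that producing a morphism from the $S^{d-1}$-factorisation to an arbitrary $(M,W,W')$ requires \emph{choosing} at which component (and which point) of $W$ to cut out the disk, and no such choice can be made naturally across $\mcF_{[W]}$. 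The paper's argument (Lemmas~\ref{lem:Fin-contractible}, \ref{lem:F(Csp)-contractible}, \ref{lem:F(Cob)-contractible}) circumvents this by passing to an auxiliary Grothendieck construction $\mcF'$ whose objects are pairs $((M,W,W'), a\colon A\to\pi_0(W))$ with $A$ a non-empty finite set. The forgetful functor $F\colon \mcF'\to\mcF_{[W]}$ is an equivalence because its fibers $\Fin^{\neq\emptyset,\mathrm{inj},\op}_{/\pi_0(W)}$ have contractible classifying spaces, and on $\mcF'$ (not on $\mcF_{[W]}$) one \emph{can} build an honest natural transformation $\ga\colon G\Rightarrow F$, where $G$ sends $((M,W,W'),(A,a))$ to the factorisation through $A\times S^{d-1}$. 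Since $G$ factors through the contractible $B\Fin^{\neq\emptyset,\mathrm{inj}}$, $BG$ is null, $\ga$ makes $BF$ homotopic to $BG$, and $BF$ is an equivalence, forcing $B\mcF_{[W]}\simeq *$. This extra level of indirection — first replacing $\mcF_{[W]}$ by a cover that records a finite multiset of marked components — is the missing idea in your proposal.
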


The classifying spaces of factorisation categories are not always trivial, though.
\cite[Section 8]{Stb21} implies that $\mcF_{S^1}(\Cob_1)$ 
is equivalent to Connes' cyclic category $\gL$.
Even though $\Cob_1$ does not admit surgery in the sense 
of definition \ref{defn:admits-surgery},
the conclusion of Theorem \ref{theorem:decomposition-and-surgery} 
still seems to apply and the map $B\ICsp(\Cob_1) \to Q(\gS^2 (B\mcF_{S^1}(\Cob_1)))$
corresponds to the continuation of the reduction fiber sequence 
that we constructed in \cite[Section 7]{Stb21}.

The subcategory $\Cobn \subset \Cobtwo$ is another example where 
the arguments for the contractibility of $\mcF_g$ fail
as we cannot use the disk morphism $D^2:\emptyset \to S^1$.
In section \ref{sec:J=gD} we prove Theorem \ref{theorem:BICobn} 
by showing that there is a zig-zag of functors inducing an equivalence 
of classifying spaces $B(\mcF_g(\Cobn)) \simeq B(\J_g)$ for all $g \ge 2$.

\subsection*{Outline}
This paper is split into two parts.
In sections \ref{sec:labelled-cospans}, \ref{sec:decomposition}, and \ref{sec:surgery} 
we study the general theory of labelled cospan categories,
and in sections \ref{sec:mcF-and-mcJ} and \ref{sec:J=gD} we focus on the case of $\Cobn$ 
and its relation to $\gD_g$.

We begin in section \ref{sec:labelled-cospans} by defining labelled cospan categories and giving general constructions such as the refinement $\ICsp(\mcC)$.
Section \ref{sec:decomposition} proves the decomposition theorem, which forms the first part of theorem \ref{theorem:decomposition-and-surgery}.
First we establish a general fiber sequence criterion
and a base-change theorem that will be useful throughout,
these are then applied to various spaces of cuts similar to \cite[Section 7]{Stb21}.
In the end of the section we also show that several factorisation categories 
are contractible.
Section \ref{sec:surgery} proves the surgery theorem,
completing Theorem \ref{theorem:decomposition-and-surgery},
by applying ideas of \cite{Gal11} and \cite{GRW14} to labelled cospan categories.
We also compute the positive boundary category of any weighted cospan category by generalizing \cite{Til96}.
At this point Theorems \ref{theorem:ICob2}, \ref{theorem:BICsp}, 
and \ref{theorem:BCobd} follow.

Section \ref{sec:mcF-and-mcJ} relates the factorisation category $\mcF_g(\Cobn)$ to the category of graphs $\J_g$ by constructing a category of filtered graphs
$\int_\J\Sub'$ that admits functors to both categoires and show that these functors induce equivalences on classifying spaces.
Together with the surgery and decomposition theorem this implies Theorem \ref{theorem:BICobn} and \ref{theorem:rational-homotopy}.
Section \ref{sec:J=gD} recalls the moduli space $\gD_g$ and
constructs a rational homology equivalence $B\J_g \to \gD_g$.
Moreover, we prove Theorem \ref{theorem:the-map} by constructing the rational splitting map 
$\mu:B\Cobn \to \SP^\infty(\gS^2\gD_2 \vee \dots)$.

\subsection*{Acknowledgements}
This paper was written as a part of my PhD thesis under the supervision of Ulrike Tillmann,
who I would like to thank for many useful conversations.
I am grateful to Oscar Randal-Williams for giving me very detailed feedback on my thesis 
and I thank Dhruv Ranganathan for his comments.
I would like to thank the referee for two very detailed reports and for their help in improving this paper.

My thesis was supported by St.~John’s College, Oxford through the \emph{Ioan and Rosemary James Scholarship} 
and the EPSRC grant no.~1941474.

\setcounter{tocdepth}{1}
\tableofcontents

\section{Labelled cospan categories}\label{sec:labelled-cospans}

In this section we introduce the framework of labelled cospan categories,
give several basic constructions that will be needed later,
and make a conjecture about how they relate to properads.

\subsection{Labelled cospans}

\subsubsection{Definition}
\begin{defn}
    The cospan category $\Csp$ has as objects finite sets $A$ 
    and as morphisms $A \to B$ isomorphism classes $[A \to X \leftarrow B]$
    of cospans of finite sets. Here two cospans $f:A \to X \leftarrow B:g$
    and $f':A \to X' \leftarrow B:g'$ are called isomorphic if there 
    is a bijection $\gp:X \cong X'$ such that $f' = \gp \circ f$ and $g' = \gp \circ g'$.
    The composite of two morphisms $[A \to X \leftarrow B]$
    and $[B \to Y \leftarrow C]$ is defined by taking the pushout
    $[A \to X \cup_B Y \leftarrow C]$.
    This is a symmetric monoidal category under disjoint union.
\end{defn}

\begin{ex}
    The key to the following definitions is that any cobordism $W:M \to N$
    yields a cospan of finite sets $\pi_0(M) \to \pi_0(W) \leftarrow \pi_0(N)$
    in such a way that composition of cobordisms corresponds to compositon 
    of cospans. In other words, taking connected components defines
    a symmetric monoidal functor $\pi_0:\Cob_d \to \Csp$.
\end{ex}

\begin{defn}
    Consider a symmetric monoidal category $(\mcC, \ot, 1_\mcC)$ 
    together with a symmetric monoidal functor $\pi: \mcC \to \Csp$.
    We say that an object $M \in \mcC$ is \emph{connected} with respect to $\pi$
    if $\pi(M)$ is a set with one element. 
    Similarly, we say that a morphism $W:M \to N \in \mcC$ is \emph{connected}
    with respect to $\pi$ if the set $\pi(W)$ in the cospan 
    $[\pi(M) \to \pi(W) \leftarrow \pi(N)]$ has a single element.
    We say that $W$ is \emph{reduced} if the cospan $[\pi(M)  \to \pi(W) \leftarrow \pi(N)]$
    has the property that $\pi(M) \amalg \pi(N) \to \pi(W)$ is surjective.
    Let $\Hom_\mcC^\con(M, N) \subset \Hom_\mcC(M, N)$
    and $\Hom_\mcC^\red(M, N) \subset \Hom_\mcC(M, N)$
    denote the subsets of connected and reduced morphisms, respectively.
    We further let $\Obj^\con(\mcC)$ and $\Mor^\con(\mcC)$ denote the sets of connected objects and morphisms of $\mcC$, respectively.
\end{defn}

The following definition encodes the idea that in $\Cob_d$ any object and cobordism
canonically decomposes into its set of connected components.
\begin{defn}\label{defn:labelled-cospans}
    A \emph{labelled cospan category} is a symmetric monoidal category
    $(\mcC, \ot, 1_\mcC)$ together with a symmetric monoidal functor 
    $\pi: \mcC \to \Csp$ satisfying the following axioms:
    \begin{itemize}
        \item[(i)]
        For any object $M \in \mcC$ such that $\pi(M)$ has $n$ elements
        we can find connected objects $M_1, \dots, M_n \in \mcC$ such that
        $M \cong M_1 \ot \dots \ot M_n$. If $n=0$, we require $M \cong 1_\mcC$.
        \item[(ii)]
        The abelian monoid $\Hom_\mcC(1_\mcC, 1_\mcC)$
        is freely generated by the subset of
        connected morphisms.
        \item[(iii)]
        For any two objects $M, N \in \mcC$ the following map is a bijection:
        \[
            \ot: \Hom_\mcC^\red(M, N) \times \Hom_\mcC(1_\mcC, 1_\mcC) 
            \to \Hom_\mcC(M, N).
        \]
        \item[(iv)]
        For any four objects $M_1,M_2, N_1,N_2 \in \mcC$ 
        the following diagram is a pullback square:
    \end{itemize}
        \[
            \begin{tikzcd}[column sep = 1pc]
                \Hom_\mcC^\red(M_1, N_1) \times \Hom_\mcC^\red(M_2, N_2) 
                \ar[r, "\ot"] \ar[d, "\pi"] & 
                \Hom_\mcC^\red(M_1 \ot M_2, N_1 \ot N_2) \ar[d, "\pi"] \\
                \Hom_\Csp^\red(\pi(M_1), \pi(N_1)) \times \Hom_\Csp^\red(\pi(M_2), \pi(N_2)) 
                \ar[r, "\amalg"] &
                \Hom_\Csp^\red(\pi(M_1) \amalg \pi(M_2), \pi(N_1) \amalg \pi(N_2)) .
            \end{tikzcd}
        \]
    A functor of labelled cospan categories 
    $F:(\pi_\mcC:\mcC \to \Csp) \to (\pi_\mcD:\mcD \to \Csp)$ 
    is a symmetric monoidal functor $F:\mcC \to \mcD$ together with a 
    symmetric monoidal natural isomorphism $\ga: \pi_\mcD \circ F \cong \pi_\mcC$.
\end{defn}

\begin{rem}
    We will show in lemma \ref{lem:hCsp(C)=C} that any labelled cospan category
    $(\mcC \to \Csp)$ is equivalent to another category $h\Csp(\mcC)$
    where morphisms are indeed cospans of finite sets labelled 
    by connected morphisms in $\mcC$. This justifies the name,
    but the list of axioms might still seem a little arbitrary.
    In definition \ref{defn:labelled-infinity-cospan} we provide a much simpler
    definition in the world of $\infty$-categories, which generalizes
    the above and only imposes a single axiom. 
    In fact one can show that labelled cospan categories are equivalent to properads, 
    see theorem \ref{conj:labelled-Csp=properads} \cite{BH22, BS22}.
    For now we shall motivate the definition by giving examples.
\end{rem}

\begin{ex}
    For any dimension $d$ and tangential structure $\gt$ the truncated
    $\gt$-structured $d$-dimensional cobordism category $\Cob_{d,\gt} = h(\mcC_{d,\gt})$ 
    is a labelled cospan category when equipped with the symmetric monoidal
    functor $\pi_0:\Cob_{d,\gt} \to \Csp$ that sends a closed $(d-1)$-manifold $M$
    to the finite set $\pi_0(M)$ and a $d$-dimensional cobordism $W:M \to N$ 
    to the cospan $[\pi_0(M) \to \pi_0(W) \leftarrow \pi_0(N)]$.
\end{ex}

\begin{ex}\label{ex:sub-cospan-cat}
    Let $(\mcC \to \Csp)$ be some labelled cospan category and let $\mcD \subset \mcC$
    be a symmetric monoidal subcategory such that:
    \begin{itemize}
        \item[(a)] If $M \ot N \in \mcD$ for some $M,N \in \mcC$, then $M,N \in \mcD$.
        \item[(b)] If $f \ot g \in \mcD$ for some $(f:M \to N), (g:M' \to N') \in \mcC$,
        then $f,g \in \mcD$.
    \end{itemize}
    Then $(\mcD \to \Csp)$ is a labelled cospan category.
    To see this we check the four properties:
    \begin{itemize}
        \item[(i)] This follows because we had such a decomposition in $\mcC$ and 
        the connected objects appearing in the decomposition of an object of $\mcD$
        also need to lie in $\mcD$ by (a).
        \item[(ii)] This follows because if a submonoid $\Hom_\mcD(1_\mcC, 1_\mcC)$
        of a free monoid $\Hom_\mcC(1_\mcC, 1_\mcC)$ satisfies the cancellation
        property (b), then it is necessarily freely generated on a subset
        of the generators of the original monoid.
        \item[(iii)] The map is injective because it is the restriction
        of an injection and it is surjective by (b).
        \item[(iv)] The map from the top-left corner to the pullback is
        injective because it is the restriction of an injection 
        and it is surjective by (b).
    \end{itemize}
    Combining this with the previous example we see that $(\pi_0:\Cobn \to \Csp)$
    is a labelled cospan category.
\end{ex}

\subsubsection{Decomposing labelled cospans}

The following lemma explains how we can think of any morphism $W:M \to N$
as the cospan $\pi(M) \to \pi(W) \leftarrow \pi(N)$ together with a labelling
of the components of $\pi(W)$ by connected morphisms in $\mcC$.
\begin{lem}\label{lem:decomposing-morphisms}
    Consider two objects $M, N \in \mcC$ and decompose them into connected objects
    $M = M_1 \ot \dots \ot M_m$ and $N = N_1 \ot \dots \ot N_n$.
    The the set of morphisms $M \to N$ can be described as:
    \[
        \Hom_\mcC(M, N) \cong 
        \Hom(1_\mcC, 1_\mcC) \times 
        \coprod_{[X]:\pi(M) \to \pi(N)}\  \prod_{x \in X} 
        \Hom_\mcC^\con\left(\bigotimes_{i \in f^{-1}(x)} M_i, 
        \bigotimes_{j \in g^{-1}(x)} N_j\right)
    \]
    where the coproduct runs over all isomorphism classes of reduced cospans 
    $f:\pi(M) \to X \leftarrow \pi(N):g$.
    This decomposition is natural with respect to functors of labelled cospan categories.
\end{lem}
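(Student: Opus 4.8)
The plan is to build the bijection by combining the three structural axioms (i), (iii), and (iv) in sequence, reducing the general morphism set to a product over connected components. First I would use axiom (iii) to split off the closed part: for objects $M,N$ the tensor map $\Hom_\mcC^\red(M,N) \times \Hom_\mcC(1_\mcC, 1_\mcC) \to \Hom_\mcC(M,N)$ is a bijection, so it suffices to produce the analogous decomposition of $\Hom_\mcC^\red(M,N)$, i.e.\ to establish
\[
    \Hom_\mcC^\red(M, N) \cong
    \coprod_{[X]:\pi(M) \to \pi(N)}\ \prod_{x \in X}
    \Hom_\mcC^\con\!\left(\bigotimes_{i \in f^{-1}(x)} M_i,
    \bigotimes_{j \in g^{-1}(x)} N_j\right),
\]
where the coproduct is over isomorphism classes of reduced cospans $f:\pi(M) \to X \leftarrow \pi(N):g$. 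The functor $\pi$ sends a reduced morphism to a reduced cospan, which gives the map to the indexing coproduct; the fiber over a fixed reduced cospan $[X]$ is what we must identify with $\prod_{x\in X} \Hom_\mcC^\con(\dots)$.

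To identify that fiber, I would argue by induction on the number of elements of $X$ using axiom (iv). Write $\pi(M) = \coprod_{x} \pi(M)_x$ and $\pi(N) = \coprod_x \pi(N)_x$ for the preimages of each $x \in X$ under the two legs of the cospan; by axiom (i) this refines the given decompositions of $M$ and $N$ into $M \cong \bigotimes_x M^{(x)}$ and $N \cong \bigotimes_x N^{(x)}$, where $M^{(x)} = \bigotimes_{i \in f^{-1}(x)} M_i$ and likewise for $N^{(x)}$ — here one must check that reindexing the connected factors of $M$ according to which component of $X$ they map to is legitimate, which is exactly what the naturality of $\pi$ with respect to the symmetry isomorphisms buys us. Now each pair $(M^{(x)}, N^{(x)})$ maps under $\pi$ to a reduced cospan on a one-element set, and a reduced morphism $M^{(x)} \to N^{(x)}$ covering it is precisely a connected morphism (the set $\pi$ assigns has one element). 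Iterating the pullback square (iv) over the components of $X$ — taking the fiber over the fixed discrete cospan $[X]$ on both the source and target of the $\amalg$ map — shows that $\bigotimes$ induces a bijection $\prod_x \Hom_\mcC^\con(M^{(x)}, N^{(x)}) \xrightarrow{\ \cong\ } \{ \text{reduced morphisms } M \to N \text{ with } \pi = [X]\}$.

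The main obstacle I expect is bookkeeping the symmetric monoidal coherence: axiom (i) only gives decompositions up to (non-canonical) isomorphism, and to splice the per-component bijections into a single statement one has to track how the chosen isomorphisms $M \cong \bigotimes M_i$, the partition of the index set induced by a cospan, and the symmetry isomorphisms of $\ot$ interact. The clean way to handle this is to fix, once and for all, the decompositions of $M$ and $N$ in the statement, and to phrase axiom (iv) as an honest pullback of sets so that the induction produces a genuine bijection rather than a zig-zag; the symmetry isomorphisms then only enter through $\pi$ being symmetric monoidal, which makes the reindexing compatible on the nose. Finally, naturality with respect to functors of labelled cospan categories is immediate once everything is expressed in terms of $\pi$, $\ot$, and the distinguished subsets $\Hom^\con$, $\Hom^\red$, since a functor $F$ of labelled cospan categories comes with a symmetric monoidal natural isomorphism $\ga:\pi_\mcD \circ F \cong \pi_\mcC$ intertwining all of this structure, so $F$ carries connected (resp.\ reduced) morphisms to connected (resp.\ reduced) morphisms and commutes with the tensor maps up to the coherence isomorphisms of $F$.
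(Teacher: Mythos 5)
Your proposal is correct and follows essentially the same route as the paper: split off $\Hom_\mcC(1_\mcC,1_\mcC)$ via axiom (iii), stratify $\Hom^\red_\mcC(M,N)$ by the reduced cospan $[X]$, regroup the connected factors of $M$ and $N$ along $X$, and identify the fiber over $[X]$ with $\prod_x \Hom^\con_\mcC(M^{(x)},N^{(x)})$ by iterating the pullback axiom (iv). Your attention to the coherence/reindexing issue and the explicit note on naturality are both sensible and go slightly beyond what the paper spells out, but do not constitute a different argument.
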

\begin{proof}
    We have the decompositon 
    $\Hom_\mcC(M,N) \cong \Hom_\mcC(1_\mcC, 1_\mcC) \times \Hom_\mcC^\red(M, N)$
    by condition (iii). The set of reduced morphisms maps to the set of 
    reduced cospans, so we get a disjoint decomposition 
    \[
        \Hom_\mcC^\red(M, N) \cong \coprod_{[X]:\pi(M) \to \pi(N)} 
        \Hom_\mcC^{[X]}(M, N) 
    \]
    where $\Hom_\mcC^{[X]}(M, N) = \{W:M \to N\;|\; \pi(W) = [X]\}$ is the set
    of morphisms with underlying cospan $X$. To understand this set of morphisms
    we decompose $M$ and $N$ according to the cospan $X$:
    \[
        M \cong \bigotimes_{x \in X} M_x, \quad M_x := \bigotimes_{i \in f^{-1}(x)} M_i
        \qand
        N \cong \bigotimes_{x \in X} N_x, \quad N_x := \bigotimes_{j \in g^{-1}(x)} N_j
    \]
    Here we identified $\pi(M)$ with $\{1,\dots,m\}$ according to the 
    decomposition $M= M_1 \ot \dots \ot M_m$ and similarly for $N$.
    We can now apply condition (iv) multiple times and glue the pullback squares
    to obtain the following pullback square:
    \[
        \begin{tikzcd}
            \prod_{x \in X}\Hom_\mcC^\red(M_x, N_x) 
            \ar[r, "\ot"] \ar[d, "\pi"] & 
            \Hom_\mcC^\red(\ot_{x \in X}M_x, \ot_{x \in X}N_x) \ar[d, "\pi"] \\
            \prod_{x \in X}\Hom_\Csp^\red(\pi(M_x), \pi(N_x)) 
            \ar[r, "\amalg"] &
            \Hom_\Csp^\red(\amalg_{x \in X}\pi(M_x), \amalg_{x \in X}\pi(N_x)) .
        \end{tikzcd}
    \]
    By definition of $M_x$ and $N_x$ the cospan $[\pi(M) \to X \leftarrow \pi(N)]$
    corresponds to the 
    element of the product $\prod_{x \in X}\Hom_\Csp^\red(\pi(M_x), \pi(N_x)) $
    that is given by the unique connected cospan 
    $[\pi(M_x) \to \{x\} \leftarrow \pi(N_x)]$ 
    in each entry. 
    We can now compare the fibers of the vertical maps at the points corresponding to $[X]$. 
    Since the diagram is a pullback square the fibers are isomorphic:
    \begin{align*}
        &\prod_{x \in X} \{W_x \in \Hom_\mcC^\red(M_x, N_x)\;|\;\ \pi(W_x) = \{x\}\}\\
        &\cong \{ W \in \Hom_\mcC^\red(\ot_{x \in X} M_x, \ot_{x \in X} N_x) \;|\; \pi(W) = [X]\}.
    \end{align*}
    The left-hand side is the product over $\Hom_\mcC^\con(M_x, N_x)$ 
    and the right-hand side is $\Hom_\mcC^{[X]}(M, N)$.
    Together with the first part this concludes the proof.
\end{proof}

\begin{cor}\label{cor:equiv-of-labelled-cospan-cats}
    A functor $F:(\pi_\mcC:\mcC \to \Csp) \to (\pi_\mcD:\mcD \to \Csp)$ 
    of labelled cospan categories is an equivalence of categories 
    if and only if it satisfies the following two conditions:
    \begin{itemize}
        \item For any connected object $M \in \mcD$ there is a connected object 
        $M' \in \mcC$ such that $F(M') \cong M$.
        \item For any two objects $M, N \in \mcC$ the functor $F$ induces a bijection
        \[
            F: \Hom_\mcC^\con(M, N) \cong \Hom_\mcD^\con(F(M), F(N)).
        \]
    \end{itemize}
\end{cor}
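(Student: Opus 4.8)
The plan is to prove Corollary \ref{cor:equiv-of-labelled-cospan-cats} by combining the standard criterion that a functor is an equivalence iff it is essentially surjective and fully faithful with the explicit description of Hom-sets from Lemma \ref{lem:decomposing-morphisms}. The two conditions in the statement are manifestly necessary (a connected object has connected image since $F$ intertwines the functors to $\Csp$, so $\pi_\mcD(F(M_\mcC)) \cong \pi_\mcC(M_\mcC)$ has one element; and any equivalence is a bijection on all Hom-sets, in particular on the subsets of connected morphisms, which are preserved because $F$ respects the cospan data via $\ga$). So the content is sufficiency, and the argument splits into essential surjectivity and full faithfulness.

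First I would establish essential surjectivity. Given an arbitrary object $M_\mcD \in \mcD$, axiom (i) for $\mcD$ gives a decomposition $M_\mcD \cong M_1 \ot \dots \ot M_n$ into connected objects of $\mcD$. By the first hypothesis, each $M_i \cong F(M_i')$ for some connected $M_i' \in \mcC$; then $F(M_1' \ot \dots \ot M_n') \cong F(M_1') \ot \dots \ot F(M_n') \cong M_1 \ot \dots \ot M_n \cong M_\mcD$, using that $F$ is symmetric monoidal. This settles essential surjectivity. (The case $n=0$ is handled since $F(1_\mcC) \cong 1_\mcD$.)

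Next, full faithfulness. Fix objects $M, N \in \mcC$ and choose connected decompositions $M \cong M_1 \ot \dots \ot M_m$, $N \cong N_1 \ot \dots \ot N_n$ in $\mcC$; their images give connected decompositions of $F(M), F(N)$ in $\mcD$. Now apply Lemma \ref{lem:decomposing-morphisms} to both sides. The lemma expresses $\Hom_\mcC(M,N)$ and $\Hom_\mcD(F(M), F(N))$ each as a product of $\Hom(1,1)$ with a disjoint union, indexed by isomorphism classes of reduced cospans $\pi_\mcC(M) \to X \leftarrow \pi_\mcC(N)$ (identified on the $\mcD$ side via $\ga$ with reduced cospans $\pi_\mcD(F(M)) \to X \leftarrow \pi_\mcD(F(N))$), of products of connected Hom-sets. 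Since the lemma states this decomposition is natural with respect to functors of labelled cospan categories, $F$ respects the indexing by cospans, so it suffices to check that $F$ induces a bijection on each factor. On the $\Hom(1_\mcC,1_\mcC)$ factor this is the $M=N=1_\mcC$ case of the second hypothesis together with axiom (ii): both monoids are free on their connected morphisms and $F$ restricts to a bijection on connected morphisms. On each factor $\Hom_\mcC^\con(\bigotimes_{i \in f^{-1}(x)} M_i, \bigotimes_{j \in g^{-1}(x)} N_j)$, the map induced by $F$ is precisely the bijection on connected Hom-sets asserted by the second hypothesis (applied to the objects $\bigotimes_{i \in f^{-1}(x)} M_i$ and $\bigotimes_{j \in g^{-1}(x)} N_j$), using that $F$ commutes with $\ot$. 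Assembling these bijections over all factors and all cospan classes $[X]$ gives a bijection $\Hom_\mcC(M,N) \cong \Hom_\mcD(F(M), F(N))$, so $F$ is fully faithful, hence an equivalence.

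The main obstacle I anticipate is purely bookkeeping rather than conceptual: one must verify carefully that the natural isomorphism $\ga: \pi_\mcD \circ F \cong \pi_\mcC$ correctly identifies the indexing sets of reduced cospans on the two sides, and that the naturality clause of Lemma \ref{lem:decomposing-morphisms} is strong enough to guarantee $F$ sends the factor indexed by $[X]$ to the factor indexed by the same $[X]$ (rather than merely some cospan in the same orbit). Once the diagram-chase identifying the two decompositions under $F$ is written out, everything reduces to the two hypotheses, so no genuinely new input is needed beyond the axioms and the decomposition lemma.
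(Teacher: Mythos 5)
Your proposal is correct and follows essentially the same route as the paper: essential surjectivity from axiom (i) plus the first hypothesis, and full faithfulness by applying Lemma \ref{lem:decomposing-morphisms} to both $\Hom_\mcC(M,N)$ and $\Hom_\mcD(F(M),F(N))$ and matching factors (connected Homs via the second hypothesis, the $\Hom(1,1)$ factor via the $M=N=1_\mcC$ case together with axiom (ii)). The only differences are cosmetic: you spell out the (trivial but, given the ``if and only if,'' strictly speaking required) necessity direction and flag the bookkeeping around the naturality clause of Lemma \ref{lem:decomposing-morphisms}, both of which the paper leaves implicit.
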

\begin{proof}
    The first condition implies that $F:\mcC \to \mcD$ is essentially surjective
    because by condition (i) every object in $\mcD$ is the product of connected objects.
    The second condition applied to $M=1_\mcC=N$ implies, together with condition (ii),
    that $\Hom_\mcC(1_\mcC, 1_\mcC) \to \Hom_\mcD(1_\mcD, 1_\mcD)$ is a bijection.
    Then we can use the decomposition from lemma \ref{lem:decomposing-morphisms}
    to see that being a bijection on connected morphisms and endomorphisms of $1_\mcC$
    implies that $F$ is a bijection on all homs.
\end{proof}

For any labelled cospan category we can define a reduced category in which 
all closed components are deleted.
\begin{defn}\label{defn:reduced-category}
    The reduced category $\mcC^\red$ has the same objects as $\mcC$ 
    and the morphisms are equivalence classes of morphisms in $\mcC$
    under the action of the abelian monoid $\Hom_\mcC(1_\mcC, 1_\mcC)$.
    This indeed defines a category as for any $W:1_\mcC \to 1_\mcC$
    the action $(W \otimes \blank):\Hom_\mcC(M, N) \to \Hom_\mcC(M, N)$.
    commutes with pre- and post-composition with any morphism.
\end{defn}

Note that it follows from the axioms for labelled cospan categories
that every morphism $[W]:M \to N$ in $\mcC^\red$ has a unique
representative $W \in \Hom_\mcC(M \to N)$ that is reduced.
In other words, the composite
\[
    \Hom_\mcC^\red(M, N) \hookrightarrow \Hom_\mcC(M, N) 
    \to \Hom_{\mcC^\red}(M, N)
\]
is a bijection.
We could therefore equivalently define the category $\mcC^\red$
to have the same objects as $\mcC$ and to have as morphisms the reduced morphisms in $\mcC$.
Composition would then be defined by $W \circ_{\mcC^\red} V = U$
where $U$ is the unique reduced morphism such that there is $Q:1_\mcC \to 1_\mcC$
with $W \circ_{\mcC} V = U \otimes Q$.

\begin{ex}
    There is a canonical bijection between 
    the set of equivalence relations on $A \amalg B$ and $\Hom_\Csp^\red(A, B)$ 
    defined by sending an equivalence relation $R \subset (A \amalg B)^2$
    to the cospan $[A \to (A\amalg B)/\!R \leftarrow B]$.
    This can be used to give an alternative description of $\Csp^\red$
    where morphisms $A \to B$ are equivalence relations on $A \amalg B$.
    In this category morphisms $R \subset (A \amalg B)^2$ and $S \subset (B \amalg C)^2$
    are composed by restricting the equivalence relation on $A \amalg B \amalg C$
    that is generated by $R$ and $S$ to the subset $A \amalg C$.
\end{ex}

\subsubsection{Weighted cospans}

Another class of examples can be constructed by labelling cospans with elements
of an abelian monoid.
This includes $\Csp$, $\Cob_2$, and $\Cobn$, and will provide a useful point of view
in section \ref{sec:mcF-and-mcJ}.
In principle it should be possible to label cospans with operations 
of any modular operad, or even a coloured properad 
(see conjecture \ref{conj:labelled-Csp=properads}).
For the purpose of this paper it will suffice to consider the case
where the operations in the modular operad are elements of an abelian monoid $A$
and pairing two inputs is given by addition with a fixed element $\ga$.
We also allow to specify a subset $A_1 \subset A$, which can be thought of as a stability condition in the sense of stable graphs, see definition \ref{defn:stable-graph}.

\begin{defn}\label{defn:weighting-monoid}
    A \emph{weighting monoid} is a triple $(A, A_1, \ga)$ where $A$ is an abelian monoid, which we will denote additively,
    $A_1 \subset A$ is a subset satisfying $A_1 + A \subset A_1$, 
    and $\ga \in A_1$ is some element.
\end{defn}
The most important weighting monoid is $(\IN, \IN_{\ge 1}, 1)$, 
and we will usually think about this case.

\begin{defn}\label{defn:weighted-cospans}
    For a monoid $(A, +)$ with a preferred element $\ga \in A$ 
    we define the $(A, \ga)$-weighted cospan category $\Csp(A, \ga)$ as follows. 
    Objects are finite sets. 
    Morphisms are equivalence classes of labelled cospans 
    $[i:M \to X \leftarrow N:j, l:X \to A]$.
    Two labelled cospans are equivalent if there is a bijection $\gp:X \cong X'$
    satisfying $l' \circ \gp = l$, $\gp \circ i = i'$, and $\gp \circ j = j'$.
    The composition of two morphisms $(i:M \to X \leftarrow N:j,l)$ and 
    $(i':N \to Y \leftarrow L:j',l')$ is defined by composing the cospans as usual
    and equipping the pushout with the labelling:
    \[
        (l * l')(u \in X \amalg_N Y) 
        =
        {b_1(u; X \leftarrow N \to Y)} \cdot \ga
        + \sum_{x \in (X \to X \amalg_N Y)^{-1}(u)} l(x) 
        + \sum_{y \in (Y \to X \amalg_N Y)^{-1}(u)} l'(y).
    \]
    To define $b_1(u; X \leftarrow N \to Y) \in \IN$ we build a bipartite graph $G$ with vertices $X \amalg Y$ and edges $N$ where each $n \in N$ is an edge from $j(n) \in X$ to $i'(n) \in Y$.
    The set of path components $\pi_0(G)$ of this graph is the pushout $X \amalg_N Y$ and $b_1(u; X, Y)$ is defined to be the first Betti number of the path component of $G$ that corresponds to $u$.
    In formulas this gives
    \[
        b_1(u; X \leftarrow N \to Y) := |(N \to X \amalg_N Y)^{-1}(u)|
        - |(X \amalg Y \to X \amalg_N Y)^{-1}(u)| + 1.
    \]
\end{defn}

\begin{lem}\label{lem:weighted-Csp}
    Definition \ref{defn:weighted-cospans} yields a well-defined category $\Csp(A, \ga)$
    and disjoint union defines a symmetric monoidal structure on this category.
    Moreover, the forgetful functor $\Csp(A, \ga) \to \Csp$ is symmetric monoidal
    and makes $\Csp(A, \ga)$ into a labelled cospan category.
\end{lem}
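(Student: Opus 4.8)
The plan is to verify the four claims in turn, starting with well-definedness and associativity of composition, then the monoidal structure, then checking that the forgetful functor $\Csp(A,\ga) \to \Csp$ witnesses $\Csp(A,\ga)$ as a labelled cospan category. For well-definedness, I would first observe that the composite labelling $l * l'$ is manifestly invariant under isomorphisms of either of the input labelled cospans, since the formula for $(l*l')([u])$ only depends on the fibers of the structure maps, which are preserved by such isomorphisms; this shows composition descends to equivalence classes. Identities are given by the identity cospan $[M = M = M]$ with the labelling taking the constant value $0 \in A$: composing with this on either side leaves the underlying cospan unchanged and adds $0$ to each label with no new Betti-number contribution (each equivalence class being a singleton across the gluing set), so it really is a unit.

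The main work is associativity. Given composable labelled cospans with underlying sets $X$, $Y$, $Z$ over $N$, $N'$ respectively, both $(W' \circ W) \circ W''$ and $W' \circ (W'' \circ W)$ have the same underlying cospan — namely the iterated pushout $X \amalg_N Y \amalg_{N'} Z$ — since pushouts of finite sets are associative. So it suffices to check that the two labellings agree. For an equivalence class $[u]$ in the triple pushout, the sum-of-labels terms $\sum l(x) + \sum l(y) + \sum l(z)$ over the elements of $[u]$ in $X$, $Y$, $Z$ are obviously the same computed either way. The content is that the $\ga$-exponent is the same, i.e.\ that the "first Betti number" is additive under iterated pushout: $b_1([u]; X \amalg_N Y, Z) + b_1(\cdot; X, Y) = b_1([u]; X, Y \amalg_{N'} Z) + b_1(\cdot; Y, Z)$, where the inner terms are summed over the relevant sub-classes. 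This is an elementary Euler-characteristic bookkeeping argument: writing $b_1$ in the given form as (size of gluing-set fiber) $-$ (size of summand fiber) $+$ $1$ and summing over a connected class, both sides compute $|\text{total gluing identifications in }[u]| - |\text{total summand elements in }[u]| + 1$, which is symmetric in the order of gluing. I expect this bookkeeping — carefully tracking how the $+1$'s telescope when a class splits into several classes at an intermediate stage — to be the one genuinely fiddly step, and I would record it as a short sublemma about Betti numbers of unions of finite sets glued along subsets (essentially $b_1(S \cup T) = b_1(S) + b_1(T) + b_1(S \cap T) - (\text{number of components of } S\cap T \text{ that fail to connect})$, phrased combinatorially).

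For the symmetric monoidal structure, disjoint union of finite sets with the evident disjoint union of labelled cospans is strictly associative and symmetric up to the usual coherence isomorphisms, and the empty set with the empty labelling is the unit; since labelling data and Betti-number contributions are computed componentwise and disjoint union introduces no new identifications between the two halves, $l * l'$ is compatible with $\amalg$, so the monoidal structure is well-defined and the forgetful functor $U:\Csp(A,\ga) \to \Csp$, which just discards $l$, is strict symmetric monoidal. Finally I would verify the four axioms of Definition \ref{defn:labelled-cospans} for $(\Csp(A,\ga) \xrightarrow{U} \Csp)$. A connected object is a one-element set, and any finite set decomposes uniquely as a disjoint union of singletons, giving (i). The monoid $\Hom(\emptyset,\emptyset)$ consists of labelled cospans $[\emptyset \to X \leftarrow \emptyset, l:X\to A]$ with $X$ finite; composition here is $X \amalg X'$ with labels unchanged (no gluing, so no $\ga$-contribution), so this monoid is the free abelian monoid on the set $A$ of connected such morphisms (one-point $X$), giving (ii). For (iii), a morphism $M \to N$ is reduced precisely when $M \amalg N \twoheadrightarrow X$; the decomposition $\Hom(M,N) \cong \Hom^{\red}(M,N) \times \Hom(\emptyset,\emptyset)$ is obtained by splitting off the components of $X$ not hit by $M \amalg N$ (each carrying an arbitrary label in $A$, and contributing nothing to the gluing), which is visibly a bijection. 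Axiom (iv) holds because a reduced labelled cospan out of $M_1 \amalg M_2$ into $N_1 \amalg N_2$ whose underlying reduced cospan is a disjoint union of a cospan on $(M_1,N_1)$ and one on $(M_2,N_2)$ must itself split: reducedness forces each component of $X$ to meet exactly one of the two halves, so the labelling splits as well, giving the required pullback. This completes the proof.
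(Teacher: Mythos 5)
Your proposal is correct and follows the same route as the paper: the main work in both is the associativity check, reduced to the Betti-number identity that both parenthesisations compute
\[
|(N \amalg L \to X\amalg_N Y\amalg_L Z)^{-1}([u])| - |(X \amalg Y \amalg Z \to X\amalg_N Y\amalg_L Z)^{-1}([u])| + 1,
\]
and you also spell out the verification of the four labelled-cospan axioms, which the paper compresses to ``one checks''. One minor inaccuracy: when arguing that $[N = N = N]$ with label $0$ is a unit, the parenthetical ``each equivalence class being a singleton across the gluing set'' is not true in general --- if $j:N \to X$ has a fiber $j^{-1}(x)$ of size $k$ then the class $[x]$ meets the gluing set $N$ in $k$ points --- but the conclusion still holds since $b_1 = k - (1+k) + 1 = 0$, so the unit check goes through as claimed.
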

\begin{proof}
    We begin by checking that composition is associative.
    Consider three composable morphisms $(M \to X \leftarrow N,l)$,
    $(N \to Y \leftarrow L,l')$, and $(L \to Z \leftarrow O, l'')$.
    Both ways of composing these morphisms result in the cospan $(M \to T \leftarrow O)$ where $T = X \amalg_N Y \amalg_L Z$ and the labelling sends $u \in T$ to
    \begin{align*}
        &\left(|(N \amalg L \to T)^{-1}(u)| 
        - |(X \amalg Y \amalg Z \to T)^{-1}(u)| + 1 \right) \cdot \alpha \\
        &+\sum_{x \in (X \to T)^{-1}(u)} l(x)
        +\sum_{y \in (Y \to T)^{-1}(u)} l'(y)
        +\sum_{z \in (Z \to T)^{-1}(u)} l''(z)
    \end{align*}
    
    The composition of $(A,\ga)$-labelled cospans is defined component-wise
    and hence disjoint union $\amalg: \Csp(A,\ga) \times \Csp(A, \ga) \to \Csp(A, \ga)$
    is a functor. 
    This defines a symmetric monoidal structure with unit the empty set $\emptyset$ and the same structure isomorphisms as for $\Csp$ (now labelled by $0$).
    By construction the forgetful functor $\Csp(A, \ga) \to \Csp$ is symmetric monoidal.
    We leave the check that this satisfies the conditions of definition \ref{defn:labelled-cospans} to the reader.
\end{proof}

\begin{defn}\label{defn:restricted-weighted-cospans}
    For weighting monoid $(A, A_1, \alpha)$ as in definition \ref{defn:weighting-monoid} we define
    $\Csp(A, A_1, \ga) \subset \Csp(A, \ga)$
    as the subcategory that contains all objects and those morphisms
    $(M \to X \leftarrow N, l)$ where for each $x \in X$
    with $|(M \amalg N \to X)^{-1}(x)| \le 1$ we require $l(x) \in A_1$.
\end{defn}

We briefly check that  $\Csp(A, A_1, \ga) \subset \Csp(A, \ga)$ is indeed closed under composition.
If $(M \to X \leftarrow N, l)$ and $(N \to Y \leftarrow L, l')$ are composable morphisms and $u \in X \amalg_N Y$ is such that $(M \amalg L \to X \amalg_N Y)^{-1}(u)$ has at most one element, then we need to check that $(l * l')(u) \in A_1$.
Consider the bipartite graph $G$ from definition \ref{defn:weighted-cospans} and let $G_u \subset G$ be the component corresponding to $u$. 
If $G_u$ has positive first Betti number, then $(l * l')(u) = b_1(u)\cdot \alpha + \dots$ has $\alpha$ as a summand and thus lies in $A_1$.
Otherwise, we can always find a vertex $v \in X \amalg Y$ of $G_u$ such that
$((M \amalg N) \amalg (N \amalg L) \to X \amalg Y)^{-1}(v)$ has at most one element. 
(If $G_u$ is a single vertex, take that vertex. If it is a tree with at least one edge, at most one leaf is hit by $(M \amalg L \to X \amalg_N Y)^{-1}(u)$ -- take a leaf that is not hit.)
If $v$ is in $X$ this means that $(M \amalg N \to X)^{-1}(v)$ has at most one element so $l(v) \in A_1$, and if $v$ is in $Y$ this means that $(N \amalg L \to Y)^{-1}(v)$ has at most one element so $l'(v) \in A_1$.
In either case $(l*l')(u)$ must be in $A_1$ because one of its summands is in $A_1$.

The subcategory is also closed under disjoint union and further satisfies the condition
of example \ref{ex:sub-cospan-cat}. Hence $\Csp(A, A_1, \ga)$ is a labelled cospan category.

\begin{lem}\label{lem:Cob2=labelled-csp}
    There are equivalences of labelled cospan categories:
    \begin{align*}
        \Csp(0, 0) &\simeq \Csp, &
        \Csp(\IN, 1) &\simeq \Cob_2, &
        \Csp(\IN, \IN_{\ge1}, 1) &\simeq \Cobn.
    \end{align*}
\end{lem}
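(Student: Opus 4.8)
The plan is to apply Corollary \ref{cor:equiv-of-labelled-cospan-cats} to the three forgetful functors $\Csp(A,A_1,\ga) \to \Csp$ (and its variants), which reduces each equivalence to two statements: every connected object downstairs is hit by a connected object upstairs, and the functor is a bijection on connected hom-sets $\Hom^\con(M,N)$. For all three cases the target labelled cospan category has the same underlying objects (finite sets, with connected objects being singletons), and the forgetful functor is the identity on objects, so essential surjectivity on connected objects is immediate. Thus the entire content is the bijection on connected morphisms.

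First I would set up the functors. For $\Csp(0,0) \simeq \Csp$ there is nothing to label (the monoid is trivial), so the forgetful functor is literally an isomorphism of categories and the connected-hom bijection is a tautology. For $\Csp(\IN,1) \simeq \Cob_2$ I would use the classical fact that a connected compact oriented surface is determined up to boundary-preserving diffeomorphism by its genus $g \in \IN$ together with its (ordered) tuple of incoming and outgoing boundary circles; this gives, for connected objects $M = S^1{}^{\ot m}$ and $N = S^1{}^{\ot n}$ in $\Cob_2$, a bijection $\Hom^\con_{\Cob_2}(M,N) \cong \IN$ sending a connected cobordism to its genus, compatible with the map $\pi_0$ to the unique connected cospan $[m \to 1 \leftarrow n]$. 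On the other side, $\Hom^\con_{\Csp(\IN,1)}$ of the same objects is by definition the set of labels $l: \{*\} \to \IN$, i.e.\ also $\IN$. I would check that the genus bijection is natural in $M,N$ and, crucially, that it is compatible with composition: gluing two connected cobordisms along $k$ circles produces a connected surface whose genus is $g_1 + g_2 + (k-1)$ plus the genus contributed by the extra loops, which is exactly the Betti-number correction $\ga^{b_1}$ with $\ga = 1$ appearing in Definition \ref{defn:weighted-cospans}. This computation — matching the genus additivity formula for surface gluing with the combinatorial $b_1$ formula — is the one genuine verification, and is the step I expect to be the main obstacle, since one must be careful that it holds not just for connected gluings but assembles correctly into a symmetric monoidal functor via Lemma \ref{lem:decomposing-morphisms}; however it is essentially Euler-characteristic bookkeeping. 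For the third equivalence $\Csp(\IN,\IN_{\ge1},1) \simeq \Cobn$, I would observe that $\Cobn \subset \Cob_2$ and $\Csp(\IN,\IN_{\ge1},1) \subset \Csp(\IN,1)$ are the subcategories cut out, respectively, by the conditions "no component is a disk or sphere" and "any component with at most one total boundary point has label in $\IN_{\ge1}$". Under the genus dictionary, a connected surface with $b$ boundary circles and genus $g$ is a disk iff $(g,b)=(0,1)$ and a sphere iff $(g,b)=(0,0)$, so forbidding these is exactly forbidding $g = 0$ when $b \le 1$ — which is precisely the restriction defining $A_1 = \IN_{\ge1}$ applied to components $x$ with $|i^{-1}(x) \amalg j^{-1}(x)| \le 1$. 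Hence the equivalence $\Csp(\IN,1)\simeq\Cob_2$ restricts to an equivalence on these subcategories; I would conclude by invoking Example \ref{ex:sub-cospan-cat} to know both sides really are labelled cospan categories and that the restricted functor is again a functor of such.

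One subtlety to address explicitly is the identification of the unit: in $\Cob_2$ the empty $1$-manifold is the monoidal unit, the object $1$ is a single circle $S^1$, and $\Hom_{\Cob_2}(\emptyset,\emptyset)$ is the free commutative monoid on closed connected surfaces, i.e.\ on genera $g \ge 0$, matching $\Hom_{\Csp(\IN,1)}(\emptyset,\emptyset)$ which is the free commutative monoid on labels in $\IN$ attached to a closed point — and axiom (ii) is exactly this freeness, already verified in the proof of Lemma \ref{lem:weighted-Csp} and in the example that $\Cob_2$ is a labelled cospan category. So no extra work is needed there beyond noting the genus bijection is compatible with these free-monoid structures. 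With the connected-hom bijections in hand and essential surjectivity on connected objects automatic, Corollary \ref{cor:equiv-of-labelled-cospan-cats} delivers all three equivalences, and naturality of the decomposition in Lemma \ref{lem:decomposing-morphisms} ensures they are equivalences of \emph{labelled} cospan categories, not merely of symmetric monoidal categories.
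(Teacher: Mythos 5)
Your proposal follows the paper's argument essentially exactly: define the genus-labelling functor $F:\Cob_2 \to \Csp(\IN,1)$ lifting $\pi_0$, verify functoriality by Euler-characteristic bookkeeping (matching the $b_1$ correction of Definition \ref{defn:weighted-cospans}), apply Corollary \ref{cor:equiv-of-labelled-cospan-cats}, and restrict to get the third equivalence via the $\chi\le0 \Leftrightarrow g\ge1$ dictionary. One small slip: the direction is $\Cob_2 \to \Csp(\IN,1)$ via $\pi_0$, which is not the identity on objects (objects of $\Cob_2$ are $1$-manifolds, not finite sets), but since there is a unique connected object up to isomorphism on each side this does not affect essential surjectivity and the argument goes through as in the paper.
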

\begin{proof}
    The first equivalence is clear since a cospan labelled in the trivial monoid $0$    
    is the same data as just a cospan, so the forgetful map $\Csp(0, 0) \to \Csp$
    is an isomorphism of categories.
    
    For the second equivalence we define a functor $F:\Cob_2 \to \Csp(\IN, 1)$
    as the lift of the functor $\pi_0:\Cob_2 \to \Csp$ by labelling the
    cospan $[\pi_0(M) \to \pi_0(W) \leftarrow \pi_0(N)]$ obtained from a morphism $W:M \to N$
    with the function $g: \pi_0(W) \to \IN$ that sends a component to its genus.
    We need to check that $F$ is in fact functorial. This means that we 
    have to verify for any two cobordisms $W:M \to N$ and $V:N \to L$
    that the genus of some component $U \subset W \cup_N V$ is given by:
     \[
        \big(
        |\pi_0(U \cap N)| + 1 - |\pi_0(U \cap W) \amalg \pi_0(U \cap V)|
        \big)
        +
        \sum_{U_0 \in \pi_0(U \cap W)} g(U_0) + \sum_{U_1 \in \pi_0(U \cap V)} g(U_1)
        .
    \] 
    Without loss of generality we may assume that $W \cup_N V$ is connected
    and hence that $U = W \cup_N V$. The Euler characteristic is additive 
    under the composition of surface-bordisms and so we can compute:
    \begin{align*}
        &\chi(W \cup_N V) 
        = \chi(W) + \chi(V) \\
        &= \sum_{U_0 \subset W} (2 - 2g(U_0) - |\pi_0(\partial U_0)|)
        + \sum_{U_1 \subset V} (2 - 2g(U_1) - |\pi_0(\partial U_1)|) \\
        &= 
        2|\pi_0(W) \amalg \pi_0(V)| - 2|\pi_0(N)| - |\pi_0(M)| - |\pi_0(L)|
        -2\left(\sum_{U_0 \subset W} g(U_0) + \sum_{U_1 \subset V} g(U_1)\right)
    \end{align*}
    Combining this with $\chi(W \cup_N V) = 2-2g(W \cup_N V)-|\pi_0(M)|-|\pi_0(L)|$
    yields the desired formula.
    
    The functor $F$ is by construction symmetric monoidal and compatible 
    with the projection to $\Csp$. 
    We may hence use corollary \ref{cor:equiv-of-labelled-cospan-cats}
    to check that it is an equivalence of labelled cospan categories.
    Indeed, $F$ is surjective on connected objects (it hits the one-point set)
    and $F$ is fully faithful on connected morphisms since a connected surface
    with fixed boundary is uniquely determined by its genus.
    
    The third equivalence is obtained by restricting the previous equivalence.
    The subcategory $\Cobn \subset \Cob_2$ corresponds exactly to $\Csp(\IN,\IN_{\ge1},1)$
    since every connected cobordism $W:M \to N$ satisfies:
    \[
        \chi(W) \le 0 
        \Leftrightarrow
        \big(|\pi_0(M) \amalg \pi_0(N)| \le 1 \Rightarrow g(W) \in \IN_{\ge 1}\big).\qedhere
    \]
\end{proof}

\begin{ex}\label{ex:Cob2g<gc}
    For any $\gc \in \IN$ one can define a category $\Cob_2^{g < \gc}$
    as the quotient of $\Cob_2$ by the equivalence relation $\sim_\gc$
    on morphisms that is generated as follows:
    whenever $W:M \to N$ is a cobordism and $V \subset W$ is a component of
    genus at least $\gamma$, then we set $W \sim_\gc W \# (S^1 \times S^1)$
    where the connected sum is taken at $V$.
    Under this equivalence relation two morphisms $W, W': M \to N$
    are identified if they become diffeomorphic after arbitrarily increasing
    the genus of every component that already has genus at least $g$.
    
    This category is equivalent to the weighted cospan category 
    $\Csp(\IN/\gc,1)$ where $\IN/\gc$ is the abelian monoid
    defined by identifying all elements of $\IN$ that lie in $\IN_{\ge\gc}$.
    For $\gc = 0$ we have that $\Cob_2^{g<0} \simeq \Csp$.
    One can also define a subcategory $\Cob_2^{\chi\le0, g<\gc} \subset \Cob_2^{g < \gc}$.
    (This will be the entire category if $\gc = 0$.)
    This category is equivalent to $\Csp(\IN/\gc,\IN_{\ge1}/\gc, 1)$.
\end{ex}

\begin{ex}\label{ex:Cob_d^Sd-1}
    For any $d \ge 2$ consider the full subcategory $\Cob_d^{S^{d-1}} \subset \Cob_d$ 
    of the truncated $d$-dimensional oriented cobordism category on all the objects that are diffeomorphic to $\coprod_{i=1}^k S^{d-1}$ for some $k \ge 0$.  
    Let $\mc{M}_d$ be the commutative monoid of diffeomorphism classes of connected closed oriented $d$-manifolds with addition given by connected sum.
    For $d=2$ we have $\mc{M}_2 \cong \IN$ and for $d=3$ Milnor \cite{Mil62} showed that the commutative monoid $\mc{M}_3$ is freely generated by the set of diffeomorphism classes of prime $3$-manifolds.

    We construct an equivalence of symmetric monoidal categories
    \[
        F\colon \Csp(\mc{M}_d, [S^1\times S^{d-1}])
        \to \Cob_d^{S^{d-1}} 
    \]
    that sends a finite set $A$ to closed oriented $(d-1)$-manifold $A \times S^{d-1}$.
    Given a labelled cospan $(A \to X \leftarrow B, l)$ we let $W := \coprod_{x \in X} W_{l(x)}$ where $W_{l(x)}$ is a closed oriented $d$-manifold that has the diffeomorphism type $l(x) \in \mc{M}_d$.
    Then we pick an orientation-preserving embedding $i: A \times D^d \amalg B \times (D^d)^- \hookrightarrow W$ such that the induced map on $\pi_0$ is $A \amalg B \to X$, and we let
    \[
        F(A \to X \leftarrow B, l) := W \setminus i(A \times (D^d)^\circ \amalg B \times (D^d)^\circ),
    \]
    which is a bordism from $A \times S^{d-1}$ to $B \times S^{d-1}$.

    First, we check that the definition is independent of choices.
    If we pick any other embedding $i'$ then, because it is orientation preserving and induces the same map on $\pi_0$, it must be in the same component of the embedding space.
    By \cite[Chapter 8, Theorem 3.2]{Hir94} there is a diffeomorphism $\varphi \in \Diff(W)$ with $\varphi \circ i = i'$ and this diffeomorphism will restrict to a boundary-preserving diffeomorphism between $W \setminus i(\dots)$ and $W \setminus i'(\dots)$.
    Similarly, if we make different choices of the manifolds $W_{l(x')}$ in the diffeomorphism type $l(x')$, then the resulting $W$s are diffeomorphic and thus so are the bordisms.
    Second, we check that it is functorial.
    Suppose we have a second morphism $(B \to Y \leftarrow C, l')$ and we set $V = \coprod_{y \in Y} V_{l'(y)}$ and pick an embedding $j: B \times D^d \amalg C \times (D^d)^- \hookrightarrow V$.
    Then the bordism obtained by composing their images in the bordism category is
    \[
        (W \setminus i(A \times (D^d)^\circ \amalg B \times (D^d)^\circ)) \cup_{B \times S^{d-1}} 
        (V \setminus j(B \times (D^d)^\circ \amalg C \times (D^d)^\circ))
    \] 
    and we need to show that this is the image of the composite of weighted cospans.
    Alternatively, this bordism can be obtained by taking $W \amalg V$, attaching a $1$-handle for each $b \in B$ (according to the maps $B \to X = \pi_0(W)$ and $B \to Y = \pi_0(V)$), and then removing $i(A \times (D^d)^\circ)$ and $j(C \times (D^d)^\circ)$.
    In terms of the bipartite graph $G$ from definition \ref{defn:weighting-monoid}, we are performing one $1$-handle attachment for each edge in $G$.
    If we first perform the $1$-handle attachments for a spanning forest in $G$, then this exactly performs connected sums in $\coprod_{x \in X} W_{l(x)} \amalg \coprod_{y \in Y} V_{l(y)}$ that correspond to the summation $\sum_{x \in (X \to X \amalg_B Y)^{-1}(u)} l(x) + \sum_{y \in (Y \in X \amalg_B Y)^{-1}(u)} l'(y)$ in $l*l'$.
    The remaining $1$-handles have both their attachment points in the same path component, so attaching them is equivalent to taking a connected sum with $(S^1 \times S^{d-1})$, thus accounting for the summand $b_1(u) \cdot [S^1 \times S^{d-1}]$ in $l * l'$.
    Finally, the functor is symmetric monoidal because the monoidal product is given by disjoint union on both sides.

    To see that $F$ is an equivalence, we can apply corollary \ref{cor:equiv-of-labelled-cospan-cats}.
    The functor $F$ is essentially surjective by definition of $\Cob_d^{S^{d-1}}$.
    For all finite sets $A$ and $B$ the map
    \[
        \mc{M}_d \cong 
        \Hom_{\Csp(\mc{M}_d, [S^1 \times S^{d-1}])}^\con(A, B) 
        \longrightarrow
        \Hom_{\Cob_d}^\con(A \times S^{d-1}, B \times S^{d-1}) 
    \]
    is a bijection, with inverse defined by 
    \[
        G: (W\colon A \times S^{d-1} \to B \times S^{d-1}) 
        \longmapsto
        (A \times D^d) \cup_{A \times S^{d-1}} W \cup_{B \times S^{d-1}} (B \times (D^d)^-).
    \]
    Indeed, if we apply $F$ to $G(W)$, we are free to choose the embedding of $A \times D^d$ and $B \times (D^d)^-$ to be the canonical one so that removing them recovers $W$.
    For the other composite we start with some $V \in \mc{M}_d$, remove a number of disks (keeping track of the boundary identification with $(A\amalg B) \times S^d$) and glue them back in, which results in a manifold $G(F(V))$ that is diffeomorphic to $V$.
    This verifies the conditions of corollary \ref{cor:equiv-of-labelled-cospan-cats} and therefore $F$ is an equivalence.
\end{ex}

Note that the above example fails in the non-oriented case because the embedding of disks is no longer unique up to isotopy.
In fact, the symmetric monoidal categories $\Cob_2^{\rm unor}$ and $\Csp(\mc{M}^{\rm unor}, [S^1 \times S^1])$ cannot be equivalent:
the object $S^1 \in \Cob_2^{\rm unor}$ has a non-trivial automorphism given by the cylinder where we flip one boundary identification,
while the object $* \in \Csp(\mc{M}_2^{\rm unor}, [S^1 \times S^1])$ has no non-trivial automorphisms because the only invertible element in $\mc{M}_2^{\rm unor}$ is the unit $[S^2]$.

\subsection{The enhanced cospan category}
Given a labelled cospan category $(\mcC \to \Csp)$ we now want to build 
a refinement $\ICsp(\mcC)$ that keeps track of permutations of closed components.
We begin by defining the groupoid-enriched category $\ICsp$:
the $2$-category of finite sets, cospans of finite sets, and isomorphisms of cospans.

In the process of constructing $\ICsp$ we pick some infinite ``background set'' $\gO$, so that objects of $\ICsp$ are finite subsets of $\gO$ rather than abstract sets. This allows us to ask when objects are disjoint and to later define a strictly associative union operation.
We also once and for all pick a total order on $\gO$.
Such strictifications are necessary to directly mirror the surgery arguments of \cite{GRW14, GRW17} later on.
During the writing of this article, a new approach to such surgery arguments was proposed by Hebestreit and Steimle \cite{HebestreitSteimle-surgery}.
It is likely possible to perform our surgery arguments in their set-up, which would allow one to work more invariantly and avoid such strictifications.

By proposition \ref{prop:closed-reduced} the canonical functor $\ICsp(\mcC) \to \mcC$ induces a rational equivalence on classifying spaces,
so this subsection might be skipped at a first read.

\subsubsection{The unlabelled case} \label{subsubsec:ICsp}
We begin by setting up an especially rigid way of representing finite sets, which will make the composition in $\Csp$ strictly associative.
\begin{defn}\label{defn:presented-set}
    A \emph{presented finite set} is a triple $(l, X, R)$ where $l \in \{0,1,\dots\}$,
    $X \subset \{1,\dots, l\}$, and $R \subset X \times X$,
    such that $R$ is an equivalence relation on $X$.
    We let $\UL{X} := X/R$ denote the set of equivalence classes of $X$ under $R$,
    and we will often write $\UL{X}$ when we mean the entire datum $(l, X, R)$.
    
    Two presented finite sets $(l, X, R)$ and $(k, Y, S)$ are called \emph{disjoint}
    if $l = k$ and $X \cap Y = \emptyset$. In this case we write $(l,X,R) \bot (k,Y,S)$
    and we define their disjoint union as $(l,X,R) \amalg (k,Y,S) := (l, X \cup Y, R \cup S)$.
    
    For any two presented finite sets $(l, X, R)$ and $(k, Y, S)$ and maps $i:A \to X/R$ 
    and $j:A \to Y/S$ from some set $A$ we define the \emph{glueing} as 
    \[
        (l, X, R) \cup_A (k, Y, S) := (l+k, X \cup (Y+l), R \cup_A (S+l))
    \]
    where $Y+l := \{ y +l \;|\; y \in Y\}$ and $R \cup_A (S+l)$ is 
    the equivalence relation generated by $R$, $(S+l)$, and $i(a) \sim j(a)$
    for all $a \in A$.
\end{defn}

Note that the disjoint union of disjoint presented finite sets is strictly associative 
and commutative whenever it is defined.
Moreover, the gluing operation 
$(\UL{X} \leftarrow A \to \UL{Y}) \to \UL{X}\cup_A \UL{Y}$
is strictly associative, which allows us to make the following definition:

\begin{defn}
    The $(2,1)$-category $\ICsp$ is defined as follows.
    Objects are finite subsets $A \subset \gO$. 
    Non-identity morphisms $A \to B$ are cospans $(f_X:A \to \UL{X} \leftarrow B:g_X)$
    where $\UL{X}$ is a presented finite set.
    We also have identity morphisms, which we denote as $(A = A = A)$.
    The composition of morphisms is defined as:
    \[
        (f_Y:B \to \UL{Y} \leftarrow C:g_Y) \circ (f_X:A \to \UL{X} \leftarrow B:g_X)
        := (f_X:A \to \UL{X} \cup_B \UL{Y} \leftarrow C:g_Y).
    \]
    $2$-morphisms $\gp:(f_X:A \to \UL{X} \leftarrow B:g_X) 
    \Rightarrow (f_X'\to \UL{X}' \leftarrow B:g_X')$
    are bijections $\gp:\UL{X} \cong \UL{Y}$ such that 
    $\gp \circ f_X = f_X'$ and $\gp \circ g_X = g_X'$. 
    (Note that $\gp$ is not an isomorphism of presented finite sets,
    but just a bijection between the resulting quotient sets.)
    These are composed in the obvious way.
\end{defn}

\begin{rem}
    The addition of identity morphisms in the above definition is a little ad-hoc.
    Let us briefly explain how they interact with the other morphisms.
    The horizontal composition is defined such that they act as identities:
    $(A = A = A) \cup_A (A\to \UL{X} \leftarrow B) := (A \to \UL{X} \leftarrow B)$,
    and similarly when composing on the other side.
    The $2$-morphisms are defined by treating the middle $A$ as if it were 
    a presented finite set $\UL{X}$: 
    a $2$-morphism $\gp:(A = A = A) \Rightarrow (f:A \to \UL{X} \leftarrow A:g)$
    is a bijection $\gp:A \to \UL{X}$ such that $\gp \circ \id_A = f_X$ 
    and $\gp \circ \id_A = g_X$.
    This means that such a $2$-morphism exists if and only if $f = g$ 
    is a bijection and the $2$-morphisms is unique if it exists.
\end{rem}

To describe the operation of disjoint union on $\ICsp$ we will use 
the notion of partial commutative monoidal structures that we introduce in 
definition \ref{defn:PCMC} and \ref{defn:PCM-2-cat}.
See section \ref{subsec:PCMC} for an introduction to how we deal
with symmetric monoidal structures.
\begin{defn}\label{defn:Csp-PCM}
    The partial commutative monoidal structure on the $(2,1)$-category $\ICsp$ 
    is defined by the relation $\bot$ and the operation $\amalg$.
    On objects we set $A \bot B$ whenever $A \cap B = \emptyset$, i.e.~we set $\id_A \bot \id_B$ in this case.
    On non-identity morphisms  morphisms we set $(\UL{X}, i, i') \bot (\UL{Y}, j, j')$ whenever 
    $\UL{X} \bot \UL{Y}$ in the sense of \ref{defn:presented-set}.
    Moreover, we never have $f \bot g$ when one of the two is an identity morphism and the other is not.
    (This is closed under composition as it is not possible to obtain an identity morphism as a composite of non-identity morphisms.)
    The operation $\amalg$ is given by taking disjoint unions, both on identity and on non-identity morphisms.
\end{defn}

\begin{lem}
    The canonical functor $\ICsp \to \Csp$ that sends a cospan 
    $(A \to \UL{X} \leftarrow B)$ to its isomorphism class 
    induces an equivalence of categories $h(\ICsp) \simeq \Csp$.
\end{lem}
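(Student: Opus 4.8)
The plan is to verify that the functor $\ICsp \to \Csp$ satisfies the hypotheses of a standard criterion for inducing an equivalence on homotopy categories, namely that it is bijective on objects (up to the choice of background set $\gO$) and that it induces a bijection between the set of $2$-isomorphism classes of morphisms in $\ICsp$ and the morphism sets of $\Csp$. Concretely, $h(\ICsp)$ has the same objects as $\ICsp$, which are finite subsets of $\gO$, while $\Csp$ has all finite sets as objects; since every finite set is isomorphic in $\Csp$ to a finite subset of $\gO$, the functor is essentially surjective. For full faithfulness of $h(\ICsp) \to \Csp$, I would fix objects $A, B \subset \gO$ and analyze the map
\[
    \{\text{morphisms } A \to B \text{ in } \ICsp\}/(2\text{-iso}) \longrightarrow \Hom_\Csp(A, B).
\]

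First I would handle the non-identity morphisms: a non-identity morphism $A \to B$ in $\ICsp$ is a cospan $(f_X: A \to \ul X \leftarrow B: g_X)$ with $\ul X = X/R$ a presented set, and two such are $2$-isomorphic exactly when there is a bijection $\gp:\ul X \cong \ul X'$ commuting with the structure maps — which is precisely the equivalence relation defining isomorphism classes of cospans in $\Csp$. So the non-identity morphisms of $\ICsp$ modulo $2$-isomorphism inject into $\Hom_\Csp(A,B)$, and the image consists of all isomorphism classes of cospans whose apex can be realized as a quotient $X/R$ with $X \subset \{1, \dots, l\}$ — which is every finite set, hence every isomorphism class. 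The one subtlety is the identity morphism $(A = A = A)$ of $\ICsp$: I would check that it maps to the identity cospan $[A \to A \leftarrow B]$ — wait, more precisely to $[A \xrightarrow{\id} A \xleftarrow{\id} A]$ in $\Csp$ — and that there is also a non-identity morphism of $\ICsp$, namely the cospan with apex a genuine presented set isomorphic to $A$, mapping to the same class; by the remark following the definition of $\ICsp$, there is a (unique) $2$-morphism between the strict identity and any such cospan, so these become identified in $h(\ICsp)$, and no collision or loss occurs. Thus the map on hom-sets is a bijection.

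Finally I would check functoriality on the level of $h$: composition in $\ICsp$ is given by the gluing operation $\ul X \cup_B \ul Y$ on presented sets, and I would note that the underlying set $X/R \cup_B Y/S$ is canonically the pushout $\ul X \cup_B \ul Y$ in $\Set$, which is exactly how composition in $\Csp$ is defined; the strict associativity of gluing noted before the definition of $\ICsp$ ensures this is compatible, and the identity morphisms of $\ICsp$ are sent to identity cospans, which represent identities in $\Csp$. Therefore the induced functor $h(\ICsp) \to \Csp$ is an isomorphism-on-objects (up to equivalence), fully faithful functor, hence an equivalence of categories. The main obstacle — really the only point requiring care rather than routine bookkeeping — is the ad hoc treatment of identity morphisms in $\ICsp$: one must confirm that adjoining them does not create extra morphisms in $h(\ICsp)$ beyond what $\Csp$ sees, which is exactly the content of the remark that a $2$-morphism out of $(A=A=A)$ exists precisely when the target cospan is an isomorphism-cospan and is then unique.
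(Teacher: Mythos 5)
Your proof is correct and follows essentially the same route as the paper's: essential surjectivity by embedding a finite set into $\gO$, fullness because any cospan apex is in bijection with a presented set, and faithfulness because $2$-isomorphisms of cospans in $\ICsp$ are exactly the isomorphisms of cospans defining $\Csp$. You go further than the paper in two small ways — explicitly checking that the ad hoc identity morphisms $(A=A=A)$ get correctly conflated in $h(\ICsp)$ with the iso-cospans via the remark on $2$-morphisms out of identities, and verifying functoriality of composition via the strict associativity of gluing — both of which the paper leaves implicit; these are harmless and arguably worthwhile additions rather than a genuinely different argument.
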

\begin{proof}
    The $1$-category $h(\ICsp)$ is defined by identifying all $1$-morphisms 
    in $\ICsp$ that are connected by a $2$-morphism.
    The functor $h(\ICsp) \to \Csp$ is hence faithful by definition:
    on both sides isomorphic cospans are identified.
    It is also full because every cospan $A \to Z \leftarrow B$ is isomorphic
    to a cospan $A \to \UL{X} \leftarrow B$ seeing as any finite set $Z$
    is in bijection with some presented finite set $\UL{X}$.
    The functor is also essentially surjective as any finite set $A$
    can be embedded $\gi:A \hookrightarrow \gO$ and the cospan
    $(A \cong \gi(A) = \gi(A))$ is an isomorphism in $\Csp$.
\end{proof}

\subsubsection{The general case}\label{subsubsec:the-general-case}
We now define a version of $\ICsp$ where the finite sets and cospans are 
labelled by connected objects and morphisms in $\mcC$. 
In lemma \ref{lem:Csp-well-defined} we check that this is indeed well-defined
and that the $2$-functors described below are indeed functorial.
To make sense of the labellings we use the following convention.

\begin{defn}\label{defn:tensor-over-set}
    Given a symmetric monoidal category $\mcC$, a finite subset $A \subset \Omega$, and a map $c:A \to \mcC$, we set 
    \[
        \bigotimes_{a \in A} c(A) :=  (\cdots(c(a_1) \otimes c(a_2)) \otimes c(a_3)) \otimes \dots) \otimes c(a_n)
    \]
    where $\{a_1 < \dots < a_n\} = A$ is the enumeration of $A$ according to the total order we picked on $\gO$.
    (Similarly, we define tensor products indexed by the presented finite sets from definition \ref{defn:presented-set}; here we order $\UL{X} = X/R$ by the smallest numbers appearing in each equivalence class.)
\end{defn}

We note that by the coherence theorem for symmetric monoidal categories \cite[Theorem 4.2]{MacLane-coherence} the order we chose on $\gO$ does not really matter here.
In particular, if $A, B \subset \gO$ are finite subsets, $f: A \to B$ a map and $c: A \to \mcC$ a labelling by objects, then we have a canonical isomorphism
\[
    \bigotimes_{b \in B} \bigotimes_{a \in f^{-1}(b)} c(a) \cong \bigotimes_{a \in A} c(a).
\]

\begin{defn}\label{defn:enhanced-labelled-cospans}
    For any labelled cospan category $(\mcC \to \Csp)$ we define the 
    \emph{enhanced cospan category} $\ICsp(\mcC)$ of $\mcC$ 
    to be the following $(2,1)$-category. 
    \begin{itemize}
    \item 
    An objects is a tuple $(A,c_A)$ where $A \subset \gO$ is a finite set 
    and $c_A: A \to \Obj^\con(\mcC)$ is a labelling by connected objects of $\mcC$.
    \item 
    A non-identity $1$-morphism $(A,c_A) \to (B,c_B)$ is a tuple 
    of a non-identity morphisms $(f:A \to \UL{X} \leftarrow B:g)$ in $\ICsp$
    and a labelling $o:\UL{X} \to \Mor^\con(\mcC)$
    such that for each $x \in \UL{X}$ the label $o(x)$ is a connected morphism 
    \[
        \bigotimes_{a \in f^{-1}(x)} c_A(a) \to \bigotimes_{b \in g^{-1}(x)} c_B(b).
    \]
    We also allow for identity morphisms $\id_{(A,c_A)}$, which we think of as 
    a cospan $(A = A = A)$ where the labelling $o:A \to \Mor^\con(\mcC)$ is
    given by $o(a) := \id_{c_A(a)}$.
    \item 
    A $2$-morphism 
    $(f:A \to \UL{X} \leftarrow B:g, o) \Rightarrow (f':A \to \UL{Y} \leftarrow B:g', o')$
    is a bijection $\gp:\UL{X} \cong \UL{Y}$ such that 
    $f' = \gp \circ f$, $g' = \gp \circ g$, and $o' \circ \gp = o$.
    \end{itemize}
    The composition of two morphisms 
    $(f_X:A \to \UL{X} \leftarrow B:g_X, o_X)$ and $(f_Y:B \to \UL{Y} \leftarrow C:g_Y, o_Y)$
    is defined as the glued cospan 
    $(f_Z:A \to \UL{X} \cup_B \UL{Y} \leftarrow C:g_Z, o_X * o_Y)$
    where the induced labelling $o_X * o_Y$ is defined as follows.
    For each $z \in \UL{X}\cup_B\UL{Y}$ we let
    $\UL{X}_z \subset \UL{X}$ and $\UL{Y}_z \subset \UL{Y}$ denote
    the preimages of $z$ under $\UL{X} \amalg \UL{Y} \to \UL{X}\cup_B\UL{Y}$.
    \[
    \begin{tikzcd}[column sep = 3pc]
        \bigotimes\limits_{a \in f_Z^{-1}(z)} c_A(a) \ar[rr, "(o_X * o_Y)(z)"] \ar[d, "\cong"] &&
        \bigotimes\limits_{c \in g_Z^{-1}(z)} c_C(s) \ar[d, "\cong"] \\
        \bigotimes\limits_{x \in X_z} \bigotimes\limits_{a \in f_X^{-1}(x)} c_A(a) 
        \ar[r, "\ot_x o_X(x)"] & 
        \bigotimes\limits_{x \in X_z} \bigotimes\limits_{b \in g_X^{-1}(x)} c_B(b)
        \cong 
        \bigotimes\limits_{y \in Y_z} \bigotimes\limits_{b \in f_Y^{-1}(y)} c_B(b) 
        \ar[r, "\ot_y o_Y(y)"] &
        \bigotimes\limits_{y \in Y_z} \bigotimes\limits_{c \in g_Y^{-1}(y)} c_C(c).
    \end{tikzcd}
    \]
\end{defn}

\begin{defn}
    There is a functor $\ot_\mcC:\ICsp(\mcC) \to \mcC$ defined by sending 
    $(A,c_A)$ to $\bigotimes_{a \in A} c_A(a)$ and 
    sending morphisms $(f_X:A \to \UL{X} \leftarrow B:g_X, o)$ to
    \[
        \bigotimes_{a \in A} c_A(a) 
        \cong 
        \bigotimes_{x \in \UL{X}} \bigotimes_{a \in f_X^{-1}(x)} c_A(a)
        \xrightarrow{\ot_{x} o(x)}
        \bigotimes_{x \in \UL{X}} \bigotimes_{b \in g_X^{-1}(x)} c_B(a)
        \cong 
        \bigotimes_{b \in B} c_B(b) .
    \]
\end{defn}

\begin{defn}
    We define a partial commutative monoidal structure (see definition \ref{defn:PCMC})
    on $\ICsp(\mcC)$ by saying that two objects or morphisms are disjoint
    if and only if they are disjoint as objects or morphisms in $\ICsp$ after forgetting
    labellings. The union of disjoint objects or morphisms is defined by
    taking the union in $\ICsp$ and equipping it with the evident labelling.
\end{defn}

\begin{lem}\label{lem:Csp-well-defined}
    Horizontal composition of $1$-morphisms in $\ICsp(\mcC)$ is associative 
    and hence $\ICsp(\mcC)$ is a well-defined $2$-category.
    Moreover, the following three constructions are $2$-functors:
    \[
        \mathrm{forget}:\ICsp(\mcC) \to \ICsp, \quad
        \amalg:\ICsp(\mcC)^{2\bot} \to \ICsp(\mcC), \qand
        \ot_\mcC: \ICsp(\mcC) \to \mcC.
    \]
\end{lem}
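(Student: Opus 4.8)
The plan is to verify the three claims of Lemma \ref{lem:Csp-well-defined} in order, with the associativity of horizontal composition being the technical heart. For associativity, note that the underlying cospan composition in $\ICsp$ is \emph{strictly} associative by the remark following Definition \ref{defn:presented-set} (gluing of presented sets was arranged to be strictly associative), so the only thing to check is that the labelings $o_X * o_Y$ compose associatively, i.e.\ that $(o_X * o_Y) * o_Z = o_X * (o_Y * o_Z)$ as functions $\ul{X}\cup_B\ul{Y}\cup_C\ul{Z} \to \Mor^\con(\mcC)$. Fix a component $w$ of the triple pushout; its label under either bracketing is obtained by tensoring the labels $o_X(x), o_Y(y), o_Z(z)$ over all $x,y,z$ mapping to $w$ and composing along the various gluings. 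Both bracketings produce a composite built from the same collection of connected morphisms tensored together and composed along the same boundary sets; the two resulting morphisms in $\mcC$ agree because the tensor product and composition in the symmetric monoidal category $\mcC$ are themselves associative, and the coherence isomorphisms identifying the two groupings of $\bigotimes$ are exactly the ones built into the defining diagram of $o_X * o_Y$. Concretely I would phrase this as: the label of $w$ in either order equals the unique morphism obtained by restricting $\ot_\mcC$ of the composite $1$-morphism to the summand indexed by $w$, so both descriptions agree with a third, manifestly symmetric one. One also checks the identity $1$-morphisms (labeled by $\id_{c_A(a)}$) act as two-sided units, which is immediate since gluing against $(A = A = A)$ does nothing to the cospan and composing with identity morphisms does nothing to the labels.

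Next I would check that $\mathrm{forget}:\ICsp(\mcC)\to\ICsp$ is a $2$-functor: on objects it drops $c_A$, on $1$-morphisms it drops $o$, on $2$-morphisms it is the identity on the bijection $\gp$; compatibility with composition of $1$-morphisms holds because the cospan underlying $o_X * o_Y$ is by definition $\ul{X}\cup_B\ul{Y}$, which is precisely $\mathrm{forget}$ applied to each factor composed in $\ICsp$, and compatibility with horizontal/vertical composition of $2$-morphisms is formal. For $\amalg:\ICsp(\mcC)^{2\bot}\to\ICsp(\mcC)$ I would check that disjoint union of labeled cospans is functorial in each variable: since disjointness and the union operation are defined by forgetting labels and using the corresponding operation in $\ICsp$ (where disjoint union of presented sets is strictly associative and commutative when defined), functoriality reduces to checking that $(o_X \amalg o_{X'}) * (o_Y \amalg o_{Y'}) = (o_X * o_Y) \amalg (o_{X'} * o_{Y'})$, which holds componentwise because a component of the pushout of a disjoint union lies entirely in one of the two summands, so its label is computed using only that summand's data. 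This is where axioms (iii) and (iv) of Definition \ref{defn:labelled-cospans}, via Lemma \ref{lem:decomposing-morphisms}, guarantee the various tensor-decompositions are compatible.

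Finally, for $\ot_\mcC:\ICsp(\mcC)\to\mcC$ I would verify functoriality by unwinding the definition: the value on a composite $1$-morphism $(f_Z, o_X * o_Y)$ is, by construction, the composite in $\mcC$ of $\ot_\mcC$ applied to $(f_X, o_X)$ and to $(f_Y, o_Y)$ — this is exactly what the commuting diagram defining $o_X * o_Y$ encodes, since $\ot_\mcC$ of the composite is $\bigotimes_z (o_X * o_Y)(z)$ and each factor is the composite along that diagram. The identity $1$-morphisms are sent to identities since they are labeled by $\id_{c_A(a)}$ and $\bigotimes_a \id_{c_A(a)} = \id_{\bigotimes_a c_A(a)}$. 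On $2$-morphisms there is nothing to check beyond that a $2$-morphism $\gp$ (with $o' \circ \gp = o$) induces the identity $2$-morphism in $\mcC$, which holds because reindexing a tensor product along a bijection of its index set, compatibly with the labels, does not change the resulting morphism up to the canonical symmetry isomorphism of $\mcC$. The main obstacle is the bookkeeping in the associativity check: carefully tracking which coherence isomorphisms of $\mcC$ appear when regrouping the nested tensor products $\bigotimes_{z}\bigotimes_{x\in X_z}\bigotimes_{a\in f_X^{-1}(x)}$ in the two bracketings and confirming they agree. I expect this to be routine but notation-heavy, and would streamline it by invoking the coherence theorem for symmetric monoidal categories to suppress the associators and unitors, reducing the claim to an equality of composites of the underlying connected morphisms.
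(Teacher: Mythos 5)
Your proposal is essentially correct and follows the same strategy as the paper: reduce the associativity of labeled-cospan composition to associativity of composition in $\mcC$ by re-expressing the label of each component of the triple pushout via $\ot_\mcC$ applied to the restriction to that component. Your account of the three functors (forget drops labels, $\amalg$ commutes with $*$ because components of a disjoint-union pushout lie entirely in one summand, $\ot_\mcC$ of a composite factors as the composite of $\ot_\mcC$ applied to each stage) also matches the paper, which derives these as byproducts of the associativity argument.

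One point of caution: your pivotal sentence --- ``the label of $w$ in either order equals the unique morphism obtained by restricting $\ot_\mcC$ of the composite $1$-morphism to the summand indexed by $w$'' --- is circular as written, since the $3$-fold composite $1$-morphism in $\ICsp(\mcC)$ is exactly what you are trying to show is well-defined. The paper sidesteps this by first proving the $2$-fold identity $(o_X * o_Y)(p) = \ot_\mcC(\ul{Y}_p, o_Y) \circ \ot_\mcC(\ul{X}_p, o_X)$ directly from the definition, and then expanding each of $((o_X * o_Y) * o_Z)(q)$ and $(o_X * (o_Y * o_Z))(q)$ step by step using that identity, landing on the common expression $\ot_\mcC(\ul{X}_q, o_X) \circ \ot_\mcC(\ul{Y}_q, o_Y) \circ \ot_\mcC(\ul{Z}_q, o_Z)$, which is bracketing-independent by associativity of composition in $\mcC$. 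You should do the same: establish the single-composition identity first, then iterate. Your appeal to axioms (iii)--(iv) and Lemma \ref{lem:decomposing-morphisms} for the functoriality of $\amalg$ is more machinery than needed; the componentwise observation alone suffices, as the paper tacitly uses.
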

\begin{proof}
    We will show that associativity holds when composing three
    composable labelled cospans: 
    $(A \to \UL{X} \leftarrow B, o_X)$, 
    $(B \to \UL{Y} \leftarrow C, o_Y)$, 
    $(C \to \UL{Z} \leftarrow D, o_Z)$.
    The functoriality of ``$\mathrm{forget}$", $\amalg$, and $\ot_\mcC$ will follow from the proof.
    The underlying cospan is $\UL{X} \cup_B \UL{Y} \cup_C \UL{Z}$
    and it does not matter how we parenthesise this expression
    as composition in $\ICsp$ is strictly associative.
    We need to check that the two induced labellings on this composed cospan agree.
    
    Before we show this, let us consider the case of a single composition
    $(\UL{X}, o_X) \cup_B (\UL{Y}, o_Y)$.
    For $p \in \UL{X} \cup_B \UL{Y}$ we let $\UL{X}_p \subset \UL{X}$, 
    $\UL{Y}_p \subset \UL{Y}$, and $B_p \subset B$ denote the subsets
    that are sent to $p$ under the quotient map.
    The label $(o_X*o_Y)(p)$ of $p$ in the composite cospan is exactly the morphism 
    $\ot_\mcC(\UL{X}_p \cup_{B_p} \UL{Y}_p, o_X*o_Y)$ in $\mcC$. 
    (If $(\UL{W},o_W)$ is a labelled cospan where $\UL{W} = \{p\}$
    is a single point, then $\ot_\mcC(\UL{W}, o_W) = o_W(p)$.)
    By inspecting the definition it follows that this composite label is
    exactly $\ot_\mcC(\UL{Y}_p, o_Y) \circ \ot_\mcC(\UL{X}_p, o_X)$ 
    as illustrated in the following diagram:
    \[
    \begin{tikzcd}[column sep = .5pc]
        \bigotimes\limits_{a \in A} c_A(a) 
        \ar[rrrr, "(o_X * o_Y)(p) = {\ot_\mcC(\UL{X}_p \cup_{B_p} \UL{Y}_p, o_X*o_Y)}"]
        \ar[d, "\cong"]  \ar[rrd, "{\ot_\mcC(\UL{X}_p,o_X)}"] &&&&
        \bigotimes\limits_{c \in C} c_C(s) \ar[d, "\cong"] \\
        \bigotimes\limits_{x \in X_p} \bigotimes\limits_{a \in f_X^{-1}(x)} c_A(a) 
        \ar[rd, "\ot_x o_X(x)"]
        &&
        \bigotimes\limits_{b \in B_p} c_B(b)
        \ar[rd, "\cong"] \ar[rru, "{\ot_\mcC(\UL{Y}_p,o_Y)}"] 
        &&
        \bigotimes\limits_{y \in Y_p} \bigotimes\limits_{c \in g_Y^{-1}(y)} c_C(c)
        \\
         & 
        \bigotimes\limits_{x \in X_p} \bigotimes\limits_{b \in g_X^{-1}(x)} c_B(b)
        \ar[rr, "\cong"] \ar[ru, "\cong"] &&
        \bigotimes\limits_{y \in Y_p} \bigotimes\limits_{b \in f_Y^{-1}(y)} c_B(b) 
        \ar[ru, "\ot_y o_Y(y)"] &
    \end{tikzcd}
    \]
    This already implies that $\ot_\mcC: \ICsp(\mcC) \to \mcC$ 
    will be a $2$-functor once we show that $\ICsp(\mcC)$ is a $2$-category.
    
    Returning to the case of a triple composition we can now see
    that a point $q \in \UL{X} \cup_B \UL{Y} \cup_C \UL{Z}$ is labelled by:
    \begin{align*}
        ((o_X * o_Y) * o_Z)(q) 
        & = \ot_\mcC(\UL{X}_q \cup_{B_q} \UL{Y}_q, o_X * o_Y) \circ \ot_\mcC(\UL{Z}_q, o_Z) \\
        & = \left(\bigotimes_{p \in \UL{X}_q \cup_{B_q} \UL{Y}_q} \ot_\mcC(\UL{X}_p \cup_{B_p} \UL{Y}_p, o_X * o_Y) \right)
        \circ \ot_\mcC(\UL{Z}_q, o_Z) \\
        & = \left(\bigotimes_{p \in \UL{X}_q \cup_{B_q} \UL{Y}_q} \ot_\mcC(\UL{X}_p, o_X) 
        \circ \ot_\mcC(\UL{Y}_p, o_Y) \right)
        \circ \ot_\mcC(\UL{Z}_q, o_Z) \\
        & = (\ot_\mcC(\UL{X}_q, o_X) \circ \ot_\mcC(\UL{Y}_q, o_Y)) \circ \ot_\mcC(\UL{Z}_q, o_Z)
    \end{align*}
    Since composition in $\mcC$ is associative it follows that this
    agrees with $(o_X * (o_Y * o_Z))(q)$ and hence horizontal composition in $\ICsp(\mcC)$
    is indeed associative.
\end{proof}

\begin{lem}\label{lem:hCsp(C)=C}
    The enhanced cospan category fits into a diagram of $2$-categories 
    \[
    \begin{tikzcd}
        \ICsp(\mcC) \ar[r] \ar[d, "\mathrm{fgt}"] &     
        h(\ICsp(\mcC)) \ar[r, "\simeq"] \ar[d, "\mathrm{fgt}"] &     
        \mcC \ar[d, "\pi"] \\
        \ICsp \ar[r] &     
        h(\ICsp) \ar[r, "\simeq"] &     
        \Csp 
    \end{tikzcd}
    \]
    that commutes up to natural isomorphisms.
    Moreover, for any two objects $(A,c_A)$ and $(B,c_B)$ there is 
    a canonical equivalence of groupoids
    \[
        \Hom_{\ICsp(\mcC)}((A,c_A), (B,c_B)) \longrightarrow
        \Hom_{\ICsp(\mcC)}(\emptyset, \emptyset) \times 
        \Hom_\mcC^\red(\ot(A,c_A), \ot(B,c_B))
    \]
    and $\Hom_{\ICsp(\mcC)}(\emptyset, \emptyset)$ is the free symmetric monoidal 
    groupoid on the set $\Hom_\mcC^\con(1_\mcC, 1_\mcC)$.
\end{lem}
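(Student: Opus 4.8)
The plan is to push the $2$-functors constructed above down to homotopy categories and then to pin down the hom-groupoids of $\ICsp(\mcC)$ by combining a \emph{closed/open} decomposition of labelled cospans with Lemma \ref{lem:decomposing-morphisms}. Since $\mcC$ has only identity $2$-morphisms, the $2$-functor $\ot_\mcC:\ICsp(\mcC)\to\mcC$ of Lemma \ref{lem:Csp-well-defined} sends every $2$-cell to an identity and hence factors uniquely as $\ICsp(\mcC)\to h\ICsp(\mcC)\to\mcC$; similarly $\pi_0:\ICsp(\mcC)\to\ICsp$ descends to $h\ICsp(\mcC)\to h\ICsp$, which we post-compose with the equivalence $h\ICsp\simeq\Csp$ established above. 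The square commutes because post-composing $\ot_\mcC$ with $\pi:\mcC\to\Csp$ records precisely the underlying cospan of finite sets of a labelled cospan, and the compatibility of all functors involved with the (partial) monoidal structures follows from their constructions. It then remains to prove the stated equivalence of hom-groupoids; this will in particular show that $\ot_\mcC:h\ICsp(\mcC)\to\mcC$ is fully faithful.

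Fix objects $(A,c_A),(B,c_B)$ and write $M=\ot(A,c_A)$, $N=\ot(B,c_B)$. Because glueing cospans over $\emptyset$ is just disjoint union of the middle presented sets, the groupoid $\Hom_{\ICsp(\mcC)}(\emptyset,\emptyset)$ has as objects finite sets labelled by $\Hom_\mcC^\con(1_\mcC,1_\mcC)$ and as morphisms label-preserving bijections (the identity morphism being the empty labelled set), with disjoint union as monoidal structure; by inspection this is the free symmetric monoidal groupoid on the set $\Hom_\mcC^\con(1_\mcC,1_\mcC)$, and its set of components is the free commutative monoid on that set, i.e.\ $\Hom_\mcC(1_\mcC,1_\mcC)$ by axiom (ii). For a general labelled cospan $(f:A\to\ul X\leftarrow B:g,o)$, split $\ul X$ into the \emph{closed} part $\ul X_\cl$ of points lying in the image of neither $f$ nor $g$ and the complementary \emph{open} part; the closed part with its labelling is an object of $\Hom_{\ICsp(\mcC)}(\emptyset,\emptyset)$, and the open part is a reduced labelled cospan. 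Since $A\amalg B$ surjects onto the open part, any $2$-morphism between two reduced labelled cospans is forced to be the identity on the middle set, hence is unique; so the groupoid of reduced labelled cospans from $(A,c_A)$ to $(B,c_B)$ is discrete, and $\ot_\mcC$ together with Lemma \ref{lem:decomposing-morphisms} identifies its set of isomorphism classes with $\Hom_\mcC^\red(M,N)$. The decomposition into closed and open parts therefore defines a functor
\[
    \Hom_{\ICsp(\mcC)}\big((A,c_A),(B,c_B)\big)\longrightarrow
    \Hom_{\ICsp(\mcC)}(\emptyset,\emptyset)\times\Hom_\mcC^\red(M,N),
\]
which is essentially surjective by taking disjoint unions of cospans, and fully faithful because any $2$-morphism respects the closed/open splitting, is the unique identity on the open part, and restricts to an arbitrary isomorphism of the closed parts. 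This is the asserted equivalence of hom-groupoids.

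Taking path components and combining with axioms (ii) and (iii) now yields $\Hom_{h\ICsp(\mcC)}((A,c_A),(B,c_B))\cong\Hom_\mcC(1_\mcC,1_\mcC)\times\Hom_\mcC^\red(M,N)\cong\Hom_\mcC(M,N)$, and by construction this bijection is the one induced by $\ot_\mcC$, so $\ot_\mcC:h\ICsp(\mcC)\to\mcC$ is fully faithful. Essential surjectivity is immediate: any $M\in\mcC$ decomposes as $M_1\ot\cdots\ot M_n$ with $M_i$ connected by axiom (i), and choosing $A\subset\gO$ of cardinality $n$ with $c_A$ selecting the $M_i$ gives $\ot_\mcC(A,c_A)\cong M$. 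Hence $\ot_\mcC$ is an equivalence and the diagram is complete.

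The substance is light and the real work is bookkeeping: one must check that the closed/open decomposition is genuinely functorial and natural in $(A,c_A)$ and $(B,c_B)$, and compatible with horizontal composition and disjoint union, so that the equivalence of hom-groupoids deserves to be called ``canonical'', and one must keep the ad hoc identity morphisms and the presented-set formalism under control throughout. The only conceptual observations needed are that reduced labelled cospans admit no nontrivial automorphisms, whereas the closed components contribute exactly the free symmetric monoidal groupoid on $\Hom_\mcC^\con(1_\mcC,1_\mcC)$; granting these, everything is a direct repackaging of Lemma \ref{lem:decomposing-morphisms}.
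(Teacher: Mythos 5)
Your proof is correct and uses the same key ingredients as the paper: the closed/reduced (your ``open'') decomposition of a labelled cospan, the observation that a reduced labelled cospan has no nontrivial automorphisms because $A \amalg B$ surjects onto the middle set, Lemma~\ref{lem:decomposing-morphisms} to identify isomorphism classes of reduced labelled cospans with $\Hom_\mcC^\red(M,N)$, and the description of $\Hom_{\ICsp(\mcC)}(\emptyset,\emptyset)$ as the free symmetric monoidal groupoid. The one genuine organizational difference is how you handle the equivalence $h\ICsp(\mcC) \simeq \mcC$: the paper first checks that $\pi_0\colon h\ICsp(\mcC) \to \Csp$ is itself a labelled cospan category and then invokes Corollary~\ref{cor:equiv-of-labelled-cospan-cats} (surjectivity on connected objects, bijectivity on connected morphisms), proving the hom-groupoid equivalence as an independent second step; you instead prove the hom-groupoid equivalence first and read off fully faithfulness of $\ot_\mcC$ by taking $\pi_0$ and applying axiom~(iii), then use axiom~(i) for essential surjectivity. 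Your route is more self-contained and makes the two halves of the lemma reinforce each other, at the cost of carrying a little more bookkeeping up front to verify that the closed/open decomposition is functorial and that the resulting bijection on $\pi_0$ really is the one induced by $\ot_\mcC$; the paper's route modularizes that work into the earlier corollary, which it also reuses elsewhere.
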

\begin{proof}
    The existence of the commutative diagram is clear by construction.
    We need to check that $\ot_\mcC:h(\ICsp(\mcC)) \to \mcC$ is an equivalence 
    of categories. 
    Note that the hom-sets in $h(\ICsp(\mcC))$ have a decomposition as in \ref{lem:decomposing-morphisms} and hence the argument of corollary \ref{cor:equiv-of-labelled-cospan-cats} applies to the functor $\ot_\mcC$.
    (In fact, $\mathrm{fgt}:h(\ICsp(\mcC)) \to h(\ICsp) \simeq \Csp$ is a labelled cospan category with the symmetric monoidal structure induced from the PCM structure via \ref{ex:PCM-vs-sm}.)
    It is clear that $\ot_\mcC$ is surjective on connected objects, and one also easily sees that $\ot_\mcC$ is fully faithful on connected morphisms.
    
    For the second claim we begin by defining a functor:
    \[
        (c,r): \Hom_{\ICsp(\mcC)}((A,c_A), (B,c_B)) \longrightarrow
        \Hom_{\ICsp(\mcC)}(\emptyset, \emptyset) \times 
        \Hom_{\ICsp(\mcC)}^\red((A,c_A), (B,c_B)).
    \]
    Here $c(\UL{X}, o) = (\UL{Y}, o_{|\UL{Y}})$ where $\UL{Y} \subset \UL{X}$
    is the subset containing those equivalence classes that are not 
    in the image of $f \amalg g: A \amalg B \to \UL{X}$.
    Similarly we let $r(\UL{X}, o) = (\UL{Z}, o_{|\UL{Z}})$ 
    where $\UL{Z} \subset \UL{X}$ is the image of $f \amalg g$.
    One now checks that $(c,r)$ is indeed a functor and that it is an isomorphism
    between $\ICsp((A,c_A),(B,c_B))$ and the full subgroupoid of the product 
    on those $((\UL{Y},o), (\UL{Z},o'))$ such that $\UL{Y}$ and $\UL{Z}$ are disjoint.
    Hence $(c,r)$ is fully faithful, and it is essentially surjective because 
    any tuple is isomorphic to one that satisfies the disjointness.
    It remains to check that the functor $\ot$ induces an equivalence of groupoids:
    \[
        \Hom_{\ICsp(\mcC)}^\red((A,c_A), (B,c_B))
        \longrightarrow
        \Hom_\mcC^\red(\ot(A,c_A), \ot(B,c_B)).
    \]
    By lemma \ref{lem:decomposing-morphisms} this functor induces a bijection
    between the isomorphism classes of objects on the left 
    and the elements on the right.
    Moreover, any object in $\Hom_{\ICsp(\mcC)}^\red((A,c_A), (B,c_B))$
    only has the identity automorphism as $\gp:\UL{X} \to \UL{X}$ 
    is uniquely determined by the requirement that it has to be compatible
    with the surjection $A \amalg B \to \UL{X}$.
    The right-hand side has only identity morphisms by definition
    and hence the two groupoids are equivalent.
    
    Note also that $\Hom_{\ICsp(\mcC)}(\emptyset, \emptyset)$ is equivalent 
    to the groupoid of finite sets labelled in $\Hom_\mcC^\con(1_\mcC, 1_\mcC)$,
    which is indeed the free symmetric monoidal groupoid on this set.
\end{proof}

\subsubsection{Higher labelled cospan categories}\label{subsubsec:higher-labelled-cospans}
While the list of conditions used to characterise labelled cospan categories
in definition \ref{defn:labelled-cospans} is rather long 
and might seem a bit arbitrary, the enhanced version $\ICsp(\mcC)$ satisfies
a much simpler condition:

\begin{lem}\label{lem:ICsp(mcC)-in-pullback}
    For any labelled cospan category $(\pi:\mcC \to \Csp)$ 
    the following diagram is a pullback square of $2$-categories:
    \[
    \begin{tikzcd}
        \ICsp(\mcC)^{2\bot} \ar[r, "\amalg"] \ar[d, "(\mathrm{fgt})^2"] &
        \ICsp(\mcC) \ar[d, "\mathrm{fgt}"] \\
        \ICsp^{2\bot} \ar[r, "\amalg"] &
        \ICsp.
    \end{tikzcd}
    \]
\end{lem}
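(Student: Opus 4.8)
The plan is to verify the universal property of the pullback directly on objects, $1$-morphisms, and $2$-morphisms, observing that in each case the "extra data" carried by $\ICsp(\mcC)$ over $\ICsp$ — namely the labellings of points of the underlying presented sets by connected morphisms of $\mcC$ — behaves additively with respect to disjoint union. Concretely, an object of the pullback is a pair of an object $(A \amalg B, c) \in \ICsp(\mcC)$ together with a decomposition $A \bot B$ of its image in $\ICsp$; since the labelling $c:A \amalg B \to \Obj^\con(\mcC)$ restricts to labellings $c|_A$ and $c|_B$, and conversely a pair of labelled objects that are disjoint in $\ICsp$ glues to a unique labelled object, the comparison functor is an isomorphism on objects. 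The same bookkeeping works for $1$-morphisms: a labelled cospan $(f:A \to \ul X \leftarrow B:g, o)$ whose underlying presented set $\ul X$ is written as $\ul X_1 \bot \ul X_2$ compatibly with $A = A_1 \bot A_2$, $B = B_1 \bot B_2$, restricts to labellings $o|_{\ul X_i}$, and since disjointness of presented sets means precisely that no equivalence class meets both pieces, each $o|_{\ul X_i}$ is again a valid labelling by connected morphisms; conversely two disjoint labelled cospans glue uniquely. For $2$-morphisms one argues identically, since a bijection $\gp:\ul X \cong \ul Y$ respecting the decomposition restricts to bijections on the two pieces respecting the labellings, and conversely.

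The cleanest way to package this is to first prove the unlabelled statement — that
\[
    \begin{tikzcd}
        \ICsp^{2\bot} \ar[r, "\amalg"] \ar[d] & \ICsp \ar[d] \\
        (\ast) & (\ast)
    \end{tikzcd}
\]
is trivially a pullback in the degenerate sense that the left column is defined *as* the subcategory of composable-into-$\amalg$ pairs — and then deduce the labelled case by a fibrewise argument. Precisely, the $2$-functor $\pi_0:\ICsp(\mcC) \to \ICsp$ has the property that the fibre over a presented set $\ul X$ (on fixed objects) is the set of labellings $\ul X \to \Mor^\con(\mcC)$ with prescribed sources and targets, which is literally the product $\prod_{x \in \ul X} \Hom_\mcC^\con(\cdots)$ — this is exactly the content of Lemma \ref{lem:decomposing-morphisms} and the construction in Definition \ref{defn:enhanced-labelled-cospans}. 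Because disjoint union of presented sets is $\ul X_1 \amalg \ul X_2$ on underlying sets, the fibre over $\ul X_1 \amalg \ul X_2$ is the product of the fibres over $\ul X_1$ and $\ul X_2$, which is precisely the statement that the square of fibres is a pullback. Since a square of $2$-categories over a common base is a pullback iff it is a pullback on objects, on $1$-morphisms, and on $2$-morphisms, and each of these reduces to the corresponding fibrewise multiplicativity, the claim follows. One should also check that the comparison $2$-functor is compatible with the partial commutative monoidal structures and with horizontal and vertical composition, but this is immediate from the way all structure on $\ICsp(\mcC)$ is defined "on underlying $\ICsp$-data plus labellings glued in the evident way", as recorded in Lemma \ref{lem:Csp-well-defined}.

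The main obstacle — to the extent there is one — is purely organisational: making precise what a "pullback square of $2$-categories" means and checking that verifying it levelwise (objects / $1$-morphisms / $2$-morphisms) suffices. Since $\ICsp$ and $\ICsp(\mcC)$ are $(2,1)$-categories and the relevant squares have strict $2$-functors, a strict $1$-categorical pullback computed levelwise is the correct notion, and the levelwise check is exactly the three additivity statements above; there is no genuine homotopical subtlety. I would spend one short paragraph fixing this convention, one paragraph on the object- and $1$-morphism-level bijections (the heart being the observation that disjointness of presented sets means equivalence classes do not mix, so restriction of labellings is well-defined), and a final line dispatching $2$-morphisms and compatibility with $\amalg$ and composition. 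The key reusable fact I am leaning on is Lemma \ref{lem:decomposing-morphisms}, which already isolates the "$\Hom$ decomposes as a product over connected components" phenomenon that makes the fibres multiplicative.
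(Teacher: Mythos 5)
Your proposal is correct and follows essentially the same approach as the paper: verify the pullback directly on objects, $1$-morphisms, and $2$-morphisms, using that the extra labelling data over $\ICsp$ restricts and glues bijectively along disjoint decompositions. The paper's proof is a one-liner displaying the bijection on objects and asserting "similar" for the higher cells; your write-up adds useful organisational scaffolding (the meaning of a pullback of $2$-categories, the fibrewise multiplicativity) but the underlying argument is identical.
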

\begin{proof}
    On objects we have that 
    \begin{align*}
        \{ A, B \subset \gO,&\ l:A,B \to \Obj^\con(\mcC) \;|\; A \cap B = \emptyset\} \\
        \cong& \{A, B \subset \gO \;|\; A \cap B = \emptyset\} \times_{\{C \subset \gO\}}
        \{C \subset \gO, l:C \to \Obj^\con(\mcC)\}
    \end{align*}
    and a similar statement is true for $1$-morphisms and $2$-morphisms.
\end{proof}

This motivates the following definition, which we will give in the language of 
$\infty$-categories.
\begin{defn}\label{defn:labelled-infinity-cospan}
    A labelled cospan $\infty$-category is a symmetric monoidal $\infty$-category
    $\mcC$ together with a symmetric monoidal functor $\pi:\mcC \to \ICsp$.
    This functor is subject to the condition that 
    \[
    \begin{tikzcd}
        \mcC \times \mcC \ar[r, "\ot"] \ar[d, "\pi\times \pi"] &
        \mcC \ar[d, "\pi"] \\
        \ICsp \times \ICsp \ar[r, "\amalg"] &
        \ICsp.
    \end{tikzcd}
    \]
    is a pullback square of $\infty$-categories.
\end{defn}

We defer a more detailed study of this definition to \cite{BS22} -- 
for now it mainly serves as an aesthetically more 
pleasing analogue of definition \ref{defn:labelled-cospans}.
However, let us briefly note that one can show that for any 
labelled cospan $\infty$-category 
$(\mcC \to \ICsp)$ its homotopy category is a labelled cospan category 
$(h\mcC \to h\ICsp \simeq \Csp)$ in the sense of definition \ref{defn:labelled-cospans}.
Together with lemma \ref{lem:ICsp(mcC)-in-pullback} this can be used to prove that labelled cospan categories form a full reflective subcategory of the $\infty$-category of labelled $\infty$-categories of cospans.

Earlier versions of this article included a conjecture that relates the notions of labelled cospan category and labelled $\infty$-category of cospans to previously established notions.
In the meantime this conjecture has been proven: 
the biequivalence to coloured properads has been proven in \cite{BH22}, and the conjectured characterisation of $\infty$-properads is proven in \cite{BS22}.
\begin{thm}[Beardsley--Hackney, Barkan--Steinebrunner]\label{conj:labelled-Csp=properads}
    There is a biequivalence between the $2$-category of labelled cospan categories
    and the $2$-category of coloured properads as described in 
    \cite[Chapter 3]{HRY15}.
    
    Let $\Cat_\infty^\ot$ be the $\infty$-category of symmetric monoidal $\infty$-categories
    and let $\mcP \subset (\Cat_\infty^\ot)_{/\ICsp}$ be the full subcategory
    of the comma category on those $(\mcC \to \ICsp)$ that satisfy the condition 
    of definition \ref{defn:labelled-infinity-cospan}.
    Then $\mcP$ is equivalent to the $\infty$-category of $\infty$-properads 
    described in \cite[Chapter 7]{HRY15}.
\end{thm}

\section{The decomposition theorem}\label{sec:decomposition}
In this section we decompose the classifying space of a labelled cospan category
$(\mcC \to \Csp)$ into various smaller parts that are usually easier to compute.
We will also show that the two variants $\ICsp(\mcC)$ and $\mcC^\red$ 
are closely related to $\mcC$.

For the purpose of this section we fix a labelled cospan category $(\mcC \to \Csp)$ and we assume that $B\mcC$ is group-like. 
In lemma \ref{lem:BmcC-group-like} we explain that this assumption is often satisfied,
and we note that in this case $B\mcC$ is equivalent to an infinite loop space.
Moreover, we let $G := \Hom_\mcC^\con(1_\mcC, 1_\mcC)$ denote the set
of connected endomorphisms of the unit in $\mcC$.
We prove the following comparison result for $\ICsp(\mcC)$, $\mcC$, and $\mcC^\red$,
which is a generalization of \cite[Theorem B]{Stb19}. 
\begin{prop}\label{prop:closed-reduced}
    For every labelled cospan category $\mcC$ such that $B\mcC$ is grouplike, there are homotopy fiber sequences of infinite loop spaces:
    \[
        \begin{tikzcd}
            Q\left(\bigvee_G S^1\right) \ar[d] \ar[r] & 
            B(\ICsp(\mcC)) \ar[d] \ar[r] & 
            B(\mcC^\red) \ar[d, equal] \\
            B\left(\bigoplus_G \IN\right) \ar[r] & 
            B(\mcC) \ar[r] & 
            B(\mcC^\red).
        \end{tikzcd}
    \]
    In particular, the map $B(\ICsp(\mcC)) \to B(\mcC)$ is a rational equivalence.
\end{prop}

The next theorem will decompose the category $\mcC$ into parts 
with no closed components and factorisations of closed components.
We first make the necessary definitions.
\begin{defn}\label{defn:Nnc}
    The simplicial set $N_\cd^\nc\mcC$ is the simplicial subset 
    of the nerve $N_\cd \mcC$ containing only those $n$-simplices $W:[n] \to \mcC$
    where $W(0 \to n)$ is a reduced morphism.
\end{defn}

We briefly check that $N_\cd^\nc \mcC$ is indeed closed under the simplicial structure maps.
For inner face maps and degeneracies this is clear as they do not change the total composite $W(0 \to n)$.
For the outer face maps, suppose we had $W \in N_n^\cd\mcC$ such that $W(1 \to n)$ is not reduced; the case of $W(0 \to n-1)$ is analogous.
Then we can write it as $V \otimes Q$ for some non-trivial $Q\colon 1_\mcC \to 1_\mcC$, and hence $W(0 \to n) = (V \circ W(0 \to 1)) \otimes Q$ would also not be reduced -- a contradiction.

\begin{defn}\label{defn:F_g}
    For any connected endomorphism of the unit $s:1_\mcC \to 1_\mcC$ 
    we define the factorisation category $\mcF_s(\mcC)$, or $\mcF_s$ for short, as follows.
    An object is a triple $(M, W, V)$ where $M \not\cong 1_\mcC$ is a non-trivial object of $\mcC$ 
    and $W:1_\mcC \to M$ and $V:M \to 1_\mcC$ are morphisms such that $V \circ W = s$.
    A morphism $(M,W,V) \to (M',W',V')$ is a morphism $X:M \to M'$ in $\mcC$
    such that $X \circ W = W'$ and $V = V' \circ X$.
\end{defn}

\begin{thm}[Decomposition Theorem]\label{thm:decomposition}
    For any labelled cospan category $\mcC$ such that $B\mcC$ is group-like
    there is a homotopy fiber sequence of infinite loop spaces:
    \[
        |N_\cd^\nc\mcC| \longrightarrow B(\ICsp(\mcC)) \longrightarrow
        Q\left(\bigvee_{g \in G} S^2(B\mcF_g)\right).
    \]
    Here the wedge runs over the set of connected morphisms $g:1_\mcC \to 1_\mcC$.
\end{thm}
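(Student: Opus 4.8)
The plan is to exhibit a homotopy fiber sequence by analyzing a suitable filtration (or a Segal-style colimit decomposition) of the nerve of $\ICsp(\mcC)$ according to the closed-component content of each simplex. The key observation is that, by Lemma~\ref{lem:hCsp(C)=C}, every morphism in $\ICsp(\mcC)$ canonically splits as a reduced morphism together with a finite labelled set of closed components $g:1_\mcC \to 1_\mcC$; the subcategory of reduced morphisms gives $|N_\cd^\nc\mcC|$, and the closed components should contribute the base $Q(\bigvee_{g\in G}S^2(B\mcF_g))$. I would first set up a map $B(\ICsp(\mcC)) \to Q(\bigvee_{g \in G} S^2(B\mcF_g))$. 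The natural source of such a map is a ``scanning'' or configuration-space construction: an $n$-simplex $W_0 \to W_1 \to \cdots \to W_n$ in $N_\cd \ICsp(\mcC)$ determines, for each closed component $V$ appearing somewhere in the diagram, (a) the isomorphism type $g \in G$ of the closed morphism $\widehat V$, (b) an object $M$ of $\mcF_g$ together with a factorization, recording ``when'' in the simplex the component first appears and then gets capped off — this is exactly the data of a point in $S^2(B\mcF_g)$, the unreduced double suspension coordinates being the ``before'' and ``after'' portions of the simplicial interval (compare the description of $\mu$ in figure~\ref{fig:mu}). Summing over all closed components lands in the free infinite loop space $Q(\bigvee_g S^2(B\mcF_g)) \simeq \SP^\infty(\bigvee_g S^2 B\mcF_g)^{\mathrm{grp}}$; one checks this is a map of (partial) commutative monoids, hence of infinite loop spaces after group completion.

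Next I would identify the fiber. The fiber over the basepoint should be the sub-simplicial-object of diagrams with \emph{no} closed components, i.e.\ those $n$-simplices $W:[n]\to\mcC$ such that $W(0\to n)$ is reduced — which is precisely $N_\cd^\nc\mcC$ by Definition~\ref{defn:Nnc}. To make this rigorous I would invoke the general fiber-sequence criterion promised in the outline (``First we establish a general fiber sequence criterion and a base-change theorem''): one presents $B(\ICsp(\mcC))$ as the homotopy colimit over a diagram indexed by configurations of closed components, with the ``empty configuration'' piece being $|N_\cd^\nc\mcC|$ and the strata indexed by $k$ closed components being fibered over $\prod$ of copies of $S^2 B\mcF_g$. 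Because $B\mcC$ (hence $B\ICsp(\mcC)$) is assumed group-like, the relevant ``group-completion'' or quasifibration argument applies, and the homotopy colimit over configurations collapses to the free $E_\infty$-space $Q(\bigvee_g S^2 B\mcF_g)$ on the base with fiber $|N_\cd^\nc\mcC|$; this is where the hypothesis that $B\mcC$ is group-like is essential and where the proof genuinely uses the infinite-loop structure.

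I expect the main obstacle to be the identification of the total space and the verification that the map to $Q(\bigvee_g S^2 B\mcF_g)$ is a quasifibration with the stated fiber — in particular, controlling how closed components can be ``created and destroyed'' along a simplex and showing this is faithfully modeled by $\mcF_g$ together with the two suspension coordinates. The double suspension (rather than a single suspension or just $B\mcF_g$) arises because a closed component needs both a ``birth'' parameter and a ``death'' parameter inside the simplicial interval, and the two degenerate ends (birth $=0$ or death $=1$, i.e.\ the component is never actually closed) must be collapsed — this is exactly the unreduced double suspension, but getting the combinatorics of degeneracies to match the suspension coordinates cleanly is delicate. A secondary technical point is ensuring that passing to the $(2,1)$-categorical refinement $\ICsp(\mcC)$ (which ``counts closed components properly'', by Lemma~\ref{lem:hCsp(C)=C}) is what makes the count of factorizations come out right: over $\mcC$ itself one would only see $B\mcF_g$ up to the action that permutes identical closed components, whereas over $\ICsp(\mcC)$ each closed component is tracked individually, which is what produces the \emph{free} infinite loop space on the wedge. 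Once the fiber sequence $|N_\cd^\nc\mcC| \to B(\ICsp(\mcC)) \to Q(\bigvee_g S^2 B\mcF_g)$ is established, combining it with Proposition~\ref{prop:decomposition-statement} (which handles the closed endomorphisms of $1_\mcC$ separately) and with the surgery theorem — which will replace $|N_\cd^\nc\mcC|$ by $B(\mcC^\pb)$ — yields Theorem~\ref{theorem:decomposition-and-surgery}.
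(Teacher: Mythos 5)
Your intuitions are largely correct — the ``birth'' and ``death'' parameters as unreduced double-suspension coordinates, the role of $\ICsp(\mcC)$ in making the count of closed components free rather than quotiented by permutations, and the identification of the fiber with $N_\cd^\nc\mcC$ — and your scanning picture is in fact the idea behind the map $\mu$ constructed in section~\ref{sec:J=gD}. But your plan takes a genuinely different route from the paper, and as written it has a real gap.

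The paper does \emph{not} construct a direct map from $B(\ICsp(\mcC))$ to $Q\big(\bigvee_g S^2 B\mcF_g\big)$. Instead it introduces an intermediate simplicial PCM groupoid $\Cut_\cd$ (definition of the space of cuts): $\Cut_n$ is essentially the subgroupoid of $D_n$ where $A_0 = A_n = \emptyset$, and the map $D_\cd \to \Cut_\cd$ is given by idempotent projections $R_n$ that discard everything touching $A_0 \amalg A_n$. The content of lemma~\ref{lem:Cut-is-simplicial} is precisely the point you flag as the ``main obstacle'': $R_n$ is \emph{not} compatible with the face maps inherited from $D_\cd$; rather, the face maps on $\Cut_\cd$ must be \emph{redefined} as ``take the face map and then apply $R$,'' and one must check that the composite $R_{n-1}\circ d_i\circ R_n = R_{n-1}\circ d_i$ holds. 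Your scanning map would run into the same subtlety: applying $d_0$ or $d_n$ to a simplex can create new closed components out of formerly open ones, so a na\"ive ``sum over closed components of the whole diagram'' is not a simplicial map. Without the careful bookkeeping of lemma~\ref{lem:Cut-is-simplicial}, this step fails.

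The second gap is in how you propose to get the fiber sequence. You invoke ``the general fiber-sequence criterion'' and a ``quasifibration argument over a homotopy colimit of configuration strata,'' but the paper's criterion (theorem~\ref{thm:fiber-sequence}) is not a quasifibration lemma: it applies to a sequence of simplicial special $\Gamma$-spaces $A_\cd \to B_\cd \to C_\cd$ equipped with level-wise \emph{sections} $s_n:B_n \to A_n$ such that $(s_n,g_n)$ and $s_n\circ f_n$ are equivalences, then uses pre-additivity of the homotopy category of special $\Gamma$-spaces (lemma~\ref{lem:pre-additive}) to produce a cofiber sequence, and group-likeness to upgrade it to a fiber sequence. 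The ``section'' is concretely $S_n:D_n\to D_n^\nc$ deleting closed components; your proposal has no analogue of this algebraic splitting, and in particular your putative target $Q\big(\bigvee_g S^2 B\mcF_g\big)$ is not itself of the form $|B C_\cd|$ for a simplicial PCM groupoid, so the criterion doesn't directly apply. The paper sidesteps this by first getting the fiber sequence $|BD_\cd^\nc|\to|BD_\cd|\to|B\Cut_\cd|$ (lemma~\ref{lem:cut-fiber-sequence}), and only \emph{afterwards} computing $|B\Cut_\cd|\simeq Q\big(\bigvee_g S^2 B\mcF_g\big)$ through a chain of equivalences (lemma~\ref{lem:Cut-free-on-con}, lemma~\ref{lem:Cut-con=gS}, lemma~\ref{lem:CutF=F}) including the relative-nerve comparison via the base-change theorem. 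Your plan collapses these two independent steps into one direct map, which is harder to make precise and is where the heavy lifting would actually land. If you want to pursue the scanning route, the honest version is: define the map to $|B\Cut_\cd|$ (not to the free infinite loop space), prove simpliciality along the lines of lemma~\ref{lem:Cut-is-simplicial}, then apply the section criterion, and finally compute the target separately.
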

We will prove this theorem at the end of subsection \ref{subsec:cut-to-fact}.

\begin{rem}
    The notation $S^2X$ denotes the unreduced double-suspension of an unpointed space. 
    (See example \ref{ex:cone-and-unreduced-suspension}.)
    If we assume that each $\mcF_g$ is non-empty, then we may choose any 
    basepoint to form the reduced suspension and the result will 
    be equivalent to the unreduced suspension: $S^2 (B\mcF_g) \simeq \gS^2 B\mcF_g$.
    However, it can happen that a connected endomorphism $g:1_\mcC \to 1_\mcC$
    does not admit a factorization. (For example if one adds spheres to $\Cobn$.)
    In this case $B\mcF_g = \emptyset$ and by convention $S(\emptyset) = S^0$,
    so $S^2(B\mcF_g) = S^1$.
\end{rem}

\begin{rem}
One might think of the term $Q(\bigvee_{g \in G} \gS S( B\mcF_g))$ 
as an ``error-term" that obstructs $B\ICsp(\mcC)$ from being equivalent
to the subspace with no closed components. 
In the analogy where $\mcC$ is a topologically enriched cobordism category
this term always vanishes: it is a theorem by \cite[Definition 4.3 and lemma 4.7]{GRW10} 
that in these cases $B\mcC$ is equivalent to the subspace with no closed components.
Note however, that this is only an analogy as our setting of labelled cospan categories
does not actually accommodate for topologically enriched cobordism categories,
and moreover the definition of $D_\cd^\nc$ is different in the topological case.
\end{rem}

\begin{rem}
    If $B\mcC$ is not group-like, then the above is still a homotopy cofiber sequence
    of $E_\infty$-algebras. However, this case will not be as useful for computations.
    One of the particular advantages of the group-like case is that any 
    homotopy cofiber sequence of infinite loop spaces is also a homotopy fiber sequence,
    and moreover induces a homotopy fiber sequence of underlying spaces.
    In the general case we still have a homotopy fiber sequence after passing to group-completions.
\end{rem}

\begin{ex}
    In section \ref{subsec:mcF-contractible} we will show that for 
    $\mcC = \Csp$ and $\mcC = \Cob_d$ with $d\ge 2$ all the factorization categories 
    $\mcF_g(\mcC)$ have a contractible classifying space. 
    In these cases it follows that $|N_\cd^\nc \mcC| \simeq B(\ICsp(\mcC))$.
\end{ex}

If $N_\cd^\nc \mcC$ were the nerve of a category that functorically depends on $\mcC$, then it would automatically be homotopy invariant under equivalence in $\mcC$.
However, in general $N_\cd^\nc \mcC$ is not the nerve of any category, so we prove invariance by hand.
\begin{lem}\label{lem:Nnc-invariant}
    Let $(\pi:\mcC \to \Csp)$ and $(\pi':\mcD \to \Csp)$ be two labelled cospan categories
    and let $F:\mcC \to \mcD$ be an equivalence of symmetric monoidal categories
    together with a symmetric monoidal natural isomorphism $F \circ \pi \cong \pi'$.
    Then $F$ induces an equivalence:
    \[
        |N_\cd F|: |N_\cd^\nc \mcC| \xrightarrow{\ \simeq\ } |N_\cd^\nc \mcD|.
    \]
\end{lem}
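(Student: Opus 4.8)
The plan is to show that $N_\cd^\nc$, viewed as a (partial) functor from labelled cospan categories to simplicial sets, sends every natural isomorphism to a simplicial homotopy; combining this with a quasi-inverse of $F$ then shows directly that $N_\cd F$ is a simplicial homotopy equivalence. The first point is that a functor of labelled cospan categories preserves reduced morphisms. Indeed, whether a morphism $W:M\to N$ in $\mcC$ is reduced depends only on the isomorphism class of the cospan $[\pi(M)\to\pi(W)\leftarrow\pi(N)]$, and the natural isomorphism $\pi_\mcD\circ F\cong\pi_\mcC$ identifies $\pi_\mcD(F(W))$ with $\pi_\mcC(W)$ up to isomorphism of cospans. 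Moreover, any factor of a reduced morphism is again reduced: if $g\circ h$ is reduced then applying $\pi$ reduces this to the corresponding easy statement in $\Csp$. In particular the condition ``$W(0\to n)$ is reduced'' defining an $n$-simplex of $N_\cd^\nc\mcC$ is equivalent to ``$W(i\to j)$ is reduced for all $i\le j$''; this is what makes $N_\cd^\nc\mcC$ a simplicial subset of $N_\cd\mcC$, and it makes clear that $N_\cd F$ restricts to a simplicial map $N_\cd^\nc\mcC\to N_\cd^\nc\mcD$.

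Next I would verify that a natural isomorphism $\eta:F_0\Rightarrow F_1$ between two functors of labelled cospan categories $\mcC\to\mcD$ induces a simplicial homotopy $N_\cd^\nc\mcC\times\Delta^1\to N_\cd^\nc\mcD$. Starting from the standard simplicial homotopy $N_\cd\mcC\times\Delta^1\cong N_\cd(\mcC\times[1])\to N_\cd\mcD$ associated to the functor $\mcC\times[1]\to\mcD$ that restricts to $F_0$ and $F_1$ on the two endpoints and to $\eta$ on the edge $0\to 1$, one only has to check that it carries $N_\cd^\nc\mcC\times\Delta^1$ into $N_\cd^\nc\mcD$. An $n$-simplex of $N_\cd\mcC\times\Delta^1$ is a chain $W_0\to\cdots\to W_n$ together with a jump index $0\le k\le n+1$, and a direct computation using naturality of $\eta$ shows that the composite of its image from vertex $0$ to vertex $n$ equals $F_0(W(0\to n))$ if $k=n+1$, equals $F_1(W(0\to n))$ if $k=0$, and equals $F_1(W(0\to n))\circ\eta_{W_0}$ otherwise. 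By the first paragraph, applying $F_0$ or $F_1$ preserves reducedness, and precomposing with the isomorphism $\eta_{W_0}$ preserves it as well, since in $\Csp$ gluing a cospan onto an invertible one does not change its isomorphism class. Hence if $W(0\to n)$ is reduced so is the total composite of the image simplex, and by the equivalence of conditions in the first paragraph the entire image chain lies in $N_\cd^\nc\mcD$.

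Finally, pick a quasi-inverse $G:\mcD\to\mcC$ of $F$. Combining the natural isomorphism $\pi_\mcD\circ F\cong\pi_\mcC$ with $F\circ G\cong\id_\mcD$ yields a natural isomorphism $\pi_\mcC\circ G\cong\pi_\mcD$, so $G$ is (the underlying functor of) a functor of labelled cospan categories and $N_\cd^\nc G$ is defined. Applying the second paragraph to the natural isomorphisms $G\circ F\cong\id_\mcC$ and $F\circ G\cong\id_\mcD$ produces simplicial homotopies $N_\cd^\nc G\circ N_\cd^\nc F\simeq\id$ and $N_\cd^\nc F\circ N_\cd^\nc G\simeq\id$, so $N_\cd F$ is a simplicial homotopy equivalence, and in particular a weak equivalence, as claimed. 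The main obstacle is the bookkeeping in the second paragraph: one has to track carefully how the standard simplicial homotopy acts on a single simplex -- including the extreme cases $k=0$ and $k=n+1$ and degenerate simplices -- and check in each case that the resulting composite $0\to n$ stays reduced, where the two small auxiliary facts (a factor of a reduced morphism is reduced, and composition with an isomorphism preserves reducedness) do the real work.
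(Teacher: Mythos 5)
Your argument is correct and takes essentially the same route as the paper: pick a quasi-inverse with natural isomorphisms to the identities, observe these induce (simplicial) homotopies, and verify the homotopies restrict to $N_\cd^\nc$ because the reducedness of the composite $0 \to n$ is insensitive to pre-/post-composition with isomorphisms. Your version is somewhat more explicit — you compute the total composite of a ``jumped'' simplex of $N_\cd(\mcC \times [1])$ to be $F_1(W(0\to n))\circ\eta_{W_0}$ and note the two auxiliary facts that make this preserve reducedness — whereas the paper leaves this as a one-line remark that naturally isomorphic chains lie in the same no-closed-components part.
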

\begin{proof}
    Pick a homotopy-inverse functor $G:\mcD \to \mcC$ and natural isomorphisms
    $\ga:F \circ G \cong \Id_{\mcD}$ and $\gb:G \circ F \cong \Id_{\mcC}$.
    Then we have maps $BF:B\mcC \longleftrightarrow B\mcD:BG$ 
    and $\ga$ and $\gb$ yield homotopies that witness that $BF$ and $BG$ are homotopy inverse.
    This homotopy equivalence restricts to a homotopy equivalence
    $|N_\cd^\nc \mcC| \simeq |N_\cd^\nc \mcD|$.
    To see this, note that if $a:[n] \to \mcC$ represents an $n$-simplex $a \in N_n\mcC$
    that lies in $N_n^\nc \mcC$ and $a':[n] \to \mcC$ is naturally isomorphic to $a$,
    then $a'$ lies in the no-closed components part as well: $a' \in N_n^\nc \mcC$.
\end{proof}

We briefly discuss some sufficient conditions for $B\mcC$ to be group-like.
\begin{lem}\label{lem:BmcC-group-like}
    Let $(\mcC \to \Csp)$ be a labelled cospan category satisfying that
    for all connected $M \in \mcC$ there is some object $N \in \mcC$ for which 
    there exists a morphism $W:M \ot N \to 1_\mcC$ or a morphism
    $V:1_\mcC \to M \ot N$.
    Then $\pi_0B\mcC$ is group-like and hence $B\mcC$ is equivalent 
    to an infinite loop space.
\end{lem}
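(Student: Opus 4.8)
The plan is to show that $\pi_0 B\mcC$ is a group by exhibiting inverses, and then invoke the standard fact that a group-like $E_\infty$-space is an infinite loop space. Recall that $\pi_0 B\mcC$ is the abelian monoid obtained from the set of isomorphism classes of objects of $\mcC$ under $\otimes$, localised at the images of all morphisms (this is the monoid of path components of the classifying space of a symmetric monoidal category). Concretely, an object $M$ becomes invertible in $\pi_0 B\mcC$ as soon as there is a morphism connecting $M \otimes N$ to $1_\mcC$ in $\mcC$ for some $N$, since then $[M] + [N] = [1_\mcC] = 0$ in $\pi_0 B\mcC$: a morphism $W: M \otimes N \to 1_\mcC$ gives a $1$-simplex in $B\mcC$ from $M\otimes N$ to $1_\mcC$, and likewise a morphism $V: 1_\mcC \to M \otimes N$ gives such a path in the other direction. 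In either case $[M\otimes N]$ and $[1_\mcC]$ lie in the same path component, hence $[M]+[N]=0$ in $\pi_0 B\mcC$.

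First I would reduce to the connected case using axiom (i) of definition \ref{defn:labelled-cospans}: every object $M$ decomposes as $M \cong M_1 \otimes \dots \otimes M_n$ with each $M_i$ connected, so $[M] = \sum_i [M_i]$ in $\pi_0 B\mcC$, and it suffices to show each $[M_i]$ has an inverse. Then I would apply the hypothesis to each connected $M_i$: there is an $N_i$ and a morphism $W_i: M_i \otimes N_i \to 1_\mcC$ or $V_i: 1_\mcC \to M_i \otimes N_i$, and by the observation above $[M_i] + [N_i] = 0$, so $[M_i]$ is invertible with inverse $[N_i]$. Since the invertible elements form a submonoid and $\pi_0 B\mcC$ is generated by the classes of connected objects, this shows every element of $\pi_0 B\mcC$ is invertible, i.e.\ $\pi_0 B\mcC$ is group-like.

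Finally, since $\mcC$ is symmetric monoidal, $B\mcC$ is an $E_\infty$-space (indeed a special $\Gamma$-space / permutative-category delooping), and a group-like $E_\infty$-space is the underlying space of a connective spectrum, hence an infinite loop space — this is the group-completion / recognition theorem of Segal and May. I do not expect any real obstacle here; the only point requiring a little care is the precise description of $\pi_0 B\mcC$ as the localisation of the monoid of objects at all morphisms, and the observation that a single morphism $M\otimes N \to 1_\mcC$ (in either direction) already forces $[M]+[N]=0$ rather than merely some weaker relation. That bookkeeping is routine once one recalls that $1$-simplices of the nerve are morphisms and that $\pi_0$ of a geometric realisation identifies the endpoints of every $1$-simplex.
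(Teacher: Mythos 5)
Your proof is correct and follows essentially the same route as the paper's: describe $\pi_0 B\mcC$ as the commutative monoid of isomorphism classes modulo the relation generated by morphisms, use axiom (i) to reduce to connected objects, apply the hypothesis to produce inverses for each connected generator, and then invoke the Segal/May recognition theorem. The only difference is cosmetic — you emphasise the $1$-simplex picture to justify $[M\otimes N]=[1_\mcC]$, while the paper states the quotient description of $\pi_0 B\mcC$ directly — but the substance is the same.
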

\begin{proof}
    The connected components of the classifying space $\pi_0 B(\mcC)$ 
    can be computed as the commutative monoid of isomorphism classes 
    of objects of $\mcC$ modulo the relation $A \sim B$ whenever 
    there is a morphism from $A$ to $B$.
    By axiom (i) of definition \ref{defn:labelled-cospans} every object of $\mcC$
    can be written as a $\ot$-product of connected objects.
    So the connected objects generate $\pi_0 B(\mcC)$.
    The conditions of this lemma imply that for each connected object $M$
    there is some object $N$ with $[M] \ot [N] = [1_\mcC]$ in $\pi_0 B(\mcC)$.
    This implies that all the connected objects have inverses and
    as these generate the monoid we can conclude that $\pi_0 B(\mcC)$
    is indeed a group under $\ot$.
    It now follows from the methods of \cite{Seg74},
    which we recall in section \ref{defn:PCMC},
    that $B\mcC$ is equivalent to an infinite loop space.
\end{proof}

\subsection{Partial commutative monoidal categories}\label{subsec:PCMC}
Throughout this paper we will work with infinite loop spaces that arise as 
the classifying space of symmetric monoidal categories.
The symmetric monodial categories that we consider will often be like 
embedded cobordism categories in the sense that we can make sense of a notion
of disjointness $M\cap N =\emptyset$ for objects $M, N$ and that
for disjoint objects taking their union $(M, N) \mapsto M \cup N$
is a strictly associative and commutative operation.
To extract a symmetric monoidal structure from this we need to fix a
functorial replacement $(M, N) \mapsto (M', N')$ such that $M' \cap N' = \emptyset$.
This choice is always somewhat arbitrary, which would make it difficult
to construct the simplicial objects in symmetric monoidal categories that we need later.
Instead of making a choice of a replacement we will work with the notion
of a partial commutative monoidal category, which formalises the idea 
of having a notion of disjointness and disjoint union.
These partial commutative monoidal categories directly yield $\gC$-categories
(and hence special $\gC$-spaces) in the sense of Segal, 
as we shall see below.

\begin{defn}\label{defn:PCMC}
    A \emph{partial commutative monoidal} structure 
    on a category $\mcC$ is a choice of a relation $\bot$ on the set of objects of $\mcC$,
    an object $\emptyset \in \mcC$, and a functor $\amalg:\mcC^{2\bot} \to \mcC$
    where $\mcC^{2\bot} \subset \mcC^2$ is the full subcategory on those $(x,y)$ where $x \bot y$.
    This data is subject to the following axioms:
    \begin{itemize}
        \item[(i)]
        For all $x \in \mcC$ we have $\emptyset \bot x$ and the functor 
        $\mcC \to \mcC$ defined by $x \mapsto \emptyset \amalg x$ is the identity.
        \item[(ii)]
        Let $\mcC^{3\bot} \subset \mcC^3$ denote the full subcategory on triples $(x,y,z)$
        satisfying $x \bot y$, $x \bot z$, and $y \bot z$. 
        Then for each such triple we have $x \bot (y \amalg z)$ and $(x \amalg y) \bot z$, and 
        the two functors $\mcC^{3\bot} \to \mcC$ defined by $(x,y,z) \mapsto (x \amalg y) \amalg z$
        and $(x,y,z) \mapsto x \amalg (y \amalg z)$ are equal.
        \item[(iii)]
        The relation $\bot$ is symmetric and the two functors $\mcC^{2\bot} \to \mcC$ 
        defined by $(x,y) \mapsto x \amalg y$ and $(x,y) \mapsto y \amalg x$ are equal.
        \item[(iv)]
        For any $n$-tuple $(x_1, \dots, x_n) \in \mcC^n$ one can find $(x_1',\dots,x_n') \in \mcC^n$
        such that $x_i \cong x_i'$ for all $i$ and $x_i' \bot x_j'$ for all $i \neq j$.
    \end{itemize}
    Given two such partial commutative monoidal categories 
    -- or PCM categories for short --
    $(\mcC, \bot_\mcC, \emptyset_\mcC, \amalg_\mcC)$ and 
    $(\mcD, \bot_\mcD, \emptyset_\mcD, \amalg_\mcD)$,
    we say that a commutative monoidal functor (or PCM-functor) between them is a functor $F:\mcC \to \mcD$
    satisfying the properties: 
    $F(\emptyset_\mcC) = \emptyset_\mcD$, $x \bot y \Rightarrow F(x) \bot F(y)$,
    and $\amalg_\mcD \circ (F, F) = F \circ \amalg_\mcC$ as functors $\mcC^{2\bot} \to \mcD$.
\end{defn}

\begin{ex}\label{ex:PCM-vs-sm}
    From any symmetric monoidal category $\mcC$ we can construct a PCM category $\mcC'$ equivalent to $\mcC$ by following a variant of the strategy we used for $\ICsp(-)$ in subsection \ref{subsubsec:the-general-case}.
    Let $\Omega$ be some infinite set.
    Then objects of $\mcC'$ are tuples $(A,c)$ where $A \subset \Omega$ is finite and $c: A \to \mcC$ is a labelling of $A$ by objects of $\mcC$.
    Morphisms and composition in $\mcC'$ are defined in the unique way such that there is a fully faithful functor (and thus an equivalence) $\mcC' \to \mcC$ that sends $(A,c)$ to the tensor product $\bigotimes_{a \in A} c(a)$ (see definition \ref{defn:tensor-over-set}).
    We can then define a PCM structure on $\mcC'$ where $(A,c_A) \bot (B,c_B)$ if and only if $A \cap B = \emptyset$ and the operation $\amalg$ is defined by taking unions. 

    Conversely, given a PCM category $\mcD$ we can first construct a $\Gamma$-category as discussed in \ref{lem:PCM-yields-gC} below, and this in particular induces a symmetric monoidal structure on $\mcD$ \cite{Seg74}.
    These two constructions are inverse to one another in a suitable sense, but we will not need the details here.
\end{ex}

\begin{ex}\label{ex:Cob-PCM}
    We define a partial commutative monoidal structure on the cobordism category $\Cob_d$ as follows.
    To make precise the notion of disjointness, suppose that the objects of $\Cob_d$ are in fact submanifolds of $\IR^\infty$, as is for example the case when we define $\Cob_d$ has the homotopy category of the topological bordism category.
    Then we set $M \bot N$ on objects whenever $M$ and $N$ are disjoint.
    The functor $\amalg: (\Cob_d)^{2\bot} \to \Cob_d$ is then defined on objects by taking the (disjoint) union.
    On morphisms we define it by sending $([W],[V]): (M_1,N_1) \to (M_2,N_2)$ to $[W \amalg V]: M_1 \cup N_1 \to M_2 \cup N_2$.
    Here the set-theoretic details of the disjoint union $[W \amalg V]$ do not matter as it only defines a diffeomorphism class of bordisms -- we can simply pick representatives of $[W]$ and $[V]$ that are already disjoint.
\end{ex}

\begin{rem}
    One can think of a given PCM category as a category internal to the category
    of partial commutative monoids. Conversely, a category $\mcC$ internal
    to partial commutative monoids represents a PCM category if it satisfies 
    the following conditions:
    \begin{itemize}
        \item For any two morphisms $f:x \to y$ and $g:x' \to y'$ we have that
        $f \bot g$ if and only if $x \bot x'$ and $y \bot y'$.
        \item For any $n$-tuples of objects $(x_1, \dots, x_n)$
        one can find $(x_1',\dots,x_n') \in \mcC^n$
        such that $x_i \cong x_i'$ for all $i$ and $x_i' \bot x_j'$ for all $i \neq j$.
    \end{itemize}
\end{rem}

\begin{rem}\label{rem:PCM-functor-property}
    The key advantage of this definition is that it is a property for 
    a functor $F$ to be commutatively monoidal. This will make 
    it much easier to construct simplicial objects in PCM categories
    than it would be to construct them in the category of symmetric monoidal
    categories.
\end{rem}

To obtain infinite loop spaces from PCM categories we will use Segal's $\gC$-spaces,
which we now recall.

\begin{defn}
    Let $\gC^\op$ denote the full subcategory of the category of finite pointed
    sets $\mathrm{Fin}_*$ on the objects $\gle{n} = \{*,1,\dots,n\}$.
    Concretely, the morphisms $\gle{n} \to \gle{m}$ in $\gC^\op$
    are maps $f:\{*,1,\dots, n\} \to \{*,1,\dots, m\}$ satisfying $f(*) = *$.
    We let $\rho_n^i:\gle{n} \to \gle{1}$ denote the unique morphism with
    $(\rho_n^i)^{-1}(1) = \{i\}$.
    
    A $\gC$-space is a functor $X: \gC^\op \to \Top$ and we denote 
    the value on $\gle{n}$ by $X^{\gle{n}}$. We say that $X$
    is a \emph{special} $\gC$-space if for each $n$ the following map
    is a weak equivalence:
    \[
        (\rho_n^1, \dots, \rho_n^n): X^{\gle{n}} 
        \to X^{\gle{1}} \times \dots \times X^{\gle{1}}.
    \]
\end{defn}

Our main source of special $\gC$-spaces will be of the form $B\mcC^{\gle{\cd}}$ 
for $\mcC^{\gle{\cd}}$ is a special $\gC$-category.
\begin{defn}\label{defn:pcm-to-gamma}
    A $\gC$-category is a functor $\mcC^{\gle{\cd}}: \gC^\op \to \Cat$.
    We say that $\mcC^{\gle{\cd}}$ is a \emph{special} if for each $n$ the map 
    $
        (\rho_n^1, \dots, \rho_n^n): \mcC^{\gle{n}} 
        \to \mcC^{\gle{1}} \times \dots \times \mcC^{\gle{1}}
    $
    is an equivalence of categories.
\end{defn}

\begin{lem}\label{lem:PCM-yields-gC}
    For any partial commutative monoidal category $(\mcC, \bot, \emptyset, \amalg)$
    we can define a special $\gC$-category be letting $\mcC^{\gle{n}} := \mcC^{n\bot}$
    be the full subcategory of\, $\mcC^n$ on those tuples $(x_1, \dots, x_n)$ satisfying 
    $x_i \bot x_j$ for all $i \neq j$.
\end{lem}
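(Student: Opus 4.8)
The plan is to first promote the assignment $\gle{n}\mapsto\mcC^{n\bot}$ to an actual functor $\gC^\op\to\Cat$, and then verify the Segal condition. A morphism $f:\gle{n}\to\gle{m}$ in $\gC^\op$ is a pointed map $\{*,1,\dots,n\}\to\{*,1,\dots,m\}$; I would send it to the functor $f_*:\mcC^{n\bot}\to\mcC^{m\bot}$ that takes $(x_1,\dots,x_n)$ to the tuple whose $j$th entry is $\coprod_{i\in f^{-1}(j)}x_i$, with the convention that an empty disjoint union is $\emptyset$, and likewise takes a morphism $(\phi_1,\dots,\phi_n)$ to the tuple whose $j$th entry is $\coprod_{i\in f^{-1}(j)}\phi_i$. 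Since the $x_i$ are pairwise $\bot$, repeated application of axiom~(ii) shows that the partial operation $\amalg$ can be iterated over each finite set $f^{-1}(j)$, and that axioms~(ii) and~(iii), being equalities of functors, make this iterated union independent of the chosen ordering of the factors; the same remark applies on morphisms because $\amalg:\mcC^{2\bot}\to\mcC$ is a functor. Using axiom~(ii) once more — the index sets $f^{-1}(j)$ and $f^{-1}(j')$ for $j\neq j'$ are disjoint — together with axiom~(i) to handle empty fibres, one gets $\coprod_{i\in f^{-1}(j)}x_i\ \bot\ \coprod_{i\in f^{-1}(j')}x_i$, so $f_*$ really does land in $\mcC^{m\bot}$, and it is a functor because $\amalg$ is.

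Next I would check functoriality in $\gC^\op$. For composable $\gle{n}\xrightarrow{f}\gle{m}\xrightarrow{g}\gle{k}$ one has $(gf)^{-1}(l)=\coprod_{j\in g^{-1}(l)}f^{-1}(j)$, so $(gf)_*=g_*\circ f_*$ is exactly the assertion that a disjoint union iterated in two stages agrees with the one-stage iteration — again axiom~(ii). For the identity, the fibres of $\mathrm{id}_{\gle{n}}$ are singletons and the disjoint union over a singleton is the original object (resp. morphism), so $(\mathrm{id}_{\gle n})_*=\mathrm{id}$. This produces the $\gC$-category $\mcC^{\gle\cd}$.

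Finally, for speciality, note that $(\rho_n^i)^{-1}(1)=\{i\}$, so $(\rho_n^i)_*:\mcC^{n\bot}\to\mcC^{1\bot}=\mcC$ is projection onto the $i$th coordinate; hence the Segal map $(\rho_n^1,\dots,\rho_n^n)_*:\mcC^{n\bot}\to\mcC\times\cdots\times\mcC=\mcC^n$ is precisely the inclusion of the full subcategory $\mcC^{n\bot}\subset\mcC^n$. It is fully faithful by construction, and it is essentially surjective by axiom~(iv), which replaces any tuple by an isomorphic pairwise-$\bot$ one; a fully faithful, essentially surjective functor is an equivalence of categories, so $\mcC^{\gle\cd}$ is special. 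The only delicate point in all of this is the bookkeeping of the first paragraph: one must check that the iterated partial disjoint unions are genuinely well defined and that $f\mapsto f_*$ is \emph{strictly} functorial, so that one obtains a bona fide functor into $\Cat$ rather than merely a pseudofunctor. This is exactly what the strict associativity and commutativity built into Definition~\ref{defn:PCMC} deliver, so the argument is routine, if a little tedious.
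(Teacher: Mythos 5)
Your proof is correct and follows essentially the same route as the paper: define $\gl_*$ by taking disjoint unions over fibres (using axioms (i)--(iii) for well-definedness and independence of ordering), then observe that the Segal map is the inclusion $\mcC^{n\bot}\hookrightarrow\mcC^n$, which is fully faithful by construction and essentially surjective by axiom (iv). You spell out the strict functoriality checks ($(gf)_*=g_*\circ f_*$, identities) in somewhat more detail than the paper, but these are the same verifications the paper leaves implicit.
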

\begin{proof}
    It follows from axioms (i-iii) that for any finite family of objects 
    $(x_i \in \mcC)_{i \in I}$
    that are disjoint in the sense that $i \neq j \Rightarrow x_i \bot x_j$
    we can define:
    \[
        \coprod_{i\in I} x_i := 
        (\dots((x_{i_1} \amalg x_{i_2}) \amalg x_{i_3}) \dots \amalg x_{i_n})
    \]
    and that this is independent of the enumeration $(i_1, \dots, i_n)$ one chooses of $I$.
    Here we set $\coprod_{i \in \emptyset} x_i := \emptyset_\mcC$.
    We may therefore define for each $\gl:\gle{n} \to \gle{m}$ a functor
    \[
        \gl_*:\mcC^{n\bot} \to \mcC^{m\bot}, \quad
        (x_1, \dots, x_n) \mapsto (y_1,\dots, y_m), \text{ where }
        y_j = \coprod_{i \in \gl^{-1}(j)} x_i.
    \]
    By iteratively applying (ii) we get that $y_j \bot y_{j'}$ for $j \neq j'$.
    This makes $\gle{n} \mapsto \mcC^{n\bot}$ into a $\gC$-category. 
    It remains to check that this is special, i.e.\ that the inclusion 
    $\mcC^{n\bot} \to (\mcC^{1\bot})^n = \mcC^n$ is an equivalence of categories.
    This inclusion is fully faithful by definition and it is essentially surjective
    by axiom (iv).
\end{proof}

\begin{rem}\label{rem:BC-PCM}
    A partial commutative monodial structure on $\mcC$ also induces a partially defined commutative operation on the topological space $B\mcC$.
    The partially defined multiplication is given by
    \[
        B\mcC \times B\mcC \cong B(\mcC \times \mcC) \supset B(\mcC^{2\bot}) \xrightarrow[\qquad]{\cup} B(\mcC),
    \]
    and we now describe this in coordinates.
    A point in $B\mcC$ can represented as
    \[
        [x_1 \xrightarrow{f_1} \dots \xrightarrow{f_n} x_n, (t_0, \dots, t_n) ]
    \]
    with $\sum_i t_i = 1$.
    Using the simplicial relations we can introduce an identity in the $i$th place and split $t_i= t_i^0 + t_i^1$.
    This means that given two points in $B\mcC$ we can always represent them by tuples whose $\Delta^n$-coordinate agrees. 
    (This process is canonical, up to removing unnecessary identity morphisms, as $B\mcC \times B\mcC \cong B(\mcC \times \mcC)$.)
    On such representatives we have
    \begin{align*}
        [x_1 \xrightarrow{f_1} \dots \xrightarrow{f_n} x_n, (t_0, \dots, t_n) ]
        &+ [y_1 \xrightarrow{g_1} \dots \xrightarrow{g_n} y_n, (t_0, \dots, t_n) ]\\
        &= [x_1 \amalg y_1 \xrightarrow{f_1 \amalg g_1} \dots \xrightarrow{f_n \amalg g_n} x_n \amalg y_n, (t_0, \dots, t_n) ]
    \end{align*}
    if $x_i \bot y_i$ for all $i$, and undefined otherwise.
    Note that the $\Gamma$-space we get by applying $B$ to the $\Gamma$-category $\mcC^{\langle\cd\rangle}$ from lemma \ref{lem:PCM-yields-gC} is isomorphic to the $\Gamma$-space that we can construct from the partially defined topological monoid $B\mcC$, by sending $\langle k\rangle$ to the space of composable $k$-tuples in $B\mcC$.
\end{rem}

We also briefly describe how to generalize this to the setting of $2$-categories.
\begin{defn}\label{defn:PCM-2-cat}
    For a $2$-category $\mcC$ let $\Mor(\mcC)$ be the $1$-category
    where objects are $1$-morphisms $f:x \to y$ in $\mcC$
    and morphisms $f \to g$ are $2$-morphisms $\ga:f \Rightarrow g$.
    In particular there are no morphisms between $f$ and $g$
    unless the have the same source and the same target.
    
    A partial commutative monoidal structure on a $2$-category $\mcC$ 
    is a partial commutative monoid structure $(\id_\emptyset, \bot, \amalg)$
    on $\Mor(\mcC)$ where the neutral object is required to be an identity morphism -- except that instead of condition (iv) we require:
    \begin{itemize}
        \item[(iv')]
        For any $n$-tuple $(x_1, \dots, x_n) \in \mcC^n$
        one can find an $n$-tuple $(x_1', \dots, x_n') \in \mcC^n$
        such that $x_i$ is equivalent to $x_i'$ for all $i$ 
        and $\id_{x_i'} \bot \id_{x_j'}$ for all $i \neq j$.
        \item[(iv'')]
        For any $n$-tuple $(f_1:x_1 \to y_1, \dots, f_n:x_n \to y_n) \in \Mor(\mcC)^n$
        satisfying $\id_{x_i} \bot \id_{x_j}$ and $\id_{y_i} \bot \id_{y_j}$ for all $i \neq j$
        one can find 
        an $n$-tuple $(f_1':x_1 \to y_1, \dots, f_n':x_n \to y_n) \in \Mor(\mcC)^n$
        such that $f_i \cong f_i'$ for all $i$ and $f_i' \bot f_j'$ for all $i \neq j$.
        \item[(v)] Composition defines a PCM functor 
        $\circ: \Mor(\mcC) \times_{\Obj(\mcC)} \Mor(\mcC) \to \Mor(\mcC)$ where $\Obj(\mcC)$ denotes the set of objects of $\mcC$.
    \end{itemize}
    We will also write $x \bot y$ for two objects $x, y \in \mcC$ 
    to mean $\id_x \bot \id_y$.
\end{defn}

\begin{cor}
    For any PCM $2$-category $\mcC$ the classifying space $B\mcC$
    naturally has the structure of a special $\gC$-space.
\end{cor}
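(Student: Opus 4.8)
The plan is to transcribe the proof of Lemma~\ref{lem:PCM-yields-gC} one categorical level up. For each $n$ set $\mcC^{\gle{n}} := \mcC^{n\bot}$, the $2$-category whose objects are $n$-tuples $(x_1,\dots,x_n)$ of objects of $\mcC$ with $\id_{x_i}\bot\id_{x_j}$ for all $i\neq j$, whose $1$-morphisms $(x_i)_i\to(y_i)_i$ are tuples $(f_i:x_i\to y_i)$ of $1$-morphisms that are pairwise disjoint in $\Mor(\mcC)$, and whose $2$-morphisms are tuples of $2$-morphisms. Axiom~(v) of definition~\ref{defn:PCM-2-cat} (composition is a PCM functor, hence preserves disjointness) ensures that $\mcC^{\gle{n}}$ is closed under composition, so it is a genuine $2$-category, and $\mcC^{\gle{1}}=\mcC$. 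Just as in Lemma~\ref{lem:PCM-yields-gC}, axioms~(i)--(iii) applied to the PCM $1$-category $\Mor(\mcC)$ show that the unordered coproduct $\coprod_{i\in I}g_i$ of a finite family of pairwise-disjoint $1$-morphisms, and likewise of $2$-morphisms, is well defined independently of the ordering of $I$. For a pointed map $\gl:\gle{n}\to\gle{m}$ one then defines
\[
    \gl_*:\mcC^{\gle{n}}\longrightarrow\mcC^{\gle{m}},\qquad (x_i)_i\mapsto(y_j)_j\ \text{ with }\ y_j:=\coprod_{i\in\gl^{-1}(j)}x_i,
\]
together with the analogous formulas on $1$- and $2$-morphisms.

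The first thing I would check is that each $\gl_*$ is a $2$-functor and that $\gle{\cd}\mapsto\mcC^{\gle{\cd}}$ is functorial in $\gC^\op$. Functoriality in $\gle{\cd}$ is immediate from unitality and associativity of $\amalg$, i.e.\ axioms~(i)--(ii). The substance is that $\gl_*$ respects horizontal composition, vertical composition, whiskering, and the interchange law, and this is exactly what axiom~(v) buys: since $\circ:\Mor(\mcC)\times_{\Obj(\mcC)}\Mor(\mcC)\to\Mor(\mcC)$ is a PCM functor it commutes with the operation $\coprod$, so that $\coprod_i(h_i\circ g_i)=(\coprod_i h_i)\circ(\coprod_i g_i)$ whenever both sides are defined, and similarly for vertical composites of $2$-cells. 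This produces a $\gC$-object $\mcC^{\gle{\cd}}$, i.e.\ a functor from $\gC^\op$ to (small) $2$-categories, with $\mcC^{\gle{1}}=\mcC$.

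Next I would verify specialness: for each $n$ the Segal map $(\rho_n^1,\dots,\rho_n^n):\mcC^{\gle{n}}\to(\mcC^{\gle{1}})^n=\mcC^n$ is a biequivalence. It is faithful on $1$-morphisms and fully faithful on $2$-morphisms by construction; essential surjectivity on objects follows from axiom~(iv$'$), and essential fullness on $1$-morphisms from axiom~(iv$''$). Finally, applying the classifying-space functor for $2$-categories---which carries finite products to products and biequivalences to weak homotopy equivalences---to the diagram $\mcC^{\gle{\cd}}$ yields a functor $B\mcC^{\gle{\cd}}:\gC^\op\to\Top$ with $B\mcC^{\gle{1}}=B\mcC$ whose Segal maps are weak equivalences, that is, a special $\gC$-space. \emph{The only real work is the first step:} unwinding axiom~(v) to see that $\gl_*$ is compatible with the entire $2$-categorical structure; once that is in hand the remainder is a routine copy of the proof of Lemma~\ref{lem:PCM-yields-gC}.
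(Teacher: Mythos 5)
Your proof is correct and follows essentially the same route as the paper's: you define the same sub-$2$-categories $\mcC^{n\bot}\subset\mcC^n$, invoke axiom~(v) for closure under horizontal composition, axioms~(i)--(iii) for well-definedness and functoriality of $\coprod$ over $\gC^\op$, and axioms~(iv$'$)/(iv$''$) for essential surjectivity on objects and equivalence on $\Hom$-categories of the Segal map. The only cosmetic difference is the final step: where you appeal to the black-box fact that $B$ sends biequivalences of $2$-categories to weak equivalences, the paper unwinds this by passing to the associated simplicially enriched categories and observing that your two conditions make the induced simplicial functor a Dwyer--Kan equivalence.
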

\begin{proof}
    We would like to mimic the proof of lemma \ref{lem:PCM-yields-gC}.
    For all $n$ let $\mcC^{\bot n} \subset \mcC^n$ be the following sub $2$-category.
    It contains an object $(x_1,\dots,x_n)$ if $\id_{x_i} \bot \id_{x_j}$
    holds for all $i \neq j$, and it contains a $1$-morphism $(f_1,\dots,f_n)$
    between two such objects if $f_i \bot f_j$ holds for all $i \neq j$. 
    It contains all $2$-morphism between any $1$-morphisms it contains.
    By condition (v) this is indeed a sub $2$-category.
    In the same way as in lemma \ref{lem:PCM-yields-gC} these $2$-categories
    assemble into a $\gC$-object in $2$-categories, and 
    therefore $\gle{n} \mapsto B\mcC^{\bot n}$ defines a $\gC$-space.
    
    We need to show that the Segal map $B\mcC^{\bot n} \to B\mcC^n$ is a weak equivalence.
    The functor $\mcC^{\bot n} \to \mcC^n$ is
    surjective up to equivalence by condition (iv') and 
    by condition (iv'') it is an equivalence on $\Hom$-categories.
    The classifying space of a $2$-category 
    is defined by first taking the nerve of $\Hom$-categories to obtain
    a simplicially enriched category and then taking the usual classifying space.
    (See definition \ref{defn:2nerve} for the case of a $(2,1)$-category.)
    By the above, the simplicially enriched functor obtained from 
    the $2$-functor $\mcC^{\bot n} \to \mcC^n$ is a Dwyer-Kan equivalence,
    and hence it induces an equivalence on classifying spaces.
\end{proof}

\subsection{A general fiber sequence}

We now introduce the abstract technique that we will use to show that certain
sequences of simplicial symmetric monoidal groupoids yield fiber sequences 
of infinite loop spaces.

\begin{thm}\label{thm:fiber-sequence}
    Consider three simplicial special $\gC$-spaces 
    $A_\cd, B_\cd, C_\cd: \gD^\op \times \gC^\op \to \Top$
    with two simplicial $\gC$-maps:
    \[
        A_\cd \xrightarrow{\ f\ }  B_\cd \xrightarrow{\ g\ } C_\cd
    \]
    and a homotopy of simplicial $\gC$-maps $\ga: g \circ f \simeq 1_C$ 
    to the unit in $C_\cd$.
    Assume further that for every $n$ there is a $\gC$-map
    $s_n: B_n \to A_n$ such that the induced maps
    \[
        (s_n, g_n): B_n^{\gle{1}} \to A_n^{\gle{1}} \times C_n^{\gle{1}}
        \qand
        s_n \circ f_n: A_n^{\gle{1}} \to A_n^{\gle{1}}
    \]
    are weak equivalences of spaces.
    Then the following is a homotopy cofiber sequence of special $\gC$-spaces
    \[
        \|A_\cd\| \xrightarrow{\ f\ }  \|B_\cd\| \xrightarrow{\ g\ } \|C_\cd\|.
    \]
    If moreover each of these spaces is group-like, then this is a homotopy fiber sequence.
\end{thm}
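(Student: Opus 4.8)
The strategy is to show that, levelwise in the simplicial direction, $A_\cd\to B_\cd\to C_\cd$ is a split homotopy cofiber sequence of $\gC$-spaces, and then to pass through fat realisation; the homotopy fiber-sequence statement in the group-like case follows formally, since a cofiber sequence of connective spectra is also a fiber sequence.

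First I would upgrade the pointwise hypotheses from statements about underlying spaces to statements about $\gC$-spaces. As $s_n$ and $g_n$ are $\gC$-maps between \emph{special} $\gC$-spaces, a map that is a weak equivalence on the $\gle{1}$-component is automatically a weak equivalence on every $\gle{m}$-component; so the hypotheses give that $(s_n,g_n)\colon B_n\to A_n\times C_n$ is an equivalence of $\gC$-spaces and that $s_n\circ f_n\colon A_n\to A_n$ is an equivalence of $\gC$-spaces, for each $n$. Restricting the nullhomotopy $\ga$ to simplicial degree $n$ gives $g_n\circ f_n\simeq 0$. Since $g_n=\mathrm{pr}_2\circ(s_n,g_n)$ and $(s_n,g_n)\circ f_n=(s_nf_n,\,g_nf_n)\simeq(\id,0)$, conjugating by the equivalence $(s_n,g_n)$ identifies the three-term sequence $A_n\xrightarrow{f_n}B_n\xrightarrow{g_n}C_n$, together with its nullhomotopy, with the split sequence $A_n\xrightarrow{i_1}A_n\times C_n\xrightarrow{\mathrm{pr}_2}C_n$. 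I use here only that $s_n\circ f_n$ is an equivalence and that $g_n\circ f_n$ is nullhomotopic.

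Next I would use the simplicial map $g_\cd\colon B_\cd\to C_\cd$ together with $\ga$ to build, via the universal property of the cofiber, a simplicial $\gC$-map $\mathrm{cofib}(f_\cd)\to C_\cd$, where the cofiber $\mathrm{cofib}(f_\cd)=B_\cd\cup_{A_\cd}0$ is formed degreewise. By the previous paragraph this is, in simplicial degree $n$, the canonical map $\mathrm{cofib}(A_n\xrightarrow{i_1}A_n\times C_n)\to C_n$, and this is an equivalence because for $\gC$-spaces a finite product is also a coproduct, so the cofiber of the inclusion of a summand is canonically the complementary summand. Hence $\mathrm{cofib}(f_\cd)\to C_\cd$ is a levelwise equivalence of simplicial $\gC$-spaces. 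Now fat realisation sends levelwise equivalences to equivalences, preserves finite products -- so that each of $\|A_\cd\|,\|B_\cd\|,\|C_\cd\|$ is again a special $\gC$-space -- and, being a homotopy colimit, commutes with the homotopy pushout defining the cofiber; therefore
\[
    \mathrm{cofib}\bigl(\|A_\cd\|\xrightarrow{f}\|B_\cd\|\bigr)=\|\mathrm{cofib}(f_\cd)\|\xrightarrow{\ \simeq\ }\|C_\cd\|,
\]
which says precisely that $\|A_\cd\|\to\|B_\cd\|\to\|C_\cd\|$ is a homotopy cofiber sequence of special $\gC$-spaces.

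Finally, in the group-like case $\|A_\cd\|,\|B_\cd\|,\|C_\cd\|$ are the infinite loop spaces of connective spectra (by \cite{Seg74}), and a cofiber sequence of spectra is also a fiber sequence; applying $\gO^\infty$ -- that is, evaluating at $\gle{1}$ -- then additionally gives a homotopy fiber sequence of the underlying spaces. The only step that is not pure bookkeeping is the middle one: one has to check that the \emph{canonical} map out of the levelwise cofiber -- not just some abstract equivalence -- is a levelwise equivalence, since this is exactly what makes the splittings $s_n$, which are not assumed to vary simplicially in $n$, enough to force the conclusion after realisation. Everything else is routine manipulation of $\gC$-spaces and fat realisation.
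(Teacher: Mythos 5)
Your proof is correct and follows essentially the same route as the paper: both arguments use the sections $s_n$ and pre-additivity of special $\gC$-spaces to exhibit $A_n\to B_n\to C_n$ as a split cofiber sequence in each simplicial degree, and then pass to fat realisation using that it commutes with colimits and preserves speciality. The only difference is one of packaging — the paper abstracts a pre-additivity lemma and applies it degreewise before taking the colimit, whereas you construct the canonical simplicial comparison map $\mathrm{cofib}(f_\cd)\to C_\cd$ once and verify it is a levelwise equivalence; both correctly isolate the key point that the $s_n$ need not be simplicially compatible because levelwise equivalences suffice.
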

\begin{proof}
    We will base this proof on the framework of \cite{GGN15}.
    Let $\mcS$ denote the $\infty$-category of spaces, which receives a functor
    $\Top \to \mcS$. The category $\Mon_{E_\infty}(\mcS) \subset \Fun(\gC^\op, \mcS)$
    is defined as the full subcategory on those functors that satisfy a Segal-condition.
    The homotopy category of special $\gC$-spaces is pre-additive, 
    i.e.\ it has a $0$-object $*$ and the canonical map $x \amalg y \to x \times y$
    from the coproduct to the product is an isomorphism for all $x$ and $y$.
    This allows us to apply lemma \ref{lem:pre-additive} and conclude from 
    the existence of $s_n$ that for each $n$ the sequence:
    \[
        A_n \longrightarrow B_n \longrightarrow C_n
    \]
    is a homotopy cofiber sequence of special $\gC$-spaces.
    Since homotopy colimits commute, taking the homotopy colimit over $\gD^\op$ 
    yields the following cofiber sequence in $\Mon_{E_\infty}(\mcS)$:
    \[
        \mathrm{hocolim}_{\gD^\op} A_\cd \longrightarrow 
        \mathrm{hocolim}_{\gD^\op} B_\cd \longrightarrow 
        \mathrm{hocolim}_{\gD^\op} C_\cd.
    \]
    To compute this homotopy colimit note that the functor 
    $\Mon_{E_\infty}(\mcS) \to \mcS$ that sends a special $\gC$-space $A$
    to the underlying space $A^{\gle{1}}$ is conservative and commutes with sifted colimits.
    Therefore 
    $(\mathrm{hocolim}_{\gD^\op} A_\cd)^{\gle{1}} 
    \simeq \mathrm{hocolim}_{\gD^\op} A_\cd^{\gle{1}}$
    where the latter is computed as a homotopy colimit in $\mcS$.
    Given a simplicial topological space $X_\cd:\gD^\op \to \Top$
    the homotopy colimit of $X_\cd:\gD^\op \to \Top \to \mcS$ is modelled
    by the fat geometric realization $\|X_\cd\|$. 
    Similarly, for a simplicial special $\gC$-space 
    $A_\cd^{\gle{\cd}}: \gD^\op\times \gC^\op \to \Top$
    we can construct a $\gC$-space $\gle{n} \mapsto \|A_\cd^{\gle{n}}\|$
    by taking the level-wise realization.
    Note that this $\gC$-space is still special since $\|\blank\|$ commutes with products
    up to weak equivalence (see \cite[Proposition A.1(iii)]{Seg74} or \cite[Theorem 7.2]{ERW19}).
    It follows by our previous comments that $\|A_\cd\|$ is in fact 
    a model for the homotopy colimit in special $\gC$-spaces. 
    This shows that 
    \[
        \|A_\cd\| \xrightarrow{\ f\ }  \|B_\cd\| \xrightarrow{\ g\ } \|C_\cd\|
    \]
    is a homotopy cofiber sequence in $\Mon_{E_\infty}(\mcS)$.
    
    Finally we would like to show that $\|A_\cd\| \to \|B_\cd\| \to \|C_\cd\|$ 
    is homotopy fiber sequence in $\mcS$ if we assume that each of the spaces is group-like.
    Indeed, for any cofiber sequence $X \to Y \to Z$ in $\Mon_{E_\infty}(\mcS)$,
    the group completion $X^{gp} \to Y^{gp} \to Z^{gp}$ is a fiber sequence.
    If we assume that each of $\|A_\cd\|$, $\|B_\cd\|$, and $\|C_\cd\|$ is already group-like,
    then it follows that $\|A_\cd\| \to \|B_\cd\| \to \|C_\cd\|$ is a homotopy fiber sequence.     
\end{proof}

The following lemma holds in any pre-additive $\infty$-category $\mcC$,
i.e.\ in every $\infty$-category $\mcC$ such that the homotopy category $h\mcC$
has a $0$-object and such that coproducts and products agree via the canonical map.
The same argument applies in any model category whose homotopy category is pre-additive.
\begin{lem}\label{lem:pre-additive}
    Let $f:A \to B$ and $g:B \to C$ be two morphisms in a pre-additive 
    $\infty$-category $\mcC$ such that $g \circ f$ is null-homotopic.
    Assume further that there is a map $s:B \to A$ such that the maps
    $(s,g):B \to A \times C$ and $s \circ f:A \to A$ are equivalences.
    Then any choice of a null-homotopy for $g \circ f$ makes $A \to B \to C$
    into a cofiber sequence in $\mcC$.
\end{lem}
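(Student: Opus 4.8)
The plan is to use pre-additivity to recognise $A\xrightarrow{f}B\xrightarrow{g}C$ as a split exact (biproduct) sequence. Since a map in $\mcC$ is an equivalence iff it is an isomorphism in $h\mcC$, the pre-additivity hypothesis on $h\mcC$ lifts: $\mcC$ has a zero object $0$ and the canonical map $X\sqcup Y\to X\times Y$ is an equivalence for all $X,Y$. Write $X\oplus Y$ for this biproduct, with coproduct inclusions $\iota$ and product projections $\mrm{pr}$, so that under the identification $X\oplus Y\simeq X\times Y$ one has $\iota_X=(\id_X,0)$, hence $\mrm{pr}_X\circ\iota_X\simeq\id_X$ and $\mrm{pr}_Y\circ\iota_X\simeq 0$.

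First I would use the given equivalence $(s,g)\colon B\xrightarrow{\ \sim\ }A\times C=A\oplus C$ to replace $B$ by $A\oplus C$; by construction $g=\mrm{pr}_C\circ(s,g)$, so under this identification $g$ becomes the projection $\mrm{pr}_C$. Composing $f$ with $(s,g)$ gives a map $A\to A\times C$ with components $(s\circ f,\ g\circ f)$, and the chosen null-homotopy $\eta\colon g\circ f\simeq 0$ then provides a homotopy $(s\circ f,\ g\circ f)\simeq(s\circ f,\ 0)=\iota_A\circ(s\circ f)$, namely the pair of the constant homotopy on $s\circ f$ and of $\eta$. As $s\circ f$ is an equivalence, precomposition with it does not change the cofiber, so $\operatorname{cofib}(f)\simeq\operatorname{cofib}(\iota_A\colon A\to A\oplus C)$. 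Now one invokes the standard fact that $A\xrightarrow{\iota_A}A\oplus C\xrightarrow{\mrm{pr}_C}C$ is a cofiber sequence — checked directly, since $\operatorname{cofib}(\iota_A)$ corepresents the homotopy fibre over $0$ of $\iota_A^*\colon\Map(A\oplus C,Z)=\Map(A,Z)\times\Map(C,Z)\to\Map(A,Z)$, i.e.\ $\Map(C,Z)$, with corepresenting map $\mrm{pr}_C$. Transporting back along $(s,g)$ and the homotopy above identifies $\operatorname{cofib}(f)$ with $C$ and the cofiber map $B\to\operatorname{cofib}(f)$ with $g$, so $A\xrightarrow{f}B\xrightarrow{g}C$ is a cofiber sequence.

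The main subtlety — and exactly what makes \emph{any} choice of null-homotopy work — is to verify that under these identifications the canonical null-homotopy of $A\to B\to\operatorname{cofib}(f)$ corresponds to the chosen $\eta$. This holds because the homotopy $f\simeq\iota_A\circ(s\circ f)$ was built using $\eta$ in its $C$-component and the constant homotopy in its $A$-component, so post-composing it with $g\simeq\mrm{pr}_C$ returns $\eta$, consistently with the constant null-homotopy witnessing $\mrm{pr}_C\circ\iota_A\simeq 0$ in the split sequence. Finally, note that $\mcC$ need not admit arbitrary pushouts: the only one used, $\operatorname{cofib}(f)$, exists because it is equivalent to $\operatorname{cofib}(\iota_A)=C$; and since the whole argument is formal, it applies verbatim to any model category whose homotopy category is pre-additive, phrased via its associated $\infty$-category (or with homotopy (co)limits directly).
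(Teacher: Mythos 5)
Your proof is correct, and it takes a genuinely different route from the paper's. The paper first constructs an explicit section $r\colon C \to B$ (lifting $(0,\id_C)$ against $(s,g)$), checks that $f$ and $r$ exhibit $B$ as the coproduct $A \amalg C$ by a $2\times2$-matrix computation, and then considers the lifted diagram $\gD^2\times\gD^1 \to \mcC$ with top row $* \to A \to *$ and bottom row $C \xrightarrow{r} B \xrightarrow{g} C$: the left square is a pushout (coproduct), the outer rectangle is a pushout because both horizontal edges on one side are equivalences (this is where ``independent of the choice of null-homotopy'' appears for free), so the pushout pasting lemma gives the right square. You instead replace $B$ by the biproduct $A\oplus C$ via $(s,g)$, use the chosen null-homotopy $\eta$ to rewrite $f \simeq \iota_A\circ (s\circ f)$, reduce to the cofiber of the split inclusion $\iota_A$, and verify that this cofiber is $C$ by a direct mapping-space computation. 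Both arguments are correct; the paper's pasting-lemma route makes the independence of the chosen null-homotopy immediate from the structure of the outer rectangle, whereas your route has to track this by hand in the final paragraph, which you do adequately (the $C$-component of your homotopy is $\eta$ and the $A$-component is constant, so postcomposing with $\mrm{pr}_C$ recovers $\eta$). Your mapping-space computation of $\operatorname{cofib}(\iota_A)$ is a nice, self-contained substitute for the paper's appeal to \cite[Lemma 4.4.2.1]{LurHTT}, and your closing remark about existence of $\operatorname{cofib}(f)$ is a valid point (one is really verifying that $C$ satisfies the universal property), though it is moot in the paper's applications where $\mcC$ has all small colimits.
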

\begin{proof}
    Pick a morphism $r:C \to B$ such that $s \circ r = 0: C \to A$ and $g \circ r = \id_C$.
    This can be done by lifting $(0,\id_C): C \to A \times C$ 
    against the equivalence $(s,g):B \to A \times C$.
    We claim that the maps $f$ and $r$ exhibit $B$ as a coproduct of $A$ and $C$.
    So we need to check that $f \amalg r:A \amalg C \to B$ is an equivalence.
    Since $(s,g)$ is an equivalence it will suffice to check that $(s,g) \circ (f \amalg r)$
    is an equivalence. Using that $h\mcC$ is pre-additive we can represent this morphism
    by a $2\times 2$ matrix with entries $s \circ f$, $s \circ r$, $g \circ f$, 
    and $g \circ r$.
    By construction $s \circ r$ and $g \circ f$ are $0$-maps and $s \circ f$ 
    and $g \circ r$ are equivalences. Hence $(s,g) \circ (f \amalg r)$ is the direct
    sum of two equivalences and therefore itself an equivalence.
    
    Consider the following diagram in the homotopy category $h \mcC$:
    \[
    \begin{tikzcd}
        * \ar[r] \ar[d] &    
        A \ar[r] \ar[d, "f"] &    
        * \ar[d] \\
        C \ar[r, "r"] &
        B \ar[r, "g"] &
        C
    \end{tikzcd}
    \]
    This diagram can be lifted to a diagram $\gD^2 \times \gD^1 \to \mcC$ in the 
    $\infty$-category by choosing any null-homotopy of $g \circ f$ in the right square 
    and the trivial homotopy in the left square.
    The left-hand square is a pushout square because we checked that $f$ and $r$
    exhibit $B$ as the coproduct of $A$ and $C$. 
    The entire rectangle is a pushout square (independent of which homotopies we chose!)
    because the top horizontal arrow $* \to *$ and the bottom horizontal arrow
    $g \circ r: C \to C$ both are equivalences.
    It therefore follows from the pushout pasting lemma \cite[Lemma 4.4.2.1.]{LurHTT}
    that the right-hand square 
    is a pushout square, which is exactly what we claimed.
\end{proof}

\subsection{A base-change theorem for simplicial spaces}

We prove a theorem that allows us to adjust the space of $0$-simplices $X_0$ of a simplicial space $X_\cd$ (and to change the spaces of $n$-simplices accordingly) without affecting the homotopy type of $\|X_\cd\|$.
This is motivated by the basic idea in category theory that 
one can change the number of objects of each isomorphism type without
changing the category, up to equivalence.
In this sense, the following definition can be thought of as a generalization
of the notion of a fully faithful functor.

\begin{defn}
    For any semi-simplicial object $X_\cd$ let $\partial_n^X:X_n \to (X_0)^{n+1}$ 
    denote the map that sends an $n$-simplex to its vertices.
    We call a map of semi-simplicial spaces $f:X_\cd \to Y_\cd$ 
    a \emph{homotopy base-change} if the map
    \[
        (\partial_n^X, f_n): X_n \longrightarrow 
        (X_0)^{n+1} \times_{(Y_0)^{n+1}}^h Y_n
    \]
    is an equivalence for all $n \ge 1$.
\end{defn}

The following theorem generalizes \cite[Theorem 5.2]{ERW19} 
and \cite[Lemma 2.30]{Stb21} to arbitrary simplicial spaces
that are not necessarily the nerve of a topological category.
Note, however, that \cite[Theorem 5.2]{ERW19} has the advantage 
of applying to weakly unital categories $\mcC$,
whereas we would need $\mcC$ to have strict units in order 
to apply the theorem to $N\mcC$.

\begin{thm}[Base-change]\label{thm:base-change}
    Let $f:X_\cd \to Y_\cd$ be a map of simplicial topological spaces 
    that is a homotopy base-change in the above sense 
    and satisfies that $\pi_0(f_0): \pi_0(X_0) \to \pi_0(Y_0)$ is surjective.
    Then $\|f\|: \|X_\cd\| \to \|Y_\cd\|$ is a weak equivalence.
\end{thm}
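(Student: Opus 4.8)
The plan is to realise $\|X_\cd\| \to \|Y_\cd\|$ as a levelwise equivalence after a single additional semisimplicial resolution, by "padding" both simplicial spaces with extra vertices so that the map on zeroth spaces becomes an equivalence, not merely $\pi_0$-surjective. Concretely, I would first reduce to semisimplicial spaces: the fat realisation $\|X_\cd\|$ only sees the underlying semisimplicial space, so I may discard degeneracies and work with $\gD_{\rm inj}$. Then I would pick a space $Z$ together with a map $Z \to X_0$ that is $\pi_0$-surjective and such that the composite $Z \to X_0 \xrightarrow{f_0} Y_0$ admits a section up to homotopy over each path component — in practice one takes $Z := X_0 \times_{Y_0}^h E$ for a suitable contractible-fibred replacement, or more simply one replaces $X_0$ by a disjoint union of points mapping to it. The point of this step is purely to arrange that the augmented map of zeroth spaces is an equivalence.

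The main tool is a bisimplicial (or rather "semisimplicial in two directions") thickening. Given a map $g:Z_0 \to X_0$ that is $\pi_0$-surjective, one can form the ``Čech-type'' semisimplicial space $X_\cd^Z$ with $p$-simplices $X_p \times_{(X_0)^{p+1}} (Z_0)^{p+1}$ via the vertex maps $\partial_p^X$; the projection $X_\cd^Z \to X_\cd$ is a homotopy base-change (indeed the squares defining it are homotopy pullbacks essentially by construction), and on realisations it is an equivalence because $\pi_0(Z_0) \to \pi_0(X_0)$ is surjective and each homotopy fibre of $Z_0 \to X_0$ appears in a way that gets ``cancelled'' in the realisation — this is exactly the classical fact that the bar construction on a surjection-up-to-$\pi_0$ map gives back the target, and it can be deduced from the usual realisation lemma comparing $\|X_\cd^Z\|$ to $\|X_\cd\|$ via the extra semisimplicial direction, using that in each simplicial degree $p$ the map $X_p^Z \to X_p$ is $\pi_0$-surjective with the relevant homotopy-fibre structure. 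So the strategy is: replace $X_\cd$ by $X_\cd^Z$ and $Y_\cd$ by $Y_\cd^Z$ for a common $Z$ chosen so that $Z_0 \to X_0$ and $Z_0 \to Y_0$ are both equivalences (this is possible precisely because $\pi_0(f_0)$ is surjective and $f$ is a homotopy base-change, so $X_0$ is a ``fibrewise thickening'' of $Y_0$; take $Z_0 = X_0$ itself and note $X_0 \to Y_0$ together with the base-change property forces the fibres to behave).

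Once both $X_\cd$ and $Y_\cd$ have been replaced by versions with the same (equivalent) zeroth space and the map is still a homotopy base-change, the homotopy-pullback condition
\[
    X_n \xrightarrow{\ \simeq\ } (X_0)^{n+1} \times^h_{(Y_0)^{n+1}} Y_n
\]
together with $X_0 \simeq Y_0$ forces $X_n \to Y_n$ to be an equivalence for every $n\ge 1$, and for $n=0$ it is an equivalence by choice. A levelwise equivalence of semisimplicial spaces induces an equivalence on fat realisations (the realisation lemma, e.g.\ \cite[Proposition A.1(iv)]{Seg74}), so $\|X_\cd\| \simeq \|Y_\cd\|$ and the original map $\|f\|$ is an equivalence by two-out-of-three against the thickening equivalences. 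The step I expect to be the genuine obstacle is verifying carefully that the ``thickening'' $X_\cd^Z \to X_\cd$ is an equivalence on fat realisations when $Z_0 \to X_0$ is only $\pi_0$-surjective rather than an equivalence: this requires either a spectral-sequence/Bousfield–Kan argument over the extra semisimplicial direction or an appeal to a group-completion-free version of the ``Čech nerve of a $\pi_0$-surjection'' being contractible in each fibre, and getting the bookkeeping of homotopy pullbacks right across all simplicial degrees simultaneously is where the real work lies. The analogous arguments in \cite[Theorem 5.2]{ERW19} and \cite[Lemma 2.30]{Stb21} handle the nerve case; here one must check nothing used strict unitality or the category structure beyond the homotopy-pullback bookkeeping, which the hypothesis of being a homotopy base-change is designed to supply.
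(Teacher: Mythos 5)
Your reduction to semisimplicial spaces is exactly the step that the paper's remark following the theorem warns against: the analogous statement is \emph{false} for semisimplicial spaces (the paper gives the counterexample $Y_\cd$ with $Y_0 = *$, $Y_i = \emptyset$ for $i>0$, and $Y_\cd \amalg Y_\cd \to Y_\cd$). Although the fat realisation only sees the underlying semisimplicial space, the \emph{truth} of the theorem genuinely depends on the degeneracies existing: the paper's proof of the key sub-claim (for $Z_\cd = \gD^n$) builds an extra degeneracy $s_{-1}$, and the argument that $Z_\cd$ can be reduced to the case of representables uses that the simplicial $\gD^n$ generate simplicial spaces under homotopy colimits --- the semisimplicial $\gD^n_s$ do not generate $\mcS^{\gD_{\rm inj}^\op}$. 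Once you discard the degeneracies as data, none of that is available, and a proof in $\gD_{\rm inj}^\op$-land cannot exist.

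Beyond this, the step you flag as ``the genuine obstacle'' is in fact the entire content of the theorem, and you do not supply an argument for it. With $Z_0 = X_0$, the homotopy base-change condition gives a levelwise equivalence $X_\cd \simeq T(X_0)_\cd \times^h_{T(Y_0)_\cd} Y_\cd$, so everything reduces to showing $\| T(X_0)_\cd \times^h_{T(Y_0)_\cd} Y_\cd \| \to \| Y_\cd \|$ is an equivalence; but that map is itself a $\pi_0$-surjective homotopy base-change, so this is precisely the theorem in the special case of a ``vertex thickening''. Your appeal to \cite[Theorem 5.2]{ERW19} and \cite[Lemma 2.30]{Stb21} does not close this: those results are about nerves of (weakly unital) topological categories and rely on composability structure not present for a general simplicial space, which is the whole reason the paper proves the more general statement. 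The paper's actual argument for this step is a presentability argument: it proves the claim for $Z_\cd$ a representable simplicial set $\gD^n$ via an extra degeneracy, observes that simplicial spaces over $T(Y_0)_\cd$ are generated by these under homotopy colimits, and checks that both sides of the natural transformation commute with homotopy colimits. That is a genuinely different and complete route; your proposal identifies where the difficulty lies but then assumes it away, and the ``common $Z$ with both $Z_0 \to X_0$ and $Z_0 \to Y_0$ equivalences'' is not achievable unless $X_0 \to Y_0$ was already an equivalence.
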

\begin{proof}
    For any space $A$ let $T(A)_\cd$ be the simplicial space defined by 
    $T(A)_n := A^{n+1}$ with face and degeneracy operators given by forgetting and repeating.
    A simplicial map $Z_\cd \to T(B)_\cd$ amounts to the same data as a map $Z_0 \to B$.
    We can factor the map $X_\cd \to Y_\cd$ in the theorem as:
    \[
        X_\cd \longrightarrow T(X_0)_\cd \times_{T(Y_0)_\cd}^h Y_\cd 
        \longrightarrow Y_\cd
    \]
    The first map is a level-wise equivalence because $X_\cd \to Y_\cd$ is a homotopy base-change,
    and in particular it induces an equivalence on geometric realizations.
    To prove the theorem we need to show that the second map also induces 
    and equivalence on geometric realizations.
    
    For the purpose of this proof let us interpret ``space" to mean
    an object of the category of simplicial sets $\mcS := \sSet_Q$,
    which we equip with the Quillen model structure.
    A simplicial space is hence a bisimplicial set and we equip 
    $\mcS^{\gD^\op} := \mrm{Fun}(\gD^\op, \sSet_Q)$ 
    with the projective model structure.
    This proof could also be translated into any other model,
    or it could be given in the $\infty$-category of simplicial spaces.%
    \footnote{
        Let us briefly comment on how to this proof would go in $\infty$-land.
        Claim (i) is proven in the same way. Claim (ii) uses the
        fact that the presheaf category (of $\gD$) is generated by
        representables under small colimits, see \cite[Corollary 5.1.5.8]{LurHTT}.
        Claim (iii) follows because in $\mcS$ colimits are stable under pullback:
        \cite[Lemma 6.1.3.14]{LurHTT}.
    }
    Since the statement of the theorem is invariant under level-wise
    weak equivalences we may assume that $X_0 \to Y_0$ is a fibration,
    and in this case we may replace the homotopy pullback by the strict pullback.
    
    In fact, we will show:
    for any surjective fibration of spaces (i.e.\ Kan-fibration of simplicial sets)
    $f:A \to B$ 
    and for any simplicial space $Z_\cd$ with map $p:Z_0 \to B$ the natural map
    \[
        \ga_{(Z_\cd,p)}: \|T(A)_\cd \times_{T(B)_\cd} Z_\cd\| \longrightarrow \|Z_\cd\|
    \]
    is an equivalence.
    This map defines a natural transformation of functors $\mcS^{\gD^\op}_{/T(B)} \to \mcS$.
    We will prove the theorem by showing:
    \begin{itemize}
        \item[(i)] The map $\ga_{(Z_\cd,p)}$ is an equivalence if $Z_\cd$ 
        is the discrete simplicial space $\gD^n$ for some $n$.
        \item[(ii)] The category $\mcS^{\gD^\op}_{/T(B)}$ is generated 
        under homotopy colimits and weak equivalences
        by objects of the form $(\gD^n, p)$.
        \item[(iii)] The source and target functor of $\ga$ both commute with homotopy colimits.
    \end{itemize}
    
    \noindent
    Claim (i): 
    For $Z_\cd = \gD^n$ the map $p:Z_\cd \to T(B)_\cd$ picks out $(n+1)$ points 
    $p(0), \dots, p(n) \in B$.
    Since $f:A \to B$ is surjective we can find $a \in A$ with $f(a) = p(0)$,
    which we can use to define an extra degeneracy:
    \begin{align*}
        s_{-1}: A^{k+1} \times_{B^{k+1}} (\gD^n)_k 
        &\longrightarrow A^{k+2} \times_{B^{k+2}} (\gD^n)_{k+1} \\
        ((a_0,\dots,a_k), (l_0,\dots,l_k)) 
        &\longmapsto ((a, a_0,\dots,a_k), (0, l_0,\dots,l_k)) 
    \end{align*}
    Hence $\|T(A)_\cd \times_{T(B)_\cd} \gD^n\|$ is contractible and 
    since $\|\gD^n\|$ is also contractible it follows that $\ga_{(\gD^n,p)}$ is an equivalence.

    \noindent
    Claim (ii):
    Let $(p:Z_\cd \to T(B)_\cd) \in \mcS^{\gD^\op}_{/T(B)}$ be any object
    in the over category. 
    The category $\mcS^{\gD^\op}$ is generated
    by the representables $\gD^n$ under homotopy colimits.
    Concretely, \cite[Proposition 2.9]{Dug01} shows that 
    we can find a diagram $F:I \to \mcS^{\gD^\op}$ 
    such that each $F(i)$ is level-wise equivalent to some $\gD^{n_i}$,
    the colimit is $\colim_{i \in I} F(i) \cong Z_\cd$,
    and the map from the homotopy colimit
    $\hocolim_{i \in I} F(i) \to Z_\cd$ is a weak equivalence.
    
    Using the map $p:Z_\cd \to T(B)_\cd$ we can lift $F$
    to a diagram $F:I \to \mcS^{\gD^\op}_{/T(B)}$.
    The homotopy colimit of this diagram is 
    $\hocolim_{i \in I} (F(i) \to T(B)_\cd) \simeq (Z_\cd \to T(B)_\cd)$
    since the forgetful functor $\mcS^{\gD^\op}_{/T(B)} \to \mcS^{\gD^\op}$
    commutes with colimits and preserves weak equivalences.

    \noindent
    Claim (iii):
    The geometric realization functor $\|\blank\|:\mcS^{\gD^\op} \to \mcS$
    commutes with homotopy colimits and so does the forgetful 
    functor $\mcS^{\gD^\op}_{/T(B)} \to \mcS^{\gD^\op}$.
    We need to show that the functor
    \[
        \mcS^{\gD^\op}_{/T(B)} \to \mcS^{\gD^\op}, \qquad
        (Z_\cd \to T(B)_\cd) \mapsto (T(A)_\cd \times_{T(B)_\cd} Z_\cd)
    \]
    commutes with homotopy colimits. 
    This functor preserves level-wise weak equivalences
    (because we assumed that $A \to B$ is a fibration)
    so it will suffice to show that it commutes with strict colimits.
    Strict colimits of bisimplicial sets (over $T(B)_\cd$) can be computed point-wise,
    so the claim follows from the fact that in the category of sets 
    the functor $(U \times_V \blank): \Set_{/V} \to \Set$
    commutes with colimits.
    (This functor has a right-adjoint given by $W \mapsto \coprod_{v \in V} \Map(U \times_V \{v\}, W)$.)
\end{proof}

\begin{rem}
    Note it is crucial for theorem \ref{thm:base-change} and 
    corollary \ref{cor:groupoid-basechange} that $X_\cd$ and $Y_\cd$
    are simplicial spaces. 
    The analogous statements for semi-simplicial spaces are generally not true.
    Consider for example the semi-simplicial space $Y_\cd$
    with $Y_0 = *$ and $Y_i = \emptyset$ for $i>0$.
    The semi-simplicial map $Y_\cd \amalg Y_\cd \to Y_\cd$ is a base-change
    and it is surjective on $Y_0$, but on geometric realizations it is
    $* \amalg * \to *$.
    
    To see how this is used in the proof note that in the above counter-example
    $Y_\cd = \gD_s^0$ is the semisimplicial $0$-simplex,
    i.e.\ the representable object on $[0] \in \gD_{\rm inj}^\op$ in $\mcS^{\gD_{\rm inj}^\op}$.
    This means that already the analogue of claim (i), where we show the theorem
    for the case that $Z_\cd$ is representable, fails.
    Of course one could still prove claim (i) for $\gD^n$
    thought of as a semisimplicial space, but the $\gD^n$ do not generate
    $\mcS^{\gD_{\rm inj}^\op}$ under homotopy-colimits.
\end{rem}

\begin{rem}
    The theorem also holds with the fat geometric realizations $\|X_\cd\|$
    replaced by the standard one $|X_\cd|$, as long as both simplicial spaces
    $X_\cd$ and $Y_\cd$ have the property that the quotient maps $\|X_\cd\| \to |X_\cd|$
    and $\|Y_\cd\| \to |Y_\cd|$ are weak equivalences.
    This is the case if $X_\cd$ and $Y_\cd$ are \emph{good} \cite[Proposition A.1.(iv)]{Seg74},
    which is the case if they are the level-wise realization of a bi-simplicial set.
    This will automatically be the case in \ref{cor:groupoid-basechange} where the simplicial spaces are obtained as the level-wise classifying space of a simplicial groupoid.
\end{rem}

We now state a special case of the theorem where $X_\cd$ and $Y_\cd$ are both
obtained as level-wise classifying spaces of simplicial groupoids.
To do so we quickly recall the following definition:
\begin{defn}\label{defn:base-change}
    A functor $P:\mcE \to \mcB$ is an \emph{iso-fibration} if for any object
    $e \in \mcE$ and isomorphism $f:P(e) \cong b$ there is an isomorphism 
    $g:e \to \ol{b}$ in $\mcE$ such that $P(\ol{b}) = b$ and $P(g) = f$.
\end{defn}

\begin{cor}\label{cor:groupoid-basechange}
    Let $F_\cd:\mc{X}_\cd \to \mc{Y}_\cd$ be a functor between 
    two simplicial groupoids such that 
    \begin{itemize}
        \item $F_0:\mc{X}_0 \to \mc{Y}_0$ is essentially surjective, 
        \item $(\partial_n^{\mc{X}}, F_n): 
        \mc{X}_n \to (\mc{X}_0)^{n+1} \times_{(\mc{Y}_0)^{n+1}} \mc{Y}_n$
        is an equivalence of categories (where the pullback is strict), and 
        \item For all $n$ at least one of the functors
        $\partial_n^{\mc{Y}}:\mc{Y}_n \to (\mc{Y}_0)^{n+1}$
        and $F_n:\mc{X}_n \to \mc{Y}_n$ is an iso-fibration.
    \end{itemize}
    Then $|BF|: |B\mc{X}_\cd| \to |B\mc{Y}_\cd|$ is an equivalence.
\end{cor}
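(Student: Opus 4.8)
The plan is to reduce this to the base-change theorem \ref{thm:base-change}, applied to the simplicial spaces $X_\cd := B\mc{X}_\cd$, $Y_\cd := B\mc{Y}_\cd$ and the simplicial map $f := BF_\cd$. Since both $B\mc{X}_\cd$ and $B\mc{Y}_\cd$ are level-wise geometric realisations of bisimplicial sets, they are \emph{good} in the sense of \cite[Proposition A.1.(iv)]{Seg74}, so by the remark following theorem \ref{thm:base-change} it is enough to verify the two hypotheses of that theorem with the fat realisation $\|{-}\|$ replaced by the usual one $|{-}|$. The surjectivity on $\pi_0$ is immediate: for a groupoid $\mcG$ one has $\pi_0(B\mcG) \cong \Obj(\mcG)/{\cong}$, and under this identification $\pi_0(BF_0)$ is the map induced by $F_0$ on isomorphism classes, which is surjective precisely because $F_0$ is essentially surjective.

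So the main task is the homotopy base-change condition: for each $n \ge 1$ the map $B\mc{X}_n \to (B\mc{X}_0)^{n+1} \times^h_{(B\mc{Y}_0)^{n+1}} B\mc{Y}_n$ should be a weak equivalence. I would factor it as
\[
    B\mc{X}_n \longrightarrow B P_n \longrightarrow (B\mc{X}_0)^{n+1} \times^h_{(B\mc{Y}_0)^{n+1}} B\mc{Y}_n,
\]
where $P_n := (\mc{X}_0)^{n+1} \times_{(\mc{Y}_0)^{n+1}} \mc{Y}_n$ denotes the \emph{strict} pullback of groupoids. The first arrow is $B$ applied to the functor $(\partial_n^{\mc{X}}, F_n)$, which is an equivalence of categories by the second hypothesis, hence a weak equivalence on classifying spaces. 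Thus everything comes down to showing that the canonical comparison map $BP_n \to (B\mc{X}_0)^{n+1} \times^h_{(B\mc{Y}_0)^{n+1}} B\mc{Y}_n$ is a weak equivalence, i.e.\ that $B$ carries the defining pullback square of $P_n$ to a homotopy pullback square of spaces.

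This is where the third hypothesis is used. The nerve functor $N\colon \Cat \to \sSet$ preserves all limits, the nerve of an iso-fibration of groupoids is a Kan fibration, and geometric realisation $|{-}|\colon \sSet \to \Top$ preserves finite limits and sends Kan fibrations to Serre fibrations. Hence if $\partial_n^{\mc{Y}}$ is an iso-fibration, then $BP_n = (B\mc{X}_0)^{n+1} \times_{(B\mc{Y}_0)^{n+1}} B\mc{Y}_n$ is a strict pullback along the Serre fibration $B(\partial_n^{\mc{Y}})$, hence a homotopy pullback. If instead $F_n$ is an iso-fibration, one first observes that $F_0$ is a retract of $F_n$ in the arrow category (via $s_0^{\,n}\colon \mc{X}_0 \to \mc{X}_n$ and $d_0^{\,n}\colon \mc{X}_n \to \mc{X}_0$, which are natural in the simplicial groupoid), so that $F_0$, and then $(F_0)^{n+1}$, is an iso-fibration; the same argument then applies with the other leg of the cospan playing the role of the fibration. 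Either way the comparison map is an equivalence, so theorem \ref{thm:base-change} yields that $|BF|$ is a weak equivalence.

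I expect the only real subtlety to be the careful handling of strict versus homotopy pullbacks of groupoids — in particular checking that the iso-fibration hypothesis is exactly what is needed to pass between them, and dealing with the two cases of that hypothesis — together with remembering to invoke the \emph{good} property so that the base-change theorem applies to the ordinary realisation. All the homotopy-theoretic ingredients (nerve preserves limits, realisation preserves finite limits and fibrancy, strict pullbacks along fibrations are homotopy pullbacks) are completely standard once this is set up.
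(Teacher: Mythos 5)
Your proposal is correct and follows essentially the same route as the paper's proof: invoke Theorem \ref{thm:base-change} (via goodness of $B\mcG_\cd$ for the ordinary realisation), get the $\pi_0$ condition from essential surjectivity, and verify the base-change condition by factoring through $B$ of the strict pullback groupoid and using the isofibration hypothesis to identify the comparison map to the homotopy pullback as a weak equivalence. You do fill in a step the paper glosses over: when the hypothesis gives $F_n$ (rather than $\partial_n^{\mc{Y}}$) as the isofibration, $F_n$ is not literally one of the two legs of the pullback cospan — it is $(F_0)^{n+1}$ that appears — and your retract argument via $s_0^{\,n}$ and an iterated face, showing $F_0$ and hence $(F_0)^{n+1}$ is an isofibration, is exactly what makes the paper's phrase ``at least one of the functors involved in the pullback is an iso-fibration'' come out true. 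Your justification of the key fact that realising a strict pullback along an isofibration of groupoids gives a homotopy pullback (nerve preserves limits, isofibrations of groupoids have Kan nerve, realisation preserves finite limits and fibrancy) is a standard alternative to the paper's appeal to Steimle's theorem and works just as well.
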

\begin{proof}
    The first condition implies the surjectivity of $\pi_0(BF_0): B\mc{X}_0 \to B\mc{Y}_0$. 
    The second condition implies that $BF_\cd$ is a homotopy base-change,
    if we can show that the map
    \[
        B \left((\mc{X}_0)^{n+1} \times_{(\mc{Y}_0)^{n+1}} \mc{Y}_n \right)
        \longrightarrow
        (B\mc{X}_0)^{n+1} \times_{(B\mc{Y}_0)^{n+1}}^h B\mc{Y}_n
    \]
    is an equivalence. By the third condition at least one of the functors 
    involved in the pullback is an iso-fibration. 
    Taking the pullback of classifying spaces along an iso-fibration of groupoids
    always yields the homotopy pullback -- this can be shown by elementary means
    or by noting that iso-fibrations of groupoids are both Cartesian 
    and coCartesian, so that \cite[Theorem 2.3]{Stm17} may be applied.
\end{proof}

To illustrate the use of the base-change theorem we quickly prove that
the realization of the Rezk nerve is always equivalent to 
the classifying space of the category.
This will be useful later. First recall:
\begin{defn}\label{defn:relative-nerve}
    For any category $\mcC$ its \emph{Rezk nerve}%
    \footnote{
        This is what Rezk calls the classifying diagram of the category, 
        see \cite[Section 3.5]{Rez01}.
    }
    is the simplicial groupoid
    $N_\cd^{\rm R}\mcC$ defined for each $n$ as the groupoid of functors
    $[n] \to \mcC$ with morphisms being natural isomorphisms.
    This contains the usual nerve as a simplicial subgroupoid 
    $N_\cd \mcC \subset N_\cd^{\rm R} \mcC$ with only identity morphisms.
\end{defn}

\begin{lem}\label{lem:relative-nerve}
    The inclusion $N_\cd \mcC \subset N_\cd^{\rm R} \mcC$ always induces 
    an equivalence:
    \[
        B\mcC \simeq \|N_\cd\mcC\| \simeq \|B(N_\cd^{\rm R} \mcC)\|.
    \]
\end{lem}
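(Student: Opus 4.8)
# Proof Proposal

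The plan is to apply the base-change theorem (Theorem \ref{thm:base-change}), or more precisely its groupoid version Corollary \ref{cor:groupoid-basechange}, to the inclusion $F_\cd \colon N_\cd\mcC \hookrightarrow N_\cd^{\rm rel}\mcC$ of simplicial groupoids. Here $N_\cd\mcC$ is regarded as a simplicial groupoid with only identity morphisms (a discrete simplicial space levelwise), and $N_\cd^{\rm rel}\mcC$ is Rezk's relative nerve from Definition \ref{defn:relative-nerve}. The outer equivalence $B\mcC \simeq \|N_\cd\mcC\|$ is standard (the classifying space of a category is by definition the fat realization of its nerve, and the nerve is levelwise a good simplicial space since it is a discrete simplicial set). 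So the content is the equivalence $\|N_\cd\mcC\| \simeq \|B(N_\cd^{\rm rel}\mcC)\|$.

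First I would verify the hypotheses of Corollary \ref{cor:groupoid-basechange} for $F_\cd$. In degree $0$, $F_0 \colon \mcC^{\rm disc} \to \mcC^{\iso}$ is the identity on objects, hence essentially surjective — in fact an isomorphism on objects, so trivially the needed surjectivity on $\pi_0$ holds. The key point is the homotopy base-change condition: for each $n \ge 1$ the functor
\[
    (\partial_n, F_n) \colon N_n\mcC \longrightarrow (N_0\mcC)^{n+1} \times_{(N_0^{\rm rel}\mcC)^{n+1}} N_n^{\rm rel}\mcC
\]
should be an equivalence of groupoids. An object of the target is a functor $[n] \to \mcC$ together with a choice, for each vertex $i$, of an object of $\mcC$ (i.e.\ a point of $N_0\mcC$, which is the \emph{discrete} set of objects) mapping to the $i$-th vertex under the object-map of $N_0^{\rm rel}\mcC \to \ast$; since $N_0\mcC \to N_0^{\rm rel}\mcC$ is the identity on underlying sets of objects, this amounts to a functor $[n] \to \mcC$ whose vertices are specified on the nose, with morphisms being natural isomorphisms that are the identity on all vertices. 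A natural isomorphism of two functors $[n] \to \mcC$ that restricts to the identity at every object is itself the identity natural transformation (since $[n]$ is generated by its objects and the consecutive arrows, and a natural iso is determined by its components at objects). Hence the target groupoid is discrete, and the functor $(\partial_n, F_n)$ is an isomorphism of discrete groupoids onto it — in particular an equivalence. Finally, for the iso-fibration condition: $F_n \colon N_n\mcC \to N_n^{\rm rel}\mcC$ is the inclusion of the identity-morphisms-only subgroupoid, which is an iso-fibration precisely because any isomorphism in $N_n^{\rm rel}\mcC$ out of an object in the image is an identity (by the same vertex-rigidity argument, or more simply: the lifting condition is vacuous since the only automorphisms landing back in the subgroupoid are identities, but one checks the definition directly — given $e \in N_n\mcC$ and $f \colon F_n(e) \cong b$ in $N_n^{\rm rel}\mcC$, there need not be a lift, so this is the direction to be careful about). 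Alternatively one uses that $\partial_n^{\mc Y} \colon N_n^{\rm rel}\mcC \to (N_0^{\rm rel}\mcC)^{n+1}$ is an iso-fibration, which is immediate since $N_0^{\rm rel}\mcC$ is a groupoid and the evaluation-at-vertices functor is a fibration of groupoids (one can transport any functor $[n]\to\mcC$ along componentwise isomorphisms at the vertices). This is the cleaner route and I would take it.

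The main obstacle, and the place that needs the most care, is ensuring that Corollary \ref{cor:groupoid-basechange} genuinely applies — specifically that $\partial_n^{\mc Y} \colon N_n^{\rm rel}\mcC \to (N_0^{\rm rel}\mcC)^{n+1}$ is an iso-fibration of groupoids, which hinges on being able to conjugate a diagram $[n] \to \mcC$ by a choice of isomorphism at each of its $n+1$ vertices to produce a new honest functor $[n] \to \mcC$ on the nose; this is a routine but genuine verification (one defines the transported functor on arrows by $f \mapsto \phi_{j} \circ f \circ \phi_i^{-1}$ and checks functoriality). Once that is in place, Corollary \ref{cor:groupoid-basechange} yields that $|BF_\cd| \colon |B(N_\cd\mcC)| \to |B(N_\cd^{\rm rel}\mcC)|$ is an equivalence, and since $B(N_\cd\mcC) = N_\cd\mcC$ (the nerve is already a discrete simplicial space, whose levelwise classifying space is itself), we get $\|N_\cd\mcC\| \simeq \|B(N_\cd^{\rm rel}\mcC)\|$, which combined with $B\mcC \simeq \|N_\cd\mcC\|$ completes the proof. (The ``good'' hypothesis needed to pass between fat and ordinary realizations is satisfied since all simplicial spaces here are levelwise realizations of bisimplicial sets, as noted in the remark following Theorem \ref{thm:base-change}.)
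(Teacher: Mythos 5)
Your proof is correct and follows essentially the same route as the paper: you apply Corollary \ref{cor:groupoid-basechange} to the inclusion $N_\cd\mcC \hookrightarrow N_\cd^{\rm rel}\mcC$, check that the pullback groupoid is discrete by the vertex-rigidity observation, and (correctly, after noticing that $F_n$ itself is not an iso-fibration) verify the iso-fibration condition for $\partial_n^{\mc{Y}}$ via transport along componentwise isomorphisms. The paper's proof is the same argument, just stated more tersely.
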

\begin{proof}
    We need to verify the conditions of the base-change theorem for simplicial groupoids 
    \ref{cor:groupoid-basechange}. The functor $N_0\mcC \to N_0^{\rm R}\mcC$ 
    is a bijection on objects and in particular essentially surjective. 
    For any $n$ the functor
    \[
        N_n\mcC \to (N_0\mcC)^{n+1} \times_{(N_0^{\rm R} \mcC)^{n+1}} N_n^{\rm R} \mcC
    \]
    is an isomorphism of groupoids, as both sides can be identified with the 
    subcategory of $N_n^{\rm R} \mcC$ containing only identity morphisms.
    Finally, we claim that $\partial_n:N_n^{\rm R}\mcC \to (N_0^{\rm R}\mcC)^{n+1}$
    is an iso-fibration for all $n$. Indeed, given a functor $X:[n] \to \mcC$
    and isomorphisms $\ga_i:X(i) \cong X'(i)$ there is a (unique) functor $X':[n] \to \mcC$
    extending the $X'(i)$ such that $\ga:X \cong X'$ is a natural isomorphism.
    (This functor $X'$ is given by $X'(i \le j) = \ga_j \circ X(i \le j) \circ \ga_i^{-1}$.)
\end{proof}

\subsection{The simplicial groupoid D(C)}

Now that all the tools are in place we define the simplicial groupoid 
$D_\cd(\mcC)$ that serves as a more convenient replacement for $N\ICsp(\mcC)$.%
\footnote{
    The name $D$ is based on the topological poset $D_\gt^\mcC$ 
    from \cite[Definition 2.13]{GRW14} that is used as a convenient replacement
    of the topological cobordism category $\mcC_d$ in their work.
}
One can think of $D_\cd(\mcC)$ as a variant of the \emph{Rezk nerve} construction 
applied to the $(2,1)$-category $\ICsp(\mcC)$.
We will see that the realization of $D_\cd(\mcC)$
is equivalent to the classifying space of $\ICsp(\mcC)$.
The advantage of $D_\cd(\mcC)$ is that for each $n$ the groupoid $D_n(\mcC)$ 
inherits a PCM structure from $\mcC$ 
(which would not be the case for $N_\cd\ICsp(\mcC)$)
and so we can think of $BD_\cd(\mcC)$ as a simplicial $\gC$-space.
First, we recall the definition of the nerve and the classifying space of a $(2,1)$-category following \cite[\S1.4]{Til99}.
\begin{defn}\label{defn:2nerve}
    For a $(2,1)$-category $\mcA$ we define a simplicial groupoid $N_\cd \mcA$ as follows.
    Objects of $N_n \mcA$ are composable $n$-tuples of morphisms in $\mcA$.
    A morphism
    \[
        \ga: (a_0 \xrightarrow{f_1} a_1 \xrightarrow{f_2} \dots \to a_n)
        \to
        (a_0' \xrightarrow{g_1} a_1' \xrightarrow{g_2} \dots \to a_n')
    \]
    can only exist if $a_i = a_i'$ for all $i$ and in this case it consists of an $n$-tuple of $2$-morphisms $(\ga_j: f_j \cong g_j)_{j=1,\dots,n}$.
    The $i$th face map forgets $a_i$ and composes $f_{i+1} \circ f_i$ (unless $i \in \{0,n\}$) and the $i$th degeneracy map duplicates $a_i$ and introduces an identity morphism.
    We define the classifying space $B\mcA$ of $\mcA$ as the realisation of the simplicial topological space defined by sending $[n]$ to $B(N_n\mcA)$, the classifying space of the groupoid $N_n\mcA$.
    (Here Tillmann instead realises the diagonal of the bisimplicial set $N_\cd (N_\cd \mcA)$, but this is homotopy equivalent to geometrically realising both simplicial directions.)
\end{defn}

\begin{obs}\label{obs:nerves-of-2-categories}
    There are multiple ways of assigning a ``classifying space'' to a $(2,1)$-category $\mcA$.
    While the definition above is in line with \cite{Til99}, a more modern approach would be to take its Duskin nerve $N^{\rm D}(\mcA)$ \cite{Duskin}, which is a quasi-category \cite[009P]{Kerodon}, and consider its geometric realization $|N^{\rm D}(\mcA)|$.
    We now show that these are equivalent constructions, and in fact they come from two equivalent ways of thinking of a $(2,1)$-category as an $\infty$-category.
    Let $\Cat(\Gpd)$ and $\Cat(\Kan)$ denote the $1$-categories of categories enriched in groupoids and Kan complexes, respectively, and let $\Gpd_\infty = \mcS$ denote the $\infty$-category of $\infty$-groupoids or spaces.
    Consider the diagram of $\infty$-categories
\[\begin{tikzcd}
	& {\Cat(\Gpd)} & {\Fun(\Delta^\op, \Gpd)} & {\Fun(\Delta^\op, \Gpd_\infty)} & \\
	& {\Cat(\Kan)} & {\Fun(\Delta^\op, \Kan)} & {\Fun(\Delta^\op, \mcS)} \\
	{\mathrm{qCat}} &&& {\Cat_\infty} & \mcS
	\arrow["{N^{\rm gpd}}", hook, from=1-2, to=1-3]
	\arrow["{\Cat(N)}", hook, from=1-2, to=2-2]
	\arrow["{N^{\rm Duskin}}"', from=1-2, to=3-1]
	\arrow[from=1-3, to=1-4]
	\arrow["{\Fun(\Delta^\op, N)}", hook, from=1-3, to=2-3]
	\arrow[equals, from=1-4, to=2-4]
	\arrow["{N^{\rm sSet}}"', hook, from=2-2, to=2-3]
	\arrow["{N^{\rm coh}}", from=2-2, to=3-1]
	\arrow[from=2-3, to=2-4]
	\arrow["{\mathrm{ac}}", from=2-4, to=3-4]
	\arrow["\colim", from=2-4, to=3-5]
	\arrow[from=3-1, to=3-4]
	\arrow["{|-|}"', from=3-4, to=3-5]
\end{tikzcd}\]
    where $N^{\rm gpd}$ denotes the groupoid-enriched nerve described in definition \ref{defn:2nerve}, $N\colon \Gpd \to \Kan$ is the usual nerve, and $N^{\rm sSet}$ denotes the simplicially enriched nerve.
    The functor $\mathrm{ac}\colon \Cat_\infty \to \mcS$ is the associated category functor that is left adjoint to the Rezk nerve, see e.g.~\cite{Hebestreit2025}, and the triangle it participates in commutes because the triangle of right adjoints commutes.
    The left triangle commutes by \cite[00KY]{Kerodon} and the two squares on the top commute by construction.
    It remains to check that the big trapezoid commutes, i.e.~that the two ways of obtaining an $\infty$-category from a Kan enriched category agree.
    Recall that the two functors
    \[
        \mathrm{qCat} \xleftarrow[\qquad]{N^{\rm coh}} \Cat(\Kan) \xrightarrow[\qquad]{N^{\rm sSet}} \Fun(\Delta^\op, \Kan)
    \]
    are restrictions of right adjoints in a Quillen equivalence to the respective fibrant objects \cite{Bergner2009}.
    These functors preserve weak equivalences and after inverting the weak equivalences all the functors in the trapezoid become equivalences of $\infty$-categories.
    Therefore, the trapezoid commutes up to a self-equivalence of $\Cat_\infty$.
    By \cite[Theorem 6.3]{Ton2005} any such self-equivalence is either the identity or $(-)^{\rm op}$ and we can exclude the latter case by noting that the functors agree on the $1$-category $(\cd \to \cd \leftarrow \cd)$.
    This shows that the whole diagram commutes and particular the two proposed notions of classifying space of a $(2,1)$-category agree: 
    going around the diagram via the top right yields definition \ref{defn:2nerve}, whereas going around the left and bottom yields the realization of the Duskin nerve.
\end{obs}

The simplicial groupoid $D_\cd(\mcC)$ is now defined as a variant of $N_\cd \ICsp(\mcC)$.
This is inspired by the Rezk nerve (definition \ref{defn:relative-nerve}), but is not quite the same as the morphisms in $D_0(\mcC)$ will be bijections of labelled sets, which are not literally the same as isomorphisms in $\ICsp(\mcC)$.

\begin{defn}\label{defn:Dnc}
    The simplicial groupoid $D_\cd(\mcC)$ is defined as follows.
    An object of $D_n$ consists of $n$ composable morphisms in $\ICsp(\mcC)$,
    i.e.\ a datum $((A_\cd,c_\cd), (\UL{X}_\cd, o_\cd))$ where
    $A_0, \dots, A_n \subset \gO$ are finite sets with 
    labellings by connected objects $c_i:A_i \to \Obj^\con(\mcC)$ and 
    cospans $(f_i:A_{i-1} \to \UL{X}_i \leftarrow A_i:g_i)$ with labellings 
    by connected morphisms $o_i:\UL{X}_i \to \Mor^\con(\mcC)$.
    A morphism of $n$-simplices 
    $((A_\cd,c_\cd), (\UL{X}_\cd, o_\cd)) \to ((A_\cd',c_\cd'), (\UL{X}_\cd', o_\cd'))$
    is a family of bijections $\ga_i:A_i \cong A_i'$ and 
    $\gp_i:\UL{X}_i \cong \UL{X}_i'$ satisfying:
    \[
        c_i \circ \ga_i = c_i', \quad
        o_i \circ \gp_i = o_i', \quad
        \gp_i \circ f_i = f_i' \circ \ga_{i-1}  \qand 
        \gp_i \circ g_i = g_i' \circ \ga_i.
    \]
    The face operator $d_i$ is defined by composing the $i$th and $(i+1)$st morphism
    according to the composition in $\ICsp(\mcC)$.
    The degeneracy operator $s_i$ is defined by inserting identity morphisms.
\end{defn}

\begin{defn}
    We further construct a PCM structure on each $D_n(\mcC)$ as follows.
    Two objects $((A_\cd,c_\cd),(\UL{X}_\cd, o_\cd))$ and $((A_\cd',c_\cd'),(\UL{X}_\cd', o_\cd'))$ are disjoint if we have $A_i \cap A_i' = \emptyset$ and $\UL{X}_j \bot \UL{X}_j$ for all $i$ and $j$, similarly to the PCM structure on $\ICsp$ in definition \ref{defn:Csp-PCM}.
    Then $\amalg$ is defined by taking unions of the disjoint sets and combining the labellings.
    All the face and degeneracy maps in $D_\cd(\mcC)$ preserve disjointness and the union of disjoint tuples.
\end{defn}

\begin{rem}
    In what follows we will often simply write $D_\cd$ for $D_\cd(\mcC)$ when 
    the labelled cospan category $(\mcC \to \Csp)$ is clear from the context.
    In a similar vein we often denote objects of $D_n(\mcC)$ as tuples 
    $(A_\cd,\UL{X}_\cd)$ where the labellings $c_i$ and $o_j$ are left implicit. 
    For $0 \le k < l \le n$ we also use the notation
    \[
        \UL{X}_{k \to l}  := \UL{X}_{k+1} \cup_{A_k} \dots \cup_{A_{l-1}} \UL{X}_l
    \]
    for the composite cospan and we write 
    $o_{k \to l}:\UL{X}_{k\to l} \to \Mor^\con(\mcC)$
    for the labelling induced by composition.
    Note that in this notation $\UL{X}_k = \UL{X}_{k-1\to k}$.
\end{rem}

\begin{rem}
    Note that there is an inclusion of simplicial groupoids:
    \[
        N_\cd \ICsp(\mcC) \longrightarrow D_\cd   
    \]
    which in level $n$ identifies $N_n\ICsp(\mcC)$ with the subcategory of $D_n$
    that contains all objects and all those morphisms $(\ga_\cd, \gp_\cd)$
    where $\ga_i = \id_{A_i}$ holds for all $i$.
\end{rem}

In analogy with $N_\cd^\nc \mcC \subset N_\cd \mcC$ in definition \ref{defn:Nnc} we let 
$D_\cd^\nc \subset D_\cd$ be the simplicial subgroupoid  where the total composite is connected.
To be precise, each $D_n^\nc \subset D_n$ is the full subgroupoid on those 
$(A_\cd, \UL{X}_\cd)$
such that the composite cospan 
$(A_0 \to \UL{X}_{0 \to n} \leftarrow A_n)$
is reduced.

\begin{lem}\label{lem:NICsp=D}
    The inclusion of simplicial groupoids $N_\cd \ICsp(\mcC) \to D_\cd(\mcC)$
    induces equivalences:
    \[
    \begin{tikzcd}
        {|B N_\cd^\nc \ICsp(\mcC)|} \ar[r, "\simeq"] \ar[d] & 
        {|BD_\cd^\nc(\mcC)|} \ar[d] \\
        {|B N_\cd \ICsp(\mcC)|} \ar[r, "\simeq"] & 
        {|BD_\cd(\mcC)|} 
    \end{tikzcd}
    \]
\end{lem}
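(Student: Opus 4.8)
The plan is to apply the base-change criterion for simplicial groupoids, Corollary~\ref{cor:groupoid-basechange}, twice: once to the inclusion $F_\cd\colon N_\cd\ICsp(\mcC) \to D_\cd(\mcC)$, and once to its restriction $F_\cd^\nc\colon N_\cd^\nc\ICsp(\mcC) \to D_\cd^\nc(\mcC)$ (where $N_n^\nc\ICsp(\mcC) = N_n\ICsp(\mcC)\cap D_n^\nc(\mcC)$, i.e.\ the simplices $W$ with $W(0\to n)$ reduced). Since $F_\cd$, $F_\cd^\nc$ and the two vertical inclusions form a strictly commuting square of simplicial groupoids, applying $|B(-)|$ produces the square in the statement, and it then only remains to check that the two horizontal maps are equivalences, which is exactly the conclusion of Corollary~\ref{cor:groupoid-basechange}. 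Morally this is the $(2,1)$-categorical analogue of Lemma~\ref{lem:relative-nerve}, carried out level by level and refined to the no-closed-components parts.

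For $F_\cd$ I would verify the three hypotheses as follows. The functor $F_0$ is a bijection on objects, since $N_0\ICsp(\mcC)$ is the discrete groupoid on $\Obj(\ICsp(\mcC))$ and $D_0(\mcC)$ is the groupoid of objects of $\ICsp(\mcC)$ and their isomorphisms; in particular it is essentially surjective. Next, because $N_0\ICsp(\mcC)$ is discrete, a morphism of the strict pullback $(N_0\ICsp(\mcC))^{n+1}\times_{(D_0(\mcC))^{n+1}} D_n(\mcC)$ is a morphism $(\ga_\cd,\gp_\cd)$ of $D_n(\mcC)$ all of whose components $\ga_i\colon A_i \cong A_i'$ are identities; by the identification of $N_n\ICsp(\mcC)$ with precisely this subgroupoid of $D_n(\mcC)$ (the remark following Definition~\ref{defn:Dnc}), the canonical comparison functor $N_n\ICsp(\mcC)\to (N_0\ICsp(\mcC))^{n+1}\times_{(D_0(\mcC))^{n+1}} D_n(\mcC)$ is an isomorphism of groupoids. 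Finally, $\partial_n^D\colon D_n(\mcC)\to (D_0(\mcC))^{n+1}$ is an iso-fibration in the sense of Definition~\ref{defn:base-change}: given $(A_\cd,c_\cd,\ul{X}_\cd,o_\cd)\in D_n(\mcC)$ and isomorphisms $\ga_i\colon (A_i,c_i)\cong (A_i',c_i')$, I would transport the whole chain of labelled cospans along the $\ga_i$, keeping each presented set $\ul{X}_i$, each labelling $o_i$ and each $\gp_i=\id$, and replacing the structure maps $f_i,g_i$ by $f_i\circ\ga_{i-1}^{-1}$ and $g_i\circ\ga_i^{-1}$; here one uses that the tensor products $\bigotimes_{a\in f_i^{-1}(x)}c_{i-1}(a)$ are functorial in bijections of the indexing set (as $\mcC$ is symmetric monoidal), so that $o_i(x)$ remains a morphism with the required source and target after transport. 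This produces an object of $D_n(\mcC)$ over $(A_\cd',c_\cd')$ together with the required lift $(\ga_\cd,\id)$, and Corollary~\ref{cor:groupoid-basechange} then gives that $|BN_\cd\ICsp(\mcC)| \to |BD_\cd(\mcC)|$ is an equivalence.

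For $F_\cd^\nc$ I would deduce the three hypotheses from the ones just established, using that $D_\cd^\nc(\mcC)\subset D_\cd(\mcC)$ is a full simplicial subgroupoid which is moreover closed under isomorphism, since whether the composite cospan $A_0\to\ul{X}_{0\to n}\leftarrow A_n$ is reduced depends only on its isomorphism type (closure under the simplicial structure maps being part of the definition, and resting ultimately on the elementary fact that if $[A\to X\cup_B Y\leftarrow C]$ is reduced then so are both factors). Concretely: the first hypothesis is unchanged because $D_0^\nc(\mcC)=D_0(\mcC)$; the strict pullback $(N_0\ICsp(\mcC))^{n+1}\times_{(D_0(\mcC))^{n+1}} D_n^\nc(\mcC)$ is the full subgroupoid of $N_n\ICsp(\mcC)$ on those objects lying in $D_n^\nc(\mcC)$, which is by definition $N_n^\nc\ICsp(\mcC)$; and the restriction of the iso-fibration $\partial_n^D$ to the full, isomorphism-closed subgroupoid $D_n^\nc(\mcC)$ is again an iso-fibration (the transported lift is isomorphic to its source, hence still lies in $D_n^\nc$). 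Applying Corollary~\ref{cor:groupoid-basechange} a second time gives that $|BN_\cd^\nc\ICsp(\mcC)| \to |BD_\cd^\nc(\mcC)|$ is an equivalence, and the square commutes by construction.

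I do not expect a deep obstacle: the whole proof is the level-by-level verification of the hypotheses of Corollary~\ref{cor:groupoid-basechange}. The one step needing genuine care is the transport-of-structure construction showing $\partial_n^D$ is an iso-fibration, where one must track the coherence isomorphisms for tensor products indexed by unordered finite subsets of $\gO$; this is routine but slightly fiddly to spell out. A secondary point is confirming that $D_\cd^\nc(\mcC)$ (and $N_\cd^\nc\ICsp(\mcC)$) are genuinely simplicial subgroupoids closed under isomorphism, which is the same reducedness bookkeeping with cospans already implicit in Definition~\ref{defn:Nnc}.
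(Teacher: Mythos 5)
Your proof is correct and follows essentially the same route as the paper: apply Corollary \ref{cor:groupoid-basechange} level by level, checking the bijection-on-objects, pullback-isomorphism, and iso-fibration conditions, with the iso-fibration handled by transport of structure along the $\ga_i$, and the no-closed-components case treated as a full isomorphism-closed subgroupoid. You spell out the details (the coherence of unordered tensor products, the reducedness bookkeeping for $D_\cd^\nc$) more explicitly than the paper, which simply notes the top map is ``similar,'' but the argument is the same.
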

\begin{proof}
    Both equivalences can be checked by applying the base-change theorem 
    for simplicial groupoids \ref{cor:groupoid-basechange}.
    We will verify the conditions for the bottom equivalence, the top map is similar.
    Both are entirely analogous to lemma \ref{lem:relative-nerve}.
    
    To begin, note that $N_n\ICsp(\mcC) \to D_n$ is an inclusion of groupoids
    that is a bijection on objects. So $N_0\ICsp(\mcC) \to D_0$ 
    is certainly essentially surjective.
    Moreover, for all $n$ both sides of 
    \[
        N_n \ICsp(\mcC) \longrightarrow 
        (N_0 \ICsp(\mcC))^{n+1} \times_{(D_0)^{n+1}} D_n
    \]
    can be thought of the subcategory of $D_n$ that contains all objects 
    and all those morphisms $(\ga_\cd,\gp_\cd)$ where $\ga_i = \id_{A_i}$ for all $i$.
    So this functor is an isomorphism.
    It remains to check that $\partial_D^n:D_n \to (D_0)^{n+1}$ is an isofibration.
    This means that given some $n$-simplex $(A_\cd, \UL{X}_\cd, c_\cd, o_\cd)$, 
    an $(n+1)$ tuple $(B_\cd,c_\cd')$, and $(n+1)$ bijections $\ga_i: A_i \cong B_i$
    such that $c_i' \circ \ga_i = c_i$,
    we need to find an $n$-simplex
    $(B_\cd, \UL{X}_\cd', c_\cd', o_\cd')$ that is isomorphic to the original $n$-simplex
    via some isomorphism of the form $(\ga_\cd, \gp_\cd)$.
    This can be achieved by simply setting $\UL{X}_i' := \UL{X}_i$ and 
    then using the $\ga_i$ to define the structure maps of the cospans as 
    $B_{i-1} \cong A_{i-1} \to \UL{X}_i \leftarrow A_i \cong B_i$.
    The labelling $o_\cd$ stays the same.
\end{proof}

In the case of the no closed component space we can also compare 
this to the much simpler simplicial set $N_\cd^\nc\mcC \subset N_\cd\mcC$.
\begin{lem}\label{lem:NncICsp=NncC}
    The canonical projection $N_\cd^\nc\ICsp(\mcC) \to N_\cd^\nc \mcC$ 
    is an equivalence on realizations. 
\end{lem}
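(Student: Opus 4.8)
The plan is to prove this by an application of the base-change theorem for simplicial groupoids, Corollary \ref{cor:groupoid-basechange}, after interpolating through relative nerves. Since $N_\cd^\nc\mcC$ is a discrete simplicial groupoid while $D_\cd(\mcC)$ plays the role of the relative nerve of $\ICsp(\mcC)$, it is cleanest to route the comparison through the relative nerve of $\mcC$. Write $N_\cd^{{\rm rel},\nc}\mcC \subseteq N_\cd^{\rm rel}\mcC$ for the simplicial subgroupoid of functors $W\colon[n]\to\mcC$ with $W(0\to n)$ reduced. Then $\ot_\mcC$ fits into a commuting diagram of simplicial groupoids
\[
    N_\cd^\nc\ICsp(\mcC) \hookrightarrow D_\cd^\nc(\mcC) \xrightarrow{\ \ot_\mcC\ } N_\cd^{{\rm rel},\nc}\mcC \hookleftarrow N_\cd^\nc\mcC,
\]
in which the composite $N_\cd^\nc\ICsp(\mcC)\to N_\cd^{{\rm rel},\nc}\mcC$ through $D_\cd^\nc(\mcC)$ agrees with the map of the lemma followed by the last inclusion. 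The first inclusion is an equivalence on realisations by (the $\nc$-part of) Lemma \ref{lem:NICsp=D}, and the last inclusion is one by the argument of Lemma \ref{lem:relative-nerve}; the only new point there is that reducedness of $W(0\to n)$ is invariant under natural isomorphism, which holds because $\pi$ of isomorphic morphisms of $\mcC$ have canonically identified underlying cospans, so $\partial_n\colon N_n^{{\rm rel},\nc}\mcC\to (N_0^{\rm rel}\mcC)^{n+1}$ is still an iso-fibration. It therefore suffices to show that $\ot_\mcC\colon D_\cd^\nc(\mcC)\to N_\cd^{{\rm rel},\nc}\mcC$ is an equivalence on realisations.

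For that I would verify the hypotheses of Corollary \ref{cor:groupoid-basechange}. On $0$-simplices $\ot_\mcC$ sends $(A,c_A)$ to $\bigotimes_{a\in A}c_A(a)$, which is essentially surjective onto $N_0^{\rm rel}\mcC$ by axiom~(i) of Definition \ref{defn:labelled-cospans}, and $\partial_n\colon N_n^{{\rm rel},\nc}\mcC\to (N_0^{\rm rel}\mcC)^{n+1}$ is an iso-fibration as just noted. The substantial condition is that
\[
    (\partial_n^{D},\ot_\mcC)\colon D_n^\nc(\mcC)\longrightarrow (D_0(\mcC))^{n+1}\times_{(N_0^{\rm rel}\mcC)^{n+1}} N_n^{{\rm rel},\nc}\mcC
\]
is an equivalence of groupoids for every $n$. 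The key observation is that if $W(0\to n)$ is reduced then every $W(i\to j)$ is reduced: by axioms (ii)--(iii) a morphism of $\mcC$ is a reduced morphism tensored with an element of the free abelian monoid $\Hom_\mcC(1_\mcC,1_\mcC)$, and any non-trivial such summand, or a closed component created when composing the reduced parts, would survive as a $\otimes$-summand of $W(0\to n)$. Hence in an $n$-simplex of $D_\cd^\nc(\mcC)$ every cospan $\ul X_i$ and every composite cospan is reduced, so has no non-trivial automorphisms and is uniquely determined by the associated morphism of $\mcC$ via Lemma \ref{lem:decomposing-morphisms} and the second part of Lemma \ref{lem:hCsp(C)=C}. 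Granting this, essential surjectivity of $(\partial_n^{D},\ot_\mcC)$ amounts to lifting each reduced $W(i-1\to i)$ to its unique reduced cospan, and full faithfulness amounts to identifying a morphism of $D_n^\nc(\mcC)$ — a tuple of relabellings $\ga_\cd$ of the objects, with the induced relabellings of the reduced cospans forced — with the data of a natural isomorphism of the image functors.

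I expect the main obstacle to be exactly that last identification: one must check that the condition for a tuple $\ga_\cd$ to assemble into a morphism of $D_n^\nc(\mcC)$ (compatibility with all cospan structure maps and labels $o_i$) is equivalent to the naturality squares commuting in $\mcC$. This is where the reducedness of all the intermediate cospans is essential, since it is what pins down the cospan data from the morphisms; I would carry it out using the naturality of the decomposition in Lemma \ref{lem:decomposing-morphisms} together with Lemma \ref{lem:hCsp(C)=C}, and then conclude by Corollary \ref{cor:groupoid-basechange}.
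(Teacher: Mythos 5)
Your plan is correct, and it takes a genuinely different route from the paper. Both arguments hinge on the same two facts: in any $\nc$-simplex every intermediate morphism $W(i\to j)$ is reduced (your argument for this is right, since a closed component of $\pi(W(i\to j))$ has nothing in $\pi(W(i))\amalg\pi(W(j))$ to glue to and so persists as a closed component of $\pi(W(0\to n))$), and, by Lemma~\ref{lem:hCsp(C)=C}, reduced labelled cospans over fixed feet have no automorphisms and biject with $\Hom^\red_\mcC$. The paper applies these directly: it first replaces $\mcC$ by the full subcategory $\mcC'$ of $\ot$-products of connected objects (harmless by Lemma~\ref{lem:Nnc-invariant}), after which $N_0^\nc\ICsp(\mcC)\to N_0^\nc\mcC'$ is surjective on the nose, $\partial_n^{\mcC'}$ is trivially an isofibration since $N_\cd^\nc\mcC'$ has only identity morphisms, and a single application of Corollary~\ref{cor:groupoid-basechange} finishes. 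You instead factor through $D_\cd^\nc(\mcC)$ and the relative nerve $N_\cd^{\rm rel,\nc}\mcC$, which spends three base-changes (one inside Lemma~\ref{lem:NICsp=D}, one for $N_\cd^\nc\mcC\hookrightarrow N_\cd^{\rm rel,\nc}\mcC$, and one for $\ot_\mcC\colon D_\cd^\nc(\mcC)\to N_\cd^{\rm rel,\nc}\mcC$) together with 2-out-of-3; the gain is that the relative nerve carries isomorphisms, so essential surjectivity of $\ot_\mcC$ on objects suffices and you never have to cut down $\mcC$. Your diagnosis of the last step is also the right one: since each $\ul{X}_i$ is reduced it is a quotient of $A_{i-1}\amalg A_i$ with no automorphisms over its feet, so a compatible tuple $\ga_\cd$ determines each $\gp_i$ uniquely, and the naturality condition in $N_n^{\rm rel,\nc}\mcC$ is precisely what ensures the forced $\gp_i$ descends to a well-defined bijection of quotients.
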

\begin{proof}
    The functor $\ot_\mcC: \ICsp(\mcC) \to \mcC$ is essentially surjective, but 
    not necessarily surjective on the nose. Let $\mcC' \subset \mcC$ 
    be the full subcategory on those objects that are in the image,
    i.e.\ those that are literally $\ot$-products of connected objects.
    The inclusion $N_\cd^\nc \mcC' \subset N_\cd^\nc \mcC$ is an equivalence
    on realizations by lemma \ref{lem:Nnc-invariant}.
    
    We will show that $N_\cd^\nc\ICsp(\mcC) \to N_\cd^\nc \mcC'$ 
    satisfies the condition of corollary \ref{cor:groupoid-basechange}.
    By construction $N_0^\nc\ICsp(\mcC) \to N_0^\nc \mcC'$ is surjective.
    The vertex map $\partial_\mcC':N_n^\nc \mcC' \to (N_0^\nc\mcC')^{n+1}$
    is an isofibration because it is a map between groupoids that 
    only have identity morphisms.
    It remains to check that for all $n$ the map
    \[
        N_n^\nc \ICsp(\mcC) \longrightarrow 
        (N_0^\nc \ICsp(\mcC))^{n+1} \times_{N_0^\nc\mcC'} N_n\mcC'
    \]
    is an equivalence of groupoids.
    This follows from the equivalence 
    \[
    \Hom_{\ICsp}^\red((A,c_A), (B,c_B)) 
    \simeq \Hom_\mcC^\red(\ot_\mcC(A, c_A), \ot_\mcC(B,c_B))
    \]
    shown in lemma \ref{lem:hCsp(C)=C} since every morphism 
    involved in $N_n^\nc \ICsp(\mcC)$ is necessarily reduced.
\end{proof}

\subsection{The groupoid of cuts}

We now proceed to construct a simplicial groupoid $\Cut_\cd$ that represents
the complement of $D_\cd^\nc$ in $D_\cd$.
This is a generalization of the ideas of our previous work \cite[Part II]{Stb21},
with the key difference being that the morphisms involved in $\Cut_\cd$ 
are allowed to have closed components whereas the space of cuts
in \cite[Section 7]{Stb21} only used reduced morphisms.

\begin{defn}\label{defn:Cut}
    The simplicial groupoid $\Cut_\cd$ is defined in each level 
    as the full subgroupoid $\Cut_n \subset D_n$ on those objects
    $(A_\cd, \UL{X}_\cd, c_\cd, o_\cd)$ where $A_0$ and $A_n$ are both empty.
    
    There is a projection functor $R_n:D_n \to \Cut_n$ that discards 
    all elements of $A_i$ and $\UL{X}_i$ that correspond to an element of
    $\UL{X}_{0 \to n}$ 
    that lies in the image of the map $A_0 \amalg A_n \to \UL{X}_{0 \to n}$.
    
    We define the simplicial structure on $\Cut_n$ by letting 
    $\gl:[n] \to [m]$ act as
    \[
        \gl_{\Cut}^*: \Cut_m \subset D_m \xrightarrow{\ \gl^*\ } D_n 
        \xrightarrow{\ R_n\ } \Cut_n.
    \]
    We equip each $\Cut_n$ with the PCM structure restricted from the one on $D_n$ (see definition \ref{defn:Csp-PCM}).
\end{defn}

\begin{lem}\label{lem:Cut-is-simplicial}
    The above definition yields a well-defined simplicial PCM groupoid
    $\Cut_\cd$ such that the functors $R_n$ induce a morphism of
    simplicial PCM groupoids: $R_\cd: D_\cd \to \Cut_\cd$.
\end{lem}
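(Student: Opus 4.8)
The plan is to reduce every assertion of the lemma -- functoriality of $\gl\mapsto\gl^*_{\Cut}$, that each $\gl^*_{\Cut}$ is a PCM functor, that $R_\cd$ is simplicial, and that $R_\cd$ is level-wise a PCM functor -- to a single identity between functors $D_m\to\Cut_n$, and then to prove that identity by a direct analysis of connected components in the iterated pushouts of finite sets underlying the objects of $D_\cd$. First I would record the routine points. The full subgroupoid $\Cut_n\subseteq D_n$ inherits the PCM structure, since a disjoint union of cuts is a cut, the empty simplex is a cut, and disjointifying an $n$-tuple of cuts as in the PCM axioms stays inside $\Cut_n$ because being a cut is invariant under isomorphism. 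Moreover $R_n$ deletes \emph{whole} components of each $\ul X_i$ and each $A_i$, because the class in the total composite $\ul X_{0\to n}$ of an element of $\ul X_i$ or $A_i$ depends only on its component; hence $R_n$ is a well-defined functor $D_n\to\Cut_n$ (every element of $A_0$ and of $A_n$ lands in the image of $A_0\amalg A_n$, so $R_n$ makes both of these sets empty), it restricts to the identity on $\Cut_n$ (a cut has no such classes), and it is a PCM functor because the image of $A_0\amalg A_n$ in $\ul X_{0\to n}$ is computed on each $\amalg$-summand separately while $R_n$ merely passes to subsets.

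Granting these, the whole lemma follows from the identity
\[
    R_n \circ \gl^* \;=\; R_n \circ \gl^* \circ R_m
    \qquad \text{as functors } D_m \longrightarrow \Cut_n,
\]
for every $\gl\colon[n]\to[m]$ in $\gD$. Indeed, writing $\gl^*_{\Cut}:=R_n\circ(\gl^*|_{\Cut_m})$, one has $\id^*_{\Cut}=R_n|_{\Cut_n}=\id_{\Cut_n}$, and the displayed identity applied to $m$-simplices of the form $\mu^*(Y)$ gives $(\mu\gl)^*_{\Cut}=\gl^*_{\Cut}\circ\mu^*_{\Cut}$, so $\gl\mapsto\gl^*_{\Cut}$ is functorial; each $\gl^*_{\Cut}$ is a PCM functor as a composite of the PCM functors $\gl^*$ and $R_n$; compatibility of $R_\cd$ with faces and degeneracies is exactly the equation $R_n\gl^*=\gl^*_{\Cut}R_m=R_n\gl^* R_m$, i.e.\ the displayed identity again; and $R_\cd$ is a map of PCM groupoids in each degree by the first paragraph.

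It remains to prove the displayed identity, which is the heart of the matter. For $T\in D_m$ the simplex $\gl^* T$ depends on $T$ only through the sub-range fixed by $a:=\gl(0)$ and $b:=\gl(n)$: its total composite is $\ul X_{a\to b}$ with end-sets $A_a$ and $A_b$, and gluing on the remaining morphisms of $T$ produces a canonical map $q\colon\ul X_{a\to b}\to\ul X_{0\to m}$ that identifies two distinct components of $\ul X_{a\to b}$ only if at least one of them meets $A_a\amalg A_b$. Since a simplex of the form $R_n(-)$ is determined by which components of its total composite survive (together with their labels, which $R_m$ never alters), the identity reduces to showing that two subsets of the set of components of $\ul X_{a\to b}$ coincide: the set $S$ of those not meeting $A_a\amalg A_b$, and the set $S'$ of those $c$ with $q(c)$ not in the image of $A_0\amalg A_m$ and $c$ not meeting $A_a(R_m T)\amalg A_b(R_m T)$ -- where one checks first that $\ul X_{a\to b}(R_m T)$ is, componentwise, the sub-coproduct on the $c$ with $q(c)\notin\mrm{im}(A_0\amalg A_m)$. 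If $c\in S$, then $q(c)=c$, which therefore also avoids $A_0\amalg A_m$, and $A_a(R_m T)\subseteq A_a$, so $c\in S'$; conversely, if some $c\in S'$ met $A_a$ at an element $a_0$, then the class of $a_0$ in $\ul X_{0\to m}$ lies on $q(c)$, which is not in $\mrm{im}(A_0\amalg A_m)$, so $R_m$ does not delete $a_0$, whence $c$ still meets $A_a(R_m T)$, a contradiction; and symmetrically for $A_b$. Hence $S=S'$ and the identity holds.

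The step I expect to be the main obstacle is precisely this last component analysis: carefully tracking how the components of the partial composite $\ul X_{a\to b}$ merge, vanish, or persist under the two orders ``glue on the remaining morphisms, then prune'' and ``prune, then glue on the remaining morphisms'' -- equivalently, checking that $R_m$ commutes with passage to a sub-range in the exact sense above. I would also take care to treat explicitly the boundary cases $a=0$ or $b=m$ (where part of the boundary is already empty) and the case of non-injective $\gl$ (where some intermediate morphisms of $\gl^* T$ are identities); in all of these the total-composite data is unchanged, so the argument above applies without modification.
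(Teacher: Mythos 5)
Your proof is correct and follows essentially the same strategy as the paper's: both reduce everything to the single identity $R_n \circ \gl^* \circ R_m = R_n \circ \gl^*$ and then verify it by tracking which connected components of the total composite survive the pruning. The one place you genuinely diverge in execution is that the paper checks the identity only for the generators $\gd^i$ of $\gD_{\rm inj}$ (splitting into the cases $0<i<n$ versus $i\in\{0,n\}$, and dismissing degeneracies by noting they preserve $\Cut_\cd$), relying implicitly on the fact that the identity is stable under composition of $\gl$'s, whereas you prove it uniformly for an arbitrary $\gl:[n]\to[m]$ by setting $a=\gl(0)$, $b=\gl(n)$ and comparing the two sets $S$ and $S'$ of surviving components of $\ul{X}_{a\to b}$. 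Your uniform treatment is cleaner in that it avoids both the case split and the (unstated in the paper) composability check, at the modest cost of a slightly longer component analysis; both versions hinge on the same geometric observation that an element of $\ul{X}_{a\to b}$ can only merge with, or be discarded in favour of, something outside the range $[a,b]$ if it already touches $A_a\amalg A_b$, and conversely that anything $R_m$ deletes would have been deleted by the later $R_n$ anyway.
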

\begin{proof}
    We need to check that the operations $\gl_\Cut^*:\Cut_m \to \Cut_n$ 
    assemble into a simplicial object. Since we already know that
    $D_\cd$ is a simplicial object it will suffice to show that
    $R_n \circ \gl^* \circ R_m = R_n \circ \gl^*$, which will also
    imply that $R_\cd:D_\cd \to \Cut_\cd$ is a simplicial functor.
    
    First consider the case where $\gl=\gd^i:[n] \to [n+1]$, the unique 
    injective morphism that does not hit $i$ for some $0<i<n+1$. 
    Then $\gl^*=d_i$ is the $i$-th face map, which composes the two adjacent morphisms.
    In the case $0<i<n$ the total composite 
    $\UL{X}_{0 \to n}$
    stays the same before and after applying $d_i$.
    Therefore deleting anything that corresponds to an element 
    of the image of 
    $A_0 \amalg A_n \to \UL{X}_{0 \to n}$
    does the same whether we do it before or after applying $d_i$.
    In formulas this means $\gl^* \circ R_{n+1} = R_n \circ \gl^*$.
    Since $R_n$ is idempotent when thought of as an endofunctor of $D_n$,
    this in particular implies $R_{n} \circ \gl^* \circ R_{n+1} = R_{n} \circ \gl^*$,
    which is what we claimed.
    The case where $\gl=\sigma^i\colon [n] \to [n-1]$ is a degeneracy operator follows similarly, as these also do not change the total composite.

    Now suppose $\gl = \delta^0$, the other remaining case of $\delta^n$ is similar.
    When applying $R_{n} \circ d_0 \circ R_{n+1}$ to an $(n+1)$-simplex
    we first delete anything in the image of 
    $A_0 \amalg A_{n+1} \to \UL{X}_{0 \to n+1}$,
    then we forget the first cospan, and then we delete anything in the image of 
    $A_1 \amalg A_{n+1} \to \UL{X}_{1 \to n+1}$.
    It follows from inspection that anything we delete in the first step
    because it lies in the image of $A_0$ will either be in the
    first cospan or lie in the image of $A_1$ later, so it will be deleted anyway.
    This implies that the first $R_{n+1}$ was redundant and so we have
    $R_{n} \circ d_0 \circ R_{n+1} = R_{n} \circ d_0$.
    
    Since $\gD^\op$ is generated by the $\gd^i$ and $\sigma^j$ 
    it follows that $\Cut_\cd$ is a well-defined simplicial object
    in the category of groupoids and that $R_\cd:D_\cd \to \Cut_\cd$
    is simplicial.
    For the partial commutative monoidal structure
    we simply note that $\Cut_n \subset D_n$ is closed under disjoint
    union and isomorphism, so the structure can be restricted.
    Since we defined $R_n$ in terms of components it commutes with 
    disjoint unions. 
    (As pointed out in remark \ref{rem:PCM-functor-property} it is a property, 
    and not a structure, 
    for a functor between two partial commutative monoidal categories 
    to preserve the structure.)
    It follows that $R_\cd$ is a morphism of simplicial PCM groupoids.
\end{proof}

We have constructed $\Cut$ is such a way that every object $W \in D_n$
canonically decomposes as a disjoint union $W = W^\nc \amalg W^\Cut$
where $W^\nc \in D_n^\nc$ and $W^\Cut \in \Cut_n$. 
This means that we can apply the fiber sequence criterion \ref{thm:fiber-sequence}
to show:
\begin{lem}\label{lem:cut-fiber-sequence}
    Assume that $B\mcC$ is group-like.
    Then the inclusion $I:D_\cd^\nc \subset D_\cd$ and the simplicial functor $R_\cd$
    yield a fiber sequence of infinite loop spaces:
    \[
        |BD_\cd^\nc| \longrightarrow |BD_\cd| \xrightarrow{\ R\ } |B\Cut_\cd|.
    \]
\end{lem}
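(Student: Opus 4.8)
The plan is to apply the general fiber-sequence criterion \ref{thm:fiber-sequence} to the simplicial special $\gC$-spaces $A_\cd := BD_\cd^\nc$, $B_\cd := BD_\cd$, $C_\cd := B\Cut_\cd$, with $f$ the inclusion $I_\cd$ induced by $D_\cd^\nc \subset D_\cd$ and $g$ the functor $R_\cd$ constructed in lemma \ref{lem:Cut-is-simplicial}. Each of these is a simplicial PCM groupoid, hence gives a simplicial special $\gC$-space by the corollary following definition \ref{defn:PCM-2-cat} (applied level-wise), and $I_\cd$ and $R_\cd$ are simplicial maps of PCM groupoids, so they induce simplicial $\gC$-maps. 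Under the group-likeness hypothesis on $B\mcC$ — which, since $D_\cd$ is built from $\ICsp(\mcC)$ and $h\ICsp(\mcC) \simeq \mcC$ by lemma \ref{lem:hCsp(C)=C}, transfers to group-likeness of $|BD_\cd|$, and then (being a retract, as we will see) of $|BD_\cd^\nc|$ and $|B\Cut_\cd|$ — the criterion will upgrade the homotopy cofiber sequence to a homotopy fiber sequence. So it remains to verify the hypotheses of \ref{thm:fiber-sequence}: a null-homotopy of $R_\cd \circ I_\cd$, and for each $n$ a $\gC$-map $s_n : B D_n \to B D_n^\nc$ such that $(s_n, R_n) : BD_n^{\gle 1} \to (BD_n^\nc)^{\gle 1} \times (B\Cut_n)^{\gle 1}$ and $s_n \circ I_n : (BD_n^\nc)^{\gle 1} \to (BD_n^\nc)^{\gle 1}$ are weak equivalences.

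The key structural input is the canonical decomposition $W = W^\nc \amalg W^\Cut$ of any object of $D_n$ discussed just before the lemma: the projection $R_n : D_n \to \Cut_n$ discards exactly the components lying in the image of $A_0 \amalg A_n$, and dually there is a retraction $S_n : D_n \to D_n^\nc$ that discards exactly the \emph{complementary} components (those \emph{not} in that image), i.e.\ keeps only $W^\nc$. I would check that $S_n$ is a functor of PCM groupoids (it is defined component-wise, so it preserves disjoint unions, hence is a $\gC$-map), that $S_n \circ I_n = \id_{D_n^\nc}$ strictly (an $n$-simplex already in $D_n^\nc$ has empty ``$\Cut$-part''), and that $(S_n, R_n) : D_n \to D_n^\nc \times \Cut_n$ is an isomorphism of groupoids — its inverse is disjoint union $\amalg$, using that $D_n^\nc$ consists of $n$-simplices whose total composite is reduced and $\Cut_n$ of those with $A_0, A_n$ empty, so the two parts of any object are canonically disjoint and reassemble uniquely. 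Applying $B(-)$ and taking $\gle 1$-levels, both required maps are then homotopy equivalences (in fact $s_n := BS_n$ makes $s_n \circ I_n$ the identity and $(s_n, R_n)$ an equivalence). For the null-homotopy of $R_\cd \circ I_\cd$: an $n$-simplex in $D_n^\nc$ has reduced total composite, so when we further demand (as in $\Cut_n$) that $A_0$ and $A_n$ be deleted, nothing remains — more precisely $R_n \circ I_n$ lands in the constant simplicial groupoid on the basepoint object $(\emptyset, \emptyset)$, giving a \emph{strict} equality $R_\cd \circ I_\cd = \mathrm{const}$, which certainly furnishes the required homotopy $\ga$ to the unit of $C_\cd$.

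The main obstacle is bookkeeping rather than anything deep: one must make the word ``component'' precise in the simplicial setting — for an $n$-simplex $(A_\cd, \ul X_\cd)$ the relevant notion of ``component'' is an element of the total composite cospan $\ul X_{0\to n}$, and the maps $A_i \to \ul X_{0\to n}$ and $\ul X_i \to \ul X_{0\to n}$ let one speak of which pieces of each $A_i$ and $\ul X_i$ are ``in the image of $A_0 \amalg A_n$.'' One then has to confirm that $S_n$ so defined commutes with faces and degeneracies — this is essentially the same idempotence/compatibility check already carried out for $R_n$ in lemma \ref{lem:Cut-is-simplicial}, with the roles of ``keep'' and ``delete'' swapped — and that $(S_n, R_n)$ is a PCM isomorphism naturally in $n$, so that it respects the $\gC$-structure and the argument of \ref{thm:fiber-sequence} applies verbatim. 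Once these routine verifications are in place, \ref{thm:fiber-sequence} delivers the homotopy cofiber sequence $|BD_\cd^\nc| \to |BD_\cd| \xrightarrow{R} |B\Cut_\cd|$ of special $\gC$-spaces, and group-likeness turns it into the asserted homotopy fiber sequence of infinite loop spaces.
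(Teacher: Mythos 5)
Your overall strategy matches the paper's exactly: invoke theorem \ref{thm:fiber-sequence} with $A_\cd = BD_\cd^\nc$, $B_\cd = BD_\cd$, $C_\cd = B\Cut_\cd$, take $s_n := BS_n$ where $S_n$ keeps exactly the components hit by $A_0 \amalg A_n$, note $S_n \circ I_n = \Id$, check $(S_n,R_n)$ is an equivalence, and use group-likeness to upgrade cofiber to fiber sequence. Two of your subsidiary claims, however, are wrong, and one of them would send you down a dead end if you tried to verify it.

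First, you assert that $S_n$ ``commutes with faces and degeneracies'' and that this is ``essentially the same idempotence/compatibility check already carried out for $R_n$ in lemma \ref{lem:Cut-is-simplicial}, with the roles of `keep' and `delete' swapped.'' This is false: $S_\cd$ is \emph{not} a simplicial map, and the paper explicitly flags this. Here is a counterexample. Take $n=2$, $A_0 = \{a\}$, $A_1 = \{b\}$, $A_2 = \emptyset$, $\ul{X}_1 = \{x,y\}$ with $a \mapsto x$, $b \mapsto y$, $\ul{X}_2 = \{z\}$ with $b \mapsto z$. Then $\ul{X}_{0\to 2} = \{x, \{y,z\}\}$ and the component $\{y,z\}$ is closed (not hit by $A_0 \amalg A_2$), so $S_2(W) = (\{a\},\emptyset,\emptyset; \{x\},\emptyset)$ and $d_0 S_2(W) = (\emptyset,\emptyset;\emptyset)$. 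On the other hand $d_0 W = (\{b\},\emptyset; \{z\})$ has reduced composite (hit by $A_1$), so $S_1(d_0 W) = d_0 W \neq d_0 S_2(W)$. The asymmetry is that $R_n$ deletes by reference to the fixed ambient composite $\ul{X}_{0\to n}$, which is unchanged by inner faces and \emph{shrinks compatibly} under outer faces, while $S_n$ keeps by reference to the image of $A_0 \amalg A_n$, and after $d_0$ the ``boundary'' becomes $A_1$, which can hit things $A_0$ did not. The reason this does not sink the proof is that theorem \ref{thm:fiber-sequence} is deliberately stated so that the $s_n$ are completely unrelated for different $n$; you should simply drop the attempted verification rather than claim it goes through.

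Second, $(S_n,R_n) : D_n \to D_n^\nc \times \Cut_n$ is not an isomorphism of groupoids with inverse $\amalg$. Its image is the full subgroupoid of pairs that are \emph{disjoint} in the ambient $\gO$, since $\amalg$ is only partially defined, and a generic pair $(\ul{X}_\cd, \ul{Y}_\cd) \in D_n^\nc \times \Cut_n$ need not be disjoint. What is true, and what the paper uses, is that $(S_n,R_n)$ is fully faithful and essentially surjective (any pair may be replaced by an isomorphic disjoint pair), hence an equivalence of groupoids — which is all \ref{thm:fiber-sequence} needs since it asks only for weak equivalences after taking classifying spaces.

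Your null-homotopy argument is fine and agrees with the paper: $R_n \circ I_n$ lands strictly in the basepoint $(\emptyset,\dots,\emptyset)$ because for $W \in D_n^\nc$ every component of $\ul{X}_{0\to n}$ is hit by $A_0 \amalg A_n$ and hence gets discarded by $R_n$. Your group-likeness bookkeeping (via $\pi_0 |B D_\cd| \cong \pi_0 B\mcC$, $\pi_0|BD_\cd^\nc| \cong \pi_0 B\mcC$, and $|B\Cut_\cd|$ connected) is also the paper's argument.
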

\begin{proof}
    To check the criteria of theorem \ref{thm:fiber-sequence} we need to
    find for each $n$ a PCM-functor $S_n:D_n \to D_n^\nc$ such that the 
    functors $S_n \circ I: D_n^\nc \to D_n^\nc$ and 
    $(S_n, R_n):D_n \to D_n^\nc \times \Cut_n$ are equivalences of groupoids.

    In analogy to the definition of $R_n$ we let $S_n(W)$ be defined 
    by deleting all components of $A_i$ or $\UL{X_i}$ that 
    \emph{do not} lie in the image of 
    $A_0 \amalg A_n \to \UL{X}_{0 \to n}$.
    Note that this functor does not satisfy the conditions we checked 
    in lemma \ref{lem:Cut-is-simplicial} and so $S_\cd$ does not
    define a semi-simplicial map $D_\cd \to D_\cd^\nc$.
    This is not a problem since theorem \ref{thm:fiber-sequence}
    does not require the sections $S_n$ to be related in any way.
    The definition of $S_n$ plays well with disjoint unions 
    and hence defines a PCM-functor.
    
    The composition $S_n \circ I$ is in fact the identity functor $\Id_{D_n^\nc}$
    by construction, so we only need to check that $(S_n, R_n)$ is an equivalence.
    We begin with essential surjectivity.
    The image of $(S_n, R_n)$ in $D_n^\nc \times \Cut_n$ consists 
    precisely of those tuples $(\UL{X}_\cd, \UL{Y}_\cd)$ 
    such that $\UL{X}_i \bot \UL{Y}_i$ for all $i$.
    By standard arguments this disjointness may always be achieved,
    up to isomorphism, and so $(S_n, R_n)$ is essentially surjective.
    We also need to check fully faithfulness. 
    Let $(A_\cd, \UL{X}_\cd), (C_\cd, \UL{Z}_\cd) \in D_n$ be any two objects.
    (We suppress the data of the labellings $c_{A_i}$ and $o_{\UL{X}_i}$.)
    Then we need to show that an isomorphism 
    $(\ga_\cd,\gp_\cd):(A_\cd, \UL{X}_\cd) \cong (C_\cd, \UL{Z}_\cd)$
    is exactly given by a tuple of isomorphisms
    $S_n(A_\cd, \UL{X}_\cd) \cong S_n(C_\cd, \UL{Z}_\cd)$ and
    $R_n(A_\cd, \UL{X}_\cd) \cong R_n(C_\cd, \UL{Z}_\cd)$.
    But note that any such isomorphism induces compatible isomorphisms:
    \[
        \begin{tikzcd}
            A_0 \amalg A_n \ar[d, "\cong", "\ga_0 \amalg \ga_n"'] \ar[r] &
            \UL{X}_{0 \to n} =  
            {\UL{X}_1 \cup_{A_1} \dots \cup_{A_{n-1}} \UL{X}_n} \ar[d, "\cong"]\\
            C_0 \amalg C_n \ar[r] &
            \UL{Z}_{0 \to n} = 
            {\UL{Z}_1 \cup_{C_1} \dots \cup_{C_{n-1}} \UL{Z}_n} .
        \end{tikzcd}
    \]
    So the isomorphism $(\ga_\cd,\gp_\cd)$ preserves the subsets 
    picked out by $S_n$ and $R_n$, respectively, and it necessarily 
    restricts to an isomorphism on both of those subsets.
    This shows that $(S_n, R_n)$ is an equivalence of categories
    and so we may apply the fiber sequence criterion \ref{thm:fiber-sequence}.
    
    All that remains to complete the proof is to check that all three spaces are group-like.
    The groupoid $\Cut_0$ is trivial be definition and hence $|B\Cut_\cd|$ is connected
    and in particular group-like.
    For the middle space we observe that $D_0 \simeq N_0^{\rm R}\mcC$
    and $D_1 \to N_1^{\rm R}\mcC$ is essentially surjective. 
    Hence $\pi_0|BD_\cd| \cong \pi_0B\mcC$, which we assumed to be group-like.
    In the case of $D_\cd^\nc$ it is still true that $D_0^\nc \simeq N_0^{\rm R}\mcC$
    and even though $D_1^\nc \to N_1^{\rm R}\mcC$ is no longer essentially surjective,
    it is still true that for any morphism $W:M \to N$ in $\mcC$ the $1$-cell
    $S(W) \in D_1^\nc$ connects $M$ and $N$. Hence $\pi_0|BD_\cd^\nc| \cong \pi_0|B\mcC|$
    is group-like.
\end{proof}

\subsection{Decomposing the space of cuts}
So far we have seen that $|BD_\cd|$ fits into a fiber sequence 
between $|BD_\cd^\nc|$ and $|B\Cut_\cd|$. 
The purpose of this section is to show that $|B\Cut_\cd|$ is a free infinite loop space.
We will do so by first showing that each of the $\Cut_n$ is a free symmetric monoidal
groupoid on the connected cuts.

\begin{defn}
    Let $\Cut_\cd^\con \subset \Cut_\cd$ denote the simplicial subgroupoid defined 
    in each level as the full subgroupoid on those 
    $(A_\cd, W_\cd, c_\cd, o_\cd) \in \Cut_n$ where $\UL{X}_{0 \to n}$
    is connected or empty.
\end{defn}

    We need to allow $\UL{X}_{0 \to n}$ to be empty in order for $\Cut_\cd^\con$
    to be closed under those face maps that delete closed components. 
    We take $\emptyset \in \Cut_0^\con$ as the base-point.
    We will prove the following.

\begin{lem}\label{lem:Cut-free-on-con}
    The inclusion $\Cut_\cd^\con \subset \Cut_\cd$ induces an equivalence of $\gC$-spaces
    $ Q(|B\Cut_\cd^\con|) \simeq |B\Cut_\cd|$.
\end{lem}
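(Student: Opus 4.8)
The plan is to prove this by establishing, in each simplicial level $n$, that $\Cut_n$ is the free symmetric monoidal groupoid on the pointed groupoid $\Cut_n^\con$ of connected cuts (with the empty cut as basepoint and monoidal unit), checking that this identification is natural in $n$, and then transporting it through geometric realisation together with a group-completion argument. Throughout, one keeps track of the $\gC$-space structures coming from the PCM structure (lemma \ref{lem:PCM-yields-gC}).

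First I would establish level-wise freeness. Given an object $(A_\cd, \ul X_\cd, c_\cd, o_\cd) \in \Cut_n$, its total composite $\ul X_{0\to n}$ is a presented set; since $A_0 = A_n = \emptyset$ the map $A_0 \amalg A_n \to \ul X_{0\to n}$ is empty, so the diagram is ``purely closed''. Restricting all the $A_i$ and $\ul X_i$ to the preimage of a component $x \in \ul X_{0\to n}$ yields a connected cut $W_x$, and one gets $W \cong \coprod_x W_x$, the disjointness required by the PCM structure being arranged up to isomorphism via axiom (iv). Using the decomposition lemma \ref{lem:decomposing-morphisms}, the description of $D_\cd$ as (essentially) the relative nerve of $\ICsp(\mcC)$, and lemma \ref{lem:hCsp(C)=C}, one then checks that any isomorphism $\coprod_i V_i \cong \coprod_j W_j$ between disjoint unions of connected cuts consists of exactly a type-preserving bijection of the indexing sets together with componentwise isomorphisms, and nothing else. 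These two facts say precisely that the comparison functor from the free symmetric monoidal groupoid on $\Cut_n^\con$ to $\Cut_n$ is an equivalence of categories; passing to classifying spaces, $\gle{k} \mapsto B\Cut_n^{k\bot}$ becomes the free special $\gC$-space on the pointed space $B\Cut_n^\con$ (underlying space $\coprod_{k\ge 0}(B\Cut_n^\con)^k_{h\Sigma_k}$ with the evident basepoint identifications).

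Next I would check naturality in the simplicial direction. By lemma \ref{lem:Cut-is-simplicial} every simplicial operator of $\Cut_\cd$ is a PCM functor, and it preserves the property that $\ul X_{0\to n}$ be connected or empty: inner faces leave the total composite unchanged, while the outer faces followed by $R$ either keep a connected diagram connected or delete it outright (this is the point of allowing the empty cut into $\Cut_\cd^\con$, cf.\ the remark above). Hence each operator restricts to a pointed functor on $\Cut_\cd^\con$, and since it is monoidal and every object of $\Cut_n$ is a disjoint union of connected cuts, it is the image of its restriction under the free-symmetric-monoidal-groupoid construction. So the level-wise equivalences assemble into an equivalence of simplicial special $\gC$-spaces between $\gle{k}\mapsto B\Cut_\cd^{k\bot}$ and the level-wise free special $\gC$-space on $\gle{k}\mapsto B(\Cut_\cd^\con)^{k\bot}$.

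Finally I would realise and group-complete. The functor sending a pointed space to the free $E_\infty$-space (in pointed spaces) on it is a left adjoint, and the forgetful functor to pointed spaces preserves sifted colimits; since $\gD^\op$ is sifted, this functor commutes with the level-wise realisation, and $\gC$-space-wise, so (using that the simplicial spaces involved are good, cf.\ the remark after theorem \ref{thm:base-change}) $|B\Cut_\cd|$ is the free $E_\infty$-space on $|B\Cut_\cd^\con|$. As $\Cut_0^\con$ is the trivial groupoid, $|B\Cut_\cd^\con|$ has a single $0$-simplex and is therefore connected, so this free $E_\infty$-space is already group-like, and the group-completion theorem (\cite{Seg74}) identifies it with $Q(|B\Cut_\cd^\con|)$; this gives the claimed equivalence of $\gC$-spaces. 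The step I expect to be the main obstacle is the level-wise freeness of the first paragraph — concretely, verifying that the componentwise decomposition of a closed diagram is genuinely an isomorphism in $\Cut_n$ and carries no automorphisms beyond permutations and componentwise ones, i.e.\ that the decomposition theory for labelled cospan categories really does control the closed diagrams parametrising $\Cut_n$.
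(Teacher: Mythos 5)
Your proposal is correct and follows essentially the same strategy as the paper's proof: show that each $\Cut_n$ is freely generated by $\Cut_n^\con$ as a special $\gC$-category, verify compatibility with the simplicial operators (which is why the empty cut must be admitted into $\Cut_\cd^\con$), and then commute the free--infinite-loop-space construction past geometric realisation, using connectivity from $\Cut_0 = \{\emptyset\}$ to replace the group completion by the free construction. The paper packages the last step as a standalone lemma (\ref{lem:level-wise-free-sm-cat}, quoting \cite{Seg74} and \cite{May72}) and handles the level-wise freeness by a zig-zag $\mcD_\cd \leftarrow \mcE_\cd \to \Cut_\cd$ through the subcategory $\mcE_\cd$ of pointwise-disjoint tuples — this is the technical device that replaces your direct ``comparison functor,'' which cannot literally exist since disjoint union in the PCM groupoid $\Cut_n$ is only partially defined; but you note disjointness can be arranged up to isomorphism via axiom (iv), which is exactly what the $\mcE_\cd$ zig-zag makes precise, so there is no gap of substance.
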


We recall an explicit construction of the free $\gC$-category based on Segal's work \cite{Seg74}.
\begin{defn}
    Let $\mcC$ be a category. Then the category $\gS_\mcC^{\gle{n}}$ has as
    objects triples $(T, \ga, x)$ where $T$ is a finite set, 
    $\ga:T \to \{1,\dots,n\}$ a map, and $x:T \to \Obj(\mcC)$ an assignment
    of objects that we denote by $t \mapsto x_t$.
    Morphisms $(T, \ga, x) \to (S, \gb, y)$ are pairs of a bijection $\gp:T \cong S$
    satisfying $\gb \circ \gp = \ga$ together
    with a collection of morphisms $f_t:x_t \to y_{\gp(t)}$ indexed by $t \in T$.
    The composition of morphisms is given by 
    $(\psi,g) \circ (\gp,f) = (\psi \circ \gp, h)$ where $h_t = g_{\gp(t)} \circ f_t$.
    For a map $\gl:\gle{n} \to \gle{m}$ in $\gC^\op$ we define a functor
    $\gl_*:\gS_\mcC^{\gle{n}} \to \gS_\mcC^{\gle{m}}$ by sending $(T, \ga, x)$
    to $(T', \gl \circ \ga, x_{|T'})$ where $T'=\{ t \in T \;|\; \gl(\ga(t)) \neq *\}$.
    Moreover, we let 
    \[
        \nu_\mcC: \mcC_+ = \mcC \amalg * \to \Sigma_\mcC^{\gle{1}}
    \]
    denote the functor defined by sending $c \in \mcC$ to $(\{1\}, \id_{\{1\}}, c)$ and the base point to $(\emptyset, \emptyset \to \{1\}, \emptyset)$.
\end{defn}

This construction is functorial in $\mcC$ and in fact it is even functorial in $\mcC_+$, which will allow us to discard certain components of the category by sending them to the base point.
To make this precise, let $\Cat^+$ denote the category whose objects are tuples $(\mcA, a_0)$ of categories $\mcA$ with an object $a_0$ such that $\mcA = \mcA' \amalg \{a_0\}$ for $\mcA' \subset \mcA$ the full subcategory on all objects except $a_0$,
and where morphisms are base-point preserving functors.

\begin{lem}\label{lem:plus-functorial}
    The construction of $\gS_\mcC$ defines a functor
    \begin{align*}
        \Cat^+ & \longrightarrow \gC\text{-}\Cat \\
        (\mcA,a_0) & \longmapsto \gS_{\mcA'}
    \end{align*}
    and $\nu_{\mcA'}: \mcA \cong (\mcA')_+ \to \gS_{\mcA'}^{\gle{1}}$ defines a natural transformation from the identity.
\end{lem}
\begin{proof}
    Let $\mcC$ and $\mcD$ be categories and $F: \mcC_+ \to \mcD_+$ a base-point preserving functor.
    We define $\gS_F: \gS_\mcC \to \gS_\mcD$ by sending $(T,\ga, x)$ to $(T',\ga_{|T'}, F \circ (x_{|T'}))$ where $T' \subset T$ is the subset of those $t \in T$ for which $F(x_t) \in \mcC \subset \mcC_+$, i.e.~those whose label is not sent to the base point by $F$.
    (Note that $F \circ (x_{|T'})$ is indeed valued in $\mcD \subset \mcD_+$ by construction.)
    Each $\gS_F$ is a functor and the construction is functorial in the sense of $\gS_G \circ \gS_F = \gS_{G \circ F}$.
    The naturality of $\nu_\mcC$ follows from the construction: if an object is sent to the base point by $F$, then both $\gS_F^{\gle{1}} \circ \nu_\mcC$ and $\nu_\mcD \circ F$ send it to the empty labelled set.
\end{proof}

Note that the $\gC$-space $\gle{n} \mapsto B(\gS_\mcC^{\gle{n}})$ is isomorphic to the $\gC$-space that is denoted by $\gS_X$ for $X = B\mcA$ in \cite[p. 299]{Seg74}.
Let us denote by $( \blank )^{\rm gp}$ denote the construction that sends a special $\gC$-space to its group completion, which we assume to come with the data of an infinite loop space.
In particular, there is a canonical infinite loop space map from the free infinite loop space $Q(Y^{\gle{1}}) = \colim \gO^N\gS^N Y^{\gle{1}}$ to the group completion $Y^{\rm gp}$ for any special $\gC$-space $Y^{\gle{\cd}}$.
Segal shows that the spectrum associated to $\gS_X$ is the suspension spectrum of $X_+ = X \amalg \{*\}$, so that $Q(X_+) \simeq (\gS_X)^{\rm gp}$.
For every category $\mcC$ the functor $\nu_\mcC: \mcC_+ \to \gS_\mcC^{\gle{1}}$ induces a map 
    \[
        Q(B\mcC_+) \xrightarrow{Q(B\nu_\mcC)} Q(B\gS_{\mcC}^{\gle{1}}) \to (B\gS_\mcC)^{\rm gp}.
    \]
This composite is an equivalence because it is homotopic to the map Segal describes, in the case of $X = \mcC$.
To see this, it suffices to compare their restrictions to $B\mcC$ (where they are identical) and to the base point $* \subset B\mcC_+$ (which is sent to the base point component).

\begin{cor}\label{cor:level-wise-free-sm-cat}
    For every simplicial object $\mcA_\cd: \Delta^\op \to \Cat^+$ there is an equivalence of $\gC$-spaces
    \[
        \nu: Q(|B\mcA_\cd|) \simeq (|B\gS_{\mcA_\cd'}|)^{\rm gp},
    \]
    and if $\mcA_0 = *$, then this is moreover equivalent to $|B\gS_{\mcA_\cd'}|$ without the group completion.
\end{cor}
\begin{proof}
    For all $[n] \in \Delta^\op$ we have an equivalence
    $Q(B\mcA_n) \simeq (B\gS_{\mcA'_n})^{\rm gp}$
    and these are natural in $[n]$ by lemma \ref{lem:plus-functorial}.
    After taking the geometric realisation on both sides, we can commute the geometric realisation inside of $Q(-)$ and $(-)^{\rm gp}$ by \cite[Proposition 12.1 and Theorem 12.3]{May72}.
    See also \cite{BBPTY17} for a more detailed account for the case of the group-completion functor.
    This gives the indicated equivalence.
    Under the additional assumption that $\mcA_0 = *$ we have that $B\gS_{\mcA_0'} = B\gS_{\emptyset}$ is contractible.
    Hence $|B\gS_{\mcA_\cd'}|$ is connected and in particular the canonical map to its group-completion is an equivalence.
\end{proof}

We can now prove lemma \ref{lem:Cut-free-on-con}
by comparing $\Cut_n$ to the free $\gC$-category on $\Cut_n^\con$.
\begin{proof}[Proof of lemma \ref{lem:Cut-free-on-con}]
    The category of connected cuts decomposes as 
    $\Cut_n^\con = \{\emptyset\} \amalg \Cut_n^{\con, \neq\emptyset}$
    and the simplicial operators all preserve $\emptyset$, so that we can consider it as a simplicial object in $\Cat^+$.
    Note that $\Cut_0^\con = \{\emptyset\}$.
    From the functoriality in lemma \ref{lem:plus-functorial} we obtain a $\gC$-category $\mcD_\cd^{\gle{\cd}} := \gS_{\Cut_\cd^{\neq \emptyset}}^{\gle{\cd}}$.
    By corollary \ref{cor:level-wise-free-sm-cat} it satisfies
    \[
        Q(|B\Cut_\cd^\con|) \simeq |B\mcD_\cd|.
    \]
    It therefore suffices to construct maps of $\gC$-functors $\mcD_\cd \leftarrow \mcE_\cd \to \Cut_\cd$ that induce equivalences
    \[
        |B\mcD_\cd| \simeq |B\mcE_\cd| \simeq |B\Cut_\cd|.
    \]
    For fixed $n$ and $m$ we let $\mcE_n^{\gle{m}} \subset \mcD_n^{\gle{m}}$
    be the full subcategory on those $(T, \ga, (A_\cd^t, \UL{X}_\cd^t)_{t \in T})$
    where the $\UL{X}_i^t \in \Cut_n^{\con, \neq \emptyset}$ 
    are pairwise disjoint as $t\in T$ varies.
    One checks that this is closed under face operators,
    and also under functoriality in $\gle{m} \in \gC$.
    As usual this hits all isomorphism classes and hence the inclusion 
    $\mcE_\cd \to \mcD_\cd$ is a level-wise equivalence of simplicial $\gC$-categories.
    In particular, it realises to an equivalence $|B\mcE_\cd| \simeq |B\mcD_\cd|$.
    
    The advantage of this subcategory is that the disjointness condition 
    allows us to define a functor $\amalg:\mcE_n^{\gle{m}} \to \Cut_n^{\gle{m}} \subset (\Cut_n)^m$
    (recall that $\Cut_n^{\gle{m}}$ is defined as in lemma \ref{lem:PCM-yields-gC} as the full subcategory of $(\Cut_n)^m$ on those $m$-tuples that are pairwise disjoint)
    by 
    \[
    (T, \ga: T \to \{1,\dots,m\}, (A_\cd^t, \UL{X}_\cd^t)_{t \in T}) 
    \longmapsto \big(\coprod_{t \in \alpha^{-1}(i)} A_\cd^t, \coprod_{t \in \alpha^{-1}(i)} \UL{X}_\cd^t\big)_{i=1,\dots m}
    \]
    This is well-defined because $\Cut_n$ is a partial commutative 
    monoidal category, and moreover it is natural in $n$ and $\gle{m}$.
    To complete the proof we need to check that 
    $\amalg:\mcE_n^{\gle{1}} \to \Cut_n^{\gle{1}}$ is an equivalence of categories.
    This is indeed the case, and an inverse is given by 
    \[
    \big(A_\cd, \UL{X}_\cd\big)
    \longmapsto (T:= \UL{X}_{0 \to n}, \ga: T \to \{1\}, (A_\cd^t, \UL{X}_\cd^t)_{t \in T}) 
    \]
    where $A_\cd^t \subset A_\cd$ and $\UL{X}_\cd^t \subset \UL{X}$ are the subsets that lie over $t \in \UL{X}_{0 \to n}$.
\end{proof}

\begin{cor}\label{cor:HBCut-is-free}
    The rational homology of $|B\Cut_\cd^\con|$ injects into the 
    rational homology of $|B\Cut_\cd|$ and its image freely generates
    $H_*(|B\Cut_\cd|;\IQ)$ as a commutative algebra.
\end{cor}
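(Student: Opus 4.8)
We read the statement as asserting that the inclusion $\Cut_\cd^\con \subset \Cut_\cd$ induces an injection $H_*(|B\Cut_\cd^\con|;\IQ) \hookrightarrow H_*(|B\Cut_\cd|;\IQ)$ whose image freely generates the target as a graded-commutative $\IQ$-algebra. The plan is to combine Lemma~\ref{lem:Cut-free-on-con} with the classical computation of the rational homology of a free infinite loop space on a connected space.

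First I would record that $|B\Cut_\cd^\con|$ is connected: the groupoid $\Cut_0^\con = \{\emptyset\}$ is trivial, so $B\Cut_0^\con$ is a point, and the realisation of a simplicial space whose space of $0$-simplices is a point is connected. Hence, writing $X := |B\Cut_\cd^\con|$, we have $\tilde H_*(X;\IQ) = H_{>0}(X;\IQ)$ and the space $QX = \gO^\infty\gS^\infty X$ is connected as well.

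Next, by Lemma~\ref{lem:Cut-free-on-con} the inclusion induces a map $\iota\colon X \to |B\Cut_\cd|$ whose adjoint extension $QX \to |B\Cut_\cd|$ to the free infinite loop space is an equivalence; under this equivalence $\iota$ is identified with the unit map $X \to QX$, since the basepoint $\emptyset \in \Cut_0^\con$ is sent to the monoidal unit of $\Cut_\cd$. It then remains to invoke the standard fact that for a connected based space $X$ the rational homology $H_*(QX;\IQ)$ is the free graded-commutative $\IQ$-algebra on $\tilde H_*(X;\IQ)$, with the unit $X \to QX$ realising $\tilde H_*(X;\IQ)$ as the space of free polynomial generators and $H_0(X;\IQ)=\IQ$ as the algebra unit. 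I would justify this by rational homotopy theory: $QX$ is a connected H-space, hence rationally equivalent to a product of Eilenberg--MacLane spaces with exactly $\dim_\IQ \tilde H_n(X;\IQ)$ copies of $K(\IQ,n)$ (because $\pi_n(QX)\otimes\IQ \cong \pi_n^s(X)\otimes\IQ \cong \tilde H_n(X;\IQ)$); since $H_*(K(\IQ,n);\IQ)$ is the free graded-commutative $\IQ$-algebra on a single degree-$n$ generator, and rational homology takes finite products of spaces to tensor products of algebras, one reads off the claimed description of $H_*(QX;\IQ)$ together with the effect of the unit map.

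Putting these inputs together yields the corollary. I do not anticipate a genuine obstacle: the mathematical content is contained in Lemma~\ref{lem:Cut-free-on-con} and in the classical computation of $H_*(QX;\IQ)$, and the only point requiring a moment's care is the identification of the inclusion-induced map $\iota$ with the unit $X \to QX$, which is immediate from the way the equivalence of Lemma~\ref{lem:Cut-free-on-con} is constructed.
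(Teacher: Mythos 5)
Your proposal is correct and follows essentially the same route as the paper: combine Lemma~\ref{lem:Cut-free-on-con} with the classical fact that for connected $X$ the rational homology $H_*(QX;\IQ)$ is the free graded-commutative algebra on $\tilde H_*(X;\IQ)$, then check that the inclusion is identified with the unit $X \to QX$. The paper states these steps more tersely (in particular taking the $H_*(QX;\IQ)$ computation as standard), while you spell out the rational homotopy argument via H-spaces and Eilenberg--MacLane spaces, but the structure and key inputs are the same.
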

\begin{proof}
    For any connected space $X$ the rational homology of $Q(X)$ is a free
    symmetric algebra on the rational homology $X$.
    The zig-zag of equivalences 
    $|B\Cut_\cd| \simeq \dots \simeq Q(|B\Cut_\cd^\con|)$
    constructed in lemma \ref{lem:Cut-free-on-con} is compatible 
    with the $\gC$-space structure and hence induces an algebra isomorphism
    on homology.
    Moreover, this zig-zag is compatible with the natural map from 
    $|B\Cut_\cd^\con|$.
\end{proof}

\subsection{From cuts to factorizations}\label{subsec:cut-to-fact}
In this section we compute the realization of the space of connected cuts in terms 
of the category of factorizations.
We will denote the set of connected endomorphism of $1_\mcC$ by:
\[
    G:= \Hom_\mcC^\con(1_\mcC, 1_\mcC).
\]

We first introduce a variant of $\Cut_\cd^\con$ where we identify isomorphic morphisms and express them as morphisms in $\mcC$ rather than $\ICsp(\mcC)$.
We will shortly see that because we are only considering connected morphisms this variant is level-wise equivalent to $\Cut_\cd^\con$.

\begin{defn}\label{defn:CutmcC}
    We let $\Cc_\cd(\mcC)$ denote the following simplicial groupoid.
    For all $n$ we let $\Cc_n(\mcC)$ be the full subgroupoid of the Rezk nerve $N_n^{\rm R}(\mcC)$ on those diagrams
    \[
        x_0 \xrightarrow{f_1} x_1 \xrightarrow{f_2} \dots \xrightarrow{f_{n-1}} x_{n-1} \xrightarrow{f_n} x_n
    \]
    where $x_0 = 1_\mcC = x_n$ and $f_n \circ \dots \circ f_1: 1_\mcC \to 1_\mcC$ is either a connected morphism or the identity.
    We moreover require%
    \footnote{
        This additional requirement does not change the homotopy type, but it ensures that in lemma \ref{lem:Cut-con=gS} we actually have a homeomorphism to the suspended cone.
    }
     that every $x_i$ that is isomorphic to $1_\mcC$ is in fact equal to $1_\mcC$.
    The degeneracy maps are defined by inserting identity morphisms and the face maps are defined by composing, except for $d_0$ and $d_n$ where we set
    \begin{align*}
        d_0(f_1,\dots,f_n) &= \begin{cases}
            (f_2, \dots, f_n) & \text{ if } f_1 = \id_{1_\mcC} \\
            (\id_{1_\mcC}, \dots, \id_{1_\mcC}) & \text{ if } f_1 \neq \id_{1_\mcC} 
        \end{cases} \\
        d_n(f_1,\dots,f_n) &= \begin{cases}
            (f_1, \dots, f_{n-1}) & \text{ if } f_{n} = \id_{1_\mcC} \\
            (\id_{1_\mcC}, \dots, \id_{1_\mcC}) & \text{ if } f_n \neq \id_{1_\mcC} .
        \end{cases}
    \end{align*}
\end{defn}

These face maps satisfy the simplicial identities.
For example, for $0 < i \le n$ we have
\[
    d_0d_1(f_1,\dots,f_n) = 
    \begin{cases}
        (f_3 ,\dots,\widehat{f_i}, \dots f_n) & \text{ if } f_1 = \id_{1_\mcC} = f_2 \\
        (\id_{1_\mcC}, \dots, \id_{1_\mcC}) & \text{ otherwise }
    \end{cases}
    = d_{0}d_0(f_1,\dots,f_n) 
\]
as $f_2 \circ f_1 = \id_{1_\mcC}$ if and only if $f_1 = \id_{1_\mcC} = f_2$. (If one of them is not the identity on $1_\mcC$ then $x_1$ or $x_2$ is not isomorphic to $1_\mcC$, and then $f_2 \circ f_1 \neq 1_\mcC$ as no object is a non-trivial retract of $1_\mcC$.)

\begin{lem}\label{lem:Cut-to-CutmcC}
    The functor $\otimes_\mcC: \ICsp(\mcC) \to \mcC$ induces a functor of simplicial groupoids $\Cut_\cd^\con \to \Cc_\cd(\mcC)$ that is a level-wise equivalence.
\end{lem}
\begin{proof}
    The functor $\otimes_\mcC$ induces a simplicial functor $D_\cd \to N_n^{\rm R}(\mcC)$ and by how we defined $\Cut_n^\con$ and $\Cc_n(\mcC)$ this restricts%
    \footnote{
        Here we in particular need to check that $\otimes_\mcC(A_i,c_i) \cong 1_\mcC$ implies $\otimes_\mcC(A_i,c_i) = 1_\mcC$, and this is indeed the case as the isomorphism can only exist if $A_i = \emptyset$.
    }
     for all $n$ to a functor 
    \[
        q_n: \Cut_n^\con \to \Cc_n(\mcC).
    \]
    The degeneracy maps and inner face maps of $\Cut_\cd^\con$ and $\Cc_\cd(\mcC)$ are defined by restricting those of $D_\cd$ and $N_n^{\rm R}(\mcC)$, respectively, and therefore $q_\cd$ respects them.
    We now check $d_0$ and the case of $d_n$ is analogous.
    In $\Cut_\cd$ (and thus also in its simplicial subgroupoid $\Cut_\cd^\con$) the first face map $d_0$ is defined on 
    \[
        ((A_0,c_0) \xrightarrow{(X_1,o_1)} \dots \xrightarrow{(X_n,o_n)} A_n)
    \]
    by discarding the first morphism and then deleting all components that intersected $A_1$ so that the result again a simplex that starts and ends at the empty set.
    Since the definition of $\Cut_n^\cd$ assumes that the composite morphism $[X_{0 \to n}, c_{0\to n}]$ is connected or empty, there are two cases:
    either $A_1$ is empty and we indeed just delete the first morphism, 
    or $A_1$ is non-empty and thus intersect the only component of $X_{0\to n}$, so that we must delete it and $d_0((A_0,c_0)\to \dots \to (A_n,c_n)) = (\emptyset = \dots = \emptyset)$ is the trivial $(n-1)$-simplex.
    This exactly agrees with how we defined the outer face maps for $\Cc_\cd(\mcC)$, so $q_n$ is a simplicial map.

    To check that $q_n$ is a level-wise equivalence we can factor it as
    \[
        \Cut_n^\con \xrightarrow{q_n'} \Cc_n(h\ICsp(\mcC)) \to \Cc_n(\mcC).
    \]
    The second functor is a level-wise equivalence because it is obtained by taking the equivalence of categories $\otimes_\mcC: h\ICsp(\mcC) \to \mcC$, applying the Rezk nerve $N_n^{\rm R}$ to it, and then restricting to certain full subcategories defined by connectedness conditions.
    (Note also that the condition that $x_i \cong 1_\mcC \Rightarrow x_i = 1_\mcC$ is trivially satisfied in $h\ICsp(\mcC)$ because no other set is isomorphic to the empty set.)
    The first functor $q_n'$ is an iso-fibration: isomorphism are bijections of the sets $A_i$ compatible with the labellings and morphisms, and we can lift those to isomorphism in $\Cut_n^\con$.
    It will hence suffice to show that the fibers of $q_n'$ are trivial.
    The fiber at some $n$-simplex $((A_0,c_0)\to \dots \to (A_n,c_n))$ is the product of the automorphism groups of the morphisms $(X_i,o_i): (A_{i-1},c_{i-1}) \to (A_i, c_i)$.
    Since we assumed that $[X_{0\to n}, c_{0\to n}]$ is a connected morphism there are two cases for each $X_i$:
    either $A_{i-1} = \emptyset = A_i$, in which case $X_i$ must be a single point,
    or the map $A_{i-1} \amalg A_i \to X_i$ is surjective.
    In either case the automorphism group of $X_i$ as an object of $\Hom_{\ICsp(\mcC)}((A_{i-1},c_{i-1}), (A_i, c_i))$ is trivial.
    This shows that the fibers of $q_n'$ are trivial and thus $q_\cd$ is a level-wise equivalence as claimed.
\end{proof}

We now specifiy a certain simplicial subgroupoid of (a double-decalage of) $\Cc_\cd(\mcC)$, which we will later see is equivalent to the Rezk nerve of the disjoint union of the factorization categories from definition \ref{defn:F_g}.
\begin{defn}\label{defn:CutF}
    Define a simplicial groupoid $\rmF_\cd(\mcC)$ as follows.
    For each $n$ let $\rmF_n(\mcC)$ be the full subgroupoid of $\Cc_{n+2}(\mcC)$ on those
    $(1_\mcC \xrightarrow{f_1} x_1 \to \dots \to x_{n+1} \xrightarrow{f_{n+2}} 1_\mcC) \in N_{n+2}^{\rm R}\mcC$ such that $x_1$ and $x_{n+1}$ are not isomorphic to $1_\mcC$.
    (Recall that $f_{n+2} \circ \dots \circ f_{1}$ is already required to be connected or the identity on $1_\mcC$, and hence this implies that $x_i \not\cong 1_\mcC$ for all $1\le i \le n+1$.)
    The $i$th face map on $\rmF_n(\mcC)$ is defined as the restriction of 
    the $(i+1)$st face map on $\Cc_n(\mcC)$, i.e.~it composes the $i$th and $(i+1)$st morphism.
    The $i$th degeneracy map is defined as the restriction the $(i+1)$st degeneracy map on $\Cc_n(\mcC)$, i.e.~it inserts an identity morphism in position $i+1$.
    There is an augmentation $\rmF_\cd(\mcC) \to G = \Hom_\mcC^\con(1_\mcC, 1_\mcC)$ 
    defined by sending any $(f_1,\dots, f_{n+2})$
    to the composite $f_{n+2} \circ \dots \circ f_1$.
\end{defn}

\begin{defn}
    For a map of spaces $f:X \to Y$ we will use the following coordinates on the (reduced) suspension on the cone:
    \[
        \Sigma\mathrm{Cone}(f) = 
        \left(* \amalg \{ (x, a, b) \in X \times [0,1]^2 \;|\; a+ b \le 1\} 
        \amalg \{ (y, a, b) \in Y \times [0,1]^2 \;|\; a+b = 1 \} \right)
        /\sim
    \]
    where $\sim$ is generated by $(x,a,b) \sim *$ whenever $a\cdot b=0$
    and by $(x,a,b) \sim (f(x),a,b)$ whenever $a+b=1$.
\end{defn}

\begin{ex}\label{ex:cone-and-unreduced-suspension}
    If $Y$ is discrete, then we can decompose $X = \coprod_{y \in Y} X^{(y)}$ and we can rewrite the suspended cone as
    \[
        \Sigma\mathrm{Cone}\left( X \to Y \right)
        \cong \bigvee_{y \in Y} \Sigma\mathrm{Cone}\left( X^{(y)} \to \{y\} \right)
        \cong \bigvee_{y \in Y} S^2(X^{(y)})
    \]
    where $S^2(-)$ denotes the unreduced suspension defined as $S^2(X) := (S^1 \amalg D^2 \times X)/\sim$ with $(a,x) \sim a$ for all $a \in S^1 \subset D^2$ and $x \in X$.
    (For example, $S^2(\emptyset) = S^1$, $S^2(*) = D^2$, and $S^2(* \amalg *) = S^2$.)
\end{ex}

    The space $|B\Cc_\cd(\mcC)|$ is the geometric realization of the bisimplicial set whose set of $(m,n)$-simplices is $N_m\Cc_n(\mcC)$.
    This means that an $(m,n)$-simplex is an $(m+1)$-tuple of objects $(f_1^j,\dots,f_n^j) \in \Cc_n(\mcC)$ for $j = 0,\dots, m$ and isomorphisms between them.
    (We suppress the isomorphisms in the notation.)
    Note that $x_i^0 \not\cong 1_\mcC$ if and only if $x_i^j \not\cong 1_\mcC$ for all $j$, as we have isomorphisms $x_i^0 \cong x_i^j$.

\begin{lem}\label{lem:Cut-con=gS}
    There is a homeomorphism 
        $\alpha:|B\Cc_\cd(\mcC)| \xrightarrow{\cong} 
        \Sigma\mathrm{Cone}(|B\rmF_\cd(\mcC)| \to G)$
    defined by
    \begin{align*}
        &[(1_\mcC \xrightarrow{f_1^j} x_1^j \to \dots \to x_{n-1}^j \xrightarrow{f_n^j} 1_\mcC)_{j=0}^m, (s_0,\dots, s_m), (t_0, \dots, t_n)]\\
        &\longmapsto
        \big[[(1_\mcC \xrightarrow{f_{c}^j} x_c^j \to \dots \to x_{d-1}^j \xrightarrow{f_{d}^j} 1_\mcC)_{j=0}^m, 
        (s_0, \dots, s_m),
        (\tfrac{t_c}{T_{cd}}, \dots, \tfrac{t_{d-1}}{T_{cd}})],
        T_{<c}, T_{d\le}\big]
    \end{align*}
    if at least one $x_i^0$ is not isomorphic to $1_\mcC$.
    Here $0 \le c < d \le n$ where $c$ is the smallest number such that $f_c^0$ is not $\id_{1_\mcC}$ and $d$ is the largest number such that $f_d^0$ is not $\id_{1_\mcC}$ and 
    \[
        T_{<c} := t_0 + \dots + t_{c-1}, \quad
        T_{cd} := t_c + \dots + t_{d-1}, \quad
        T_{d\le} := t_{d} + \dots + t_n.
    \]
    If all the $x_i^0$ are isomorphic to $1_\mcC$, then either all $f_i^0$ are equal to $\id_{1_\mcC}$ and we send it to the basepoint, or there is a unique $c$ such that $f_c^0$ is the is not $\id_{1_\mcC}$ and we set
    \begin{align*}
        &[(1_\mcC \xrightarrow{f_1^j} x_1^j \to \dots \to x_{n-1}^j \xrightarrow{f_n^j} 1_\mcC)_{j=0}^m, (s_0,\dots,s_m), (t_0, \dots, t_n)]\\
        &\longmapsto
        \big[(1_\mcC \xrightarrow{f_{c}^0} 1_\mcC) \in G, 
        T_{<c}, T_{c\le}\big].
    \end{align*}
\end{lem}
\begin{proof}
    For every $(m,n)$-simplex in $(f_1^j,\dots,f_n^j)_{j=0}^m \in N_m\Cc_n(\mcC)$ that involves an object not isomorphic to $1_\mcC$, the formula in the statement defines a map of the form
    \[
        |\Delta^m| \times |\Delta^n| 
        \to N_m \rmF_{d-c-1}(\mcC) \times |\Delta^m| \times |\Delta^{d-c-1}| \times \{(a,b) \in [0,1]^2 \;|\; a+b\le 1\}
        \to \Sigma\mrm{Cone}(|B\rmF_\cd(\mcC)| \to G),
    \]
    where the first map picks the bi-simplex in $N_m \rmF_{d-c-1}(\mcC)$ that is specified by the morphisms $(f_c^j, \dots, f_d^j)_{j=0}^m$, i.e.~that is obtained by restricting to the non-trivial part of the diagram we started with.
    If all the $x_i^0$ are the unit, the map is instead of the form
    \[
        |\Delta^m| \times |\Delta^n| 
        \to G \times \{(a,b) \in [0,1]^2 \;|\; a+b = 1\}
        \to \Sigma\mrm{Cone}(|B\rmF_\cd(\mcC)| \to G).
    \]
    These maps glue together under the face and degeneracy maps to define the map $\alpha$ from the statement of the lemma, as we shall now argue.
    To illustrate the compatibility with face maps in the second simplicial direction, suppose $(f_1^j,\dots,f_n^j)_{j=0}^m$ is an $(m,n)$ simplex where $f_1^0 \neq \id_{1_\mcC}$ then by definition of the face maps in $\Cc_\cd(\mcC)$ (\ref{defn:CutmcC}) we have $d_0(f_1^j,\dots,f_n^j)_{j=0}^m = (\id_{1_\mcC},\dots, \id_{1_{\mcC}})$, which is sent to the base point by $\alpha$.
    On the other hand, $\alpha$ sends a point $(t_0,\dots,t_n)$ in the original simplex to $[\dots, T_{<c}, T_{d\le}]$ and in this case $c=1$ so $T_{<c} = t_0$ and if we restrict to the $0$th face then $t_0 = 0$, so that $\alpha$ sends the point to the base point in $\Sigma\mrm{Cone}(\dots)$.
    A dual argument deals with $d_n$, and compatibility with inner face maps and degeneracies holds by inspection of the formulas.
    
    To check that the compatibility with face maps in the first simplicial direction, the key is to observe that definitions we made in terms of $x_i^0$ and $f_i^0$ could equivalently be said in terms of any other $x_i^j$ and $f_i^j$.
    This is true because we know that $x_i^0 \cong x_i^j$ and $f_i^0 \cong f_i^j$ (in $\Ar(\mcC)$) and all the conditions we used are invariant under isomorphism.
    (Even the condition $f_b^0 \neq \id_{1_\mcC}$ is invariant under isomorphism in $\Cc_n(\mcC)$ because in definition \ref{defn:CutmcC} we required that $x_i \cong 1_\mcC$ implies $x_i = 1_\mcC$ and moreover every automorphism of $1_\mcC$ is the identity.)

    We now describe an inverse map $\beta: \Sigma\mathrm{Cone}(|B\rmF_\cd(\mcC)| \to G) \to |B\Cc_\cd|$.
    First, we describe it on $|B\rmF_\cd(\mcC)| \times \{(a,b)\;|\; a+b\le 1\}$, where it is given as
    \begin{align*}
        &\big[[(1_\mcC \xrightarrow{f_1^i} x_1^i \to \dots \to x_{n-1}^i \xrightarrow{f_n^i} 1_\mcC)_{i=0}^m, (s_0,\dots, s_m), (t_1, \dots, t_{n-1})], a, b\big]\\
        &\mapsto
        [(1_\mcC \xrightarrow{f_1^i} x_1^i \to \dots \to x_{n-1}^i \xrightarrow{f_n^i} 1_\mcC)_{i=0}^m, 
        (s_0,\dots, s_m), (a, (1-a-b)\cdot t_1, \dots, (1-a-b)\cdot t_{n-1}, b)]
    \end{align*}
    for all $m\ge 0$ and $n\ge 2$.
    Again, this should be interpreted as a map that is glued together from maps of topological simplices $|\Delta^m| \times |\Delta^{n-2}| \times \{(a,b) \;|\; a+b \le 1\} \to |\Delta^m| \times |\Delta^{n}|$.
    Second, we describe it on $G \times \{(a,b)\;|\; a+b=1\}$.
    Here we send $[(g:1_\mcC \to 1_\mcC) \in G, a, b]$
    to $[[1_\mcC \xrightarrow{g} 1_\mcC, (1), (a,b)], a, b]$.
    Inspecting definitions we see that this glues together to give a well-defined map out of $\Sigma\mrm{Cone}(\dots)$.

    By construction the composite $\alpha \circ \beta$ is the identity on $\Sigma\mrm{Cone}(|B\rmF_\cd(\mcC)| \to G)$,
    but the other composite $\beta \circ \alpha$ might not look like $\id_{|B\Cc_\cd(\mcC)|}$ at first sight.
    On the simplices with some $x_i \neq 1_\mcC$ it is given by
    \begin{align*}
        &[(1_\mcC \xrightarrow{f_1^j} x_1^j \to \dots \to x_{n-1}^j \xrightarrow{f_n^j} 1_\mcC)_{j=0}^m, (s_0, \dots, s_m), (t_0, \dots, t_n)]\\
        &\longmapsto
        [(1_\mcC \xrightarrow{f_{c}^j} x_c^j \to \dots \to x_{d-1}^j \xrightarrow{f_{d}^j} 1_\mcC)_{j=0}^m,
        (s_0, \dots, s_m),
        (T_{<c}, t_c, \dots, t_{d-1}, T_{d\le})]
    \end{align*}
    Since we assume that $x_1, \dots, x_{c-1}$ and $x_{d}, \dots, x_{n-1}$
    are all isomorphic (and hence equal) to $1_\mcC$, it follows that $f_1, \dots f_{c-1}$ and $f_{d+1}, \dots, f_n$
    are the identity morphisms on $1_\mcC$.
    (If they were non-identities, they would contribute another connected component to $f_{0 \to n}$, which is required to be connected.)
    By the definition of the geometric realization we are allowed 
    to remove the identity morphism $f_1$ and add the adjacent weights $t_0$ and $t_1$. 
    Iterating this shows that $\beta \circ \alpha$ simply sends every point to a different representative of the same point, i.e.\ it is the identity on the bi-simplices with $x_i^0 \not\cong 1_\mcC$ for some $i$.
    In the case where all $x_i^j$ are the unit, as similar but simpler argument shows the same.
\end{proof}

    The augmentation yields a disjoint decomposition:
    \[
        \rmF_\cd(\mcC) \cong  \coprod_{g \in G} \rmF_\cd^{(g)}(\mcC)
    \]
    where we let $\rmF_n^{(g)}(\mcC)$ be the fiber of $\rmF_n(\mcC) \to G$ at $g$.

Now we show that $|B\rmF_\cd(\mcC)|$ is in fact the classifying space of the category 
of factorisations.
\begin{lem}\label{lem:CutF=F}
    There is a level-wise equivalence between the simplicial groupoid $\rmF_\cd^{(g)}(\mcC)$ and the Rezk nerve of the factorization category $\mcF_g(\mcC)$.
    In particular, we by corollary \ref{cor:groupoid-basechange} have homotopy equivalences
    \[
        B\mcF_g(\mcC) \simeq |BN_\cd^{\rm R} \mcF_g(\mcC)| \simeq |B\rmF_\cd^{(g)}\mcC|.
    \]
\end{lem}
\begin{proof}
    Without loss of generality we may assume that every object in $\mcC$ that is isomorphic to $1_\mcC$ is in fact equal to $1_\mcC$ -- this only changes $\mcF_g(\mcC)$ and its Rezk nerve up to equivalence.
    Under this hypothesis there is an isomorphism of simplicial groupoids $\rmF_\cd^{(g)}(\mcC) \cong N_\cd^{\rm R}\mcF_g(\mcC)$, which is given by
    \[
        \left(
        \begin{tikzcd}[column sep = 2.25em]
	{1_\mcC} & {x_1} & \dots & {x_{n+1}} & {1_\mcC}
	\arrow["{f_1}", from=1-1, to=1-2]
	\arrow["{f_2}", from=1-2, to=1-3]
	\arrow[from=1-3, to=1-4]
	\arrow["{f_{n+1}}", from=1-4, to=1-5]
\end{tikzcd}
        \right)
        \longmapsto
        \left(
        \begin{tikzcd}
        	& {1_\mcC} \\
        	{x_1} & {x_2} & \dots & {x_{n+1}} \\
        	& {1_\mcC}
        	\arrow["{f_1}"', from=1-2, to=2-1] \arrow[from=1-2, to=2-2] \arrow["{f_n \circ \dots \circ f_1}", from=1-2, to=2-4] \arrow["f_2", from=2-1, to=2-2] \arrow["{f_{n+1} \circ \dots \circ f_2}"', from=2-1, to=3-2] \arrow[from=2-2, to=2-3] \arrow[from=2-2, to=3-2] \arrow[from=2-3, to=2-4] \arrow["{f_{n+1}}", from=2-4, to=3-2]
        \end{tikzcd}
        \right)
    \]
    on $n$-simplices.
    This is compatible with the face maps and degeneracy maps:
    except for $d_0$ and $d_n$ they are on both sides just given by composing morphisms or inserting identity morphisms.
    (The face map $d_0$ forgets $x_1$ and composes $f_2 \circ f_1$, which on the right side corresponds to forgetting the object $(1_\mcC \to x_1 \to 1_\mcC)$ of $\mcF_g(\mcC)$.)
\end{proof}

To summarise, we record:
\begin{cor}\label{cor:Cut=Q(SBF)}
    There are equivalences of $\gC$-spaces:
    \begin{align*}
        |B\Cut_\cd| 
        &\simeq Q(|B\Cut_\cd^\con|) 
        \simeq Q(|B\Cc_\cd(\mcC)|) 
        \simeq Q\left(\bigvee_{g \in G} S^2|B\rmF_\cd^{(g)}| \right) 
        \simeq Q\left(\bigvee_{g \in G} S^2 (B\mcF_g) \right) .
    \end{align*}
\end{cor}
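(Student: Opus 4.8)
The plan is to simply chain together the three main results of this section in order, keeping careful track of the $\gC$-space structures. For the first equivalence I would invoke Lemma~\ref{lem:Cut-free-on-con} directly: it already asserts that the inclusion $\Cut_\cd^\con \subset \Cut_\cd$ induces an equivalence of $\gC$-spaces $Q(|B\Cut_\cd^\con|) \simeq |B\Cut_\cd|$, where the left-hand side carries the free $\gC$-structure on the based space $|B\Cut_\cd^\con|$. (Note that $\Cut_\cd^\con$ is \emph{not} closed under disjoint union, which is precisely why the free functor $Q$ has to appear here.)

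For the second equivalence I would feed the homeomorphism of Lemma~\ref{lem:Cut-con=gS} through the decomposition of Remark~\ref{rem:cone-and-unreduced-suspension}: combining the identification $|B\Cut_\cd^\con| \cong \Sigma\,\mathrm{cone}(|B\Cut_\cd^F| \to G)$ with the disjoint decomposition $\Cut_\cd^F \cong \coprod_{g \in G}\Cut_\cd^{F,(g)}$ over the augmentation to $G$ yields a homeomorphism $|B\Cut_\cd^\con| \cong \bigvee_{g \in G} S^2|B\Cut_\cd^{F,(g)}|$ of based spaces. Since $Q$ is a functor from based spaces to $\gC$-spaces, applying it turns this homeomorphism into the desired equivalence of $\gC$-spaces $Q(|B\Cut_\cd^\con|) \simeq Q\!\left(\bigvee_{g \in G} S^2|B\Cut_\cd^{F,(g)}|\right)$.

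For the third equivalence I would use Lemma~\ref{lem:CutF=F}. That lemma produces a weak equivalence $\|B\Cut_\cd^F\| \simeq B\mcF$, and under the compatible disjoint decompositions $\Cut_\cd^F = \coprod_{g} \Cut_\cd^{F,(g)}$ and $\mcF = \coprod_g \mcF_g$ it restricts, for each fixed $g \in G$, to a weak equivalence $\|B\Cut_\cd^{F,(g)}\| \simeq B\mcF_g$. Since each $B\Cut_\cd^{F,(g)}$ is a level-wise classifying space of a groupoid, it is good in the sense discussed after Theorem~\ref{thm:base-change}, so the canonical map $\|B\Cut_\cd^{F,(g)}\| \to |B\Cut_\cd^{F,(g)}|$ is a weak equivalence; hence $|B\Cut_\cd^{F,(g)}| \simeq B\mcF_g$. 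The unreduced double suspension $S^2(-)$ preserves weak equivalences of based spaces, and so does $Q\!\left(\bigvee_{g \in G}(-)\right)$, so we obtain the last equivalence $Q\!\left(\bigvee_{g} S^2|B\Cut_\cd^{F,(g)}|\right) \simeq Q\!\left(\bigvee_{g} S^2(B\mcF_g)\right)$ of $\gC$-spaces.

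The argument is essentially pure bookkeeping, so there is no serious obstacle; the one point that requires a moment's care is to make sure every intermediate object in the chain is understood as a $\gC$-space with the \emph{free} structure on the indicated based space (so that plain based weak equivalences suffice to produce $\gC$-equivalences) and that the passage between $\|\cdot\|$ and $|\cdot|$ is legitimate — both of which are already handled by the cited results.
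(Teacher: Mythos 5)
Your proposal is correct and follows exactly the paper's own proof, which consists of combining Lemma~\ref{lem:Cut-free-on-con}, Lemma~\ref{lem:Cut-con=gS}, and Lemma~\ref{lem:CutF=F}. The extra bookkeeping you add (the $\|\cdot\|$ vs.\ $|\cdot|$ passage, the compatibility with the free $\gC$-structure) is accurate and is already implicit in the cited lemmas.
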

\begin{proof}
   Combine lemma \ref{lem:Cut-free-on-con}, lemma \ref{lem:Cut-to-CutmcC}, example \ref{ex:cone-and-unreduced-suspension}, lemma \ref{lem:Cut-con=gS}, 
   and lemma \ref{lem:CutF=F}.
\end{proof}

Together with lemma \ref{lem:cut-fiber-sequence}  and lemma \ref{lem:NncICsp=NncC} this proves the decomposition theorem \ref{thm:decomposition}.

\subsection{The reduced category}
We now prove the comparison result proposition \ref{prop:closed-reduced} 
between $\ICsp(\mcC)$, $\mcC$ and $\mcC^\red$, in particular showing that $\ICsp(\mcC) \to \mcC$ is a rational equivalence on classifying spaces.
This uses similar results to what we have already seen in the proof of the decomposition theorem \ref{thm:decomposition}.

\begin{defn}
    Let $\ICsp(\mcC)^\cl \subset \ICsp(\mcC)$ denote the full subcategory 
    on the object $\emptyset$ and let $\mcC^\cl \subset \mcC$ denote 
    the full subcategory on the object $1_\mcC$.
\end{defn}

\begin{lem}\label{lem:B-of-cl}
    There are canonical equivalences of infinite loop spaces:
    \[
        B(\mcC^\cl) \simeq B\left(\bigoplus_G \IN\right)
        \qand
        B(\ICsp(\mcC)^\cl) \simeq Q\left(\bigvee_G S^1\right) .
    \]
\end{lem}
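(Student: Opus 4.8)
The plan is to analyze the two full subcategories separately, each of which is a one-object symmetric monoidal groupoid. First I would identify $\mcC^\cl$. By axiom (ii) of definition \ref{defn:labelled-cospans}, the abelian monoid $\Hom_\mcC(1_\mcC, 1_\mcC)$ is free on the set $G = \Hom_\mcC^\con(1_\mcC, 1_\mcC)$, so as a monoid it is $\bigoplus_G \IN$. Since $\mcC^\cl$ is the full subcategory on the single object $1_\mcC$, it is precisely the one-object category whose endomorphism monoid is this free commutative monoid, and its symmetric monoidal structure (restricted from $\mcC$) is given by $\amalg$, which corresponds to addition in $\bigoplus_G \IN$. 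Hence $\mcC^\cl$ is equivalent as a symmetric monoidal category to the (discrete) monoid $\bigoplus_G \IN$ viewed as a one-object category, giving $B(\mcC^\cl) \simeq B(\bigoplus_G \IN)$ as an equivalence of $\gC$-spaces (hence of infinite loop spaces after group completion — note that $B(\bigoplus_G \IN)$ is not group-like, but the statement is just an equivalence of the spaces, or of $E_\infty$-spaces).

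Next I would identify $\ICsp(\mcC)^\cl$, the full sub-$2$-category of $\ICsp(\mcC)$ on the object $\emptyset$. By the last sentence of lemma \ref{lem:hCsp(C)=C}, $\Hom_{\ICsp(\mcC)}(\emptyset,\emptyset)$ is the free symmetric monoidal groupoid on the set $G$; that is, it is equivalent to $\coprod_{n\ge 0}\left(\coprod_{G^n}\!/\Sigma_n\right)$, the groupoid of finite sets labelled in $G$, which is exactly $\coprod_{n} E\Sigma_n \times_{\Sigma_n} G^n$. Its classifying space is therefore $\coprod_n (E\Sigma_n \times_{\Sigma_n} G^n)$, and as a $\gC$-space (with the disjoint-union monoidal structure, which is the restriction of the PCM structure on $\ICsp(\mcC)$) this is the free special $\gC$-space on the set $G$. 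I would then invoke the fact, recalled in the proof of lemma \ref{lem:level-wise-free-sm-cat} following Segal \cite{Seg74}, that the $\gC$-space $\gS_X$ associated to a set $X$ (here $X = G$, a discrete space) has group completion the suspension spectrum of $G_+ = \bigvee_G S^0$, so its group completion is $Q(\bigvee_G S^0)$. But since $\ICsp(\mcC)^\cl$ has a single object $\emptyset$, its classifying space is connected, and hence is already group-like; more precisely the canonical map $B(\ICsp(\mcC)^\cl) \to Q(\bigvee_G S^0)$ induced by group completion... wait: I should instead note that the free $E_\infty$-space (not group-completed) on $G$ is $\coprod_n E\Sigma_n\times_{\Sigma_n}G^n$ and its group completion is $Q(\bigvee_G S^0)$; since this is not connected I instead argue as in lemma \ref{lem:Cut-con=gS}-style reasoning: actually the cleanest route is that $B(\ICsp(\mcC)^\cl)$, having one object, is a connected $E_\infty$-space, hence equivalent to its own group completion; on the other hand $\pi_0$ of the free $E_\infty$-space on $G$ is $\bigoplus_G\IN$, not a group — so $B(\ICsp(\mcC)^\cl)$ cannot be the full free $E_\infty$-space. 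The resolution is that $\ICsp(\mcC)^\cl$ is the \emph{reduced} free symmetric monoidal groupoid: each object is connected by an isomorphism to $\emptyset$ only through... no. Let me reconsider: in $\ICsp(\mcC)$ the object $\emptyset$ is the unit, and $\Hom(\emptyset,\emptyset)$ is the free \emph{symmetric monoidal groupoid} on $G$, meaning its objects are finite $G$-labelled sets and there is genuinely more than one object. So $B(\ICsp(\mcC)^\cl)$ has $\pi_0 = \bigoplus_G\IN$. Thus I was wrong that it is connected; it has one \emph{object of the $2$-category} but its endomorphism groupoid has many objects. So $B(\ICsp(\mcC)^\cl) \simeq \coprod_n E\Sigma_n\times_{\Sigma_n}G^n$, the free $E_\infty$-space on $G$, whose group completion is $Q(\bigvee_G S^0)$. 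The claimed answer, though, is $Q(\bigvee_G S^1)$, not $Q(\bigvee_G S^0)$ — so there must be an extra suspension, presumably because the classifying space of a $2$-category introduces the nerve of $\Hom$-categories, i.e. $B(\ICsp(\mcC)^\cl)$ is really the realization of the simplicial space $[n]\mapsto B(N_n(\Hom(\emptyset,\emptyset)))$ and taking $B$ of a symmetric monoidal \emph{groupoid} $\Gamma$ that is itself free on a set $G$ yields (by Barratt-Priddy-Quillen / Segal) a space whose group completion is $\Omega^\infty \Sigma^\infty_+(B\Gamma_{\text{pt}})$... I would then trace through carefully that the relevant space is $Q$ of the suspension spectrum of $BG$ where $G$ is discrete, landing in degree giving the extra $S^1$; concretely the classifying space of the one-object category with a single loop for each $g\in G$ is $\bigvee_G S^1$, and the $\gC$-space here is $\gS$ applied to $\bigvee_G S^1$ rather than to $G$. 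This is the point I would need to get exactly right.

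So the key steps, in order: (1) use axiom (ii) to identify $\mcC^\cl$ with the one-object category on the free monoid $\bigoplus_G \IN$ and conclude $B(\mcC^\cl)\simeq B(\bigoplus_G\IN)$, compatibly with the $E_\infty$-structure; (2) use the last sentence of lemma \ref{lem:hCsp(C)=C} to identify $\Hom_{\ICsp(\mcC)}(\emptyset,\emptyset)$ — and hence $\ICsp(\mcC)^\cl$ — with the free symmetric monoidal groupoid on $G$; (3) compute $B$ of this $2$-category: its classifying space is, by definition, the realization of the simplicially enriched category obtained by taking nerves of $\Hom$-groupoids, and since the underlying one-object groupoid has classifying space $\bigvee_G S^1$ and the symmetric monoidal (PCM/$\gC$) structure makes it the free $\gC$-space, invoke Segal's computation (as recalled in lemma \ref{lem:level-wise-free-sm-cat}, with $X = \bigvee_G S^1$) that its group completion is $\Omega^\infty\Sigma^\infty(\bigvee_G S^1) = Q(\bigvee_G S^1)$; (4) observe this $E_\infty$-space is already group-like since $\bigvee_G S^1$ is connected, so $B(\ICsp(\mcC)^\cl)\simeq Q(\bigvee_G S^1)$ as infinite loop spaces.

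\textbf{The main obstacle} will be step (3): correctly accounting for the fact that we are taking the classifying space of a \emph{$(2,1)$-category}, so that the endomorphism groupoid $\Gamma = \Hom_{\ICsp(\mcC)}(\emptyset,\emptyset)$ contributes its \emph{classifying space} $B\Gamma$ rather than its set of objects — and that $B$ of the free symmetric monoidal groupoid on $G$, as a $2$-category with one object, is the free infinite loop space not on the discrete set $G$ but on $\bigvee_G S^1$ (equivalently on $B\Gamma_{\mathrm{one\ object}}$). I would pin this down by writing the classifying space of $\ICsp(\mcC)^\cl$ as $|[n]\mapsto B\,N_n\Gamma|$ and matching it levelwise with Segal's $\gS$-construction applied to the connected space $\bigvee_G S^1$, using that the symmetric monoidal structure on $\Gamma$ is the free one so that $N_n\Gamma$ is the free $\gC$-category construction $\gS^{\gle{n}}$ of section preceding lemma \ref{lem:level-wise-free-sm-cat} applied appropriately; the rest is then a direct citation of the Barratt–Priddy–Quillen–Segal theorem as already invoked in the proof of lemma \ref{lem:level-wise-free-sm-cat}.
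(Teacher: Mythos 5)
Your treatment of the first equivalence matches the paper exactly: axiom (ii) identifies $\mcC^\cl$ with the one-object category on the free commutative monoid $\bigoplus_G\IN$, and that is the entire argument.

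For the second equivalence you reach the right answer, but there is a genuine gap in step (3), and you half-acknowledge this yourself. The claim that ``the underlying one-object groupoid has classifying space $\bigvee_G S^1$'' is not correct: the relevant groupoid $\Gamma=\Hom_{\ICsp(\mcC)}(\emptyset,\emptyset)$ is the free symmetric monoidal groupoid on the set $G$, which has many objects (finite $G$-labelled sets), and $B\Gamma\simeq\coprod_{n\ge 0}E\Sigma_n\times_{\Sigma_n}G^n$ is the free $E_\infty$-space on the \emph{discrete} set $G$, not $\bigvee_G S^1$. The wedge of circles only appears after the \emph{second} bar construction -- realising the simplicial space $[n]\mapsto B(\Gamma^{\times n})$ coming from the nerve of the one-object $(2,1)$-category -- and it is precisely this extra delooping that produces the suspension. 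You identify this as ``the main obstacle'' and then propose to cite lemma \ref{lem:level-wise-free-sm-cat} with $X=\bigvee_G S^1$, but that begs the question: lemma \ref{lem:level-wise-free-sm-cat} takes as input a simplicial category $\mcA_\cd$ of ``generators,'' not a space $X$, and you have not produced the $\mcA_\cd$ that realises to $\bigvee_G S^1$.

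The paper supplies exactly this missing ingredient. It defines $K_\cd\subset N_\cd\ICsp(\mcC)$ as the level-wise full simplicial subgroupoid of those $n$-simplices $\ul{X}:[n]\to\ICsp(\mcC)$ with $\ul{X}_{0\to n}:\emptyset\to\emptyset$ connected or empty, applies lemma \ref{lem:level-wise-free-sm-cat} to the pair $(K_\cd\subset N_\cd\ICsp(\mcC)^\cl)$ to get $B(\ICsp(\mcC)^\cl)\simeq Q(\|BK_\cd\|)$, and then exhibits a level-wise equivalence $BK_\cd\simeq Y_\cd:=(\gD^1\times G)/(\partial\gD^1\times G)$, so that $\|BK_\cd\|\simeq\bigvee_G S^1$. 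This last identification (recording which $i\in\{1,\dots,n\}$ carries the unique connected morphism, and which label $g\in G$ it has) is where the $S^1$ actually comes from; your argument would need to do something equivalent rather than assert it.
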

\begin{proof}
    By definition $\mcC^\cl$ is a category with a single object $1_\mcC$
    and the endomorphisms of this object are 
    $\Hom_\mcC(1_\mcC, 1_\mcC) \cong \bigoplus_G \IN$ by axiom (ii) 
    of definition \ref{defn:labelled-cospans}.
    This implies the first equivalence.
    
    The second equivalence can easily be proven from the perspective 
    of \cite{Seg74}, but for the readers convenience 
    we will prove it as an application of corollary \ref{cor:level-wise-free-sm-cat}.
    (Note, however, that this corollary is also based on \cite{Seg74}.)
    Indeed, the groupoid $\Hom_{\ICsp(\mcC)^\cl}(\emptyset, \emptyset)$ 
    is ``freely generated'' by its connected objects in the sense that it is equivalent to $\Sigma_{\mc{H}}$ for $\mc{H} = \Hom_{\ICsp(\mcC)^\cl}^\con(\emptyset, \emptyset)$.
    Let $K_\cd$ be the level-wise full simplicial subgroupoid of the nerve 
    $N_\cd \ICsp(\mcC)^\cl$ containing all those objects $\UL{X}:[n] \to \ICsp(\mcC)^\cl$
    where $\UL{X}_{0 \to n}: \emptyset \to \emptyset$ is connected or empty.
    Because face and degeneracy maps preserve this subgroupoid, it defines a functor $K_\cd: \Delta^\op \to \Cat^+$ and we can apply \ref{cor:level-wise-free-sm-cat} to obtain an equivalence
    \[
        Q(\|B K_\cd\|) \simeq 
        \|B\gS_{K_\cd^{\neq \emptyset}}\| \simeq 
        |BN_\cd\ICsp(\mcC)^\cl| = B(\ICsp(\mcC)^\cl) .
    \]
    Here the middle equivalence comes from an equivalence of simplicial $\gS$-categories $\gS_{K_\cd^{\neq \emptyset}} \simeq N_\cd\ICsp(\mcC)^\cl$ which can be constructed in the same way as in the proof of lemma \ref{lem:Cut-free-on-con}.

    We will show that $BK_\cd$ is level-wise equivalent to 
    $Y_\cd := (\gD^1 \times G)/(\partial \gD^1 \times G)$,
    the geometric realization of which is a wedge of circles, one for each element of $G$.
    The $n$-simplices of $\gD^1 \times G$ can be thought of as tuples
    $(i,g) \in \{0,\dots,n+1\} \times G$ where $i$ represents the unique map
    $[n] \to [1]$ with $i \mapsto 0$ and $(i+1) \mapsto 1$.
    The simplicial set $Y_\cd$ is obtained by identifying all tuples $(i,g)$
    with $i \in \{0,n+1\}$.
    Indeed, there is a level-wise equivalence of simplicial groupoids
    $K_\cd \simeq Y_\cd$ defined by sending $(\UL{X}:[n] \to \ICsp(\mcC)) \in K_n$
    to the basepoint if $\UL{X}_{0 \to n}$ is empty and otherwise
    to $(i, [\UL{X}_{0 \to n}])$ where $i \in \{1,\dots,n\}$ is the unique
    $i$ such that $\UL{X}_{i-1 \to i}$ is non-empty.
    From this it follows that
    \[
        \| BK_\cd \| \simeq \| (\gD^1 \times G)/(\partial \gD^1 \times G) \|
        \simeq \bigvee_{g \in G} S^1. \qedhere
    \]
\end{proof}

Now we can prove proposition \ref{prop:closed-reduced}, which states that we have fiber sequences of infinite loop spaces
    \[
        \begin{tikzcd}
            Q\left(\bigvee_G S^1\right) \ar[d] \ar[r] & 
            B(\ICsp(\mcC)) \ar[d] \ar[r] & 
            B(\mcC^\red) \ar[d, equal] \\
            B\left(\bigoplus_G \IN\right) \ar[r] & 
            B(\mcC) \ar[r] & 
            B(\mcC^\red).
        \end{tikzcd}
    \]
whenever $B\mcC$ is grouplike, and in particular that the middle map is a rational equivalence.

\begin{proof}[Proof of proposition \ref{prop:closed-reduced}]
    For the purpose of the proof we will assume that $\mcC$ has a PCM structure, rather than a symmetric monoidal structure, and that the functor $\ICsp(\mcC) \to \mcC$ is a PCM functor. 
    (This can be achieved by the construction from example \ref{ex:PCM-vs-sm}.)
    Consider the following diagram of simplicial PCM groupoids:
    \[
        \begin{tikzcd}
            D_\cd^\cl \ar[d] \ar[r] & 
            D_\cd \ar[d] \ar[r] & 
            D_\cd^\red \ar[d] \\
            N_\cd \mcC^\cl \ar[r] & 
            N_\cd^{\rm R} \mcC \ar[r] & 
            N_\cd^{\rm R} \mcC^\red.
        \end{tikzcd}
    \]
    Here $D_\cd^\cl \subset D_\cd$ is the level-wise full subgroupoid 
    on those $(\UL{X}_\cd, A_\cd)$ with $A_i = \emptyset$ for all $i$.
    $D_\cd^\red$ is defined level-wise as the full subgroupoid $D_n^\red \subset D_n$
    on those $n$-simplices that represent $n$-tuples of composable reduced cospans.
    Similar to the definition of $\Cut_\cd$ we define the face maps in $D_\cd^\red$
    by taking the face map in $D_\cd$ and then forgetting closed components.
    In the bottom row $N_\cd \mcC^\cl$ is simply the simplicial commutative monoid
    where each $N_n\mcC^\cl$ is isomorphic to $(\IN\gle{G})^n$.
    
    It follows from lemma \ref{lem:B-of-cl} and our previous results 
    that the realization of this diagram is a commutative diagram 
    of $\gC$-spaces that is equivalent to the diagram stated in 
    the claim of this proposition.
    Moreover, each of the terms is group-like (note that $\pi_0B\mcC^\red \cong \pi_0B\mcC$ since there is a morphism $x \to y$ in $\mcC^\red$ if and only if there is such a morphism in $\mcC$), and so we have 
    a diagram of infinite loop spaces.
    
    To prove the proposition we apply the general fiber sequence criterion
    theorem \ref{thm:fiber-sequence} to the two sequences of simplicial PCM groupoids.
    For the first one, consider the PCM functor:
    \[
        S_n: D_n \to D_n^\cl, \qquad 
        (A_\cd, \UL{X}_\cd, c_\cd, o_\cd) \mapsto 
        (\emptyset, \UL{Y}_\cd, \emptyset, (o_\cd)_{|\UL{Y}_\cd})
    \]
    where each $\UL{Y}_i \subset \UL{X}_i$ is defined as the complement 
    of the image of $A_{i-1} \amalg A_i \to \UL{X}_i$.
    This map is a section to the inclusion $D_n^\cl \to D_n$, so to apply the fiber
    sequence theorem we need to check that $(S_n, R_n):D_n \to D_n^\cl \times D_n^\red$
    is an equivalence. This functor is given by
    \[
        (A_\cd, \UL{X}_\cd, c_\cd, o_\cd) \mapsto 
        \left((\emptyset, \UL{Y}_\cd, \emptyset, (o_\cd)_{|\UL{Y}_\cd}), 
        (A_\cd, \UL{Z}_\cd, c_\cd, (o_\cd)_{|\UL{Z}_\cd})\right)
    \]
    where $\UL{Y}_i$ is as above and $\UL{Z}_i \subset \UL{X}_i$ is the image
    of $A_{i-1} \amalg A_i$, i.e.\ the complement of $\UL{Y}_i$.
    Just as in the proof of \ref{lem:cut-fiber-sequence} we see that this functor is an 
    isomorphism onto the subcategory of disjoint tuples and that hence 
    it is an equivalence.
    
    To conclude that the first row is a homotopy fiber sequence we note that 
    $\pi_0|BD_\cd^\cl| \cong *$, $\pi_0|BD_\cd| \cong \pi_0B\mcC$, 
    and $\pi_0|BD_\cd^\red| \cong \pi_0 B\mcC^\red$ are all group-like.
    
    For the second row the proof is similar. The section is given by
    \[
        S_n: N_n^{\rm R}\mcC \to N_n \mcC^\cl, \quad
        (W:[n] \to \mcC) \mapsto (V_1, \dots, V_n) \in (\Hom_\mcC(1_\mcC,1_\mcC))^n = N_n\mcC^\cl
    \]
    where $V_i \subset W(i-1 \to i)$ is the union of the closed components.
    This is a PCM functor where the commutative monoid $N_n\mcC^\cl$ 
    is thought of as a PCM category with only identity morphisms 
    and where all objects are disjoint.
    Since this is indeed a section to the inclusion $N_n\mcC^\cl \to N_n^{\rm R}\mcC$
    we again need to check that the functor 
    $(S_n,R_n): N_n^{\rm R}\mcC \to N_n\mcC^\cl \times N_n\mcC^\red$
    is an equivalence. 
    This follows from axiom (iii) of definition \ref{defn:labelled-cospans}.
    This completes the proof that both rows are homotopy fiber sequences.
    
    Finally, we want to check that $B(\ICsp(\mcC)) \to B(\mcC)$ is a rational equivalence.
    By comparing the homotopy fibers of the first two vertical maps between 
    the two homotopy fiber sequence we obtain the following homotopy fiber sequence:
    \[
        \tau_{\ge 2} Q(\bigvee_{g \in G} S^1) \longrightarrow
        B(\ICsp(\mcC)) \longrightarrow 
        B(\mcC).
    \]
    The homotopy groups of the left-hand space are 
    $\pi_k \tau_{\ge 2} Q(\bigvee_{g \in G} S^1) 
    = \bigoplus_{g \in G} \pi_{k-1}^{\rm st}(S^0)$ for $k \ge 2$.
    This is rationally trivial because the stable homotopy groups of spheres
    $\pi_*^{\rm st}(S^0)$ are finite for $* \ge 1$.
    Hence the map $B(\ICsp(\mcC)) \to B(\mcC)$ is an isomorphism on rational 
    homotopy groups, as claimed.
\end{proof}

\subsection{Contracting the factorisation category}
\label{subsec:mcF-contractible}

In this section we show that the factorisation categories for 
$\Cob_d$ have contractible classifying spaces for all $d \le 2$.
The argument is based on an argument showing $B\mcF(\Csp) \simeq *$
and crucially uses the existence of the disk morphism $D^d:\emptyset \to S^{d-1}$.
In particular, it will not apply to $\Cobn$.

\begin{defn}
    For a non-empty set $X$ let $\Fin_{/X}^{\neq \emptyset, \rm inj}$
    denote the category where objects are tuples $(A,a)$ 
    of a non-empty finite set $A$ together with a map $a:A \to X$,
    and morphisms $(A,a) \to (B,b)$ are injections $i:A \to B$
    such that $a = b \circ i$.
\end{defn}

\begin{lem}\label{lem:Fin-contractible}
    For every non-empty set $X$ the classifying space $B\Fin_{/X}^{\neq \emptyset, \rm inj}$
    is contractible.
\end{lem}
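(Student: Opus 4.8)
The plan is to show that $B\Fin_{/X}^{\neq\emptyset,\mathrm{inj}}$ is contractible by exhibiting the category as having a kind of ``homotopy-initial-ish'' structure, namely by realising it as a poset-like category to which one can apply a standard contractibility criterion. The most efficient route I would take is to observe that $\Fin_{/X}^{\neq\emptyset,\mathrm{inj}}$ admits a symmetric monoidal structure (or at least a coproduct-like operation): given $(A,a)$ and $(B,b)$ we can form $(A\amalg B, a\amalg b)$, and the two inclusions $(A,a)\to(A\amalg B,a\amalg b)\leftarrow(B,b)$ are morphisms in the category. This makes $B\Fin_{/X}^{\neq\emptyset,\mathrm{inj}}$ into an $H$-space (in fact a topological monoid up to coherence). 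Combined with connectedness — which holds because any two objects $(A,a)$ and $(B,b)$ are connected by the zig-zag $(A,a)\to(A\amalg B, a\amalg b)\leftarrow(B,b)$, using non-emptiness of $A\amalg B$ — this already shows $B\Fin_{/X}^{\neq\emptyset,\mathrm{inj}}$ is a connected $H$-space. That is not yet contractibility, so a second ingredient is needed.

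The cleanest second ingredient is to show directly that the identity functor of $\Fin_{/X}^{\neq\emptyset,\mathrm{inj}}$ is connected by a zig-zag of natural transformations to a constant functor. Fix once and for all an element $x_0\in X$ (possible since $X\neq\emptyset$) and let $* := (\{0\}, 0\mapsto x_0)$ be the corresponding one-element object. Define $F:\Fin_{/X}^{\neq\emptyset,\mathrm{inj}}\to\Fin_{/X}^{\neq\emptyset,\mathrm{inj}}$ by $F(A,a) := (A\amalg\{0\}, a\amalg(0\mapsto x_0))$ on objects and $F(i) := i\amalg\id_{\{0\}}$ on morphisms; this is a well-defined functor. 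There is a natural transformation $\id \Rightarrow F$ given on $(A,a)$ by the inclusion $A\hookrightarrow A\amalg\{0\}$, and a natural transformation $\mathrm{const}_* \Rightarrow F$ given on $(A,a)$ by the inclusion $\{0\}\hookrightarrow A\amalg\{0\}$ (naturality is immediate since morphisms are injections over $X$ and these inclusions are natural in the obvious way). Passing to classifying spaces, natural transformations give homotopies, so $BF$ is homotopic both to $B(\id) = \id_{B\Fin}$ and to $B(\mathrm{const}_*)$, which is the constant map at the point $[*]$. Hence $\id_{B\Fin_{/X}^{\neq\emptyset,\mathrm{inj}}}$ is null-homotopic, i.e. the space is contractible.

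The main thing to be careful about — and what I expect to be the only mildly delicate point — is the naturality of the two transformations and the fact that all the maps involved really are morphisms in $\Fin_{/X}^{\neq\emptyset,\mathrm{inj}}$, i.e. injections commuting with the structure maps to $X$; this is a routine check but must be spelled out, in particular that $A\amalg\{0\}\to X$ is indeed given by $a$ on $A$ and $x_0$ on $\{0\}$, consistently for $F$ applied to a morphism. One should also note $F(A,a)$ is non-empty (it always is, containing $0$), so $F$ does land in the right category, and that the construction did not require $A$ to be non-empty at all, so nothing breaks. No group-completion or homology-stability input is needed here; this is purely a ``natural transformation to a constant functor'' argument, which is the standard way to prove classifying spaces of such comma-type categories are contractible, and it is how I would present it.
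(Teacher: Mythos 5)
Your proof is correct and is essentially the same as the paper's: both fix $x_0\in X$, consider the functor adding a disjoint point labelled $x_0$, and use the two natural transformations from the identity and from the constant functor to $F$ to conclude. The first paragraph's $H$-space observation is a harmless digression that you yourself note is not needed; the second paragraph is the argument.
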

\begin{proof}
    Pick any $x_0 \in X$ and consider the functor 
    \[
        F:\Fin_{/X}^{\neq \emptyset, \rm inj} \to \Fin_{/X}^{\neq \emptyset, \rm inj} \qquad
        (A,a) \mapsto (A \amalg \{x_0\}, a \amalg \id_{\{x_0\}})
    \]
    that adds a new point to $A$ and labels it by $x_0$.
    There is a natural transformation $\ga:\Id_{\Fin_{/X}^{\neq \emptyset, \rm inj}} \Rightarrow F$
    coming from the canonical inclusion $\ga_A : A \hookrightarrow A \amalg \{x_0\}$.
    There also is a natural transformation $\gb:G \Rightarrow F$ from the 
    constant functor $G(A,a) := (\{x_0\}, \id_{\{x_0\}})$ coming
    from the other inclusion $\gb_A: \{x_0\} \hookrightarrow A \amalg \{x_0\}$.
    Both $\ga$ and $\gb$ induce homotopies after taking classifying spaces 
    and together they imply that the identity on $B\Fin_{/X}^{\neq\emptyset, \rm inj}$
    is homotopic to $BG$, which is constant.
\end{proof}

\begin{lem}\label{lem:F(Csp)-contractible}
    The factorisation category $\mcF(\Csp)$ has a contractible classifying space.
\end{lem}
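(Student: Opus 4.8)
The plan is to exhibit $\mcF(\Csp)$ as a homotopy-retract of a subcategory that has a terminal object. First I would unwind the definitions: the unit of $\Csp$ is $\emptyset$, so an object of $\mcF(\Csp)$ is a diagram of finite sets $A \xleftarrow{\alpha} M \xrightarrow{\beta} B$ with $M$ non-empty such that the pushout $A \amalg_M B$ has exactly one element; since $M$ is non-empty, $\alpha$ and $\beta$ are then automatically surjective (an element of $A$ outside the image of $\alpha$ would be an isolated point of the pushout). A morphism $(A \xleftarrow{\alpha} M \xrightarrow{\beta} B) \to (A' \xleftarrow{\alpha'} M' \xrightarrow{\beta'} B')$ is an isomorphism class of cospans $[M \xrightarrow{p} C \xleftarrow{q} M']$ with $A' \cong A \amalg_M C$ (compatibly with the leg $M' \xrightarrow{q} C \to A'$) and $B \cong C \amalg_{M'} B'$ (compatibly with the leg $M \xrightarrow{p} C \to B$); in particular every such morphism induces a canonical map $\phi\colon A \to A'$, namely $A \to A \amalg_M C \cong A'$.

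Second I would introduce the endofunctor $F\colon \mcF(\Csp) \to \mcF(\Csp)$ given on objects by $F(A \xleftarrow{\alpha} M \xrightarrow{\beta} B) := (A \xleftarrow{\id_A} A \xrightarrow{!} \{*\})$, which replaces the factorisation by ``$|A|$ disks merged into a point''. This lands in $\mcF(\Csp)$ precisely because $A \amalg_A \{*\} = \{*\}$ — this is exactly where the disk morphism $[\emptyset \to \{*\} = \{*\}]$ of $\Csp$ enters, and it has no analogue in $\Cobn$. On a morphism $f$ with induced map $\phi\colon A \to A'$ I set $F(f)$ to be the morphism $(A,\id,!) \to (A',\id,!)$ given by $\phi$; functoriality is immediate since pushouts compose. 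A direct cospan computation then shows that for any object $(N,\mu,\nu)$ with $A$-side $P$ the set $\Hom_{\mcF(\Csp)}\bigl((A,\id_A,!_A),(N,\mu,\nu)\bigr)$ is canonically the set of all maps $A \to P$; consequently the full subcategory $\mcF'$ on the objects $(A,\id_A,!_A)$ is isomorphic to the category $\Fin^{\neq\emptyset}$ of non-empty finite sets and all maps, $F\circ\iota = \Id_{\mcF'}$ for the inclusion $\iota\colon \mcF' \hookrightarrow \mcF(\Csp)$, and since $\Fin^{\neq\emptyset}$ has $\{*\}$ as a terminal object we get $B\mcF' \simeq *$.

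Third I would construct the counit. For $X = (A \xleftarrow{\alpha} M \xrightarrow{\beta} B)$ the identity $\id_A$ gives, under the identification above applied with $P = A$ and $\mu = \alpha$, a morphism $\eps_X\colon F(X) = (A,\id_A,!_A) \to X$; I would check that $\eps$ is natural, which after unwinding just records that both composites in the naturality square induce $\phi$ on $A$-sides. Together with $F\iota = \Id_{\mcF'}$ this shows that $B\iota$ and $BF$ are mutually inverse homotopy equivalences (in fact $\iota \dashv F$), so $B\mcF(\Csp) \simeq B\mcF' \simeq *$.

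The step I expect to be the main obstacle is the bookkeeping in the last two paragraphs: one must pin down precisely the morphism sets of $\mcF(\Csp)$ out of the objects $(A,\id,!)$, and verify that the pushout identifications used to define $F$ on morphisms and to define $\eps$ match the three conditions in the definition of a morphism of $\mcF(\Csp)$, keeping careful track of cospan isomorphism classes throughout. Everything is elementary manipulation of pushouts of finite sets, but it is easy to get a variance or an identification slightly wrong.
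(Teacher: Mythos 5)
Your proof is correct, and it takes a genuinely different route from the paper's. You exhibit the full subcategory $\mcF'$ on the objects $(A,\id_A,!_A)$ as a homotopy retract of $\mcF(\Csp)$ via an adjunction $\iota \dashv F$, and then observe that $\mcF' \cong \Fin^{\neq\emptyset}$ has $\{*\}$ as a terminal object, so $B\mcF' \simeq *$. The paper instead forms a Grothendieck construction $\mcF'$ over $\mcF(\Csp)$ with fibers $\Fin_{/X}^{\neq\emptyset,\mathrm{inj},\op}$, uses the contractibility of $B\Fin_{/X}^{\neq\emptyset,\mathrm{inj}}$ (lemma \ref{lem:Fin-contractible}, which is not a terminal-object argument since $\Fin^{\neq\emptyset,\mathrm{inj}}$ has no terminal object) to show the forgetful functor is a homotopy equivalence, and then produces a second functor $G$ factoring through the contractible $B\Fin^{\neq\emptyset,\mathrm{inj}}$ together with a natural transformation $G \Rightarrow F$, concluding the identity on $B\mcF(\Csp)$ is null-homotopic. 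Your approach is cleaner for $\Csp$, but the paper's setup is chosen so that it carries over essentially verbatim to lemma \ref{lem:F(Cob)-contractible} for $\Cob_d$: there the analogue of your subcategory $\mcF'$ consists of factorisations $(A \times S^{d-1}, A \times D^d, Q_A)$, and only \emph{injections} of finite sets yield morphisms between them, so one gets $\Fin^{\neq\emptyset,\mathrm{inj}}$ rather than $\Fin^{\neq\emptyset}$ and the terminal-object/adjunction argument is unavailable. One small point worth making explicit in your write-up: the well-definedness of the induced map $\phi\colon A \to A'$ and of the bijection $\Hom_{\mcF(\Csp)}((A,\id_A,!_A),(N,\mu,\nu)) \cong \Map(A,P)$ both rely on the uniqueness of the comparison isomorphism of cospans, which holds precisely because $\mu\colon N \to P$ is surjective — the surjectivity you established at the outset is doing real work twice.
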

\begin{proof}
    Consider the functor $\partial_0:\mcF(\Csp) \to \Fin$ that sends 
    the factorisation $(W:1_\mcC \to M, W':M \to 1_\mcC)$ to the underlying 
    set $\pi(W)$ of the first morphism.
    To a morphism $X:M \to N$ from $(M,W,W')$ to $(N,V,V')$ 
    this assigns the composite $\pi(W) \to \pi(W) \cup_{\pi(M)} \pi(X) \cong \pi(V)$.
    A short explanation is due to argue why we can talk about $\pi(W)$ 
    as a well-defined set. A priori the morphism $W$ only yields an 
    isomorphism class of cospans $[\emptyset \to \pi(W) \leftarrow \pi(M)]$,
    but not a well-defined set $\pi(W)$. But because this is part 
    of a factorisation we know that $\pi(M) \to \pi(W)$ is surjective.
    In particular we can define $\partial_0(M, W, W')$ as the quotient
    of $\pi(M)$ by the relation induced by $\pi(M) \to \pi(W)$.
    
    We define a new category $\mcF'$ as the Grothendieck construction 
    of the functor $\mcF(\Csp) \to \Cat$ that sends $(M,W,W')$ to 
    $\Fin_{/\partial_0(M,W,W')}^{\neq \emptyset, \rm inj, \op}$.
    Spelling out the construction we see that an object in $\mcF'$
    is a tuple $((M,W,W'),(A,a))$ where $(M,W,W') \in \mcF(\Csp)$,
    $A$ is a non-empty finite set and $a:A \to \partial_0(M,W,W')$.
    A morphism $((M,W,W'),(A,a)) \to ((N,V,V'),(B,b))$ is a 
    tuple of a morphism $X:M \to N$ in $\mcF(\Csp)$ and an injection $i:A \to B$ 
    such that $\partial_0(X) \circ a = b \circ i$.
    Consider the forgetful functor $F:\mcF' \to \mcF(\Csp)$ that sends 
    $((M,W,W'),(A,a))$ to $(M,W,W')$.
    
    It is a consequence of Quillen's theorem A that,
    given any functor $\Phi:\mcC \to \Cat$ where each $B(\Phi(c))$ is contractible,
    the forgetful functor $\int_\mcC \Phi \to \mcC$ is an equivalence on classifying spaces.
    (One can also think of this as an instance of Thomason's homotopy colimit theorem \cite{Tho79}.)
    In lemma \ref{lem:Fin-contractible} we showed that each 
    $\Fin_{/\partial_0(M,W,W')}^{\neq \emptyset, \rm inj, \op}$
    has a contractible classifying space,
    hence $F:\mcF' \to \mcF(\Csp)$ is a homotopy equivalence on classifying spaces.
    We will prove the lemma by showing that the homotopy equivalence 
    $B\mcF' \to B\mcF(\Csp)$ is null-homotopic.
    
    Define a functor $G:\mcF' \to \mcF(\Csp)$ by
    \[
        ((M, [\emptyset \to W \leftarrow M], [M \to V \leftarrow \emptyset]),(a:A \to W))
        \mapsto 
        (A, [\emptyset \to A \xleftarrow{=} A], [A \to * \leftarrow \emptyset])
    \]
    to a morphism $(X,i):((M,W,W'),(A,a)) \to ((N,V,V'),(B,b))$ this
    assigns the morphism 
    \[
        [A \xrightarrow{i} B \xleftarrow{=} B].
    \]
    This functor by definition factors through the forgetful functor $\mcF' \to \Fin^{\neq\emptyset, \rm inj}$ that sends $((M,W,W'), (A,a))$ to $A$. 
    Since the classifying space $B\Fin^{\neq\emptyset, \rm inj}$ is contractible
    (set $X=*$ in lemma \ref{lem:Fin-contractible}),
    we can conclude that $BG:\mcF' \to \mcF(\Csp)$ is null-homotopic.
    
    We construct a natural transformation $\ga:G \Rightarrow F$
    by assigning to each $((M,W,W'), (A,a))$ the morphism
    \[
        \ga_{((M,W,V), (A,a))}:= [A \xrightarrow{a} W \leftarrow M] :
        (A, [\emptyset \to A \xleftarrow{=} A], [A \to * \leftarrow \emptyset])
        \to (M, W, V).
    \]
    To see that this is a well-defined morphism in the factorisation category we check
    \begin{align*}
        [A \to W \leftarrow M] \cup_M [M \to W' \leftarrow \emptyset] 
        &= [A \to * \leftarrow \emptyset] \\
        [\emptyset \to A \leftarrow A] \cup_A [A \to W \leftarrow M] 
        &= [\emptyset \to W \leftarrow M].
    \end{align*}
    Here the first equation used that $W \cup_M W' = *$ holds for all objects 
    $(M, W, W') \in \mcF(\Csp)$.
    We also need to check that $\ga$ is natural. 
    To see this, consider a morphism 
    $(X,i):((M,W,W'),(A,a)) \to ((N,V,V'),(B,b))$
    where $X = [M \to X \leftarrow N]:(M,W,W) \to (N,V,V')$ 
    and $i:A \hookrightarrow B$.
    For any such morphism we have that:
    \[
        [A \xrightarrow{i} B \xleftarrow{=} B] \cup_B [B \xrightarrow{b} V \leftarrow N]
        = [A \xrightarrow{b \circ i} V \leftarrow N]
        = [A \xrightarrow{a} W \leftarrow M] \cup_M [M \to X \leftarrow N]
    \]
    since $V = W \cup_M X$ and $b \circ i = \partial_0(X) \circ a$.
    
    In summary, we have shown that the forgetful functor induces 
    a homotopy equivalence $BF:B\mcF' \to B\mcF(\Csp)$,
    that the map $BG:B\mcF' \to B\mcF(\Csp)$ factors through
    the contractible space $B\Fin^{\neq\emptyset,\rm inj}$,
    and that $\ga$ induces a homotopy between $BF$ and $BG$.
    This implies that $B\mcF(\Csp)$ is contractible as claimed.
\end{proof}

The same argument applies to $B\mcF_a(\Csp(A,A_1,\ga)) \simeq *$ for all $a \in A$
as long as $0 \in A_1$, i.e.\ as long as $A_1 = A$.

\begin{figure}[h]
    \centering
    \small
    \def\svgwidth{\figurerescalefactor\linewidth}
\begingroup%
  \makeatletter%
  \providecommand\color[2][]{%
    \errmessage{(Inkscape) Color is used for the text in Inkscape, but the package 'color.sty' is not loaded}%
    \renewcommand\color[2][]{}%
  }%
  \providecommand\transparent[1]{%
    \errmessage{(Inkscape) Transparency is used (non-zero) for the text in Inkscape, but the package 'transparent.sty' is not loaded}%
    \renewcommand\transparent[1]{}%
  }%
  \providecommand\rotatebox[2]{#2}%
  \newcommand*\fsize{\dimexpr\f@size pt\relax}%
  \newcommand*\lineheight[1]{\fontsize{\fsize}{#1\fsize}\selectfont}%
  \ifx\svgwidth\undefined%
    \setlength{\unitlength}{195.39065624bp}%
    \ifx\svgscale\undefined%
      \relax%
    \else%
      \setlength{\unitlength}{\unitlength * \real{\svgscale}}%
    \fi%
  \else%
    \setlength{\unitlength}{\svgwidth}%
  \fi%
  \global\let\svgwidth\undefined%
  \global\let\svgscale\undefined%
  \makeatother%
  \begin{picture}(1,0.28473509)%
    \lineheight{1}%
    \setlength\tabcolsep{0pt}%
    \put(0,0){\includegraphics[width=\unitlength,page=1]{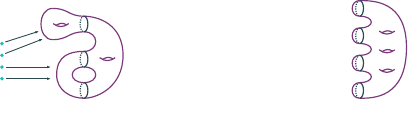}}%
    \put(-0.01608103,0.05966367){\color[rgb]{0,0,0}\makebox(0,0)[lt]{\lineheight{1.25}\smash{\begin{tabular}[t]{l}$a:A \to \pi_0(W)$\end{tabular}}}}%
    \put(0.12628202,0.00211416){\color[rgb]{0,0,0}\makebox(0,0)[lt]{\lineheight{1.25}\smash{\begin{tabular}[t]{l}$\emptyset \xrightarrow{\ W\ } M \xrightarrow{\ W'\ } \emptyset$\end{tabular}}}}%
    \put(0.77890078,0.00211416){\color[rgb]{0,0,0}\makebox(0,0)[lt]{\lineheight{1.25}\smash{\begin{tabular}[t]{l}$\emptyset \xrightarrow{A \times D^2} A \times S^1 \xrightarrow{Q_A} \emptyset$\end{tabular}}}}%
    \put(0,0){\includegraphics[width=\unitlength,page=2]{contracting-F.pdf}}%
    \put(0.37068426,0.00211427){\color[rgb]{0,0,0}\makebox(0,0)[lt]{\lineheight{1.25}\smash{\begin{tabular}[t]{l}$\emptyset \xrightarrow{A \times D^2} A \times S^1 \xrightarrow{\ W_A\ } M \xrightarrow{\ W'\ }  \emptyset$\end{tabular}}}}%
    \put(0,0){\includegraphics[width=\unitlength,page=3]{contracting-F.pdf}}%
  \end{picture}%
\endgroup%

    \caption{Left: an object $((M,W,W'),(A,a))\in\mcF_{g=3}'(\Cob_2)$.
    Middle: the value of the natural transformation 
    $\ga:G \Rightarrow \Id$ at this object.
    Right: the value the functor $G$ on this object.
    }
    \label{fig:contracting-F}
\end{figure}

\begin{lem}\label{lem:F(Cob)-contractible}
    For all $d \ge 2$ and all diffeomorphism types $[Q]$ of connected closed $d$-manifolds 
    the factorisation category $\mcF_{[Q]}(\Cob_d)$ has a contractible 
    classifying space.
\end{lem}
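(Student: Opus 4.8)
The plan is to adapt the argument for $B\mcF(\Csp)\simeq *$ in Lemma~\ref{lem:F(Csp)-contractible}, replacing finite sets by disjoint unions of $(d-1)$-spheres and the structure morphisms of $\Csp$ by their geometric counterparts in $\Cob_d$. Fix a connected closed oriented $d$-manifold $Q$, viewed as a connected endomorphism $\emptyset\to\emptyset$; an object of $\mcF_{[Q]}(\Cob_d)$ is then a factorisation $(M,W,V)$ with $M\not\cong\emptyset$, $W:\emptyset\to M$, $V:M\to\emptyset$ and $W\cup_M V=Q$. Since $Q$ is connected and $M\neq\emptyset$, no component of $W$ can be closed, so $\pi_0(M)\to\pi_0(W)$ is surjective and $\partial_0(M,W,V):=\pi_0(W)$ is a well-defined quotient of $\pi_0(M)$; this gives a functor $\partial_0:\mcF_{[Q]}(\Cob_d)\to\Fin$, exactly as in the proof of Lemma~\ref{lem:F(Csp)-contractible}.

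The geometric input, valid only for $d\ge2$, is the oriented disk theorem already used in Example~\ref{ex:Cob_d^Sd-1}, namely \cite[Chapter 8, Theorem 3.2]{Hir94}: any two orientation-preserving embeddings of $\coprod_k D^d$ into a connected oriented $d$-manifold are related by an ambient diffeomorphism. (It fails for $d=1$, which is why $\mcF_{S^1}(\Cob_1)\simeq\gL$ is not contractible.) Using it, for a non-empty finite set $A$ let $Q_A:\coprod_A S^{d-1}\to\emptyset$ be the cobordism obtained by deleting $|A|$ disjoint open balls from $Q$; this is well-defined up to boundary-preserving diffeomorphism and satisfies $\wh{Q_A}=Q$, i.e.\ $(\coprod_A D^d)\cup_{\coprod_A S^{d-1}}Q_A=Q$. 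I claim that $A\mapsto(\coprod_A S^{d-1},\coprod_A D^d,Q_A)$ extends to a functor $J:\Fin^{\neq\emptyset,\rm inj}\to\mcF_{[Q]}(\Cob_d)$, sending an injection $i:A\hookrightarrow B$ to the cobordism $\coprod_A S^{d-1}\to\coprod_B S^{d-1}$ built from a cylinder $S^{d-1}\times[0,1]$ for each element of $A$ (from the $a$th sphere to the $i(a)$th) and a disk $D^d$ for each element of $B\setminus i(A)$. Checking functoriality and the two identities $(\coprod_A D^d)\cup X_i=\coprod_B D^d$ and $Q_A=X_i\cup Q_B$ is routine, using that a disk glued onto a cylinder is again a disk. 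Since $B\Fin^{\neq\emptyset,\rm inj}$ is contractible by Lemma~\ref{lem:Fin-contractible} (take $X=*$), any functor factoring through $\Fin^{\neq\emptyset,\rm inj}$ induces a null-homotopic map on classifying spaces.

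The remaining construction mirrors Lemma~\ref{lem:F(Csp)-contractible}. Let $\mcF'$ be the Grothendieck construction of the functor $\mcF_{[Q]}(\Cob_d)\to\Cat$ sending $(M,W,V)$ to $\Fin^{\neq\emptyset,\rm inj,\op}_{/\partial_0(M,W,V)}$, so an object of $\mcF'$ is a tuple $((M,W,V),(A,a\colon A\to\pi_0(W)))$; by the same Quillen's Theorem~A argument as before together with Lemma~\ref{lem:Fin-contractible}, the forgetful functor $F:\mcF'\to\mcF_{[Q]}(\Cob_d)$ is a homotopy equivalence. Let $G:\mcF'\to\mcF_{[Q]}(\Cob_d)$ be $J$ precomposed with $\mcF'\to\Fin^{\neq\emptyset,\rm inj}$, $((M,W,V),(A,a))\mapsto A$; then $BG$ is null-homotopic. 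Finally, I would define a natural transformation $\eta:G\Rightarrow F$ whose component at $((M,W,V),(A,a))$ is the cobordism $X:\coprod_A S^{d-1}\to M$ obtained from $W$ by deleting, for each element of $A$, a small open ball from the interior of the component of $W$ picked out by $a$. Then $(\coprod_A D^d)\cup X=W$ by construction, and $X\cup_M V$ is $Q=W\cup_M V$ with $|A|$ disjoint balls removed, hence equals $Q_A$ by the uniqueness above; naturality follows again from the disk theorem. Thus $BF\simeq BG$, and since $BF$ is a homotopy equivalence while $BG$ factors through the contractible space $B\Fin^{\neq\emptyset,\rm inj}$, the identity of $B\mcF_{[Q]}(\Cob_d)$ is null-homotopic, i.e.\ $B\mcF_{[Q]}(\Cob_d)\simeq *$.

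The main obstacle is entirely geometric: one must verify that the cobordism obtained by deleting $k$ disjoint balls from $Q$ is well-defined rel boundary and that the gluing identities needed for $J$ and for $\eta$ (including its naturality) hold. As in Example~\ref{ex:Cob_d^Sd-1}, all of these reduce to the oriented disk theorem and genuinely require $d\ge2$; the bookkeeping itself is straightforward.
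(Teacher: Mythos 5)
Your proposal is correct and follows essentially the same strategy as the paper's proof: construct $\mcF'$ via the Grothendieck construction over $\Fin^{\neq\emptyset,\rm inj}_{/\partial_0}$, define the null-homotopic functor $G$ (your $J$) by deleting balls from $Q$ and using cylinders plus caps, and exhibit the natural transformation $\eta:G\Rightarrow F$ by deleting balls from $W$. The only difference is cosmetic notation ($\coprod_A S^{d-1}$ vs.\ $A\times S^{d-1}$), and the geometric verifications you defer to the oriented disk theorem are precisely the ones the paper spells out.
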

\begin{proof}
    An object of $\mcF_{[Q]}(\Cob_d)$ is a triple $(M, [W], [W'])$ where $M$ is 
    a closed oriented $(d-1)$-manifold and $[W]$ and $[W']$ are diffeomorphism
    classes of $d$-dimensional cobordisms $W:\emptyset \to M$ and $W':M \to \emptyset$
    such that their composite $W \cup_M W'$ is diffeomorphic to $Q$.
    A morphism $(M, [W], [W']) \to (N, [V], [V'])$ is a diffeomorphism class
    of cobordisms $[X]:M \to N$ satisfying $W \cup_M X \cong V$ and $W' \cong X \cup_N V'$.
    
    We will choose explicit representatives for $[W]$ and $[W']$ throughout.
    This does not change the category $\mcF_{[Q]}(\Cob_d)$, as long as we still define
    morphisms as above.
    
    As in the proof of lemma \ref{lem:F(Csp)-contractible} we construct
    a category $\mcF'$ as follows.
    Objects are tuples $((M, W, W'), (a:A \to \pi_0(W)))$ where $(M, W, W')$
    is an object as above, $A$ is a finite non-empty set, and $a:A \to \pi_0(W)$ a map.
    Because $\Fin_{/\pi_0(W)}^{\neq \emptyset, \rm inj}$ is contractible by lemma \ref{lem:Fin-contractible}, Quillen's theorem A implies that the forgetful functor $F:\mcF' \to \mcF_{[Q]}(\Cob_d)$ is an equivalence on classifying spaces.
    
    The following constructions are illustrated in figure \ref{fig:contracting-F}.
    Consider the functor $G:\Fin^{\neq\emptyset,\rm inj} \to \mcF_{[Q]}(\Cob_d)$
    that sends a finite set $A$ to the factorisation 
    \[
        G(A) = (A \times S^{d-1}, A \times D^d: \emptyset \to A \times S^{d-1}, 
        Q_A:A \times S^{d-1} \to \emptyset)
    \]
    where the cobordism $Q_A: A \times S^{d-1} \to \emptyset$ is obtained 
    by picking an injection $A \to Q$, removing a neighbourhood of its image,
    and identifying its boundary with $A \times S^{d-1}$ in the canonical orientation
    preserving way.
    Since $Q$ is connected and of dimension $d \ge 2$
    any choice of injection leads to the same diffeomorphism class of cobordism.
    To injections $i:A \to B$ the functor $G$ assigns the cobordism:
    \[
        G(i: A \to B) := [(A\times S^{d-1} \times [0,1]) \amalg ((B \setminus i(A)) \times D^d)]:
        A \times S^{d-1} \to B \times S^{d-1}.
    \]
    This cobordism connects the sphere $\{a\} \times S^{d-1}$ with $\{i(a)\} \times S^{d-1}$
    via a cylinder and caps off each of the spheres $\{b\} \times S^{d-1}$
    with $b \not\in i(A)$ with a disk.
    This defines a morphism in the factorisation category because 
    closing off the $(B \setminus i(A))$-spheres of $Q_A$ yields a manifold
    diffeomorphic to $Q_B$: 
    \[
        G(i:A \to B) \cup_{B \times S^{d-1}} Q_B \cong Q_A
        \qand
        (A \times D^d) \cup_{A \times S^{d-1}} G(i:A \to B) \cong (B \times D^d).
    \]
    Let us denote the composite of $G$ with the forgetful functor 
    $\mcF' \to \Fin^{\neq\emptyset, \rm inj}$ also by $G$.
    
    To conclude the proof in the same manner as for lemma \ref{lem:F(Csp)-contractible}
    we would like to construct a natural transformation
    $\ga: G \Rightarrow F$ of functors $\mcF' \to \mcF_{[Q]}(\Cob_d)$.
    For an object $((M,W,W'),(a:A \to \pi_0(W))) \in \mcF'$ we let 
    $W_A: A \times S^{d-1} \to M$ be the cobordism obtained by lifting the map
    $a:A \to \pi_0(W)$ to an embedding $a:A \hookrightarrow W$ and 
    then removing a small neighbourhood of its image.
    We define the components of $\ga$ as:
    \[
        \ga_{((M,W,W'),(a:A \to \pi_0(W)))} := W_A :
        (A \times S^{d-1}, A \times D^d, Q_A) \to (M, W, W').
    \]
    To check that this is a well-defined morphism in the factorisation category
    we note:
    \[
        (A \times D^d) \cup_{A \times S^{d-1}} W_A \cong W
        \qand
        W_A \cup_M W' \cong Q_A.
    \]
    To check naturality let $([X],i):((M,W,W'),(A,a)) \to ((N,V,V'),(B,b))$
    be a morphism in $\mcF'$, i.e.\ $X:M \to N$ a cobordism and $i:A \to B$
    and injection, both satisfying certain conditions.
    Then we have:
    \[
        G(i:A \to B) \cup_{B \times S^{d-1}} V_B  \cong 
        V_A \cong
        W_A \cup_M X.
    \]
    Here the first diffeomorphism comes from the fact that both sides
    are obtained from $V:\emptyset \to N$ by removing disks according to 
    $(\partial_0(X)\circ a:A \to \pi_0(W) \to \pi_0(V)) = (b\circ i:A \to B \to \pi_0(V))$,
    and the second diffeomorphism comes from the fact that we can arrange
    for the diffeomorphism $V \cong W \cup_M X$ to fix the disks corresponding to $A$.
    
    This shows that $BG, BF:B\mcF' \to B\mcF_{[Q]}(\Cob_d)$ are homotopic.
    Since $BF$ is a homotopy equivalence and $BG$ is null-homotopic,
    this implies that $B\mcF_{[Q]}(\Cob_d)$ is contractible, as claimed.
\end{proof}

\begin{cor}
    For $d \ge 2$ there are equivalences:
    \[
        B\ICsp \simeq |N_\cd^\nc\Csp| \qand
        B\ICob_d \simeq |N_\cd^\nc\Cob_d|.
    \]
\end{cor}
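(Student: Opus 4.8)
The plan is to apply the Decomposition Theorem \ref{thm:decomposition} to $\mcC = \Csp$ and to $\mcC = \Cob_d$, feeding in the contractibility of the factorisation categories established in Lemmas \ref{lem:F(Csp)-contractible} and \ref{lem:F(Cob)-contractible}. Both $\Csp$ and $\Cob_d$ are labelled cospan categories --- the former tautologically, the latter via $\pi_0:\Cob_d \to \Csp$ --- so the first task is to check that $B\Csp$ and $B\Cob_d$ are group-like, which is what licenses the use of Theorem \ref{thm:decomposition}. For this I would invoke Lemma \ref{lem:BmcC-group-like}: a connected object $*$ of $\Csp$ admits the morphism $[* \to * \leftarrow \emptyset]:* \to 1_\Csp$, while a connected object $M$ of $\Cob_d$ (a connected closed oriented $(d-1)$-manifold) admits the connected morphism $M \times [0,1]:\emptyset \to M \amalg \ol{M}$, so that $[M]\ot[\ol{M}] = [1_{\Cob_d}]$ in $\pi_0 B\Cob_d$. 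Theorem \ref{thm:decomposition} then produces homotopy fiber sequences of infinite loop spaces
\[
|N_\cd^\nc\Csp| \to B\ICsp(\Csp) \to Q\Bigl(\bigvee\nolimits_{g} S^2 B\mcF_g(\Csp)\Bigr)
\qquad\text{and}\qquad
|N_\cd^\nc\Cob_d| \to B\ICsp(\Cob_d) \to Q\Bigl(\bigvee\nolimits_{g} S^2 B\mcF_g(\Cob_d)\Bigr).
\]

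The next step is to see that both base spaces are contractible. In $\Csp$ the set $G = \Hom_\Csp^\con(1_\Csp,1_\Csp)$ has a single element $g_0$, the one-point cospan, and it admits the factorisation through $*$ given by $[\emptyset \to * \leftarrow *]$ and $[* \to * \leftarrow \emptyset]$; hence $\mcF_{g_0}(\Csp) = \mcF(\Csp)$ is non-empty, and is contractible by Lemma \ref{lem:F(Csp)-contractible}. For $\Cob_d$ with $d \ge 2$ the elements of $G$ are diffeomorphism classes $[Q]$ of connected closed oriented $d$-manifolds, and each $[Q]$ admits the factorisation $D^d:\emptyset \to S^{d-1}$ followed by the cobordism $S^{d-1}\to\emptyset$ obtained from $Q$ by removing an open ball; hence $\mcF_{[Q]}(\Cob_d)$ is non-empty, and is contractible by Lemma \ref{lem:F(Cob)-contractible}. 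Since every factorisation category occurring in the wedge is non-empty, the Remark following Theorem \ref{thm:decomposition} allows us to replace each unreduced suspension $S^2 B\mcF_g$ by the reduced suspension $\gS^2 B\mcF_g \simeq *$, so $\bigvee_g S^2 B\mcF_g \simeq *$ and hence $Q\bigl(\bigvee_g S^2 B\mcF_g\bigr) \simeq *$. The two fiber sequences therefore collapse to equivalences $|N_\cd^\nc\Csp| \simeq B\ICsp(\Csp)$ and $|N_\cd^\nc\Cob_d| \simeq B\ICsp(\Cob_d)$, and it only remains to identify the targets: $\ICsp(\Csp) \simeq \ICsp$ because the labellings in Definition \ref{defn:enhanced-labelled-cospans} are vacuous for $\Csp$ (which has a connected object unique up to isomorphism, with connected morphisms determined by the underlying cospan), and $B\ICsp(\Cob_d) \simeq B\ICob_d$ exactly as in the case $d = 2$ (cf.\ the $2$-categorical counterpart of Lemma \ref{lem:hCsp(C)=C}).

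The real content is the contractibility of the two families of factorisation categories, which is already done in Lemmas \ref{lem:F(Csp)-contractible} and \ref{lem:F(Cob)-contractible}; the one step beyond formal nonsense that I expect to need a moment's care is the non-emptiness of every $\mcF_g$, since by the convention $S(\emptyset) = S^0$ an empty factorisation category would contribute a circle to the wedge instead of a point and the base space would no longer be contractible. The explicit factorisations above --- through the one-point set, and through $S^{d-1}$ using the disk $D^d$ --- rule this out.
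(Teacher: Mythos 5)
Your proof is correct and follows the same route the paper takes: it simply combines the decomposition theorem with the contractibility of $\mcF(\Csp)$ and $\mcF_{[Q]}(\Cob_d)$. You have, moreover, made explicit two hypotheses that the paper's one-line proof leaves unstated — group-likeness of $B\Csp$ and $B\Cob_d$ (checked via Lemma \ref{lem:BmcC-group-like} with the duality cobordism $M\times[0,1]$), and non-emptiness of every $\mcF_g$ so that $S^2B\mcF_g$ is genuinely contractible rather than a circle (checked via the explicit factorisations through $*$ and through $S^{d-1}$ using $D^d$). Both observations are correct and are exactly the implicit content of "combine".
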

\begin{proof}
    Combine the decomposition theorem \ref{thm:decomposition}
    with lemma \ref{lem:F(Cob)-contractible} and lemma \ref{lem:F(Csp)-contractible}.
\end{proof}
\section{The surgery theorem}\label{sec:surgery}

In this section we try to understand the realization of the simplicial set
$N_\cd^\nc\mcC$, which is one of the two main components in the decomposition theorem.
We will prove the surgery theorem \ref{thm:Csp-surgery}, which says that under favourable circumstances $|N_\cd^\nc(\mcC)|$ is equivalent
to the classifying space of the positive boundary subcategory $\mcC^\pb \subset \mcC$,
which will often be computable.

\begin{defn}
    For a labelled cospan category $(\mcC \to \Csp)$ we define the 
    \emph{positive boundary category} $\mcC^\pb$ as the subcategory of $\mcC$
    that contains all objects, but only those morphisms $W:M \to N$
    for which $\pi(N) \to \pi(W)$ is surjective.
\end{defn}

Inspecting definition \ref{defn:labelled-cospans} we see that the restricted functor $\mcC^\pb \to \Csp$ exhibits $\mcC^\pb$ again as labelled cospan category.
In this labelled cospan category $\Hom_{\mcC^\pb}(1, 1) = \{\id_1\}$ is trivial and every morphism is reduced.

\begin{defn}\label{defn:admits-surgery}
    We say that a labelled cospan category $(\mcC \to \Csp)$ \emph{admits surgery} 
    if one can pick the following data:
    \begin{itemize}
        \item A connected object $O \in \mcC$ and a connected morphism $T:1_\mcC \to O$.
        \item For any connected object $A \in \mcC$ a connected morphism $P_A:A \to O \ot A$.
    \end{itemize}
    subject to the condition that for any two connected objects $A,B \in \mcC$
    and any two arbitrary objects $M, N \in \mcC$
    the following diagrams commute for all connected morphisms 
    $U:A \ot M \to B \ot N$,
    $V:A \ot B \ot M \to N$, and 
    $W:M \to A \ot B \ot N$:
    \[
        \begin{tikzcd}[column sep = 3pc]
            A \ot M \ar[d, "U"] \ar[r, "P_A \ot \id_M"] &
            O \ot A \ot M \ar[d, "\id_O \ot U"] \\
            B \ot N \ar[r, "P_B \ot \id_N"] &
            O \ot B \ot N
        \end{tikzcd}
        \quad
        \begin{tikzcd}[column sep = 5pc]
            A \ot B \ot M 
            \ar[d, "(\gb_{O,A} \ot \id_{B \ot M}) \circ (\id_A \ot P_B \ot \id_M)"] 
            \ar[r, "P_A \ot \id_{B \ot M}"] &
            O \ot A \ot B \ot M \ar[d, "\id_O \ot V"] \\
            O \ot A \ot B \ot M \ar[r, "\id_O \ot V"] &
            O \ot N
        \end{tikzcd}
    \]
    \[
        \begin{tikzcd}[column sep = 3pc]
            M \ar[d, "W"] \ar[r, "W"] &
            A \ot B \ot N 
            \ar[d, "(\gb_{A,O} \ot \id_{B \ot N}) \circ (\id_A \ot P_B \ot \id_N)"] \\
            A \ot B \ot N \ar[r, "P_A \ot \id_{B \ot N}"] &
            O \ot A \ot B \ot N.
        \end{tikzcd}
    \]
    Here $\beta_{X,Y}\colon X \otimes Y \cong Y \otimes X$ is the (symmetric) braiding in $\mcC$.
    See figure \ref{fig:admits-surgery} for an illustration of these three equations.
\end{defn}

\begin{rem}
    Note that the morphism $T$ is not required to satisfy any equation.
    (In fact, we do not even need it to be connected.)
    Nevertheless, it is crucial that there exists a morphism from $1_\mcC$ to $O$
    as we use it to introduce new connected components during the surgery.
    This would for example be impossible in the negative boundary category 
    $\Cob_d^{\partial_-} = (\Cob_d^{\partial_+})^\op$, which is also a labelled cospan category.
    This category cannot satisfy the conclusion of the surgery theorem as 
    \[
        \pi_0 B((\Cob_d^{\partial_-})^{\partial_+})
        \longrightarrow
        \pi_0 |N_\cd^\nc (\Cob_d^{\partial_+})|
        = \pi_0 B(\Cob_d^{\partial_+}) \cong \gO_{d-1}^{\rm or}
    \]
    is not injective because in $(\Cob_d^{\partial_-})^{\partial_+}$
    there are no non-identity morphisms to or from the empty set.
\end{rem}

\begin{rem}
    The string calculus that we use throughout the paper, and for example in figure \ref{fig:admits-surgery}, to depict composite morphisms in labelled cospan categories can be read as follows.
    Each of the six diagrams in figure \ref{fig:admits-surgery} depicts a morphism going from left to right.
    Each line correspond to an object and the source and target of the morphism can be read off by tensoring the incoming and outgoing lines top to bottom.
    The objects $M$ and $N$ are represented by multiple lines as they are tensors of a number of connected objects.
    The purple dots represent morphisms whose source and target are again the tensor product of the incoming and outgoing lines.
    For example, the braiding $\beta_{A,O}\colon A \otimes O \cong O\otimes A$, which is represented by a small purple dot, has as input a line labelled by $A$ on top of a line labelled by $O$, and its outputs are the same lines reversed.
\end{rem}

\begin{figure}[ht]
    \centering
    \def\svgwidth{.7\linewidth}
    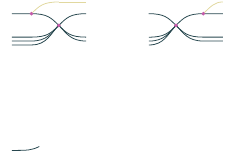
    \caption{The three conditions that the morphism $P_A:A \to O \ot A$ has to satisfy 
    for all connected objects $B$, arbitrary objects $M$, $N$, and connected morphisms
    $U$, $V$, $W$.}
    \label{fig:admits-surgery}
\end{figure}

The purpose of this section is to prove the surgery theorem that relates the 
space of ``no compact components" with the positive boundary subcategory.
\begin{thm}[Surgery theorem]\label{thm:Csp-surgery}\label{thm:surgery}
    For any labelled cospan category $\mcC$ that admits surgery the inclusion 
    $N\mcC^\pb \to N_\cd^\nc\mcC$ of simplicial sets induces an equivalence
    \[
        B(\mcC^\pb) \simeq |N_\cd^\nc\mcC|.
    \]
\end{thm}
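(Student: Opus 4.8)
The strategy follows the "positive boundary surgery" technique of Galatius and Galatius–Randal-Williams, adapted to the combinatorial setting of labelled cospan categories. The goal is to show that the inclusion of semi-simplicial sets $N_\cd\mcC^\pb \hookrightarrow N_\cd^\nc\mcC$ is a weak equivalence after geometric realisation. Since a simplex of $N_\cd^\nc\mcC$ is a string $W:[n]\to\mcC$ whose total morphism $W(0\to n)$ is reduced — but whose intermediate morphisms may fail to be surjective on $\pi$ — the idea is to build a filtration (or a zig-zag of intermediate semi-simplicial spaces) that interpolates between "all components of every intermediate cobordism eventually reach the right-hand side" (which is the positive boundary condition, up to reindexing) and the weaker reducedness condition on the total morphism. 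The surgery data $(O, T, P_A)$ from Definition \ref{defn:admits-surgery} provides exactly the tool to push a "bad" component — one that is created somewhere in the middle of the string and dies before reaching the end — across the string by inserting a copy of $O$ and the universal morphism $P_A:A\to O\ot A$, at the cost of changing the string but not its realisation.

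\textbf{Key steps, in order.} First I would set up an auxiliary semi-simplicial space $X_\cd$ (or a bi-semi-simplicial space) whose $n$-simplices are strings in $N_\cd^\nc\mcC$ together with a choice of "surgery coordinates" recording, for each connected component of each intermediate cospan, a way of steering it to the right. Concretely, following \cite[Section 6]{GMTW06} and \cite[Section 5]{GRW14}, one augments each $n$-simplex by extra data (e.g. a linear order, or a choice of embedding into some $O^{\times k}$) that (a) forms a contractible space of choices fiberwise, so the forgetful map $X_\cd \to N_\cd^\nc\mcC$ is a weak equivalence, and (b) supports a flow that increases the "positive boundary defect" monotonically. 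Second, I would use the three commuting squares in Definition \ref{defn:admits-surgery} to verify that the surgery move is well-defined on strings and compatible with the face maps: the first square handles sliding $P_A$ past a connected morphism, the second and third handle the interaction of $P_A$ with the two ways a component can merge or branch. Third, I would run the surgery inductively: given a string with a bad component, apply $P_A$ to route that component to the final object, using $T:1_\mcC\to O$ to create the new component and the commuting squares to check the result is a legitimate string with the same realisation class. This produces a deformation retraction of $|X_\cd|$ onto the sub-object of strings satisfying the positive boundary condition; combined with step one, $|N_\cd^\nc\mcC| \simeq |X_\cd| \simeq |X_\cd^\pb| \simeq |N_\cd\mcC^\pb| = B(\mcC^\pb)$, where the last identification uses that a positive-boundary string automatically has reduced total morphism and that $N_\cd\mcC^\pb$ sits inside $N_\cd^\nc\mcC$.

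\textbf{Main obstacle.} The hard part will be the bookkeeping of the surgery move in the presence of general (not necessarily connected) objects $M, N$ flanking the component being operated on, and in particular verifying that the move descends correctly through the face maps $d_i$ — i.e. that surgering and then composing two adjacent morphisms agrees with composing first and then surgering. This is precisely what the elaborate commuting-square hypotheses of Definition \ref{defn:admits-surgery} are designed to ensure, and the remark following that definition (about why the morphism $T$ from $1_\mcC$ to $O$ is essential, and why $\Cob_d^{\partial_-}$ fails) signals that the argument is delicate: one must create new components freely, route exactly the bad ones, and never accidentally alter the total morphism's cospan. Setting up the augmented space $X_\cd$ so that the forgetful map is genuinely a levelwise equivalence — rather than merely a homotopy equivalence after realisation — while still carrying enough structure to run the surgery, will require care analogous to the "space of surgery data" constructions in \cite{GRW14}, and I expect the bulk of the section to be occupied by making this precise and checking semi-simpliciality.
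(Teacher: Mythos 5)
Your plan matches the paper's strategy: an auxiliary bi-semi-simplicial set of simplices decorated with surgery data, a GRW-style flag-complex argument showing the space of such data is contractible levelwise, and a surgery homotopy built from $T$ and $P_A$ whose well-definedness on face maps is exactly what the commuting squares of Definition~\ref{defn:admits-surgery} ensure. The one piece you leave open --- what the surgery data actually is --- the paper resolves with piecewise-linear \emph{surgery paths} in a representing space $|W|$ built from the cospan structure of the $n$-simplex, rather than a linear order or an embedding into $O^{\times k}$; these paths record where the new component travels at each intermediate time and are what make the homotopy $K$ that slides $P_A$ across the string continuous.
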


Combining this with the decomposition theorem we have the following immediate consequence:
\begin{cor}[Theorem \ref{theorem:decomposition-and-surgery}]\label{cor:decomposition-and-surgery}
    If the labelled cospan category $(\mcC \to \Csp)$ admits surgery
    and $B(\mcC)$ is group-like, then there is a homotopy fiber sequence of infinite loop spaces
    \[
        B(\mcC^\pb) \longrightarrow B(\ICsp(\mcC)) \longrightarrow
        Q\left(\bigvee\nolimits_{g \in G} S^2(B\mcF_g)\right).
    \]
\end{cor}

    The condition of admitting surgery might seem unreasonably strong
    if one thinks about a general labelled cospan category $(\mcC \to \Csp)$.
    We are essentially requiring that $P_A: A \to O \ot A$ 
    commutes with all other operations. 
    However, it turns out to satisfied in the cases we are interested in.

\begin{ex}
    We will now show that many geometric examples admit surgery.
    Let, for example, $\mcC = \Cob_d$ be the $d$-dimensional cobordism category
    for $d \ge 2$. Then we can let $O := S^{d-1}$ be the sphere and 
    $T := D^d: \emptyset \to S^{d-1}$ the disk.
    For the morphism $P_M: M \to S^{d-1} \amalg M$ we can choose the 
    connected sum of the cobordisms $D^d:\emptyset \to S^{d-1}$ 
    and $M\times [0,1]:M \to M$.
    Since both cobordisms are oriented and connected it does not matter at which
    point we take the connected sum.
    We need to make sure that this makes the relevant diagrams commute.
    For example, for all connected morphisms $W:A \amalg M \to B \amalg N$
    one checks that both ways of going around the diagram 
    can be described as the connected sum of $W$ with the cobordism 
    $D^{d-1}:\emptyset \to S^{d-1}$. Since $W$ is connected any 
    two such connected sums are diffeomorphic.

    We can also replace the morphism $T:\emptyset \to S^{d-1}$ by 
    any other connected manifold with boundary $S^{d-1}$.
    In particular for $d=2$ we can choose a genus $g \ge 1$ surface 
    with one boundary component, which implies that $\Cobn \subset \Cob_2$
    also admits surgery.
\end{ex}

\begin{ex}
    As similar argument as above also applies to the weighted cospan categories
    $\Csp(A, A_1, \ga)$ from definition \ref{defn:weighted-cospans}.
    One can choose $O := *$ to be the singleton, $T:\emptyset \to *$
    the cospan $(\emptyset \to * = *)$ weighted by any element in $A_1$ (for example $\ga$),
    and $P: * \to * \amalg *$ the cospan $(* \to * \leftarrow * \amalg *)$
    weighted by the unit $0 \in A$.
    This makes three squares in definition \ref{defn:admits-surgery} commute: 
    in each case the composite is a connected morphisms (uniquely determining it as a morphism in $\Csp$) and so it suffices to compare the labels.
    In the first square the label is that of $U$, in the second that of $V$ and in the third that of $W$.

    By lemma \ref{lem:weighted-Csp}, example \ref{ex:Cob2g<gc}, and example \ref{ex:Cob_d^Sd-1}
    this means that all of the following labelled cospan categories admit surgery:
    \[
        \Csp, \quad 
        \Cobn, \quad 
        \Cob_{d}^{S^{d-1}}, \quad 
        \Cob_2^{g < \gc}, \qand 
        \Cob_2^{\chi\le0, g < \gc}.
    \]
\end{ex}

\begin{ex}
    If a labelled cospan category $\mcC$ admits surgery,
    has a group-complete classifying space $B\mcC$,
    and satisfies that $\mcF_g(\mcC)$ is contractible for all $g:1_\mcC \to 1_\mcC$,
    then the inclusion of the positive boundary subcategory induces an equivalence:
    \[
        B(\mcC^\pb) \simeq B(\ICsp(\mcC)).
    \]
    We will show that this is the case for $\Csp$ and $\Cob_d$ with $d \ge 2$
    in section \ref{subsec:mcF-contractible}. This implies Theorem \ref{theorem:BCobd}
    and together with the computations $B\Csp^\pb \simeq *$ 
    and $B\Cob_2^\pb \simeq S^1$ made in proposition \ref{prop:computing-pb}
    it implies Theorem \ref{theorem:BICsp} and Theorem \ref{theorem:ICob2}.
\end{ex}

\subsection{Strategy}
The proof of the surgery theorem will be given by implementing a surgery 
similar to the one described in \cite[section 4.2, figure 3]{Gal11} 
using language similar to that of \cite[section 3 and 6]{GRW14}.
For each $W \in N_\cd^\nc\mcC$ and point $p \in W(i \le i+1)$
there is either a path connecting it to $W(0)$ or to $W(n)$.
We will use this path to do a surgery that connects $p$ to $W(0)$ or $W(n)$.
One of the key problems will be that there is not always a unique or canonical surgery,
but rather a contractible space of choices. We will have to make sure that any 
combination of surgeries commutes.

To construct the homotopies that are crucial to the proof of theorem \ref{thm:surgery}
it will be extremely useful to work with a variant of $\mcC$ where objects 
are simply subsets of some big background set $\gO$.

\begin{defn}
    Fix a set $\gO$ and a map $F:\gO \to \Obj^\con(\mcC)$ such that every 
    connected object $M \in \mcC$ has infinitely many preimages in $\gO$.
    For any finite subset $A \subset \gO$ we let $F(A) := \bigotimes_{a \in A} F(a)$.
    (See definiton \ref{defn:tensor-over-set}.)
    
    The category $\mcC^\gO$ has as objects finite subset of $\gO$
    and a morphism $W:A \to B$ is defined as a morphism $W:F(A) \to F(B)$ in $\mcC$.
\end{defn}

In what follows we will often use $n$-simplices in the nerve of $\mcC^\gO$
so it will be useful to fix some notation.
An element of $N_n\mcC^\gO$ is a functor $W:[n] \to \mcC^\gO$ 
sending any $0 \le i \le n$ to an object $W(i) \in \mcC^\gO$, 
i.e.\ a finite subset $W(i) \subset \gO$, 
and any $0 \le i \le j \le n$ to a morphism $W(i\le j): W(i) \to W(j)$.
We will denote the underlying cospan of $W(i\le j)$ by 
$W(i) \to W_\pi(i \le j) \leftarrow W(j)$.

\begin{defn}
    We let $C_\cd^\nc \subset N_\cd \mcC^\gO$ be the simplicial subset containing
    those $n$-simplices $W:[n] \to \mcC^\gO$ where $W(0 \le n)$ 
    has no closed components,
    i.e.\ those $W$ for which 
    $W(0) \amalg W(n) \to W_\pi(0 \le n)$ is surjective.
    
    Moreover, we let $C_\cd^\pb \subset N_\cd \mcC^\gO$ be the simplicial subset
    containing those $W:[n] \to \mcC^\gO$ where $W(k \le k+1)$ 
    is positive boundary for all $k$, i.e.\ where
    $W(k+1) \to W_\pi(k \le k+1)$ is surjective for all $k$.
\end{defn}

\begin{lem}
    With the usual notions of disjointness and disjoint union
    $\mcC^\gO$ is a partial commutative monoidal category
    and the functor $F:\mcC^\gO \to \mcC$ is an equivalence of categories.
    Moreover, $F$ induces equivalences:
    \[
        |C_\cd^\pb| \simeq B\mcC^\pb
        \qand
        |C_\cd^\nc| \simeq |N_\cd^\nc\mcC| .
    \]
\end{lem}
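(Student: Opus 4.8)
The plan is to dispatch the three assertions in order, with most of the work going into identifying the ad hoc simplicial subsets $C_\cd^\pb$ and $C_\cd^\nc$ with nerve-type constructions and then transporting along $F$.

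First I would check the axioms of Definition~\ref{defn:PCMC} for $\mcC^\gO$, with $A \bot B$ declared to mean $A \cap B = \emptyset$, with $A \amalg B := A \cup B$, and with unit $\emptyset$. Axioms (i) and (iii) are immediate on objects, and on morphisms the required equalities of functors follow from the coherence theorem for symmetric monoidal categories, since $F(A \cup B)$ and $F(A) \ot F(B)$ are identified via the preferred coherence isomorphism assembled from associators and braidings. Axiom (ii) is the corresponding associativity statement, again a coherence check. For axiom (iv), given finite subsets $x_1, \dots, x_n \subset \gO$, I would use that $F$ has infinitely many preimages over each connected object of $\mcC$ to choose, for each $i$, a bijection from $x_i$ onto a finite set $x_i' \subset \gO$ that preserves $F$-labels, with the $x_i'$ pairwise disjoint; the resulting coherence isomorphisms $F(x_i) \cong F(x_i')$ witness $x_i \cong x_i'$ in $\mcC^\gO$. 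Next, $F\colon \mcC^\gO \to \mcC$ is fully faithful by construction (its hom-sets are defined to be those of $\mcC$, and it preserves composition and identities strictly) and essentially surjective by axiom (i) of Definition~\ref{defn:labelled-cospans}: any object of $\mcC$ is isomorphic to a tensor product $M_1 \ot \cdots \ot M_n$ of connected objects, and choosing distinct preimages $a_i \in \gO$ of the $M_i$ gives $F(\{a_1,\dots,a_n\}) \cong M$. Hence $F$ is an equivalence of categories; equipping $\mcC^\gO$ with $\pi^\gO := \pi \circ F$ (under which an object $A$ is sent to $\pi(F(A))$, canonically the set $A$ itself) makes $F$ an equivalence of labelled cospan categories.

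For $|C_\cd^\pb| \simeq B\mcC^\pb$, I would first note that the positive boundary morphisms in $\mcC^\gO$ contain all identities and are closed under composition (a routine check: the relevant right-hand maps of cospans are surjective, and a pushout of a surjection along a surjection is again surjective), so they form a wide subcategory $\mcC^{\gO,\pb} \subset \mcC^\gO$ whose nerve is exactly $C_\cd^\pb$. Since $F$ restricts to a functor $\mcC^{\gO,\pb} \to \mcC^\pb$ that is again fully faithful (same hom-sets) and essentially surjective ($\mcC^\pb$ has all objects of $\mcC$, and every isomorphism is positive boundary), it is an equivalence of categories, so $|C_\cd^\pb| = |N_\cd \mcC^{\gO,\pb}| \simeq |N_\cd \mcC^\pb| = B\mcC^\pb$.

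Finally, for $|C_\cd^\nc| \simeq |N_\cd^\nc\mcC|$, I would observe that $C_\cd^\nc$ is precisely $N_\cd^\nc \mcC^\gO$ in the sense of Definition~\ref{defn:Nnc}: for a morphism $W(0 \le n)\colon A \to B$ the condition that $A \amalg B \to W_\pi(0 \le n)$ be surjective is exactly the condition that $W(0 \le n)$ be reduced with respect to $\pi^\gO$. Since $F$ is an equivalence of labelled cospan categories, Lemma~\ref{lem:Nnc-invariant} gives that $N_\cd F$ induces an equivalence $|N_\cd^\nc \mcC^\gO| \xrightarrow{\ \simeq\ } |N_\cd^\nc \mcC|$, which is the remaining statement. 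The step most likely to need care is the bookkeeping with coherence isomorphisms in PCM axiom (ii) and in treating $F$ as a strong (not strict) symmetric monoidal equivalence, together with the routine but essential matching of the two ad hoc simplicial subsets with $N_\cd \mcC^{\gO,\pb}$ and $N_\cd^\nc \mcC^\gO$; everything past that is formal.
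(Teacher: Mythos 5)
Your proof takes essentially the same route as the paper: verify the PCM axioms for $\mcC^\gO$ (with the key point being axiom (iv), which is where the infinite-preimage hypothesis on $F:\gO\to\Obj^\con(\mcC)$ enters), note that $F$ is fully faithful by construction and essentially surjective by axiom (i) of the labelled-cospan definition, identify $C_\cd^\pb$ with $N_\cd(\mcC^\gO)^\pb$ and $C_\cd^\nc$ with $N_\cd^\nc\mcC^\gO$, and transport along $F$ — invoking Lemma~\ref{lem:Nnc-invariant} for the $\nc$ case. Your write-up is a bit more careful than the paper's about coherence isomorphisms and about verifying that positive-boundary morphisms form a wide subcategory, but these are explications of steps the paper treats as evident rather than a different argument.
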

\begin{proof}
    Since the objects of $\mcC^\gO$ are simply finite subsets of $\gO$ 
    it is clear that the notion of disjointnes and disjoint union behave as expected.
    For $\mcC^\gO$ to satisfy definition \ref{defn:PCMC} we need to 
    check that for any finite sequence of objects $A_1,\dots, A_n \subset \gO$
    one can find isomorphic replacements $A_1', \dots, A_n' \subset \gO$ 
    that are pairwise disjoint. 
    We required $F:\gO \to \Obj^\con(\mcC)$ to have infinite preimages for this exact purpose.
    Starting with $A_2$ we can find for each $a \in A_i$ an element $a' \in \gO$
    with $F(a) = F(a')$ and $a'$ disjoint to all previous elements.
    Then $a$ and $a'$ are isomorphic via the morphism  $\id_{F(a)}:a \to a'$.
    Moreover, they are all disjoint by construction.
    
    The functor $F$ is fully faithful by construction and it is essentially surjective
    because every object in the labelled cospan category $\mcC$ can be written
    as a product of connected objects.
    Therefore $F$ is an equivalence of categories and it also restricts
    to an equivalence between $(\mcC^\gO)^\pb$ and $\mcC^\pb$.
    Since $C_\cd^\pb \cong (\mcC^\gO)^\pb$ we see that $|C_\cd^\pb| \simeq B\mcC^\pb$.
    The equivalence $|C_\cd^\nc| \simeq |N_\cd^\nc\mcC|$ follows from 
    lemma \ref{lem:Nnc-invariant}.
\end{proof}

\begin{rem}
    We have so far treated $W_\pi(k \le l)$ as if it were a well-defined set,
    but one has to be careful with this. In general, for a morphism $W(k \le l)$
    we only have an isomorphism class of cospans $[W(k) \to W_\pi(k \le l) \leftarrow W(l)]$.
    It does therefore not really make sense to talk about elements of $W_\pi(k \le l)$.
    However, if the $n$-simplex $W:[n] \to \mcC^\gO$ lies in $C_\cd^\nc \subset N_\cd\mcC^\gO$,
    then for each $k\le l$ the map $W(k) \amalg W(l) \to W_\pi(k \le l)$ is surjective and hence $W_\pi(k \le l)$ is in canonical bijection with the quotient of $W(k) \amalg W(l)$ by the relation induced by the cospan.
    (In other words, the groupoid $N_n^\nc \ICsp(\mcC)$ has contractible components and thus $N_\cd^\nc \ICsp(\mcC) \to N_\cd^\nc \mcC^\Omega$ is a level-wise equivalence.)
    As this is the only case that features in this section, we will confidently speak about elements of $W_\pi(k\le l)$.
\end{rem}

\subsection{Surgery data}

Given some $n$-simplex $W \in C_n^\nc$ that is not yet in $C_n^\pb$ there must
be parts of $W$ that are not positive boundary. 
Concretely, we can find an element $u \in W_\pi(i\le i+1)$ that is not in the 
image of $W(i+1) \to W_\pi(i \le i+1)$.
We would like to find a homotopy that homotops this to a simplex in $C_n^\pb$.
To do so we first choose a ``path" from $u$ to $W(0)$ or $W(n)$
along which we can introduce a morphism that connects $U$ to a positive boundary.
This is the datum of a surgery path.
To make sense of the idea of a path we introduce the following notion
of representing space:

\begin{defn}\label{defn:|W|}
    For an $n$-simplex $W \in C_n^\nc$ we define the representing space $|W|$ as
    \[
        |W| = \left(\{ (t, k, \go) \in [0,1] \times \{0,\dots, n\} \times \gO \;|\; 
                \go \in W(k) \} 
                \amalg \{\bot, \top\}
                \right)/\sim
    \]
    The equivalence relation $\sim$ is defined as follows:
    for $k \in \{0,\dots, n\}$ it identifies two elements of 
    $\{ (t, i, \go) \in |W| \;|\; t+i = k\} \cong W(k-1) \amalg W(k)$
    whenever they are mapped to the same element under 
    $W(k-1) \amalg W(k) \to W_\pi(k-1\le k)$.
    Moreover, we set $(0,0,\go_0) \sim \bot$ and $(1,n,\go_n) \sim \top$ 
    for all $\go_0 \in W_\pi(0)$ and $\go_n \in W_\pi(n)$.
    
    This space naturally comes with a continuous projection $\pr:|W| \to [0,n+1]$
    sending $[t,k,\go]$ to $t+k$, $\bot$ to $0$, and $\top$ to $n+1$.
    We let $|W|_0 \subset |W|$ be the finite subset $\pr^{-1}(\{1,\dots,n\})$.
\end{defn}

\begin{rem}\label{rem:|W|}
    We can also think of $|W|$ as a long sequence of pushouts:
    \[
\begin{tikzcd}[column sep =-2.5pc, row sep =1pc]
  &                                              & {W(0) \times [0,1]} &                                              &                            &                                              & {W(1) \times [1,2]} &                            & {W(n) \times [0,n+1]} &                                                &   \\
* &                                              &                         &                                              & W_\pi(0\le 1) \times \{1\} &                                              &                         & \ \qquad \dots \qquad\                      &                           &                                                & * \\
  & W(0) \times \{0\} \arrow[lu] \arrow[ruu] &                         & W(0) \times \{1\} \arrow[luu] \arrow[ru] &                            & W(1) \times \{1\} \arrow[lu] \arrow[ruu] &                         & {\dots} \arrow[luu] \arrow[ruu] &                           & W(n) \times \{n+1\} \arrow[ru] \arrow[luu] &  
\end{tikzcd}
    \]
    The finite subset $|W|_0 \subset |W|$ is exactly the image of the sets
    $W_\pi(i-1\le i) \times \{i\}$ for $1 \le i \le n$.
    We can therefore define a map $g:|W|_0 \to \Mor^\con(\mcC)$
    that sends each $(i,k,\go)$ with $k+i \in \{1,\dots,n\}$ to the connected morphism
    in $\mcC$ that represents the relevant component of $W_\pi(k+i-1 \le k+i)$.
    This is exactly the labelling of the cospan in the enhanced $\ICsp(\mcC)$-model.
    The datum of $W$ together with $g$ in fact uniquely encodes 
    the $n$-simplex $W \in C_n^\nc$.
    Motivated by this we will often be drawing pictures of 
    $(|W|, g:|W|_0 \to \Mor^\con(\mcC))$ as in figure \ref{fig:simplified-presentation}.
\end{rem}

\begin{figure}[ht]
    \centering
    \small
    \def\svgwidth{\figurerescalefactor\linewidth}
    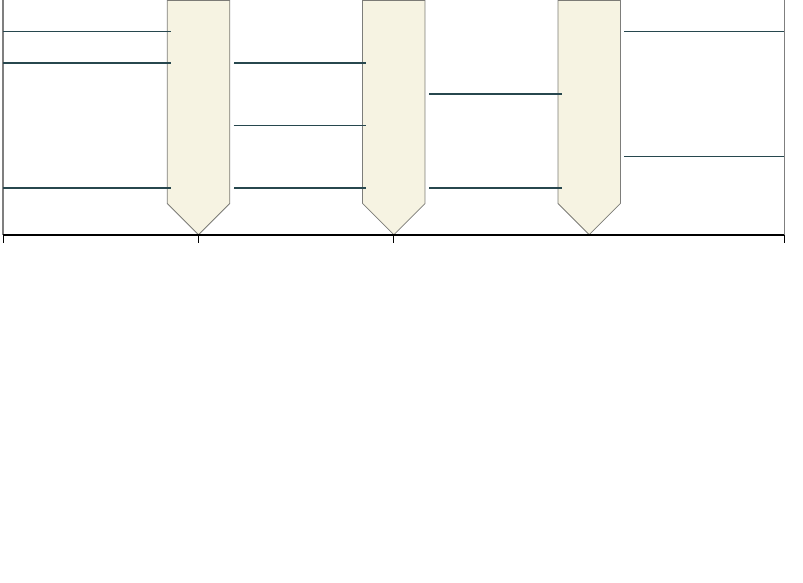
    \caption{On top: a $3$-simplex $W \in C_3^\nc$ for the labelled cospan category 
    $\mcC = \Cob_2$. 
    Below: The representing space $|W|$ with labels in $\IN$ recording the genus
    and a possible surgery path.}
    \label{fig:simplified-presentation}
\end{figure}

\begin{defn}\label{defn:surgery-path}
    For $W \in C_n^\nc$ a \emph{surgery path} is a continuous path $p: [0,1] \to |W|$
    such that the composite $p_\IR: [0,1] \to |W| \to [0,n+1]$ is piece-wise linear,
    $p_\IR(0) \in \{0,n+1\}$, and $p_\IR(1) \in \{0, 1, \dots, n+1\}$.
    
    If the integer $i := p_\IR(1)$ is in $\{1,\dots,n\}$
    then the end-point of the path corresponds 
    to some component of the morphism $W(i-1 \le i)$, which we denote by $p_{\rm end} \in W_\pi(i-1 \le i)$.
\end{defn}

Of course one surgery path by itself is not sufficient to make some $W$ be positive boundary.
We need several surgery paths at once, which we collect in a surgery datum:
\begin{defn}\label{defn:surgery-datum}
    A surgery datum for $W \in C_n^\nc$ is a finite subset 
    $A \subset \gO_O := F^{-1}(O)$,
    disjoint to $\cup_{i=0}^n W(k) \subset \gO$,
    together with the choice of a surgery path $p^\ga:[0,1] \to |W|$ for all $\ga \in A$
    such that they satisfy the following connectivity condition:
   \begin{itemize}
       \item[]
    For any $i\in \{1,\dots, n\}$ and $u \in W_\pi(i-1\le i)$
    either $u$ is in the image of $W(i) \to W_\pi(i-1 \le i)$,
    or there is an $\ga \in A$ such that the path $p^\ga$ ends at $u = p_{\rm end}^\ga$.
   \end{itemize} 
    For a surgery datum $(p^\ga)_{\ga \in A}$ we let 
    $A^{\rm in} := \{\ga \in A \;|\; p_\IR^\ga(0) = 0\} \subset A$ 
    denote the subset of those $\ga$ where 
    the surgery path starts at the incoming boundary.
\end{defn}

Note that the empty collection is a surgery datum for $W$ if and only if $W$ 
already lies in $C_n^\pb$. In that case we do not need to do any surgery.
We now describe how the surgery data behaves with respect to the simplicial structure:

\begin{defn}\label{defn:surgery-datum-functoriality}
    For any injective morphism $\gl:[n] \to [m]$ in $\gD$ we define a map
    \[ 
    \gl^*:\{0, \dots, m+1\} \to \{0, \dots, n+1\} \quad
    k \mapsto \min\{j \;|\; \gl(j) \ge k\}
    \]
    where we set $\gl^*(0) = 0$ and $\gl^*(m+1) = n+1$ by convention.
    Extending affine linearly we obtain the piece-wise linear map 
    $\gl^\cd:[0,m+1] \to [0,n+1]$.
    This defines a functor $\gD_{\rm inj}^\op \to \Top$.
    
    For an $m$-simplex $W \in C_m^\nc$ we let $\gl_W:|W| \to |\gl^*W|$ denote 
    the unique continuous map that satisfies $\gl_W([t,\gl(k),\go]) = [t,k,\go]$ for
    all $[t,k,\go] \in |\gl^*W|$, $\gl_W(\bot) = \bot$, $\gl_W(\top) = \top$,
    and makes the following diagram commute:
    \[
    \begin{tikzcd}
        {|W|} \ar[r, "\gl_W"] \ar[d, "\pr"]    &
        {|\gl^*W|} \ar[d, "\pr"]  \\
        {[0,m+1]} \ar[r, "\gl^\cd"] & 
        {[0,n+1]}
    \end{tikzcd}
    \]
\end{defn}

The above definition indeed uniquely characterizes $\gl_W$:
the equation $\gl_W([t,\gl(k),\go]) = [t,k,\go]$ defines $\gl_W$ on all points $[t,i,\go]$ where $i$ is in the image of $\gl$,
and if $i$ is not in the image of $\gl$, then $\gl^\cd(i) = \gl^\cd(i+t) = \gl^\cd(i+1)$ for all $t \in [0,1]$, so the commutative square forces $\gl_W$ to be constant on the edge $[0,1] \times \{i\} \times \{\go\}$
Because $W$ has no closed components this recursively determines the value of $\gl_W$ on all points.
We can use this unique characterization to check that $\gl_W$ is functorial, i.e.~that if $\rho: [l] \to [n]$ is another injective morphism in $\Delta$, then $(\gl\circ \rho)_W = \rho_{\gl^*W} \circ \gl_W$ are the same map $|W| \to |\rho^*\gl^*W|$.
(This holds because either map satisfies both the equations and the commutative diagram, as $(\rho \circ \gl)^\cd = \rho^\cd \circ \gl^\cd$.)

\begin{defn}
    The semisimplicial set $C_\cd^\gs$ has as $n$-simplices pairs of an $n$-simplex
    $W \in C_n^\nc$ with a surgery datum $(p^\ga)_{\ga \in A}$ on $W$.
    The semisimplicial structure is defined by 
    \[
        \gl^*(W,(p^\ga)_{\ga \in A})
        := (\gl^*W, (\gl_W \circ p^\ga)_{\ga \in A}).
    \]
\end{defn}

\begin{lem}
    If $(p^\ga)_{\ga \in A}$ is a surgery datum for $W$ 
    then $(\gl_W \circ p^\ga)_{\ga \in A}$ is a surgery datum for $\gl^*W$.
    Therefore $C_\cd^\gs$ is a well-defined semisimplicial set.
\end{lem}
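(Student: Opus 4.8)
The plan is to check the three defining properties of a surgery datum for $(\gl_W\circ p^\ga)_{\ga\in A}$ on $\gl^*W$ and then deduce semi-simpliciality formally. Throughout let $\gl\colon[n]\to[m]$ be injective and $W\in C_m^\nc$. That $\gl^*W\in C_n^\nc$ is immediate since $C_\cd^\nc$ is a simplicial subset of $N_\cd\mcC^\gO$, and $A\subset\gO_O$ together with $A\cap\big(\bigcup_{k=0}^n(\gl^*W)(k)\big)=A\cap\big(\bigcup_{k=0}^nW(\gl(k))\big)=\emptyset$ are inherited from the corresponding conditions for $W$. For the path condition I would use the square defining $\gl_W$ to rewrite $(\gl_W\circ p^\ga)_\IR=\gl^\cd\circ (p^\ga)_\IR$; this is piecewise linear because $\gl^\cd$ is, and since $\gl^\cd$ carries $\{0,m+1\}$ into $\{0,n+1\}$ and integers to integers (by definition of $\gl^*$, with $\gl^*(0)=0$ and $\gl^*(m+1)=n+1$), the constraints $(p^\ga)_\IR(0)\in\{0,n+1\}$ and $(p^\ga)_\IR(1)\in\{0,\dots,n+1\}$ pass to $\gl_W\circ p^\ga$.

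The substantive step is the connectivity condition. Fix $i\in\{1,\dots,n\}$ and $u\in(\gl^*W)_\pi(i-1\le i)=W_\pi(\gl(i-1)\le\gl(i))$ with $u$ not in the image of $(\gl^*W)(i)=W(\gl(i))$; I must find $\ga\in A$ with $(\gl_W\circ p^\ga)_{\mathrm{end}}=u$. The relevant structural facts are: $W_\pi(\gl(i-1)\le\gl(i))$ is the colimit of the zig-zag $W(\gl(i-1))\to W_\pi(\gl(i-1)\le\gl(i-1)+1)\leftarrow W(\gl(i-1)+1)\to\cdots\leftarrow W(\gl(i))$, and $\gl_W$ carries each integer level $j\in\{\gl(i-1)+1,\dots,\gl(i)\}$ — those on which $\gl^\cd$ takes the value $i$ — onto level $i$ via the canonical map $W_\pi(j-1\le j)\to W_\pi(\gl(i-1)\le\gl(i))$ of this colimit. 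The preliminary observation, using $W\in C_m^\nc$, is that no component of an intermediate bordism $W(k\le k+1)$ can miss both $W(k)$ and $W(k+1)$ (else it would be a closed component of $W(0\le m)$), so every component of $W(\gl(i-1)\le\gl(i))$ meets some $W(k)$ with $\gl(i-1)\le k\le\gl(i)$; hence $u$ is represented by some $\go\in W(k)$, and because $u\notin\mathrm{im}(W(\gl(i)))$ we may take $k\le\gl(i)-1$. Then I run a ``push to the right'': let $u'\in W_\pi(k\le k+1)$ be the class of $\go$; if $u'\notin\mathrm{im}(W(k+1))$ stop, and otherwise $u'$ is the class of some $\go_1\in W(k+1)$, which still represents $u$, so $k+1=\gl(i)$ would contradict $u\notin\mathrm{im}(W(\gl(i)))$ while otherwise we replace $(k,\go)$ by $(k+1,\go_1)$ and repeat. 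Since $k$ strictly increases with ceiling $\gl(i)-1$, this terminates and yields $j\in\{\gl(i-1)+1,\dots,\gl(i)\}$ and $u'\in W_\pi(j-1\le j)$ with $u'\notin\mathrm{im}(W(j))$ and $\gl_W(u')=u$. Applying the connectivity hypothesis for $(p^\ga)_{\ga\in A}$ at the index $j\le\gl(i)\le m$ produces $\ga\in A$ with $p^\ga_{\mathrm{end}}=u'$; for this $\ga$ one has $(\gl_W\circ p^\ga)_\IR(1)=\gl^\cd(j)=i$ and $(\gl_W\circ p^\ga)_{\mathrm{end}}=\gl_W(u')=u$, as required.

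Finally, for the semi-simplicial identity $(\gl\circ\mu)^*=\mu^*\circ\gl^*$ on pairs, the underlying-simplex part is the already-established simpliciality of $N_\cd\mcC^\gO$, and the path part reduces to $(\gl\circ\mu)_W=\mu_{\gl^*W}\circ\gl_W$. I would verify the latter by observing that both are continuous maps $|W|\to|\mu^*\gl^*W|$ satisfying the relation $(\,\cdot\,)\big([t,(\gl\circ\mu)(k),\go]\big)=[t,k,\go]$ and the compatibility square over $(\gl\circ\mu)^\cd=\mu^\cd\circ\gl^\cd$, hence agree by the uniqueness in the definition of these maps. I expect the connectivity step to be the main obstacle: tracking how an element of the composite cospan $W_\pi(\gl(i-1)\le\gl(i))$ distributes across the individual bordisms $W(k\le k+1)$, and arguing that it must fail to touch the outgoing boundary at some intermediate level, is the real content; the rest is bookkeeping with the definitions of $\gl^\cd$ and $\gl_W$. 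The representing space $|W|$ of Definition~\ref{defn:|W|} provides a convenient picture for this, though the argument can also be run directly with colimits of fibre-sets as sketched above.
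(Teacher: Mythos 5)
Your proof is correct and follows the same high-level plan as the paper (check continuity and piecewise linearity, check endpoint constraints, check connectivity, then check semi-simpliciality). The one place where you differ is in the verification of the connectivity condition: the paper reduces to the elementary face maps $\gd^i$, where the composite pushout $W_\pi(i-1\le i+1) \cong W_\pi(i-1\le i)\amalg_{W(i)}W_\pi(i\le i+1)$ gives a two-case dichotomy, and then appeals to the fact that $\gD_{\rm inj}^\op$ is generated by the $\gd^i$. You instead treat a general injection $\gl$ directly, using the fact that $W\in C_m^\nc$ rules out closed components in the intermediate cobordisms $W(k\le k+1)$, then running a finite ``push to the right'' induction on the level $k$ until you locate a constituent component $u'\in W_\pi(j-1\le j)$ that fails positive boundary. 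Both arguments are correct and really encode the same observation --- that a component of a composite cospan failing positive boundary must, at some intermediate stage of the composition, also fail positive boundary --- but the paper's reduction to $\gd^i$ avoids the explicit induction, at the price of having to invoke generation of $\gD_{\rm inj}$, while your argument is self-contained and arguably more transparent about why the hypothesis $W\in C_m^\nc$ (no closed components) is needed. Your appeal to the uniqueness clause in the definition of $\gl_W$ to verify $(\gl\circ\mu)_W=\mu_{\gl^*W}\circ\gl_W$ is also fine and slightly more explicit than the paper's one-line appeal to functoriality of $\gl\mapsto\gl^\cd$.
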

\begin{proof}
    Since $\gl_W:|W| \to |\gl^*W|$ is continuous the new path 
    $\gl_W \circ p: [0,1] \to |W| \to |\gl^*W|$ is also continuous.
    Moreover, $\gl^\cd:[0,n+1] \to [0,m+1]$ is a piece-wise linear map and hence 
    $(\gl_W\circ p)_\IR = \gl^\cd \circ p_\IR$ is piece-wise linear, too.
    The condition $p_\IR(0) \subset \{0,n+1\}$ is preserved because 
    $\gl^\cd:[0,n+1] \to [0,m+1]$ preserves the minimal and the maximal element.
    The condition $p_\IR(1) \in \{0,\dots,n+1\}$
    implies $(\gl^\cd \circ p_\IR)(1) \in \{0,\dots,m+1\}$
    since $\gl^\cd$ sends integers to integers.
    Therefore $\gl_W \circ p$ is indeed still a surgery path.
    
    Next we need to check that $(\gl_W\circ p^\ga)_{\ga \in A}$ is a surgery datum
    for $\gl^*W$. It is clear that $A$ is disjoint to $\gl^*W$ as we 
    can only forget elements of $\gO$ when passing from $W$ to $\gl^*W$.
    To check the connectivity condition let us assume that $\gl$ 
    is the unique morphism $\gd^i:[n] \to [n+1]$ whose image does not contain $i$. 
    Write $d_iW := (\gd^i)^*W$. The general case follows as the category 
    $\gD_{\rm inj}$ is generated by the $\gd^i$.
    Consider some $u \in (d_iW)_\pi(j-1\le j)$ that is not in the image of $(d_iW)_\pi(j)$.
    If $j \neq i$ it is clear from the connectivitiy condition for $W$ 
    that there is an $\ga \in A$ such that $p^\ga$ ends in $p_{\rm end}^\ga = u$.
    We may therefore assume that $j=i$, in which case $(d_iW)(i-1\le i)$
    is the composite of $W(i-1\le i)$ and $W(i\le i+1)$
    and the set $(d_iW)_\pi(i-1\le i)$ is the pushout 
    $W_\pi(i-1 \le i) \amalg_{W(i)} W_\pi(i \le i+1)$.
    
    There are two cases: either $u \in (d_iW)_\pi(i-1 \le i)$ 
    is not hit by an element of $W_\pi(i \le i+1)$ or it is.
    In the former case $u$ is represented by an element $v \in W_\pi(i-1 \le i)$
    that is not in the image of $W(i)$. 
    The connectivity condition for $(p^\ga)_{\ga \in A}$ 
    implies that there is $\ga \in A$ such that the surgery path $p^\ga$ 
    ends at $p_{\rm end}^\ga = v$. 
    Consequently $\gd^i_W \circ p^\ga$ ends at 
    $(\gd^i_W \circ p^\ga)_{\rm end} = \gd^i_W(p^\ga_{\rm end}) = u$ and we are done.
    In the other case we can represent $u$ by an element $v' \subset W_\pi(i \le i+1)$.
    This $v'$ cannot lie in the image of $W(i+1) \to W_\pi(i \le i+1)$,
    as otherwise $u$ would be in the image of $(d_iW)(i) \to (d_iW)_\pi(i-1\le i)$,
    which we assumed not to be the case.
    This means that $v$ does not have positive boundary as part of $W$ 
    and so we can find $\ga \in A$ such that $p^\ga$ ends in $p_{\rm end}^\ga = v$,
    and then $(\gd^i_W \circ p^\ga)_{\rm end} = \gd^i_W(p_{\rm end}^\ga) = u$.
    
    This shows that $(\gl_W \circ p^\ga)_{\ga \in A}$ is a surgery datum for $\gl^*W$
    and hence the face operators $\gl^*$ yield well-defined elements of $C_{m}^\gs$.
    Functoriality follows from the argument below definition \ref{defn:surgery-datum-functoriality} and hence this assembles into a well-defined functor $C_\cd^\gs: \gD_{\rm inj}^\op \to \Set$.
\end{proof}

\subsection{The basic surgery}
We now want to describe how to do the surgery given a surgery datum.
The problem of choosing this surgery datum will be dealt with
in the next subsection.
In formulas this means that we want to construct a homotopy from 
the forgetful map $|C_\cd^\gs| \to |C_\cd^\nc|$ to another map that lands
in the subspace $|C_\cd^\pb| \subset |C_\cd^\nc|$.
Moreover, we would like this homotopy to be constant in the case of an empty surgery datum.
This is summarized in the following diagram:
\[
\begin{tikzcd}[column sep = 4pc, row sep = 2pc]
    {|C_\cd^\pb|} \ar[d, equal] \ar[r]     &
    {|C_\cd^\gs|} \ar[d, "\mcS_0"] \ar[ld, "\mcS_1"', dashed]  \\
    {|C_\cd^\pb|} \ar[r]     &
    {|C_\cd^\nc|} 
\end{tikzcd}
\]
Here $\mcS:|C_\cd^\gs| \times [0,1] \to |C_\cd^\nc|$ is a homotopy, 
which we think of as a continuous family of maps $\mcS_r$ indexed 
by $r \in [0,1]$ such that $\mcS_0$ is the forgetful map and $\mcS_1$
is a retraction onto the subspace $|C_\cd^\pb|$.

The homotopy $\mcS$ will be obtained by concatenating two homotopies $\rho$ and $K$.
With $\rho$ we introduce a disjoint copy of $\id_\ga$ for all $\ga \in A^{\rm in}$
as pictured in figure \ref{fig:introducing-omega}.
This is based on the idea of how the morphism $T: 1_\mcC \to O$ 
yields a natural transformation $\Id_{\mcC} \Rightarrow O \ot \Id_\mcC$,
which in turn induces a homotopy of maps $B\mcC \to B\mcC$.
The second homotopy $K$ will be more complicated as we have to 
follow a surgery path to move a copy of the morphism $P_M: M \to O \ot M$
from $W(0)$ or $W(n)$ to the morphism at the end-point of the surgery path,
in order to make that morphism positive boundary; see figure \ref{fig:surgery-homotopy}.

\begin{defn}
    We let $\gO_O := F^{-1}(O) \subset \gO$.
    For any $\ga \in \gO_O$ and $\go \in \gO$ with $\go \neq \ga$ we define
    morphisms $T_\ga:\emptyset \to \{\ga\}$ 
    and $P_{\go,\ga}: \{\go\} \to \{\go,\ga\}$ in $\mcC^\gO$ by:
    \[
        T_\ga := (F(\emptyset) = 1_\mcC \xrightarrow{T} O = F(\ga)) 
        \qand
        P_{\go,\ga} := (F(\go) \xrightarrow{P_{F(\go)}} O \ot F(\go) \cong F(\{\ga,\go\})) .
    \]
\end{defn}

We first check that it follows from the axioms in definition \ref{defn:admits-surgery} that two copies of $P$ always commute.
\begin{lem}\label{lem:surgery-paths-commute}
    For all $\ga,\ga'\in \gO_O$ and $\go\in \gO$, all three distinct, we have
    \[
        (P_{\go,\ga'} \amalg \id_\ga) \circ P_{\go, \ga} = (P_{\go,\ga} \amalg \id_{\ga'}) \circ P_{\go,\ga'}.
    \] 
\end{lem}
\begin{proof}
    This follows by combining the top left and bottom diagram in definition \ref{defn:admits-surgery}, as indicated in figure \ref{fig:sigmas-commute}.
    Note that in writing the morphism in $\mcC^\gO$ we do not have to write the braiding $\beta_{O,O}$, as the morphism $(P_{\go,\ga} \amalg \id_{\ga'}) \circ P_{\go',\ga}$ is already a morphism from the set $\{\go\}$ to the set $\{\go,\ga',\ga\} = \{\go,\ga,\ga'\}$ and when interpreting this as a morphism in $\mcC$ we apply $F$ which uses the order on $\{\go,\ga,\ga'\}$ induced from whatever order we had picked on $\gO$.
\end{proof}

\begin{figure}[ht]
    \centering
    \def\svgwidth{\figurerescalefactor\linewidth}
    \small
    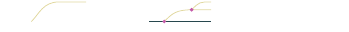
    \caption{When attaching two arcs, the order does not matter.}
    \label{fig:sigmas-commute}
\end{figure}

The first half of the basic surgery will be a semisimplicial map, but the second half will not; rather it is only defined on geometric realizations, so we fix some notation.
Recall that the space $|C_\cd^\nc|$ is a quotient of the disjoint union
\[
    \coprod_{n\ge 0} C_n^\nc \times |\gD^n| \longrightarrow |C_\cd^\nc|.
\]
We will parametrise the space $|\gD^n|$ as 
\[
    |\Delta^n| = \{ \mbf{t} = (t_1,\dots,t_{n}) \in [0,1]^{n} \;|\; 0 \le t_1 \le \dots \le t_n \le 1 \}
\]
and by convention we set $t_0 = 0$ and $t_{n+1} = 1$.
A point in $|C_\cd^\nc|$ can be represented by $(W, \mbf{t})$ 
where $W:[n] \to \mcC^\gO$ and $\mbf{t}$ is as above.

The first step in the surgery is to homotope $\mcS_0$ to a map $\mcS_{1/2}$ that introduces an identity morphism for each surgery path that starts at the incoming boundary.
This does not really interact with the existing morphisms in $W$ and so we can simply write is as a disjoint union.
To make sense of this, note that if $A, B \subset \gO$ are disjoint subsets and we write $C_\cd^{\nc,\subset A}, C_\cd^{\nc, \subset B} \subset C_\cd^{\nc}$ for the simplicial subsets where we require the simplices to only use elements of $A$ or $B$, respectively, then taking the union defines a map of simplicial sets
\[
    \cup: C_\cd^{\nc,\subset A} \times C_\cd^{\nc, \subset B} \longrightarrow C_\cd^{\nc, \subset A \amalg B}
\]
and hence a map on geometric realizations.

\begin{defn}\label{defn:rho}
    For $A \subset \gO$ finite we define a homotopy as the composite
    \[
        \rho_A: |C_\cd^{\nc, \subset \gO\setminus A}| \times [0,1] 
        \xrightarrow{\id \times T_A} |C_\cd^{\nc, \subset \gO\setminus A}| \times |C_\cd^{\nc, \subset A}|
        \xrightarrow{\cup} |C_\cd^\nc|
    \]
    where $T_A$ denotes the inclusion of the $1$-simplex $T_A = \coprod_{\ga \in A} (T_\ga: \emptyset \to \{\ga\})$.
    We will denote the value of this map on some point $([W,\bbt],r)$ by $\rho_A^r(W,\bbt)$.
\end{defn}

\begin{rem}
    If we unpack the details of the definition, we obtain a description of the homotopy on representatives as
    \[
        \rho_A^r(W, \bbt) = (W^*, \bbt^*)
    \]
    where $\bbt^*$ is obtained by inserting $r$ as $(t_1,\dots, t_i, r, t_{i+1}, \dots, t_n)$ such that $r \in [t_i,t_{i+1}]$ and $W^*$ is the $(n+1)$-simplex with 
    \begin{align*}
        W^*(j) &= \begin{cases}
            W(j) & \text{ for } j \le i \\
            W(j-1) \amalg A & \text{ for } j \ge i+1
        \end{cases}
        \qand\\
        W^*(j\le j+1) &= \begin{cases}
            W(j \le j+1) & \text{ for } j < i \\
            \id_{W(j)} \amalg\ \coprod_{\ga \in A} T_\ga & \text{ for } j = i \\
            W(j-1 \le j) \amalg \id_{A} & \text{ for } j \ge i+1.
        \end{cases}
    \end{align*}
\end{rem}

The semisimplicial set $C_\cd^\gs$ decomposes as a disjoint union $\coprod_{A \subset \gO, \text{ finite}} C_\cd^{\gs,A}$ where in $C_\cd^{\gs,A}$ we require that the surgery data is labelled by $A$.
In particular, if $(W, p^\cd) \in C_n^{\gs, A}$ then $W$ must in fact all be in $C_n^{\nc, \subset \gO \setminus A}$ and we have a union map with simplices in $C_\cd^{\nc, \subset A}$.
This allows us to take the union of some $(W, (p^\ga)_{\ga \in A}) \in C_\cd^\gs$ with the $1$-simplices $T_\ga$ for $\ga \in A^{\rm in}$.

\begin{defn}\label{defn:rho-mcS}
    We define the first half of the basic homotopy as
    \begin{align*}
        \mcS: |C_\cd^\gs| \times [0,1/2] &\longrightarrow |C_\cd^\nc| \\
        ([W, (p^\ga)_{\ga \in A}, \bbt],r) &\longmapsto \rho_{A^{\rm in}}^{1-2r}(W, \bbt).
    \end{align*}
\end{defn}

    In particular, $\mcS_{1/2}$ is the realization of the semisimplicial map that on $n$-simplices is given by
    \[
        (W, (p^\ga)_{\ga \in A}) \longmapsto (W \amalg \id_{A^{\rm in}}).
    \]

Now that we have the homotopy that introduces the cylinder $\id_\ga$ we proceed to describe a similar, but more complicated homotopy that homotops this cylinder along a surgery path to make a certain part of $W$ positive boundary.

\begin{figure}[ht]
    \centering
    \def\svgwidth{\figurerescalefactor\linewidth}
    \small
    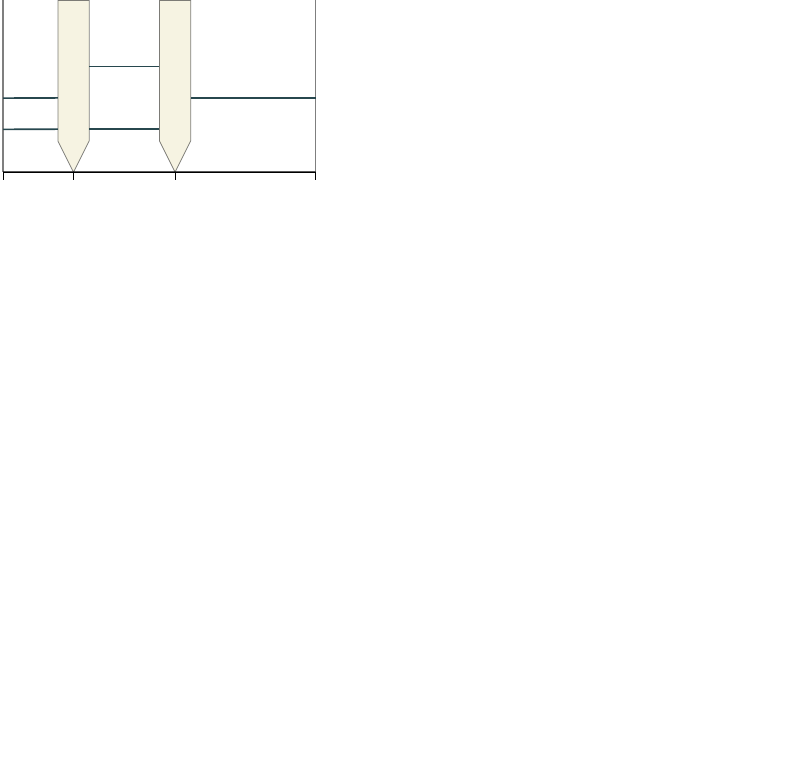
    \caption{A slide show of the homotopy $\rho_\ga^s(W, \bbt)$ for a $2$-simplex $W$
    as $s$ moves from $1$ to $0$.}
    \label{fig:introducing-omega}
\end{figure}

\begin{defn}\label{defn:K-new}
    For $W \in C_n^\nc$, $A \subset \Omega$ disjoint to $W$, $x: A \to W$ a map specifying attachment points,  and $\bbt \in |\Delta^n|$ we will define
    \[
        \gs(W,\bbt,A,x) := [W', \bbt'] \in |C_\cd^\nc|
    \]
    as follows.
    Write $x_\IR := \pr_\IR \circ x: A \to [0,n+1]$ for the $\IR$-coordinate of $x$.
    For $N = n+ |A|$, pick a bijection $\varepsilon: \{1,\dots,N\} \cong \{1,\dots,n\} \amalg A$ such that the map
    \begin{align*}
        \varepsilon_\IR: \{1,\dots,N\} & \longrightarrow [0,n+1] \\
        j & \longmapsto \begin{cases}
            \varepsilon(j) & \text{ if }\varepsilon(j) \in \{1,\dots,n\} \\
            x_\IR(\varepsilon(j)) & \text{ if }\varepsilon(j) \in A
        \end{cases}
    \end{align*}
    is monotonous.
    Write $\bbt'$ for the composite of $\varepsilon_\IR$ with the piece-wise linear map $[0,n+1] \to [0,1]$ that is obtained from $\bbt$ by interpolation.
    Let $(-)_\bot: \{1,\dots,N\} \to [n]$ be the map that sends $j$ to the largest $i$ such that $i \le \varepsilon(j)$.
    Pick $\go: A \to \Omega$ such that $\go(\ga) \in W(\ga_\bot)$ and such that $(\go(\ga), x_\IR(\ga))$ maps to $x(\ga)$ under $W(\ga_\bot) \times [\ga_\bot, \ga_\bot+1] \to |W|$.
    Then we define the $N$-simplex $W' \in C_N^\nc$ as follows.
    \begin{align*}
        W'(j) &:= 
            W(j_\bot) \amalg \{\ga \in A \;|\; \varepsilon(\ga) \le j \}
        \qand \\
        W'(j-1\le j) &:= \begin{cases}
            W(i-1 \le i) \amalg \id_{\{\ga \in A \;|\; \varepsilon(\ga) \le j\}} & \text{ if } i = \varepsilon(j) \in \{1,\dots,n\} \\
            P_{\go(\ga), \ga} \amalg \id_{W'(j-1) \setminus \{\ga\}} & \text{ if } \ga = \varepsilon(j) \in A.
        \end{cases}
    \end{align*}
\end{defn}

\begin{lem}\label{lem:gs-continuous}
    The construction of $\gs(W,\bbt,A,x) \in |C_\cd^\nc|$ in definition \ref{defn:K-new} is independent of the choice of the auxiliary maps $\varepsilon$ and $\omega$, and for fixed $W$ and $A$ it yields a well-defined continuous map 
    \begin{align*}
       |\Delta^n| \times \Map(A,|W|) & \longrightarrow |C_\cd^\nc| \\
        (\bbt, x) &\longmapsto \gs(W,\bbt,A,x).
    \end{align*}
\end{lem}
\begin{proof}
    Fix $W$ and $A$ as in definition \ref{defn:K-new}.
    If we also fix $\varepsilon$ and $\go$, then it is clear that $\bbt'$ is continuous in $\bbt$ and $x$ and thus $\gs(W,-,A,-)$ defines a continuous map on the subspace of $|\Delta^n| \times \Map(A, |W|)$ on those $(\bbt,x)$ such that $\varepsilon$ and $\go$ satisfy the conditions of definition \ref{defn:K-new}.
    (It defines a continuous map to $|\Delta^N|$ which then maps to $|C_\cd^\nc|$ as the $N$-simplex $W'$.)
    As we go through the finite set of choices for $\varepsilon$ and $\go$ this yields a cover of $|\Delta^n| \times \Map(A, |W|)$ by \emph{closed} subspaces (some of which might be empty) such that $\gs(W, -, A, -)$ is continuous on each of them.
    Note that this is indeed a cover because it is always possible to pick $\varepsilon$ and $\go$: this uses that for every $i \in \{1,\dots,n\}$ the source and target map $W(i-1) \to W(i-1\le i) \leftarrow W(i)$ are jointly surjective since we are in $C_\cd^\nc$.
    Therefore, we just have to show that for each fixed $\bbt$ and $x$ any choice of $\varepsilon$ and $\go$ actually results in the same point in $|C_\cd^\nc|$.

    Fix $(W,\bbt,A,x)$. 
    When choosing $\varepsilon$ there is ambiguity whenever there is $j \in \{1,\dots,N-1\}$ with $\varepsilon_\IR(j) = \varepsilon_\IR(j+1)$ and we need to show that in this case composing $\varepsilon$ with the transposition that swaps $j$ and $j+1$ does not change the resulting point in $|C_\cd^\nc|$.
    When this happens we always have a repetition in $\bbt'$ as $\bbt'_j = \bbt'_{j+1}$, and we write $\bbt' = \delta^j \hat{\bbt'}$ where $\hat{\bbt'} \in |\Delta^{N-1}|$ is obtained by merging these two values.
    Now let $W''$ be the $N$-simplex that we obtain if we swapped $j$ and $j+1$ in our choice of $\varepsilon$.
    Then we have that in $|C_\cd^\nc|$
    \[
        [W', \bbt' = \delta^j \hat{\bbt'}] = [d_j W', \hat{\bbt'}] \stackrel{?}{=}
        [d_j W'', \hat{\bbt'}] = [W'', \bbt']
    \]
    and for the middle equality we will check that $d_j W' = d_j W''$.
    These two $(N-1)$-simplices already agree by construction expect possibly for one morphism, so it will suffice to argue that $W'(j-1\le j+1) = W''(j-1 \le j+1)$ in $\mcC$.

    First, suppose that $\varepsilon(j) = \ga$ and $\varepsilon(j+1) = \ga'$ are both in $A$.
    Additionally, suppose that $\go(\ga) \neq \go(\ga')$.
    In this case 
    \begin{align*}
        W'(j-1 \le j+1) &= (P_{\go(\ga'),\ga'} \amalg \id_{W'(j) \setminus \{\ga'\}}) \circ (P_{\go(\ga),\ga} \amalg \id_{W'(j-1) \setminus \{\go(\ga)\}})\\
        &= P_{\go(\ga'),\ga'} \amalg P_{\go(\ga),\ga} \amalg \id_{W'(j-1) \setminus \{\go(\ga), \go(\ga')\}}
    \end{align*}
    and for $W''$ the roles of $\ga$ and $\ga'$ are reversed, but the result is the same.
    If we instead have that $\go(\ga) = \go(\ga')$ then
    \begin{align*}
        W'(j-1 \le j+1) &= (P_{\go(\ga),\ga'} \amalg \id_{W'(j) \setminus \{\go(\ga)\}}) \circ (P_{\go(\ga),\ga} \amalg \id_{W'(j-1) \setminus \{\go(\ga)\}}) \\
        &= ((P_{\go(\ga),\ga'} \amalg \id_\ga) \circ P_{\go(\ga),\ga}) \amalg \id_{W'(j-1) \setminus \{\go(\ga)\}} \\
        W''(j-1 \le j+1) &= ((P_{\go(\ga),\ga} \amalg \id_{\ga'}) \circ P_{\go(\ga),\ga'}) \amalg \id_{W'(j-1) \setminus \{\go(\ga)\}}
    \end{align*}
    and the equality $W'(j-1\le j+1)=W''(j-1\le j+1)$ follows from lemma \ref{lem:surgery-paths-commute}.

    For the second case, suppose that $\varepsilon(j) = i \in \{1,\dots,n\}$.
    Then we must have $\varepsilon(j+1) = \ga \in A$ because $\varepsilon_\IR$ is strictly monotone on $\{1,\dots,n\}$.
    (The argument will also deal with the opposite case where $\varepsilon(j) \in A$ and $\varepsilon(j+1) \in \{1,\dots,n\}$.)
    Let $\go': A \to \gO$ be a map compatible with the transposed $\varepsilon$.
    (If it is not possible to pick such a $\go'$ we can ignore this transposition.)
    This means that $\go(\ga) \in W(i)$ and $\go'(\ga) \in W(i-1)$ must map to the same element $w \in W(i-1\le i)$, which exactly corresponds to $x(\ga) \in |W|$.
    We can assume, without loss of generality, that $\go$ and $\go'$ agree away from $\ga$; the independence of the choice of $\go$ will be discussed below.
    The morphisms we need to compare are now 
    \begin{align*}
        W'(j-1 \le j+1) &= (P_{\go(\ga),\ga} \amalg \id_{W'(j) \setminus \{\go(\ga)\}}) \circ (W(i-1 \le i) \amalg \id_{\{\ga \in A \;|\; \varepsilon(\ga) \le j\}}) \\
        W''(j-1 \le j+1) &=
        (W(i-1 \le i) \amalg \id_{\{\ga \in A \;|\; \varepsilon(\ga) \le j\}}) \circ 
         (P_{\go'(\ga),\ga} \amalg \id_{W'(j-1) \setminus \{\go'(\ga)\}}) .
    \end{align*}
    Ignoring identity morphisms, we only have to consider the connected component of the morphism $W(i-1 \le i)$ that corresponds to $x(j) \in |W|$.
    So, without loss of generality, we may assume that $W(i-1 \le i)$ is connected.
    Write $M := F(W(i-1) \setminus \{\go'(\ga)\})$ and $N := F(W(i) \setminus \{\go(\ga)\})$.
    Then $W$ is a connected morphism $F(\go'(\ga)) \otimes M \to F(\go(\ga)) \otimes N$. (As before, we freely use the braiding and associator of $\mcC$.)
    By definition \ref{defn:admits-surgery} we have a commutative diagram
    \[
        \begin{tikzcd}[column sep = 5pc]
            {F(\go'(\ga))} \ot M \ar[d, "{W(i-1 \le i)}"'] \ar[r, "P_{\go'(\ga),\ga} \ot \id_M"] &
            O \ot {F(\go(\ga))} \ot M \ar[d, "\id_O \ot {W(i-1 \le i)}"] \\
            {F(\go(\ga))} \ot N \ar[r, "P_{\go(\ga),\ga} \ot \id_N"] &
            O \ot {F(\go(\ga))} \ot N,
        \end{tikzcd}
    \]
    which exactly shows $W'(j-1\le j+1) = W''(j-1 \le j+1)$ as morphisms in $\mcC$.

    Finally, we need to argue that the choice of $\go$ does not affect $\gs(W,\bbt,A,x) \in |C_\cd^\nc|$.
    If $\ga \in A$ is such that $x_\IR(\ga) \not\in \{1,\dots,n\}$, then $x(\ga) \in \gO$ uniquely determines the $\gO$-coordinate and we have no choice.
    Suppose we have $x_\IR(\ga) = i \in \{1,\dots,n\}$.
    Then, up to reordering $\varepsilon$, we can arrange for $\ga = \varepsilon(j\pm 1)$ with $j = \varepsilon^{-1}(i)$.
    We will deal with the case of $\ga = \varepsilon(j+1)$, the other being analogous.
    As before, we have a repetition in $\bbt'$ and eventually the claim can be reduced to checking that the two morphism
    \begin{align*}
        W'(j-1 \le j+1) &= (P_{\go(\ga),\ga} \amalg \id_{W'(j) \setminus \{\go(\ga)\}}) \circ (W(i-1 \le i) \amalg \id_{\{\ga \in A \;|\; \varepsilon(\ga) \le j\}}) \\
        W''(j-1 \le j+1) &= (P_{\go'(\ga),\ga} \amalg \id_{W'(j) \setminus \{\go'(\ga)\}}) \circ (W(i-1 \le i) \amalg \id_{\{\ga \in A \;|\; \varepsilon(\ga) \le j\}}) 
    \end{align*}
    are equal, with the only difference being that we use $P_{\go'(\ga),\ga}$ instead of $P_{\go'(\ga),\ga}$.
    Up to using the braiding we have so far left implicit, this is exactly the content of the bottom diagram in definition \ref{defn:admits-surgery}, see also the bottom picture in figure \ref{fig:admits-surgery}.
    (In the case of $\ga = \varepsilon(j-1)$ we instead use the top right square in \ref{defn:admits-surgery})

    This concludes the proof that definition \ref{defn:K-new} is independent of the choices made and, as argued in the first paragraph of the proof, we therefore get a continuous map as claimed.
\end{proof}

\begin{figure}
    \centering
    \small
    \def\svgwidth{\figurerescalefactor\linewidth}
    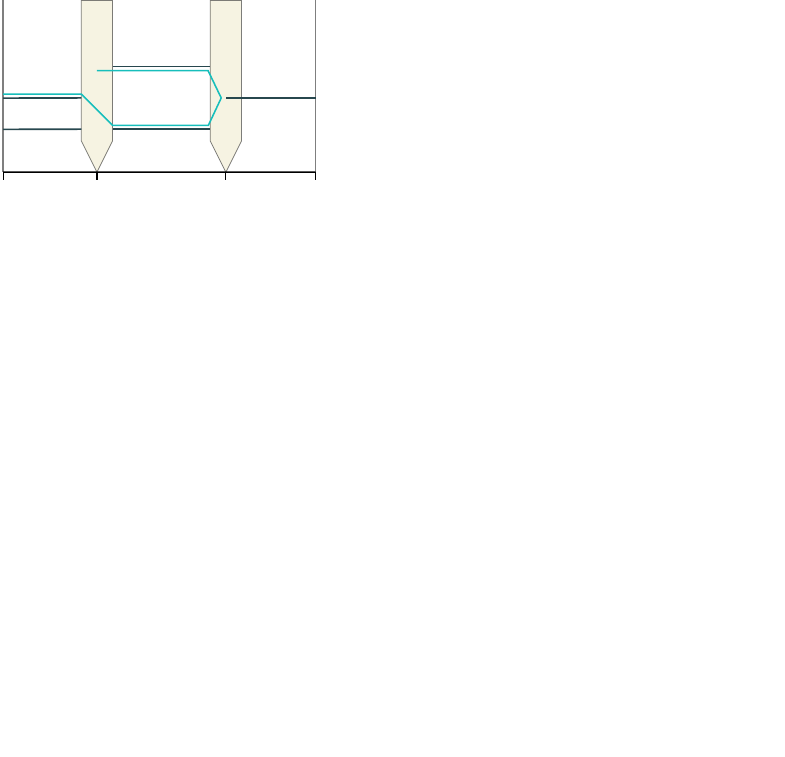
    \caption{A slide-show depiction of the homotopy $\gs(W,\bbt,A,p(r))$ for $A = \{\ga\}$ and $p$ the path indicated in the first picture.
    Here $\gamma:[0,1] \to [0,1]$ denotes the composite of $p_\IR$ and the map $[0,n+1] \to [0,1]$ obtained by linearly interpolating $\bbt$.
    See remark \ref{rem:fig-surgery-homotopy}.
    }
    \label{fig:surgery-homotopy}
\end{figure}

\begin{figure}
    \centering
    \small
    \def\svgwidth{\figurerescalefactor\linewidth}
    \import{}{surgery-homotopy-2.pdf_tex}
    \caption{A slide-show depiction of the homotopy $\gs(W,\bbt,\{\ga\},p(r))$ for a path starting at $1$. 
    }
    \label{fig:surgery-homotopy-2}
\end{figure}

\begin{rem}\label{rem:fig-surgery-homotopy}
    Figure \ref{fig:surgery-homotopy} illustrates the homotopy $\gs(W,\bbt,A,p)$ in the case of $A = A^{\rm in} = \{\ga\}$.
    The morphisms in the simplex $W$ are $f:\go_2 \ot \go_3 \to \go_3$,
    $g:1_\mcC \to \go_1$, and $h: \go_1 \ot \go_3 \to \go_2$.
    Throughout the homotopy we see other morphisms 
    $f' = P_{\go_3,\ga} \circ f = (\id_\ga \ot f) \circ (P_{\ga, \go_2} \ot \id_{\go_3})$,
    $g' = P_{\go_1,\ga} \circ g$, and 
    $h' = (\id_\ga \ot h) \circ (\id_{\go_1} \ot P_{\ga, \go_3})
    = (\id_\ga \ot h) \circ (P_{\ga, \go_1} \ot \id_{\go_3})$.
    A key step of lemma \ref{lem:gs-continuous} is to show that the two different
    expressions we have for $f'$ and for $h'$ yield the same morphisms.
    
    See also figure \ref{fig:surgery-homotopy-2}, for an illustration of the case $\ga \not\in A^{\rm in}$,
    where the surgery path starts at $p_{\IR}(0)=1$.
    Note that in this case we do not need to start out with a disjoint copy of $\id_\ga$ on top of the diagram.
    This difference will play a role when gluing the homotopies in proposition \ref{prop:mcS}.
\end{rem}

We can now concatenate the two homotopies to obtain the basic surgery.
\begin{defn}
    The homotopy $\mcS$ is defined as follows:
    \begin{align*}
        \mcS: |C_\cd^\gs| \times [0,1] &\to |C_\cd^\nc|, \\
        ((W, (p^\ga)_{\ga \in A},\bbt), r) &\mapsto 
        \mcS_A^r(W,\bbt) :=
        \begin{cases}
            \rho_{A^{\rm in}}^{1-2r}(W, \bbt) & \text{ for } 0 \le r \le \oh,\\
            \gs(W, \bbt, A, p^{(-)}(2r-1)) & \text{ for } \oh \le r \le 1.
        \end{cases}
    \end{align*}
\end{defn}

To conclude this subsection we record the fact that the homotopy $\mcS$ we constructed has all the desired properties.
\begin{prop}\label{prop:mcS}
    The above defines a continuous family of maps $\mcS^r: |C_\cd^\gs| \to |C_\cd^\nc|$ 
    satisfying:
    \begin{itemize}
        \item[(i)] $\mcS^0: |C_\cd^\gs| \to |C_\cd^\nc|$ is the realization of 
        the map $C_\cd^\gs \to C_\cd^\nc$ that forgets the surgery data.
        \item[(ii)] $\mcS^1: |C_\cd^\gs| \to |C_\cd^\nc|$ factors through the subspace
        $|C_\cd^\pb| \subset |C_\cd^\nc|$.
        \item[(iii)] Precomposed with the inclusion $|C_\cd^\pb| \to |C_\cd^\gs|$ that equips 
        the empty surgery data, the family $\mcS^r$ is the standard inclusion 
        $|C_\cd^\pb| \to |C_\cd^\nc|$ for all $r$.
    \end{itemize}
\end{prop}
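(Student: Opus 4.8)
The plan is to verify the three listed properties together with the implicit claim that the formula defines a continuous family $\mcS^r\colon |C_\cd^\gs| \to |C_\cd^\nc|$. Continuity of the two pieces $\rho$ and $K$ is Lemma~\ref{lem:rho-well-defn} and Lemma~\ref{lem:gs-well-defn}, so the only thing left for well-definedness of the concatenation is the matching condition $\rho_{A^{\rm in}}^0(W,\bbt) = K_A^0(W,\bbt)$ at the gluing parameter $r = \oh$. Since the operations indexed by distinct $\ga$ commute (Lemma~\ref{lem:gs-commute}), it suffices to treat one surgery path. If $\ga \in A^{\rm in}$ the path starts at $\bot$; then $K_{(p^\ga,\ga)}^0$ applies $\gs$ with the new vertex forced to time $0$, so its time-coordinate acquires a repeated entry, and collapsing that degenerate vertex deletes the $P_{\go,\ga}$-edge and leaves $W$ with a single trivial $\ga$-cylinder adjoined --- which is exactly $\rho_\ga^0(W,\bbt)$; in particular the dependence on the choice $p_\gO^+(0)$ disappears, as it must by Lemma~\ref{lem:gs-well-defn}. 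If $\ga \notin A^{\rm in}$ the path starts at $\top$ and the analogous collapse at the terminal vertex shows the corresponding $\gs$ is the identity, matching the fact that $\rho$ ignores such $\ga$. This makes $\mcS$ a well-defined continuous family.

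Properties (i) and (iii) are then immediate. For (i): $\mcS^0 = \rho_{A^{\rm in}}^1$, which by Lemma~\ref{lem:rho-well-defn} is the realisation of the forgetful map $C_\cd^\gs \to C_\cd^\nc$. For (iii): an $n$-simplex of $C_\cd^\pb$ carries the empty surgery datum, and for $A = \emptyset$ (hence $A^{\rm in} = \emptyset$) all the operations $\rho_\emptyset^s$ and $K_\emptyset^s$ are the identity, so along the inclusion $|C_\cd^\pb| \hookrightarrow |C_\cd^\gs|$ the family $\mcS^r$ is constantly the standard inclusion $|C_\cd^\pb| \hookrightarrow |C_\cd^\nc|$.

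The substance of the proposition is property (ii): that $\mcS^1 = K_A^1$ factors through $|C_\cd^\pb|$. Fix a point of $|C_\cd^\gs|$ represented by $(W,(p^\ga)_{\ga\in A},\bbt)$ and let $\widetilde W$ denote the enlarged simplex underlying $K_A^1(W,\bbt)$; I must show every edge of $\widetilde W$ is positive boundary. Its edges are of three kinds. The birth edges have the form $\id \amalg T_\ga$, which is positive boundary because $T$ is connected with target surjecting onto a point. The unmodified edges are of the form $W(j\le j+1)$ with some identity cylinders $\id_\ga$ adjoined, hence positive boundary exactly when $W(j\le j+1)$ is. The surgered edges occur where a path $p^\ga$ has been slid to its endpoint $u_\ga := p^\ga_{\rm end} \in W_\pi(i-1\le i)$, and such an edge is the original $W(i-1\le i)$ (tensored with $\id_\ga$'s) pre-composed with $\id \amalg P_{\go_\ga,\ga}$ over $\go_\ga := p^\ga_\gO(1)$. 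Now I invoke the connectivity condition in the definition of a surgery datum: every component of every $W_\pi(j\le j+1)$ failing the positive-boundary condition is the endpoint $u_\ga$ of some surgery path, and for such a $u_\ga$ --- not in the image of $W(i)$ --- the last point $\go_\ga$ of the path must lie in $W(i-1)$ and inside the component $u_\ga$; since $P_{\go_\ga,\ga}$ is connected it carries $\go_\ga$ and the new object $\ga$ into one component, so after the pre-composition the target of the surgered edge hits $u_\ga$ via $\ga$. Components that were already positive boundary are untouched, surgeries for distinct $\ga$ affect disjoint components and commute (Lemma~\ref{lem:gs-commute}), so nothing is spoiled; hence every edge of $\widetilde W$ lies in $\mcC^\pb$ and $\mcS^1(W,(p^\ga)_{\ga\in A},\bbt) \in |C_\cd^\pb|$.

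The main obstacle is precisely (ii): one has to track exactly which edge of the enlarged simplex $\widetilde W$ carries the inserted $P_{\go_\ga,\ga}$ and verify that the new generator $\ga$ appears on the \emph{target} side of the offending component --- this is where the representing space $|W|$, the notion of surgery path, and the connectivity condition of a surgery datum all come together. The $r=\oh$ matching and properties (i), (iii) are routine bookkeeping with simplicial identities once the relevant degenerate simplices are identified.
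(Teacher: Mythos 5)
Your proposal follows essentially the same route as the paper: verify the matching condition $\rho^0_{A^{\rm in}} = K^0_A$ at $r=\oh$, dispose of (i) and (iii) by direct inspection, and for (ii) describe the endpoint simplex $\widetilde W = K_A^1(W,\bbt)$ explicitly and check positive boundary at each level using the connectivity condition together with the connectedness of $P_{\go,\ga}$. The key step for (ii) --- that every non-positive-boundary component $u$ of $W_\pi(j-1\le j)$ is the endpoint of some path $p^\gb$, that the last $\gO$-value $\go_\gb$ must lie in $W(j-1)$ (it cannot be in $W(j)$, else $u$ would already be positive boundary), and that the pre-composed connected morphism $P_{\go_\gb,\gb}$ puts the new object $\gb\in\widetilde W(j)$ into $u$'s component --- is exactly the paper's argument.

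One factual error worth flagging: you describe $\widetilde W$ as containing ``birth edges of the form $\id\amalg T_\ga$''. These do not occur. The morphism $T$ appears only in the $\rho$ half of the homotopy, i.e.\ for $r\le\oh$, and by the matching identity $\rho^0 = K^0 = (W\amalg\id_{A^{\rm in}},\bbt)$ the $T_\ga$ has already been absorbed into the degenerate face collapse by the time the $K$ half begins. The endpoint $\mcS^1(W,\bbt)=K_A^1(W,\bbt)$ is built entirely from the original $W(j\le j+1)$, the identity cylinders $\id_\ga$, and the pre-composed $P$'s; compare the explicit formula $W'(j-1\le j) = P_A^j \cup W(j-1\le j)\amalg\{\id_\ga\;|\;i_\ga<j\}$ in the paper's proof. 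The error is harmless here since the spurious case is vacuous and your other two cases are analyzed correctly, but it suggests a small misreading of the $\rho$ versus $K$ bookkeeping. A second, minor imprecision: ``surgeries for distinct $\ga$ affect disjoint components'' is not true in general --- several paths may terminate at the same component $u$ --- but the commutativity you need is supplied by Lemma~\ref{lem:gs-commute} without any disjointness hypothesis, so the conclusion stands.
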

\begin{proof}
    We already know that $\gs(W,\bbt,A,x)$ is continuous in its inputs, and since each $p^\alpha(2r-1) \in |W|$ varies continuously with $r$ it follows that $\mcS$ defines a continous map on each $|\Delta^n| \times [1/2, 1]$.
    To see that these assemble into a continuous map out of the geometric realization $|C_\cd^\gs| \times [1/2,1]$, we need to check that they are compatible with face maps.
    Let $(W, (p^\ga)_{\ga \in A})$ be an $n$-simplex in $C_n^\gs$ and $\bbt \in |\Delta^{n-1}|$, then $[d_i(W, p), \bbt] = [(W,p), \delta^i\bbt]$ in $|C_\cd^\gs|$, so we need to check that they are send to the same point.
    On surgery paths we defined the face map by post-composing with the map $|W| \to |d_iW|$ that collapses the segment corresponding to the $i$th component.
    Because of this $\gs(d_iW,\bbt,A,p^{-}(r))$ does not change as we vary the surgery paths within the region $\pr_\IR^{-1}([i,i+1])\subset |W|$, and on the other hand $\gs(W,\delta^i\bbt,A,p^{-}(r))$ does not change because $(\delta^i \bbt)_i = (\delta^i \bbt)_{i+1}$.
    This allows us (for fixed $r$) to move all values of $p^\ga(r)$ that would be in the interior of $[i,i+1]$ to the boundary, and then we can re-choose the auxiliary maps $\varepsilon$ and $\go$ in definition \ref{defn:K-new} in such that way that the are no points between $i$ and $i+1$, i.e.~writing $j:= \varepsilon^{-1}(i)$ we have $\varepsilon(j+1) = i+1$.
    (This again uses that there are no closed components in $W$ as we need to move each element $\gamma \in W(i)$ to one in either $W(i-1)$ or $W(i+1)$ that maps to the same component of $W(i-1 \le i+1)$ as $\gamma$.)
    Once this is done, there are no other morphisms between $W(i-1 \le i)$ and $W(i \le i+1)$ in the $N$-simplex $W'$ and thus $d_j(W') = (d_iW)'$, which together with $\delta^j(\bbt') = (\delta^i \bbt)'$ gives
    \[
        \gs(d_iW, \bbt, A, p^{-}(r)) = [(d_iW)', \bbt'] = [d_j(W'), \bbt'] = [W', \delta^j (\bbt')] = [W', (\delta^i\bbt)'] = \gs(W, \delta^i\bbt, A, p^{-}(r)).
    \]

    We have so far argued that the map is continuous on $|C_\cd^\gs| \times [1/2,1]$.
    The homotopy from definition \ref{defn:rho} gives a continuous map on $|C_\cd^\gs| \times [0,1/2]$ whose restriction to $\{1/2\}$ is by construction the map $\mcS_{1/2}$ that takes the disjoint union with $A^{\rm in}$.
    Inspecting definition \ref{defn:K-new} we see that this is also what $\gs(W, \bbt, A, \rho^{-}(0))$ does, so the two homotopies fit together giving the desired map on $|C_\cd^\gs| \times [0,1]$.
    
    Claim (i) follows from the construction of $\rho$ in definition \ref{defn:rho}.
    Claim (iii) is similarly straightforward:
    if the surgery data is empty then neither $\rho$ nor $\gs(W,\bbt, \emptyset, \emptyset)$ change anything about $(W,\bbt)$.
    
    Claim (ii) is the most interesting one: it says that the result of the surgery is always in the positive-boundary subspace.
    We need to check that for every $n$-simplex $W \in C_n^\nc$ with surgery data $(p^\ga)_{\ga \in A}$ the endpoint of the surgery move $\gs(W,\bbt,A,p^{(-)}(1)) = [W',\bbt']$ is in $|C_\cd^\pb|$.
    The $N$-simplex $W' \in C_N^\nc$ will not be in $C_N^\pb$ (unless $W$ was), but the coordinates $\bbt'$ have many repetitions and by rewriting $[W',\bbt']$ using the simplicial relations we will be able to see that it is in $|C_\cd^\pb|$.
    As we are evaluating the surgery paths at $1$, the map $x:= p^{(-)}(1): A \to |W|$ lands entirely over $\{0,\dots,n+1\}$, and therefore every entry in $\bbt'$ is a repetition of one in $\bbt$.
    After using all simplicial relations to simplify $[W',\bbt'] = [W'',\bbt]$ we have an $n$-simplex $W''$ with
    \begin{align*}
        W''(j) &= W(j) \amalg \{ \ga \in A \;|\; p_\IR^\ga(1) \le j-1\}\\
        W''(j-1\le j) &= 
        \left(\left(P_j^+ \circ W(j-1 \le j) \right) 
            \amalg \id_{\{\ga \in A \;|\; \varepsilon(\ga)\le j\}}\right)
         \circ P_j^-
            \amalg \id_{\{ \ga \in A \;|\; p_\IR^\ga(1) \le j-1\}}
    \end{align*}
    where $P_j^+$ is obtained by composing all $P_{\go(\ga),\ga}$ for which $p^\ga(1) = j$ and $\varepsilon(\ga) > j$,
    and similarly $P_j^-$ is obtained from those with $p^{\ga}(1) = j$ and $\varepsilon(\ga) < j$.
    Note that each component of $P_j^+$ and $P_j^-$ is positive boundary and if we compose them with $W(j-1\le j)$ then every connected component where we compose them becomes positive boundary.
    Therefore, we need that for every $y \in W(j-1 \le j)$ either it already has outgoing boundary, or there is some $\ga \in A$ with $p^\ga(1) = y \in |W|$.
    This is exactly what we required in the definition \ref{defn:surgery-datum}.
\end{proof}

\subsection{Contractible surgery data}
One problem we have not addressed yet is how to choose the surgery data 
        $(p^\ga)_{\ga \in A}$
for a given point $(W,t)$ in $|C_\cd^\nc|$. 
Concretely, the problem is that the forgetful map $C_\cd^\gs \to C_\cd^\nc$ 
will not induce an equivalence on realizations.
To resolve this we need to interpolate between different choices of surgery data by only traversing some surgery paths partially.

The following definition allows for multiple pieces of surgery data.
\begin{defn}
    The bi-semi-simplicial set $C_{\cd,\cd}^\gs$ has as $(n,m)$-simplices 
    tuples $(W, ((p_0^\ga)_{\ga \in A_0},\dots, (p_m^\ga)_{\ga \in A_m}))$
    where $W \in C_n^\nc$ and each $(p_i^\ga)_{\ga \in A_i}$ 
    is a surgery datum for $W$, such that $A_i$ and $A_j$ are disjoint for $i \neq j$.
    The $i$-th face operator in the second direction is defined by forgetting
    $(p_i^\ga)_{\ga \in A_i}$.
\end{defn}

We can always find surgery data that is disjoint to all previous surgery,
which makes it very easy to show contractibility.
\begin{lem}\label{lem:surgery-contractible}
    For any $n$ the augmentation of $C_{n,\cd}^\gs$ induces an equivalence
    $|C_{n,\cd}^\gs| \simeq C_n^\nc$.
\end{lem}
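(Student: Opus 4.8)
The plan is to prove that the augmentation $C_{n,\cd}^\gs \to C_n^\nc$ induces a weak equivalence by checking that each of its fibers is weakly contractible. Fix $n$. Since $C_n^\nc$ is a discrete set and $C_{n,\cd}^\gs \to C_n^\nc$ is a map of semi-simplicial sets to the constant one on $C_n^\nc$, the source splits as a disjoint union over $W \in C_n^\nc$ of sub-semi-simplicial sets $C_{n,\cd}^\gs(W)$, whose $m$-simplices are the $(m+1)$-tuples of surgery data on the fixed $n$-simplex $W$ with pairwise disjoint index sets, and whose face maps delete one datum. Geometric realisation commutes with coproducts, and the realisation of the constant semi-simplicial set on $C_n^\nc$ is $C_n^\nc$ itself, so it suffices to show that $|C_{n,\cd}^\gs(W)|$ is non-empty and weakly contractible for every $W$.

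First I would record that $C_{n,\cd}^\gs(W)$ is non-empty. Because $W$ lies in $C_n^\nc$, the map $W(0) \amalg W(n) \to W_\pi(0 \le n)$ is surjective; hence every element $u \in W_\pi(i-1 \le i)$ that fails to be positive boundary is nonetheless connected in $|W|$ to $\bot$ or $\top$, and reversing such a path gives a surgery path ending at $u$. Collecting one such path for each of the finitely many non-positive-boundary elements, and indexing them by a finite subset $A \subset \gO_O = F^{-1}(O)$ disjoint from $\bigcup_i W(i)$ (which exists since $\gO_O$ is infinite, $F$ having infinite preimages), produces a surgery datum on $W$.

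The key point is that $C_{n,\cd}^\gs(W)$ has the ``cone off finitely much'' property that forces weak contractibility, and this is exactly where the perpetual availability of fresh elements of $\gO_O$ — the reason for passing to $\mcC^\gO$ — is used. Given a map $f \colon S^k \to |C_{n,\cd}^\gs(W)|$ with $k \ge 0$, compactness lets me factor it through the realisation of a finite sub-semi-simplicial set $Y_\cd$, generated by finitely many simplices. The surgery data occurring among the vertices of $Y_\cd$ involve only finitely many elements of $\gO_O$, so, using again that $\gO_O$ is infinite and that surgery data on $W$ exist, I can choose a surgery datum $\tau$ on $W$ whose index set is disjoint from all of them. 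Then for any simplex $\sigma = (\sigma_0,\dots,\sigma_m)$ of $Y_\cd$ the prepended tuple $\tau \ast \sigma := (\tau, \sigma_0,\dots,\sigma_m)$ is again a valid simplex of $C_{n,\cd}^\gs(W)$, and $d_0(\tau\ast\sigma) = \sigma$, $d_{i+1}(\tau\ast\sigma) = \tau\ast d_i\sigma$. Letting $Y_\cd'$ be the sub-semi-simplicial set generated by $Y_\cd$, the vertex $\tau$, and all the $\tau \ast \sigma$, these face relations identify the union of the $\tau\ast\sigma$ with the cone on $Y_\cd$ with cone point $\tau$, so the inclusion $|Y_\cd| \hookrightarrow |Y_\cd'| \subset |C_{n,\cd}^\gs(W)|$ is null-homotopic; hence $f$ is null-homotopic. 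Together with non-emptiness this gives $|C_{n,\cd}^\gs(W)| \simeq \ast$, and then the splitting above shows $|C_{n,\cd}^\gs| \to C_n^\nc$ is a weak equivalence.

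I expect the only real subtlety to be the bookkeeping in the last paragraph: verifying that prepending $\tau$ is compatible with every face operator, so that the $\tau\ast\sigma$ genuinely assemble into the cone on $|Y_\cd|$ inside $|Y_\cd'|$, and that no degeneracy issues intrude in the semi-simplicial setting. Confirming that surgery data always exist on an arbitrary $W \in C_n^\nc$ (the connectivity claim for $|W|$ used above) is routine but worth stating carefully. Conceptually this is the same mechanism that makes spaces of embeddings and of surgery data contractible in the work of Galatius and Randal-Williams, here transported to the combinatorial category $\mcC^\gO$.
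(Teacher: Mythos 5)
Your proof is correct, and it reaches the same two essential facts the paper relies on, but it packages them differently. The paper observes that the augmented semi-simplicial set $C_{n,\cd}^\gs \to C_n^\nc$ is a discrete instance of an augmented topological flag complex in the sense of Galatius--Randal-Williams and then simply cites \cite[Theorem 6.2]{GRW14}, after verifying the three hypotheses: (i) discreteness trivialises the openness condition, (ii) every $W \in C_n^\nc$ admits at least one surgery datum, and (iii) given any finite family of surgery data one can always produce a fresh one disjoint from all of them, using that $\gO_O$ is infinite. You instead decompose $|C_{n,\cd}^\gs|$ over the discrete base $C_n^\nc$ and prove weak contractibility of each fiber $|C_{n,\cd}^\gs(W)|$ directly: non-emptiness is your version of (ii), and the cone argument---factoring a compact family through a finite subcomplex $Y_\cd$, choosing a surgery datum $\tau$ disjoint from everything appearing in $Y_\cd$, and observing that $\tau\ast(-)$ exhibits the cone on $Y_\cd$ inside $C_{n,\cd}^\gs(W)$---is your version of (iii), and is in fact precisely the combinatorial heart of the proof of the GRW flag-complex theorem in the discrete setting. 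The paper itself remarks that the statement being proved is ``much simpler than the full generality'' of the cited theorem; your write-up makes that remark concrete by giving the self-contained elementary argument. The trade-off is transparency versus brevity: you reveal exactly which features of $\mcC^\gO$ (the infinite supply of fresh elements of $\gO_O$) drive the contractibility, at the cost of re-deriving a known tool. One small bookkeeping point you flag yourself and which is indeed fine: prepending $\tau$ is compatible with all face operators precisely because $d_0$ deletes the new first coordinate and $d_{i+1}$ deletes $\sigma_i$, so the generated sub-semi-simplicial set is genuinely the semi-simplicial cone on $Y_\cd$ and its realisation is contractible; no degeneracy issues arise since everything here is semi-simplicial.
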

\begin{proof}
    Since $C_n^\nc$ is a discrete set we need to show that for all $W \in C_n^\nc$ the fiber $C_{n,\cd}^\gs(W)$ of the map $C_{n,\cd}^\gs \to C_n^\nc$ at $W$ is a contractible semi-simplicial set.
    The realisation of the semisimplicial set $C_{n,\cd}^\gs(W)$ is exactly the simplical complex whose vertices are surgery data $(p^{\alpha})_{\alpha\in A}$ and where $(n+1)$ vertices span a simplex if their indexing sets are disjoint.

    First, note that this simplicial complex is non-empty.
    We will show this by constructing a surgery datum for $W$.
    Let $A \subset \gO_O$ be a subset disjoint to $W$ and big enough 
    so that we may choose a surjection 
    $f:A \twoheadrightarrow \coprod_{i=1}^n W_\pi(i-1\le i) \subset |W|$.
    Since $W \in C_\cd^\nc$ every connected component of $|W|$ is connected 
    to either $\bot \sim [W(0) \times \{0\}]$ or $\top \sim [W(n) \times \{n+1\}]$.
    We can therefore find piece-wise linear paths $p^\ga:[0,1] \to |W|$ 
    such that $p^\ga(0) \in \{\bot, \top\}$ and $p^\ga(1) = f(\ga)$.
    This trivially satisfies the connectivity condition because 
    it hits all components of morphisms, in particular those that are not
    positive boundary. Therefore $(p^\ga)_{\ga \in A}$ is the desired 
    surgery datum for $W$.

    Now to show that $C_{n,\cd}^\gs(W)$ is contractible, we check that for every finite sequence of surgery data 
    $(p_0^\ga)_{\ga \in A_0},\dots, (p_m^\ga)_{\ga \in A_m}$ 
    we can define a new surgery datum that is disjoint to all of them by simply copying the first one.
    Concretely, we choose a subset $A' \subset \gO$ that is disjoint to $W$ and all of the $A_i$ and pick a bijection $\gp:A' \cong A_0$.
    Then $p_{n+1}^\ga := p_{0}^{\gp(\ga)}$ defines a surgery datum indexed by $\ga \in A'$
    and is disjoint to all the others by construction.
    This shows that any finite subcomplex of $C^\gs_{n,\cd}$ can be coned off and hence $|C^\gs_{n,\cd}| \simeq *$.
\end{proof}

We now need to construct the surgery homotopy $\mcS_\gl$ dependent 
on a weighted collection of surgery data.
This will require us to surgery along multiple pieces of surgery data at once, which we can do by progressing through the surgery paths at different speeds.

To do so, let $\|C_{\cd,\cd}^\gs\|$ denote the geometric realization 
of the semisimplicial space $[n] \mapsto |C_{n,\cd}^\gs|$.
\begin{defn}\label{defn:mcS}
    We define a continuous map
    \[
        \mcS: \|C_{\cd,\cd}^\gs\| \times [0,1] \longrightarrow |C_\cd^\nc|.
    \]
    A point on the left is represented by a triple 
    $((W, \bbt), ((A_0,\dots,A_m),(s_0,\dots,s_m)), r)$
    where the $s_i \in [0,1]$ are such that $\sum_i s_i = 1$.%
    \footnote{
        Note that here it is convenient to use a different parametrisation of the topological $n$-simplex than we did for $\bbt$.
    }
    We set $r_i := r \cdot s_i/\max(s_0,\dots,s_m)$
    and choose a partition $\{0,\dots,m\} = I \sqcup J$ such that $r_i \le 1/2$ for $i \in I$ and $r_j \ge 1/2$ for $j \in J$.
    Then we define
    \[
        \mcS^r((W,\bbt),(A_0,\dots,A_m),(s_0,\dots,s_m)) 
        := (\circ_{j \in J}\; \rho_{A_j^{\rm in}}^{1-2r_j})\Bigg(\gs\Big(W, \bbt, \coprod_{i \in I} A_i, x\Big)\Bigg)
    \]
    where $x$ is the map that sends $\ga \in A_i$ (with $r_i \ge 1/2$) to $p^\ga(2r_i-1) \in |W|$.
\end{defn}

\begin{rem}
    The order of operations in definition \ref{defn:mcS} might be slightly counter-intuitive, as we first do the surgery at each $p^\ga(2r_i - 1) \in |W|$ for $i \in I$ (so $r_i \ge 1/2$) and only afterwards do the homotopies $\rho_{A_j^{\rm in}}^{1-2r_j}$ that introduces the arcs $A_j^{\rm in}$ for for $j \in J$ (so $r_j < 1/2$).
    The reason for this is that this way around it is clearer that the operations are well-defined.
    The result of the surgeries is still disjoint to $\coprod_{j \in J} A_j$, so we can indeed apply each of the $\rho_{A_j^{\rm in}}^{1-2r_j}$ to them.
    If we did this the other way around, we would have to argue that after applying $\rho_{A_j^{\rm in}}^{1-2r_j}$ we still have the surgery data in terms of the collections of surgery paths for the $(A_i)_{i \in I}$.
\end{rem}

\begin{proof}[Proof of theorem \ref{thm:Csp-surgery}]
    We begin by arguing that the map $\mcS$ defined above is indeed a well-defined continuous map.
    It follows from lemma \ref{lem:gs-continuous} in the previous section, that this map is continuous if it is well-defined.
    For this, we need to check that it is independent of the choice of partition $I \sqcup J$ and that it is compatible with face maps in the first and second coordinate.
    When choosing the partition of $\{0,\dots,m\}$ into $I$ and $J$, the only choice is where we put those $k$ such that $r_k=1/2$.
    Here the choice does not matter because the surgery at the start points $\gs(W,\bbt, A_k, p^{(-)}(0))$ takes the disjoint union of $(W,\bbt)$ with the set $A_k^{\rm in} = \{ \ga \;|\; p_\IR^\ga(0) = 0\}$, which is exactly what $\rho_{A_k^{\rm in}}^0(-)$ does as well.
    This already implies that $\mcS$ induces a well-defined continuous map $|\Delta^n| \times |\Delta^m| \to |C_\cd^\nc|$ for each element of $C_{n,m}^\gs$.
    This glues under face maps in the first coordinate by the same argument as in the proof of proposition \ref{prop:mcS}.
    Finally, to get gluing in the second (semi-simplicial) coordinate, we need to consider what happens when $s_k = 0$. 
    In this case we have $r_k = 0$ as well, and as $\rho_{A_k^{\rm in}}^1(-)$ is the identity, this does not do anything and we can equivalently forget the $k$th entry.

    Now that we know that $(\mcS^r)_{r \in [0,1]}$ is a continuous family of continuous maps
    $\|C_{\cd,\cd}^\gs\| \to |C_\cd^\nc|$ we can fit it into the following diagram:
    \[
    \begin{tikzcd}[column sep = 4pc, row sep = 2pc]
        {|C_\cd^\pb|} \ar[d, equal] \ar[r, "\iota"]     &
        {\|C_{\cd,\cd}^\gs\|} \ar[d, "\mcS^0"] \ar[ld, "\mcS^1"', dashed]   \\
        {|C_\cd^\pb|} \ar[r]     &
        {|C_\cd^\nc|} 
    \end{tikzcd}
    \]
    The map $\iota$ is defined by equipping $(W,t)$ with the empty surgery $A_0 = \{\emptyset\}$.
    This is possible because $(W,t)$ is already positive boundary in this case.
    We need to check that $\mcS^1$ indeed lands in the subspace
    $|C_\cd^\pb| \subset |C_\cd^\nc|$.
    Indeed, for $r = 1$ we know that there is a $k$ such that $s_k=1$, meaning that at least in that coordinate we are fully performing the surgery. 
    The result of that surgery $\gs(W,t,A_k, p^{(-)}(1))$ then is positive boundary by part (ii) of \ref{prop:mcS}.
    Additionally doing the surgeries for the other $\coprod_{i \in I \setminus \{k\}} A_i$ and adding the disjoint copies of $T_\ga$ for the $\ga \in \coprod_{j \in J} A_j$ does not break this positive boundary property, so $\mcS^1$ always lands in $|C_\cd^\pb|$ and hence the dashed map exists.
    For the empty surgery the map $\mcS^1$ does not do anything 
    by part (iii) of \ref{prop:mcS} and so the top-left triangle commutes.
    By construction $\mcS^\gl$ is a homotopy for the bottom-right triangle in the diagram.
    Moreover, part (i) of \ref{prop:mcS} tells us that the map $\mcS^0$ 
    simply forgets the surgery data and we saw in lemma \ref{lem:surgery-contractible}
    that this map is an equivalence.
    
    In summary, the above diagram is homotopy commutative and the right-hand 
    vertical map is a homotopy equivalence. From this it follows formally from 
    the $2$-out-of-$6$ property that all other maps in the diagram are weak equivalences.
    We can check this by hand:
    Pick any base-points and apply $\pi_k$ to the diagram. 
    Considering the top-left triangle we see that the diagonal map $\pi_k(\mcS^1)$
    has to be surjective because the identity on $\pi_k|C_\cd^\pb|$ factors through it.
    Similarly, the homotopy commutativity of the bottom-right triangle 
    and the fact that $\pi_k(\mcS^0)$ is an isomorphism implies that $\pi_k(\mcS^1)$
    is injective. Hence we have shown that $\pi_k(\mcS^1)$ is an isomorphism
    and it follows quickly that all the other maps are, too.
\end{proof}

\subsection{The positive boundary subcategory}

In this section we compute the classifying space of the positive boundary subcategory
$\mcC^\pb \subset \mcC$ for all weighted cospan categories.
This includes the three cases $\Csp$, $\Cob_2$, and $\Cobn$ that appear in our main theorems.

\begin{prop}\label{prop:computing-pb}
    For any weighting monoid $(A, A_1, \ga)$ the classifying space of the positive boundary
    subcategory $\Csp(A, A_1, \ga)^\pb \subset \Csp(A, A_1, \ga)$ is
    \[
        B(\Csp(A, A_1, \ga)^\pb) \simeq BA
    \]
    and the inclusion $BA \simeq B(\Csp(A, A_1, \ga)^\pb) \to B(\Csp(A, A_1, \ga))$
    admits a splitting as an infinite loop space map.
\end{prop}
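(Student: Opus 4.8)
The plan is to compute $B(\Csp(A,A_1,\ga)^\pb)$ directly by identifying this category with something very rigid. First I would observe that in the positive boundary subcategory every morphism $[i:M\to X\leftarrow N:j,\,l]$ has $j:N\to X$ surjective; combined with the fact that $\Csp(A,A_1,\ga)$ is a labelled cospan category, the reduced positive boundary morphisms $M\to N$ are in bijection with surjections $N\twoheadrightarrow X$ together with a map $i:M\to X$ and a labelling $l:X\to A$ (with the stability condition on $l$ at the points with $\le 1$ preimage from $M\amalg N$). The key structural point is that $B(\mcC^\pb)$ is a group-complete infinite loop space whose $\pi_0$ is computed as isomorphism classes of objects modulo the relation generated by morphisms; since there is a morphism from any finite set to the one-point set and from the empty set as well (using $\ga\in A_1$), $\pi_0$ is trivial, so $B(\mcC^\pb)$ is connected.

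Next I would follow Tillmann's argument from \cite{Til96}, which the introduction explicitly says this proposition generalises. The idea there is to exhibit $B(\Cob_2^\pb)\simeq S^1$ via a cofinality/Quillen-type argument restricting to the full subcategory on objects $\{n\cdot S^1\}$ and then to a subcategory of ``chain-like'' generators, ultimately reducing the classifying space computation to the bar construction on the monoid $\IN$ of genera. In our generality the role of the genus monoid $\IN$ is played by the abelian monoid $A$: a closed torus (an endomorphism of $1_\mcC$ labelled by $\ga$, more generally any morphism creating a closed component) contributes exactly the monoid $A$ worth of self-gluings. So I expect the argument to run: restrict along the inclusion of the full subcategory on powers of the connected object $1$ (cofinal since every object decomposes into connected objects), then show this category is equivalent, after a further reduction, to a category whose classifying space is $B(\coprod_n A^n/\!\sim)$, i.e.\ $BA$ where $A$ is viewed as a symmetric monoidal category (discrete, one object, endomorphism monoid $A$) — equivalently the bar construction $|N_\cd A|$. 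The main obstacle I anticipate is handling the stability subset $A_1$: one must check that the constraint ``$l(x)\in A_1$ whenever $x$ has $\le 1$ preimage from $M\amalg N$'' does not obstruct the relevant cofinality arguments or the contractibility of the auxiliary categories, and in particular that it becomes invisible after group completion because each generator can be stabilised by attaching further components (using $A_1+A\subset A_1$ and $\ga\in A_1$). This is exactly the analogue of the fact that $\Cobn^\pb\simeq S^1$ still holds even though disks are forbidden.

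For the splitting statement, I would construct an explicit infinite loop map $B(\Csp(A,A_1,\ga))\to BA$ realising a retraction. The natural candidate is induced by a symmetric monoidal functor $\Csp(A,A_1,\ga)\to A$ (the latter regarded as a one-object symmetric monoidal category with morphism monoid $A$) sending every object to the unique object and a labelled cospan $[M\to X\leftarrow N,\,l]$ to $\sum_{x\in X} l(x) + \ga^{\,b_1}$, where $b_1$ is the total first Betti number of the cospan — i.e.\ the same formula that governs composition in $\Csp(A,A_1,\ga)$ (definition \ref{defn:weighted-cospans}). Functoriality of this assignment is precisely the associativity-of-Betti-numbers identity already verified in the proof of lemma \ref{lem:weighted-Csp}. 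This functor restricts on $\mcC^\pb$ to (a functor inducing) the equivalence $B(\mcC^\pb)\simeq BA$ from the first part, so the composite $BA\simeq B(\mcC^\pb)\to B(\Csp(A,A_1,\ga))\to BA$ is the identity, giving the desired splitting of infinite loop spaces (both functors being symmetric monoidal, all maps are infinite loop maps after applying $B$). I would expect the routine content here to be: checking the functor to $A$ is symmetric monoidal (immediate, since disjoint union adds Betti numbers and labels and $\ga^0=0$), and checking compatibility of this functor with the identification $B(\mcC^\pb)\simeq BA$, which amounts to tracking the Betti-number term through the cofinality reduction.
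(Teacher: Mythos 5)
Your first half is on the right track and does match the paper's strategy: the paper really does generalise Tillmann's argument, by defining a functor $\Phi:\mcC^\pb\to\mcC_1$ (where $\mcC_1\subset\mcC^\pb$ is the full subcategory on the one-point set, with $B\mcC_1\cong BA$) by $\Phi(i:M\to W\leftarrow N:j,\,a) := \sum_{x\in W} a(x) + \ga\cdot(|N|-|W|)$, together with a natural transformation $\rho:\Id\Rightarrow\Phi$ whose component $\rho_M$ is the cospan $M\to *\leftarrow *$ labelled by $\ga$. This gives a deformation retraction onto $\mcC_1$ directly; the intermediate reductions you sketch are not needed, but the overall shape is the same.

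The gap is in your splitting. First, ``the total first Betti number of the cospan'' is not a well-defined intrinsic quantity: the $b_1([u];X,Y)$ appearing in Definition~\ref{defn:weighted-cospans} is defined for a \emph{composite} of two cospans, not for a single one, and there is no natural Betti number attached to a lone cospan $[M\to X\leftarrow N]$ (any honest graph-theoretic Betti number of the associated double mapping cylinder is $0$). The correct integer is $|N|-|W|$, which is not intrinsic but arises from capping off the outgoing boundary with a single connected filler. Second, and more seriously, $|N|-|W|$ \emph{can be negative} once you leave the positive boundary subcategory: for instance the cospan $\{1,2\}\to\{*\}\leftarrow\emptyset$ (the pair of pants viewed as $S^1\amalg S^1\to\emptyset$ in $\Cobn$, which satisfies the stability condition because its middle point has two preimages) has $|N|-|W|=-1$. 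Any functor extending $\Phi$ to all of $\mcC$ is forced to assign $-\ga$ to such a morphism, so its target must be the \emph{group completion} $A^{\rm gp}$, not $A$. Your proposed target ``the one-object symmetric monoidal category with morphism monoid $A$'' therefore cannot receive the functor you describe. The paper takes exactly the route you intend but with this fix: it defines $\Phi':\mcC\to A^{\rm gp}$ by the same formula and concludes via the equivalence $BA\to B(A^{\rm gp})$, which is what actually furnishes the splitting of infinite loop spaces.
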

\begin{proof}
    In the case of $\Cob_2$ the equivalence $B\Cob_2^\pb \simeq S^1$
    was shown in \cite[Proposition 6]{Til96} and the splitting constructed
    in \cite[Theorem 10]{Til96}.
    We begin by recalling the constructions made there so we can modify 
    them for our purposes.

    We would like to construct a homotopy equivalence $B\Cob_2^\pb \to B\IZ$.
    Sending a surface to its Euler characteristic defines a functor $\chi:\Cob_2^\pb \to \IZ$, where we think of $\IZ$ as a category with one object.
    This is functorial because the Euler characteristic is additive under gluing along circles.
    However, the map $B\Cob_2^\pb \to B\IZ$ turns out not to be an equivalence, but rather a degree $2$ map, so we would like to ``divide $\chi$ by $2$''.
    This division is not always possible as surfaces with boundary can have odd Euler characteristic, but it is possible if we account for the boundary.
    The functor
    \begin{align*}
        \chi': \Cob_2^\pb & \to \IZ, \\
        (W:M \to N) & \mapsto \chi(W) + |\pi_0(M)| - |\pi_0(N)|
    \end{align*}
    is naturally isomorphic to $\chi$ but lands in $2\IZ$.
    In fact, using that the Euler characteristic of a \emph{connected} surface of genus $g$ with $n$ boundary components is $2-2g-n$, we can rewrite this number as
    \[
        \chi'(W) = \chi(W) + |\pi_0(M)| - |\pi_0(N)| 
        = 2\left(|\pi_0(W)| - |\pi_0(N)| - g(W) \right),
    \]
    where $g(W)$ is the sum of the genera of each of the components of $W$.
    The number $\chi'(W)$ is always even and non-positive as we assumed that $\pi_0(N) \to \pi_0(W)$ is a surjection in the positive boundary category.
    Therefore $-\oh\chi'$ lands in $\IN$ and we can define a functor $\Phi:\Cob_2^\pb \to \Cob_2^\pb$ that sends any object to $S^1$
    and that sends a cobordism $W:M \to N$ to the unique cobordism $\Phi(W):S^1 \to S^1$
    that is connected and has genus:
    \[
        g(\Phi(W)) 
        = -\oh\chi'(W) =  g(W) + |\pi_0(N)| - |\pi_0(W)|.
    \]
    There is a natural transformation $\rho:\Id_{\Cob_2^\pb} \Rightarrow \Phi$
    defined by letting $\rho_M:M \to S^1$ be the unique connected genus $0$ cobordism.
    One checks naturality by noting that $\rho_N \circ W$ and $\Phi(W) \circ \rho_M$ are both connected bordisms of genus $g(W) + |\pi_0(N)| - |\pi_0(W)|$.
    
    Now we can consider the full subcategory $\mcC_1 \subset \Cob_2^\pb$ 
    on the object $S^1$, which is equivalent to $\IN$ by counting the genus.
    The functor $\Phi$ lands in $\mcC_1$ and defines a retraction 
    $B\Phi: B\Cob_2^\pb \to B\mcC_1$ to the inclusion.
    The natural transformation $\rho$ gives a homotopy between the identity and
    $B\Phi: B\Cob_2^\pb \to B\mcC_1 \subset B\Cob_2^\pb$ and hence the inclusion and
    $B\Phi$ form a homotopy equivalence.
    
    We will now generalize this argument to the weighted cospan category
    $\mcC := \Csp(A, A_1, \ga)$. Let $\mcC_1 \subset \mcC^\pb$ denote
    the full subcategory on the object $*$. This category has a single object
    and morphisms $A$, so $B\mcC_1 = BA$.
    The functor $\Phi:\mcC^\pb \to \mcC_1$ again sends all objects to the single object $*$,
    and on morphisms it is defined by sending $(M \to W \leftarrow N, a:W \to A)$ 
    to the cospan $(* \to * \leftarrow *)$ weighted by:
    \[
        \Phi(W) = \sum_{x \in W} a(x) + \ga \cdot (|N| - |W|).
    \]
    The number $|N|-|W|$ is again non-negative as $N \to W$ is surjective 
    in the positive boundary category. One checks that this is functorial
    by using the definition of the composite weighting in definition \ref{defn:weighted-cospans}.
    
    To define the natural transformation $\rho:\Id \Rightarrow \Phi$
    we let $\rho_M: (M \to * \leftarrow *, a:* \to A)$ where the labelling is $a(*) = \ga$.
    (We cannot set the labelling to be $0$ like we did for the surface category case,
    as this might not be an allowed cospan for $M = \emptyset$.)
    Naturality is checked by inserting the definitions.
    By the same argument as above it now follows that $BA = B\mcC_1 \to B\mcC^\pb$
    is a homotopy equivalence.
    
    To construct the splitting of $B\mcC^\pb \to B\mcC$ 
    let $A^{\rm gp}$ be the group completion of $A$.
    Define a functor $\Phi': \mcC \to A^{\rm gp}$ by sending a morphism $W:M \to N$ to 
    \[
        \Phi'(W) = \sum_{x \in W} a(W) + \ga \cdot (|N| - |W|).
    \]
    The number $|N| - |W|$ can now be negative, but this is fine because we have 
    a formal inverse $-\ga$ for $\ga$ in $A^{\rm gp}$.
    $\Phi'$ clearly extends $\Phi$ and it is functorial for the same reason that $\Phi$ was.
    This yields the desired splitting: 
    the composite $BA= B\mcC_1 \to B\mcC^\pb \to B\mcC \xrightarrow{\Phi'} B(A^{\rm gp})$
    is the standard map $BA \to B(A^{\rm gp})$, which is an equivalence.
\end{proof}

\begin{cor}
    The classifying space of a weighted cospan category is:
    \[
        B(\ICsp(A, A_1, \ga)) \simeq 
        BA \times Q\left(\bigvee\nolimits_{a \in A} S^2 B(\mcF_a(\Csp(A, A_1, \ga)))\right).
    \]
\end{cor}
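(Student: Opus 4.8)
The plan is to derive this directly from the Decomposition and Surgery Theorem (Corollary~\ref{cor:decomposition-and-surgery}) and the computation of the positive boundary category (Proposition~\ref{prop:computing-pb}), together with a splitting argument for the resulting fibre sequence. Write $\mcC := \Csp(A, A_1, \ga)$. First I would check the hypotheses of Corollary~\ref{cor:decomposition-and-surgery}: $\mcC$ is a labelled cospan category by Definition~\ref{defn:restricted-weighted-cospans} and the discussion after it; it admits surgery by the example following Corollary~\ref{cor:decomposition-and-surgery} (take $O = *$, let $T$ be the cospan $(\emptyset\to * = *)$ weighted by $\ga$, and $P_*$ the cospan $(*\to *\leftarrow *\amalg *)$ weighted by $0$); and $B\mcC$ is group-like by Lemma~\ref{lem:BmcC-group-like}, since the morphism $(\emptyset\to * = *)$ weighted by $\ga\in A_1$ runs from $1_\mcC = \emptyset$ to the connected object $*$, making every connected object equivalent to the unit in $\pi_0 B\mcC$. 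Corollary~\ref{cor:decomposition-and-surgery} then produces a homotopy fibre sequence of infinite loop spaces
\[
    B(\mcC^\pb) \xrightarrow{\ \iota\ } B(\ICsp(\mcC)) \longrightarrow
    Q\left(\bigvee\nolimits_{g \in G} S^2(B\mcF_g(\mcC))\right),
\]
with $G = \Hom_\mcC^\con(1_\mcC, 1_\mcC)$. A short computation identifies $G$: a connected endomorphism of $\emptyset$ is a labelled cospan $[\emptyset\to\{x\}\leftarrow\emptyset]$ on a one-element set, and since $x$ has no incoming or outgoing legs the stability condition of Definition~\ref{defn:restricted-weighted-cospans} forces its label into $A_1$; hence $G\cong A_1$. (Thus in the statement the index set ``$a\in A$'' should be read as $a\in A_1$, equivalently the terms with $a\notin A_1$, for which $\mcF_a(\mcC)=\emptyset$, are to be omitted; when $A_1=A$ there is nothing to reconcile.)

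It remains to split this fibre sequence. The canonical infinite loop space map $p:B(\ICsp(\mcC))\to B(\mcC)$ (shown to be a rational equivalence in Proposition~\ref{prop:decomposition}) satisfies $p\circ\iota = \bigl(B(\mcC^\pb)\to B(\mcC)\bigr)$, the map induced by the subcategory inclusion $\mcC^\pb\subseteq\mcC$. By Proposition~\ref{prop:computing-pb} this inclusion admits a retraction of infinite loop spaces, namely the composite $B(\mcC)\xrightarrow{B\Phi'}B(A^{\rm gp})\simeq BA\simeq B(\mcC^\pb)$ coming from the functor $\Phi':\mcC\to A^{\rm gp}$ of that proof. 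Composing this retraction with $p$ gives a retraction $r:B(\ICsp(\mcC))\to B(\mcC^\pb)$ of $\iota$, and a homotopy fibre sequence of infinite loop spaces whose fibre inclusion admits a retraction is a product. Using $B(\mcC^\pb)\simeq BA$ once more from Proposition~\ref{prop:computing-pb}, this yields
\[
    B(\ICsp(\mcC)) \;\simeq\; BA \times Q\left(\bigvee\nolimits_{g \in G} S^2(B\mcF_g(\mcC))\right),
\]
which, after the identification $G\cong A_1$, is the asserted decomposition.

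The only step requiring real care is the identity $p\circ\iota = \bigl(B(\mcC^\pb)\to B(\mcC)\bigr)$. This is bookkeeping rather than homotopy theory: $\iota$ is the Surgery Theorem equivalence $B(\mcC^\pb)\simeq|N_\cd^\nc\mcC|$ followed by $|N_\cd^\nc\mcC|\simeq|BD_\cd^\nc(\mcC)|\hookrightarrow|BD_\cd(\mcC)|\simeq B(\ICsp(\mcC))$ from Lemmas~\ref{lem:NICsp=D} and~\ref{lem:NncICsp=NncC}, while $p$ comes from $\ot_\mcC:\ICsp(\mcC)\to\mcC$ together with Lemma~\ref{lem:relative-nerve}; unwinding these identifies the composite with the realisation of the subcomplex inclusions $N_\cd\mcC^\pb\subseteq N_\cd^\nc\mcC\subseteq N_\cd\mcC$, i.e.\ the map induced by $\mcC^\pb\subseteq\mcC$. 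I expect this verification to be the main (indeed essentially the only) obstacle; once it is in place the corollary follows formally from the results already established.
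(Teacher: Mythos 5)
Your proof is correct and takes essentially the same approach as the paper, which applies Corollary~\ref{cor:decomposition-and-surgery} and then splits the resulting fibre sequence using the retraction of $B(\mcC^\pb)\to B(\mcC)$ from Proposition~\ref{prop:computing-pb}. You go somewhat further than the paper's terse argument in two useful respects: you make explicit the compatibility check that $p\circ\iota$ is the subcategory inclusion (which the paper glosses over), and you correctly observe that the wedge should be indexed by $G=\Hom_\mcC^\con(1_\mcC,1_\mcC)\cong A_1$ rather than by all of $A$, since for $a\notin A_1$ there is no corresponding connected endomorphism of $1_\mcC$.
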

\begin{proof}
    The surgery theorem applies and by proposition \ref{prop:computing-pb} the left-hand map 
    in theorem \ref{theorem:decomposition-and-surgery} (corollary \ref{cor:decomposition-and-surgery})
    admits a splitting as infinite loop space map.
    Therefore the middle term decomposes as a product of $B(\Csp(A, A_1, \ga)^\pb)$ 
    and the free infinite loop spaces on the $S^2 (B \mcF_a)$.
\end{proof}

\section{Factorisation categories and filtered graphs}\label{sec:mcF-and-mcJ}
Our work in the previous sections shows that computing the classifying space
$B\Cobn$ of the $(\chi\le 0)$-surface category essentially reduces to understanding 
its factorisation categories.
In this section we show for $g \ge 2$ the classifying space of $\mcF_{\gS_g}(\Cobn)$
is equivalent to the classifying space a certain finite category $\J_g$ of stable graphs of genus $g$,
which plays an important role in studying the moduli spaces $\gD_g$ of tropical curves of genus $g$ and volume $1$, see \cite{CGP16}.%
\footnote{
    Note that as discussed in remark \ref{rem:our-J_g} our version $\J_g$
    differs from their version in that we remove the terminal object $\cd_g$.
}
In fact, we will see in section \ref{sec:J=gD} that $B\J_g$ is rationally equivalent to $\gD_g$.

\begin{thm}\label{thm:computing-mcF}
    There are equivalences
    \[
        B\mcF_{\gS_g}(\Cobn) \simeq \begin{cases}
            BO(2) & \text{ for } g=1,\\
            B\J_g & \text{ for } g\ge 2.
        \end{cases}
    \]
\end{thm}

\subsection{The category of graphs}
We now recall the category of stable graphs from \cite[section 2.2]{CGP16}, and discuss their geometric realizations.

\begin{defn}
    A \emph{graph} $G$ consists of finite sets of vertices $V_G$ and half-edges $H_G$, together with a fixed-point free involution $i:H_G \to H_G$ and a root map $r:H_G \to V_G$.
    We define the set of edges as $E_G = \{\{h, i(h)\} \;|\; h \in H_G\}$.
    
    A graph $G$ is \emph{connected} if the equivalence relation on $V_G \amalg H_G$ generated by $h \sim i(h)$ and $h \sim r(h)$ has a single equivalence class.
    (In particular connected graphs are non-empty.)
    We define the \emph{valence} of a vertex $v$ in a graph to be the number of incident half-edges: $\val(v) := |r^{-1}(v)|$.
\end{defn}

\begin{ex}
    The graph with one vertex and one loop $\oneloop$ can be defined in the above context
    as $V_G = \{v\}$, $H_G =\{h, h'\}$ with $i(h) = h'$, $i(h') = h$, and $r(h) = r(h') = v$.
    This has one vertex $v$, two half-edges $h$ and $h'$, and a single edge $\{h, h'\}$.
\end{ex}

\begin{defn}\label{defn:stable-graph}
    A \emph{stable graph} is a connected graph $G$ together with a function $w:V_G \to \IN$ such that for any vertex $v \in V_G$ we have $\val(v) + 2w(v) \ge 3$.
    We define the genus of a connected weighted graph $(G,w)$ to be 
    \[
        g(G,w) = (|E_G| - |V_G| + 1) + \sum_{v \in V_G} w(v) \in \IN.
    \]
\end{defn}

\begin{rem}
    Every stable graph $(G,w)$ has genus at least $2$.
    Indeed, we have
    \[
        \sum_{v \in V_G} w(v)
        \ge \sum_{v \in V_G} (\tfrac{3}{2}-\tfrac{1}{2}\val(v))
        = \tfrac{3}{2}|V_G| - |E_G|
    \]
    as the sum of all vertex valences is twice the number of edges,
    so $g(G,w) \ge \tfrac{1}{2}|V_G| + 1 > 1$.
\end{rem}

We define the relevant notion of morphisms between weighted graphs:
\begin{defn}
    Let $(G,w) = ((V_G,H_G,i,r),w)$ and $(G',w')=((V_{G'}, H_{G'},i',r'),w')$ be two stable graphs.
    A \emph{morphism} of stable graphs $f:(G,w) \to (G',w')$ is a map 
    $f:V_G \amalg H_G \to V_{G'} \amalg H_{G'}$ satisfying 
    \begin{enumerate}
        \item $ f \circ (\id_{V_{G}} \amalg i) = (\id_{V_{G'}} \amalg i') \circ f$ (so in particular $f(V_G) \subset V_{G'}$),
        \item $f \circ (\id_{V_{G}} \cup r) = (\id_{V_{G'}} \cup r') \circ f$,
        \item for every $h \in H_{G'}$ the preimage $f^{-1}(h)$ contains exactly one element, and 
        \item for every $v \in V_{G'}$ the preimage $f^{-1}(v)$ (which is a graph) is a connected graph of genus $w'(v)$.
    \end{enumerate}
    We let $\J$ denote the category of the stable graphs that have at least one edge, with morphisms defined as above.
    For any $g \in \IN$ and we let $\J_g \subset \J$ denote the full subcategory on the graphs of genus $g$.
\end{defn}

Note that any stable graph with at least one edge must have at least genus $2$.
The total genus of graphs is preserved by morphisms and hence we have a disjoint decomposition
\[
    \J = \coprod_{g \ge 2} \J_g.
\]

\begin{rem}\label{rem:our-J_g}
    In \cite[section 2.2]{CGP16} the authors define a category called $J_g$,
    on which our definition of the category $\J_g$ is based.
    One key difference is that $J_g$ contains the graph $\cd_g$ with one vertex and no edges. 
    This is a terminal object, which makes the classifying
    space $BJ_g \simeq *$ contractible. 
    If we remove this terminal object then $J_g \setminus \{\cd_g\}$
    is equivalent to our $\J_g$.
    We will argue in remark \ref{rem:our-gD_g} that our version of $J_g$ still leads to the same tropical moduli space $\gD_g$ as theirs.
\end{rem}

We will also need to think about graphs as topological spaces.
\begin{defn}
    For a stable graph $G = (V_G, H_G, i, r) \in \J$ we define its geometric realization to be the topological space
    \[
        |G| := \left( V_G \amalg H_G \times [0,1] \right)/\sim
    \]
    where we identify $(h,0) \sim r(h)$ and $(h,t) \sim (i(h),1-t)$ for all $h \in H_G$.
    For every graph morphism $f: G \to G$, we define the induced map $|f|: |G| \to |G'|$ by $[v] \mapsto [f(v)]$ and $[h,t] \mapsto [f(h),t]$.
    Moreover, we let $\Homeo(G)$ denote the group of homeomorphisms of $|G|$ that send vertices to vertices and we let $\Homeo_0(G) \subset \Homeo(G)$ be its identity component.
\end{defn}

Note that because $\varphi \in \Homeo_0(G)$ is isotopic to the identity in $\Homeo(G)$, it has to fix all vertices and it has to send each edge to itself.
Therefore we get an isomorphism of topological groups
\[
    \Homeo_0(G) \cong \prod_{e \in E_G} \Homeo_\partial([0,1])
\]
and in particular this group is contractible.
Moreover, $\Homeo(G) \cong \Aut(G) \ltimes \Homeo_0(G)$.

\subsection{From factorisations to graphs}

The purpose of this section is to prove theorem \ref{thm:computing-mcF} for $g\ge 2$.
Translating this through the equivalence $\Cobn \simeq \Csp(\IN, \IN_{\ge 1}, 1)$ from lemma \ref{lem:Cob2=labelled-csp}, we need to show that the classifying space of
\[
    \mcF_{\ge 2} := \coprod_{g \ge 2} \mcF_{g}(\Csp(\IN, \IN_{\ge 1}, 1))
\]
is equivalent to the classifying space of $\J$.
We will deal with the special case of $g=1$ in the next subsection.
The equivalence we construct does not come from a functor in either direction.
Rather, we define a third category $\SubJ$ where objects are triples $(G,w,U)$ consisting of a stable graph $(G,w)$ and an ``admissible'' subset $U \subset |G|$ of the geometric realization of $G$.
There are two functors
\[
    \J \longleftarrow \SubJ \longrightarrow \mcF_{\ge 2}.
\]
The one to the left simply forgets the subset $U$, 
and the one to the right takes $(G,w,U)$ and thinks of it as a factorisation
$|G| = U \cup_{\partial U} (|G| \setminus U^\circ)$.
We will show that both functors induce equivalences on classifying spaces.

\begin{defn}
    For a graph $G \in \J$, a subset $U \subset |G|$ is called \emph{admissible} if 
    \begin{itemize}
        \item $U$ is closed, has finitely many components, and no isolated points,
        \item $U$ is neither empty nor all of $X$, and 
        \item $V_G \cap \partial U = \emptyset$, i.e.~$U$ is a neighbourhood of all vertices it contains.
    \end{itemize}
    We let $\Subpi(G)$ denote the category where
    objects are admissible subsets $U \subset |G|$ and a morphism $U \to U'$ is equivalence class of homeomorphisms $\varphi\in \Homeo_0(G)$ satisfying $\varphi(U) \subset U'$.
    Two homeomorphisms $\varphi_0$ and $\varphi_1$ are equivalent if they are isotopic through homeomorphisms $\varphi_t$ satisfying $\varphi_t(U) \subset U'$.
\end{defn}

\begin{ex}
    If the graph $G = \oneloop$ consists of a single edge attached to a vertex, then the full subcategory of $\Subpi(G)$ on those admissible subsets $U \subset |G| \cong S^1$ that do not contain the vertex, is equivalent $\Delta$.
    (The equivalence sends $U \subset S^1 \setminus \{0\} \cong (0,1)$ to the totally ordered set $\pi_0(U)$.)
    One can similarly give combinatorial descriptions of other parts of this category, but the author is not aware of a concise description of $\Subpi(G)$ as a whole.
\end{ex}

\begin{lem}\label{lem:Subpi-iso}
    A morphism $[\varphi]: U \to U'$ in $\Subpi(G)$ is an isomorphism if and only if we can find a representative $\psi \in [\varphi]$ such that $\psi(U) = U'$.
\end{lem}
\begin{proof}
    First, consider the case that $G$ consists of two vertices connected by an edge.
    Then $U, U' \subset |G| \cong [0,1]$ are closed subsets with finitely many components and no isolated points.
    Since $[U] \cong [U']$ in $\Sub(G)$, we know that they have the same number of components, that $0 \in U \Leftrightarrow 0 \in U'$, and that $1 \in U \Leftrightarrow 1 \in U'$.
    By matching up the boundary points $\partial U \cong \partial U'$ and linearly interpolating, we can construct $\psi \in \Homeo_\partial([0,1])$ with $\psi(U) = U'$.
    The case of a general graph now follows by building $\psi$ on each edge as described above.
\end{proof}

If $f:H \to G$ is a morphism in $\J$ and $U \subset |G|$ is admissible, then $f^{-1}(U) \subset |H|$ is also admissible.
    It is closed and has finitely many components because $f\colon |H| \to |G|$ is continuous and has connected fibers.
    It is non-empty and not equal to $|G|$ because $f$ is surjective.
    It is a neighbourhood of all vertices it contains because $f\colon |H| \to |G|$ sends vertices to vertices and $\mrm{int}(f^{-1}(U)) \supset f^{-1}(\mrm{int}(U))$.

\begin{defn}
    For $f: H \to G$ we define a functor
    \[
        f^*: \Subpi(G) \to \Subpi(H)
    \]
    that on objects sends $U$ to $f^{-1}(U)$.
    For a morphism $[\varphi]:U \to U'$ we set $f^*[\varphi] = [\psi]$ where $\psi: H \to H$ is the unique homeomorphism that satisfies $f\circ \psi = \varphi \circ f$ and that is the identity on $f^{-1}(V(G))$.
    This induces a functor
    \[
        \Subpi(-): \J^{op} \to \Cat
    \]
    and we let $\SubJ \to \J$ be its cartesian unstraightening.
\end{defn}

We want to show that $B\Subpi(G)$ is contractible for all $G$.
First we recall a result about the topological poset of non-empty configurations $C(X)$.
For a space $X$, we let $C(X)$ be the topological poset $\{ \emptyset \neq A \subseteq X \;|\; A \text{ finite}\}$ ordered by inclusion and topologised as $\coprod_{n\ge 1} X^{\times n} /\Sigma_n$.

\begin{lem}\label{lem:C(X)-contractible}
    If $X$ is a Hausdorff space with infinitely many points, then the classifying space $BC(X)$ is weakly contractible.
\end{lem}
\begin{proof}
    This follows from the discretisation technique \cite[Propositions 2.6-2.8]{GRW17}.
    The exact version we need here can be found in \cite[Proposition 2.18]{BBS-finite}, and we take the open, symmetric, anti-reflexive relation $\bot$ mentioned there to be $\neq$.
    (To apply this proposition we need to check that for any finite subset $A \subset X$ the following discrete simplicial complex is contractible: vertices are points in $X \setminus A$ and $(n+1)$ vertices form an $n$-simplex if they are disjoint.
    Indeed, this is contractible as its poset of simplices is the discrete version of $C(X\setminus A)$, which is a filtered poset because $X\setminus A$ is non-empty.)
\end{proof}

\begin{lem}\label{lem:Subpi-contractible}
    For every graph $G$ the category $\Subpi(G)$ has a contractible classifying space.
\end{lem}
\begin{proof}
    For the purpose of this proof all geometric realizations are fat.
    Let $C(G)$ be the topological poset $C(|G| \setminus V)$ of finite non-empty subsets that avoid the vertices. 
    We know from lemma \ref{lem:C(X)-contractible} that $BC(G) \simeq *$ and this remains true when we take the fat geometric realization.
    (The thin geometric realization of $N_\cd C(G)$ is the fat geometric realization of $N_\cd (C(G), \subsetneq)$ and the comparison map is an equivalence by \cite[Proposition 3.8]{ERW19}, as $(C(G), \subset)$ is obtained from the non-unital $(C(G), \subsetneq)$ by disjointly adding units.)

    In preparation of the proof we note that the topological group $\Homeo_0(G)$ acts on $C(G)$ and this action is locally retractile in the sense of \cite[Definition 2.3]{CRW17} (the local retractions can be constructed directly as piecewise linear maps).
    In particular, any $\Homeo_0(G)$-equivariant map with target $C(G)$ will be a fibration by \cite[Lemma 2.5]{CRW17}.
    For example, the maps $d_i: N_1 C(G) \to N_0 C(G)$ are fibrations and by basechange it follows that all outer face maps in the nerve are fibrations.

    Throughout, we fix a geodesic metric on $|G|$.
    We let $\Subtop(G)$ denote the topological poset of admissible subsets of $|G|$.
    We topologise $\Subtop(G)$ with the following variant of the Hausdorff metric:
    \[
        d(U, U') = \left| |\pi_0(U)| - |\pi_0(U')| \right| + \inf \{ \varepsilon \in [0,\infty) \;|\; U  \subseteq B_\varepsilon(U') \text{ and } U' \subseteq B_\varepsilon(U) \}.
    \]
    The first term ensures that $\Subtop(G)$ decomposes as a disjoint union over the number of connected components, and we see that taking an admissible subset to its boundary defines a continuous map $\partial\colon \Subtop(G) \to C(G) = C(|G| \setminus V_G)$.
    Each fiber of this map is either empty or has exactly two elements: $U$ and $|G| \setminus U^\circ$.
    Because $\Homeo_0(G)$ still acts on $\Subtop(G)$ and $\partial$ is equivariant, the local retractile condition implies that $\partial$ is a finite covering map.
    This implies that $\Subtop(G)$ is also $\Homeo_0(G)$-locally retractile, and by the same argument as before we see that the outer face maps in the nerve $N\Subtop(G)$ are fibrations.
    
    We will now show that $B\Subtop(G)$ is contractible.
    To deduce this from contractibility of $BC(G)$, we proceed similar to the proof of the topological version of Quillen's theorem A in \cite[Theorem 4.7]{ERW19} applied to a hypothetical functor $C(G) \to \Subtop(G)$ -- but we do not construct such a functor.
    Let $X_{\cd, \cd} \subset N_\cd C(G) \times N_\cd\Subtop(G)$ be bi-semi-simplicial space whose $(n,m)$-simplices are those tuples 
    \[
        (A_0\subset \dots \subset A_n, U_0 \subset \dots \subset U_m)
    \]
    where $A_n \subset U_0$.
    We will argue that for all $n$ and $m$ the geometric realizations of projection maps
    \[
        p_n: X_{n,\cd} \to N_n C(G)
        \qquad
        q_m: X_{\cd,m} \to N_m \Subtop(G)
    \]
    are equivalences.
    First, note that $(p_n)_m$ and $(q_m)_n$ are fibrations for all $m$ and $n$.
    (For $n=0=m$ this holds because they are $\Homeo_0(G)$-equivariant maps into a locally $\Homeo_0(G)$-retractile space.
    The general case follows by using basechange and the fact that the outer face maps of $N_\cd \Subtop(G)$ and $N_\cd C(G)$ are fibrations as we established above.)
    Now \cite[Lemma 2.14]{ERW19} allows us to show that $p_n$ and $q_m$ realize to equivalences by proving that their fibers have a contractible realizations.
    The fiber of $q_n: X_{\cd, m} \to N_m\Subtop(G)$ 
    over some $U_0 \subset \dots \subset U_m$ in $N_m\Subtop(G)$
    is the nerve of the topological poset $C(U_0 \setminus (U_0 \cap V))$ of finite non-empty subsets of $U_0$ that avoid vertices.
    This has a contractible classifying space by lemma \ref{lem:C(X)-contractible}, and hence $|q_n|$ is an equivalence.
    The fiber of $p_n$ at some $(A_0 \dots A_n)$ is the nerve of the topological poset $\Subtop_{A_n\subset}(G)$ of admissible subsets containing $A_n$.
    Let $\delta > 0$ be sufficiently small (such that any two points in $A_n$ are more than $2\delta$ apart from each other and from all vertices) and let $A_n^\delta$ be the union of closed $\delta$-balls around the points of $A_n$ -- this is an admissible subset of $G$.
    Then the inclusion $\Subtop(G)_{A_n^\delta/} \to \Subtop_{A_n \subset}(G)$ induces a levelwise equivalence on nerves (we can push $\partial U$ away from $A_n$) and thus an equivalence on classifying spaces, but $\Subtop(G)_{A_n^\delta/}$ has an initial object and thus a contractible classifying space.
    This concludes the proof that $|p_n|$ and $|q_m|$ are equivalences for all $n$ and $m$.
    Now \cite[Theorem 2.2]{ERW19} implies that as we vary $n$ and $m$ they realize to equivalences $B\Subtop(G) \simeq ||X_{\cd,\cd}|| \simeq B C(G) \simeq *$.

    We now have to relate the topological poset $\Subtop(G)$ to the discrete $1$-category $\Subpi(G)$.
    Let $\mcA$ be the topological action category for the action of $\Homeo_0(G)$ on $\Subtop(G)$.
    This has as space of objects $\Subtop(G)$ and the space of morphisms is the space of triples $(U_0,U_1,\varphi)$ where $U_0,U_1 \in \Subtop(G)$, $\varphi \in \Homeo_0(G)$, and $\varphi(U_0) \subset U_1$.
    There is an inclusion $\Subtop(G) \to \mcA$ and on nerves this is a level-wise homotopy equivalence because one can construct a splitting $N_n\mcA \simeq N_n\Subtop(G) \times \Homeo_0(G)^{\times n}$ and $\Homeo_0(G)$ is contractible.
    In particular, $B\mcA$ is also contractible.
    For $\mcA$, the source-and-target map 
    \[
        (d_1,d_0): N_1(\mcA) \to N_0(\mcA) \times N_0(\mcA)
    \]
    is a fibration because it is a $\Homeo_0(G)^{\times 2}$-equivariant map into a locally $\Homeo_0(G)^{\times 2}$-retractile space.
    Let $\mcA^\delta$ be the version of $\mcA$ where the space of objects has the discrete topology and the space of morphism is topologised such that only the homeomorphism $\varphi$ can vary continuously.
    Then the ``identity'' functor $\mcA^\delta \to \mcA$ induces a homotopy base-change on nerves in the sense of definition \ref{defn:base-change} and thus by theorem \ref{thm:base-change} (or alternatively \cite[Theorem 5.2]{ERW19}) this inclusion induces a homotopy equivalence $B(\mcA^\delta) \simeq B\mcA \simeq *$.

    Finally, we have a functor of topologically enriched categories $\mcA^\delta \to \Subpi(G)$ that is the identity on objects an on morphisms sends $\varphi$ to its isotopy class.
    This induces an equivalence on mapping spaces because if we fix $U_1, U_2 \in \Subtop(G)$, then the space of $\varphi\in \Homeo_0(G)$ that satisfy $\varphi(U_1) \subset U_2$ has contractible components.
    Therefore we get $* \simeq B(\mcA^\delta) \simeq B\Subpi(G)$.
    (Recall that the fat and thin geometric realization of a semisimplicial set are equivalent \cite[Lemma 1.7]{ERW19}, so the difference does not matter for $B\Subpi(G)$.)
\end{proof}

\begin{cor}\label{cor:SubJ}
    The forgetful functor 
    \[
        \SubJ \to \J
    \]
    induces an equivalence on classifying spaces.
\end{cor}
\begin{proof}
    As this is (by definition) a cartesian fibration, Quillen's theorem A reduces to checking that the fibers have contractible classifying spaces, but the fiber at $G \in \J$ is $\Subpi(G)$, so the claim follows from lemma \ref{lem:Subpi-contractible}.
\end{proof}

Now we construct a functor 
$\SubJ \to \mcF_{\ge 2} =\coprod_{g \ge 2} \mcF_{g}(\Csp(\IN, \IN_{\ge 1}, 1)) $, 
which will also be an equivalence on classifying spaces.
(Recall that via the equivalence $\Cobn \simeq \Csp(\IN, \IN_{\ge1}, 1)$ from lemma \ref{lem:Cob2=labelled-csp} we can think of such labelled cospans as representing surface bordisms.)

\begin{defn}\label{defn:S}
    We define a functor $S\colon \SubJ \to \mcF_{\ge 2}$ by sending 
    $(G, U)$ to the factorisation
    \[
        S(G, U) := 
        \left( \emptyset \to [\pi_0(U), w_{U}] \leftarrow 
        \partial U \to [\pi_0(X \setminus U^\circ), w_{X\setminus U}] \leftarrow \emptyset\right).
    \]
    Here the weights $w_U:\pi_0(U) \to \IN$ and 
    $w_{X\setminus U}:\pi_0(X \setminus U^\circ) \to \IN$
    are defined by sending a component $U_0 \subset U$ to its total genus, i.e.~the number $b_1(U_0) + \sum_{v \in V_G \cap U_0} w(v)$.
    
    To a morphism $(f,[\varphi]): (G_1, U_1) \to (G_2,U_2)$ a we assign the cospan 
    \[
        S(f) = \left(\partial U \to
        [\pi_0((f\circ \varphi)^{-1}(U') \setminus U^\circ), w_{V, U}] 
        \xleftarrow{(f\circ \varphi)^{-1}} \partial U' \right)
    \]
    where $w_{V, U}$ sends each component of $V \setminus f(U)^\circ$ 
    to its total genus.
    See figure \ref{fig:functor-S} for an illustration of how this functor 
    is applied to two objects and a morphism.
\end{defn}

\begin{figure}[ht]
    \centering
    \small
    \def\svgwidth{\figurerescalefactor\linewidth}
    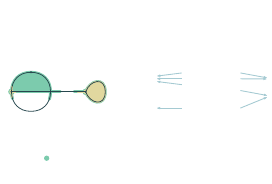
    \caption{The functor $S$ evaluated on a morphism $(f,[\varphi]):(X, U) \to (Y, V)$, where for simplicity we consider $\varphi = \id$.}
    \label{fig:functor-S}
\end{figure}

\begin{lem}
    The above construction indeed describes a well-defined functor
    $S\colon \SubJ \to \mcF_{\ge 2}$.
\end{lem}
\begin{proof}
    First we note that $S(f)$ does not depend on the representative $\varphi$ in the isotopy class $[\varphi]$ because the homotopy type of $(\varphi \circ f)^{-1}(U') \setminus U^\circ$ and the set of vertices it contains does not change as we vary $\varphi$ among homeomorphisms satisfying $\varphi(f(U)) \subset U'$.

    Because $\SubJ$ only involves admissible subsets of \emph{stable} graphs, the resulting factorisation $S(G,U)$ will always be in the subcategory $\Csp(\IN, \IN_{\le 1}, 1) \subset \Csp(\IN, 1)$ and the total composite will always have genus at least $2$.

    We need to check that $S$ is functorial and for this it suffices to check that the composite $\SubJ \to \mcF_{\ge 2} \to \Cob_2^{\chi\le0}$ is functorial.
    Consider morphisms $f: G_1 \to G_2$, $g: G_2 \to G_3$ and admissible subsets $U_i \in \Subpi(G_i)$ with $[\varphi]:U_1 \to f^{-1}(U_2)$ and $[\psi]:U_2 \to g^{-1}(U_3)$.
    We would like to show that the weighted cospan $S(g \circ f, [f^*(\psi) \circ \varphi])$ 
    agrees with the composite $ S(g,[\psi]) \circ S(f,[\varphi]) = S(f,[\varphi])\cup_{\partial U_2} S(g, [\psi])$.
    On underlying sets one can describe this pushout as
    \begin{align*}
        \pi_0(f_\varphi^{-1}(U_2) \setminus U_1^\circ) 
        \cup_{\partial U_2} \pi_0(g_\psi^{-1}(U_3) \setminus U_2^\circ)
        &\cong \pi_0(f_\varphi^{-1}(U_2) \setminus U_1^\circ) \cup_{\partial U_2} \pi_0((g_\psi \circ f_\varphi)^{-1}(U_3) \setminus f_\varphi^{-1}(U_2^\circ)) \\
        &\cong \pi_0((g_\psi \circ f_\varphi)^{-1}(U_3) \setminus U_1^\circ) 
    \end{align*}
    where we write $f_\varphi := f \circ \varphi: |G_1| \to |G_2|$, $g_\psi := g \circ \psi: |G_2| \to |G_3|$, so that $g_\psi \circ f_\varphi = (g \circ f) \circ (f^*\psi \circ \varphi)$.
    (The first bijection in the display uses that $f_\varphi$ has connected preimages and the second bijection uses that $\pi_0$ preserves pushouts.)
    To conclude $S(g,[\psi]) \circ S(f,[\varphi]) = S(g \circ f, [f^*\psi \circ \varphi])$ we need to
    check that both sides are weighted in $\IN$ in the same way.
    To show this, let $C \subset (g_\psi \circ f_\varphi)^{-1}(U_3) \setminus U_1^\circ$ some component.
    As a point in $S(g \circ f, [f^*\psi\circ \varphi])$ this is labelled by its total genus.
    We have a decomposition of $C$ into $C_0 = C \cap (f_\varphi^{-1}(U_2) \setminus U_1^\circ)$ 
    and $C_1 =  C \cap ((g_\psi \circ f_\varphi)^{-1}(U_3) \setminus f_\varphi^{-1}(U_2)^\circ)$.
    The total genus of each component of $C_1$ agrees with that of the relevant
    component in $f_\varphi(C_1)$, which is a union of components of $g_\psi^{-1}(U_3)\setminus U_2^\circ$.
    The total genus of $C$ can be computed by adding up the genus of each component of $C_0$ and $C_1$ and then adding the first Betti number of the graph with vertices $\pi_0(C_0) \amalg \pi_0(C_1)$ and edges $ C_0 \cap C_1 = \partial U_2$.
    This is exactly the bipartite graph that we used in definition \ref{defn:weighted-cospans} to define the composition in a weighted cospan category.
    Hence $S(g \circ f, [f^*\psi \circ \varphi])$ indeed agrees with $S(g, [\psi]) \circ S(f,[\varphi])$ as a weighted cospan.
\end{proof}

\begin{lem}\label{lem:S_n-adjoint}
    For all $n$ the composite functor
    \[
        S_n: \int_{G \in \J} N_n^{\rm R} \Subpi(G) 
        \to N_n^{\rm R} \int_{G \in \J} \Subpi(G) 
        \xrightarrow{S} N_n^{\rm R} \mcF_{\ge 2}
    \]
    is an equivalence on classifying spaces.
    (In fact, it admits a fully faithful right adjoint.)
\end{lem}
\begin{proof}
    For a graph $G$ let $\Fil_n(G)$ denote the full subcategory of $N_n^{\rm R}(\Subpi(G))$ on those objects $(U_0 \to \dots \to U_n)$ where every edge of $|G|$ intersects $\bigcup_{i=0}^n \partial U_i$ non-trivially.
    This is functorial in graph isomorphisms and we have an inclusion functor 
    \[
        I_n: \int_{G \in \J^{\cong}} \Fil_n(G) \to \int_{G \in \J} N_n^{\rm R}(\Subpi(G)).
    \]
    This functor is fully faithful because if $U_\cd \in \Fil_n(G)$ and $f:G' \to G$ is a morphism such that $f^*U_\cd \in \Fil_n(G')$, then $f$ must be an isomorphism.
    (As $\bigcup_i \partial f^{-1}(U_i)$ intersects every edge of $G'$ the morphism $f$ cannot collapse any edges and hence must be an isomorphism.)

    Next we show that $I_n$ has a left adjoint.
    For an object $(G, (U_\cd, [\varphi_\cd])) \in \int_{G \in \J} N_n^{\rm R}\Subpi(G)$ we can construct its localization onto the image of $I_n$ by collapsing all edges of $G$ that do not intersect $\bigcup_i \partial U_i$.
    Let $G'$ denote the graph resulting from this collapse (with quotient map $f:G \to G'$) and let $U_i'$ be the quotients of the $U_i$ -- these are still admissible because their boundary did not intersect an edge we collapsed.
    Similarly, the $[\varphi_i]:U_{i-1} \to U_i$ descend to the quotient and yield $[\varphi_i']: U_{i-1}' \to U_i'$.
    Then, because $U_i = f^{-1}(U_i')$, the quotient map $f:G \to G'$ defines a map $f:(G, (U_\cd,[\varphi_\cd])) \to (G', (U_\cd', [\varphi_\cd']))$ in $\int_{G \in \J} N_n^{\rm R}\Subpi(G)$, which by construction is initial among maps from $(G, (U_\cd, [\varphi_\cd]))$ to objects in the essential image of $I_n$.
    Therefore, $I_n$ has a left adjoint $L_n$, and hence induces an equivalence on classifying spaces.

    To complete the proof it will now suffice to check that for all $n$ the functor
    \[
        \int_{G \in \J^{\cong}} \Fil_n(G) 
        \xrightarrow{I_n} \int_{G \in \J} N_n^{\rm R}\Subpi(G)
        \xrightarrow{S_n} N_n^{\rm R}\mcF_{\ge 2}
    \]
    is an equivalence of groupoids. 
    (In fact, one can check that $I_n \circ (S_n \circ I_n)^{-1}$ provides a fully faithful right adjoint to $S_n$ -- this uses that $S_n \cong S_n \circ I_n \circ L_n$.)
    For the remainder of the proof it will be convenient to write the objects of $\Fil_n(G)$ as $U_\cd = (U_0 \subset \dots \subset U_n)$, i.e.~to assume that the $\varphi_i$ are the identity.
    We can do this without loss of generality as every object in $\Fil_n(G)$ is isomorphic to one of these.
    Recall that an object in the groupoid $N_n^{\rm R}(\mcF_{\ge 2})$ can be written as a sequence of $n+2$ composable cospans
    \[
        \emptyset \to X_{-1} \leftarrow A_0 \to X_0 \leftarrow A_1 \to \dots \leftarrow A_n \to X_{n} \leftarrow \emptyset
    \]
    where the $X_i$ come equipped with labellings $c_i\colon X_i \to \IN$. 
    These diagrams are subject to the conditions:
    \begin{enumerate}
        \item all the $A_i$ are non-empty, 
        \item the total composite $X_{-1} \amalg_{A_0} \dots \amalg_{A_n} X_{n}$ is a point and its label is $\ge 2$, and
        \item the labellings $c_i$ respect the ``$\chi \le 0$'' condition, i.e.~if $c_i(x) = 0$ for some $x \in X_i$, then $x$ has at least two preimages in $A_i \amalg A_{i+1}$.
    \end{enumerate}
    The morphisms in this groupoid are the natural isomorphism of such diagrams.%
    \footnote{
    To be entirely precise, and we should identify any two objects between which there is an isomorphism that is the identity on the $A_i$. 
    However, as the other conditions imply that each $A_{i-1} \amalg A_i \to X_i$ is surjective, this does not remove any non-trivial automorphisms, and therefore does not change the groupoid up to equivalence.
    Hence, we can safely ignore it here.
    }
    The functor $S_n$ sends the filtered graph $(G, U_0 \subset \dots \subset U_n)$ to the diagram
    \[
        \emptyset \to \pi_0(U_0) \leftarrow  \partial U_0 \to \pi_0(U_1 \setminus U_0^\circ) \leftarrow \partial U_1 \to \dots \leftarrow \partial U_n \to \pi_0(|G| \setminus U_n^\circ) \leftarrow \emptyset
    \]
    where the labellings $c_i: \pi_0(U_i \setminus U_{i-1}^\circ) \to \IN$ are defined by sending a component $W \subset U_i \setminus U_{i-1}^\circ$ to the sum of all the vertex weights in $W$ plus the first Betti number of $W$.
    Note that because of the condition imposed on $U_\cd \in \Fil_n(G)$ we actually know that no such component $W$ can contain an entire edge.
    This means that $W$ is either a contractible neighbourhood of some vertex or a contractible subset of an edge.

    To construct an inverse functor to $S_n \circ I_n$ we first build from the diagram $(X_\cd, A_\cd, c) \in N_n^{\rm R}\mcF_{\ge2}$ the topological graph
    \[
        G^{\rm top} = X_{-1} \amalglim_{A_0} (A_0 \times [-1,0]) \amalglim_{A_0} X_0 \amalglim_{A_1} (A_1 \times [0,1]) \amalglim_{A_1} X_1  \dots 
        \amalglim_{A_{n}} (A_n \times [n,n+1]) \amalglim_{A_n} X_{n+1}
    \]
    as in \ref{defn:|W|} and \ref{rem:|W|}.
    This topological space comes with a projection $p$ to the interval $[-1,n+1]$ and it has a canonical filtration given by $U_i := p^{-1}([-1,i-\oh])$.
    We can write $G^{\rm top} = |G_1|$ for $G_1$ a graph with vertices $\coprod_{i=-1}^{n+1} X_i$ and edges $\coprod_{i=0}^n A_i$.
    With respect to the above filtration each component of $U_i \setminus U_{i-1}^\circ$ would then contain exactly one vertex.
    The $c_i\colon X_i \to \IN$ yield a labelling of the vertices of $G_1$ by natural numbers in such a way that univalent vertex are not labelled by $0$, and the total genus $b_1(G_1) + \sum_i \sum_{x \in X_i} c_i(x)$ is at least $2$.
    However, it is still possible for bivalent vertices to be labelled by $0$ as we allowed this in definition \ref{defn:weighted-cospans}.
    Let $G$ be the graph obtained by forgetting all the bivalent vertices of $G_1$ that are labelled by $0$ and joining their adjacent edges.
    (Because we assumed that the total genus is at least $2$, this will not remove all vertices.)
    Then $|G| \cong |G_1| \cong G^{\rm top}$, so $U_\bullet$ yields a well-defined element of $\Fil_n(G)$.
    This construction is functorial with respect to isomorphisms and it is inverse (up to natural isomorphism) to the functor $S_n \circ I_n$ we started with.
\end{proof}

We now have all the tools to show that $\SubJ \to \mcF_{\ge2}$ induces an equivalence on classifying spaces.
Rather than proving this directly, we first deduce the following stronger statement, as it clarifies the relation between $\J$, $\mcF_{\ge2}$ and $\Subpi$.

\begin{cor}\label{cor:colim-of-infinity-cats}
    The functor $S$ exhibits $\mcF_{\ge2}$ as the colimit of the diagram
    \[
        \J^\op \xrightarrow{\Subpi(-)} \Cat \to \Cat_\infty
    \]
    in the $\infty$-category of $\infty$-categories.
\end{cor}
\begin{proof}
    The Rezk nerve functor extends to a functor $N_\cd^{\mcR}:\Cat_\infty \to \Fun(\Delta^\op, \mcS)$ defined by sending an $\infty$-category $\mcC$ to the simplicial space $\Map_{\Cat_\infty}([\cd], \mcC)$.
    This is compatible with the Rezk nerve we have used so far in the sense that there is an equivalence $B N_n^{\rm R}(\mcD) \simeq N_n^{\mcR}(\mcD)$ natural in $[n] \in \Delta^\op$ and $\mcD \in \Cat_1$.
    Using this translation, lemma \ref{lem:S_n-adjoint} states that the functor
    \[
        \int_{G \in \J} N_n^\mcR(\Sub(G))
        \to N_n^{\mcR}(\SubJ)
        \xrightarrow{N_n^\mcR(S)} N_n^{\mcR}\mcF_{\ge 2}
    \]
    induces an equivalence on classifying spaces for all $n$.
    By Thomason's theorem \cite{Tho79}, the classifying space of the Grothendieck construction $\int_\J N_n^\mcR(\Sub(G))$ is the homotopy colimit of the functor $N_n^\mcR(\Sub(-))$.
    (In $\infty$-categorical language, the classifying space of (the source of) a right fibration computes the colimit of the corresponding presheaf \cite[Corollary 3.3.4.6]{LurHTT}.)
    Therefore we have a natural equivalence of spaces
    \[
        \colim_{G \in \J} N_\cd^\mcR \Subpi(G)
        \simeq N_\cd^\mcR \mcF_{\ge 2}.
    \]
    Because the Rezk nerve $N_\cd^{\mcR}$ is fully faithful \cite{Rez01,Hebestreit2025} it reflects all colimits (even though it does not preserve them), and hence we get an equivalence $\colim_{G \in \J} \Subpi(G) \simeq \mcF_{\ge2}$ in $\Cat_\infty$
\end{proof}

\begin{cor}\label{cor:Sub=mcF}
    The functor $\SubJ \to \mcF_{\ge 2}$ induces an equivalence on classifying spaces.
\end{cor}
\begin{proof}
    By \cite[Corollary 3.3.4.3]{LurHTT} the colimit of a diagram $\J^\op \to \Cat_\infty$ can equivalently be described as the localization of the unstraightening $\SubJ$ at the cartesian edges.
    Therefore corollary \ref{cor:colim-of-infinity-cats} implies that the functor $S: \SubJ \to \mcF_{\ge2}$ becomes an equivalence after inverting all cartesian edges in $\SubJ$.
    If we further invert all other morphisms on sides we get that $S$ is an equivalence on classifying spaces.
\end{proof}

\subsection{The genus $1$ case}

We still have to deal with the genus $1$ case of theorem \ref{thm:computing-mcF}.
That is, we have to show that there is a homotopy equivalence
\[
    |\mcF_1(\Cobn)| \simeq BO(2).
\]
An object in $\mcF_1(\Cobn)$ is a factorisation $(W\colon \emptyset \to M, V \colon M \to \emptyset)$ such that $W \cup_M N$ is a torus and $M \neq \emptyset$.
As all components must have non-positive Euler characteristic, this is only possible if both $W$ and $V$ are disjoint unions of cylinders.

One can show that $\mcF_1 = \mcF_1(\Cob_2^{\chi \le 0})$ is equivalent to $\mcF_{S^1}(\Cob_1^{\rm unor})$.
(Collapsing the cylinders to arcs yields a factorisation of a circle into arcs.)
Just like we showed in \cite[Section 8]{Stb21} that $\mcF_{S^1}(\Cob_1)$ 
is equivalent to Connes' cyclic category $\gL$,
it should be possible to show that $\mcF_{S^1}(\Cob_1^{\rm unor})$
is the dihedral version of Connes' cyclic category.
This dihedral category is known to have classifying space $BO(2)$, see \cite[Proposition 3.11]{Lod87}.

However, making this equivalence precise is rather tedious as we need to compare our geometric category to Connes' combinatorially defined category.
Therefore, we will instead explain how to adjust the proof from the previous section to this case.

\begin{defn}
    Let $\mcC$ be the category where objects are pairs $(C, U)$ of a circle $C$ and an admissible subset $\emptyset \neq U \subsetneq S^1$ (i.e.~a finite disjoint unions of closed intervals of positive length).
    A morphism $(C, U) \to (C', U')$ in this category is represented by a homeomorphism $\varphi: C \cong C'$ with $\varphi(U) \subset U'$ and two such homeomorphism represent the same morphism if they are isotopic through a family $\varphi_t$ with $\varphi_t(U) \subset V$.
\end{defn}

\begin{proof}[Proof of theorem \ref{thm:computing-mcF} for $g=1$]
As in definition \ref{defn:S}, we still have a functor 
\[
    S: \mcC \to \mcF_1.
\]
In this case, we can in fact show that it is an equivalence of categories, which we will do by showing that for all $n$ the induced map
\[
    S_n: N_n^{\rm R}(\mcC) \to N_n^{\rm R}(\mcF_1)
\]
is an equivalence of groupoids.
An inverse $R_n$ can be constructed in essentially the same way as in the second half of lemma \ref{lem:S_n-adjoint}, but with $N_n^{\rm R}\mcC$ playing the role of $\int_{G \in \J^{\cong}} \Fil_n(G)$.
Indeed, given an object $(X_\cd, A_\cd, c_\cd)$ of $N_n^{\rm R}(\mcF_1)$ we again construct the topological graph $G^{\rm top}$ with its canonical filtration $U_\bullet$.
The analysis of factorisations of the torus in $\Cobn$ at the start of the subsection shows that $G^{\rm top}$ must be a circle with labels $c_i \equiv 0$.
We can thus define $R_n(X_\cd, A_\cd, c_\cd) := (G^{\rm top}, U_\bullet)$ and this is functorial with respect to isomorphisms and it is inverse (up to natural isomorphism) to $S_n$.
Therefore, we in fact have that $\mcC \to \mcF_1$ is an equivalence of categories and in particular it induces an equivalence on classifying spaces.

The same argument as in lemma \ref{lem:Subpi-contractible} shows that the topological poset $\Subtop(S^1)$ of admissible subsets has a contractible classifying space.
The argument also shows that $B\mcC$ is equivalent to the realization of the action groupoid of $\Homeo(S^1)$ on $\Subtop(S^1)$.
Here the difference is that we do not restrict to the identity component of $\Homeo(S^1)$ and that this identity component is not contractible.
(However, it is still true that the space of homeomorphisms $\varphi\in \Homeo(S^1)$ satisfying $\varphi(U) \subset V$ has contractible components.)
Therefore we get that
\[
    B\mcC \simeq B\Subtop(S^1) /\!\!/ |\Homeo(S^1)| \simeq B\Homeo(S^1) \simeq BO(2). \qedhere
\]
\end{proof}

\section{Comparison to \texorpdfstring{$\Delta_g$}{Delta g}}\label{sec:J=gD}

In the previous sections we computed the classifying space $B(\ICobn)$
up to weak equivalence of infinite loop spaces and showed that it 
splits off free infinite loop spaces on the double-suspension of $B\J_g$.
Below we recall the moduli space $\Delta_g$ from tropical geometry and show that 
there is a rational homology equivalence $B\J_g \to \Delta_g$.
In fact, one can argue that $B\J_g$ is the homotopy type of a moduli \emph{stack} whose coarse space is $\Delta_g$.

One shortcoming of our results so far is that the splitting map $B(\ICobn) \to Q(\Sigma^2 B\J_g)$ is obtained though a rather long zig-zag of equivalences and maps,
making it difficult to understand how exactly the much-studied rational cohomology
of $\Delta_g$ embeds into the cohomology of $B(\ICobn)$.
The purpose of this section is to show that a very concrete model of this map can be constructed if we ``ignore finite automorphism groups''. 
Concretely, we will work up to rational homotopy equivalence, 
which leads to several simplifications:
we can work with the $1$-category $\Cobn$ instead of the $(2,1)$-category $\ICobn$,
we can map to the coarse tropical moduli space $\Delta_g$ instead of $B\J_g$,
and we can work with the free commutative topological monoid 
$\SP^\infty(X) = (\coprod_{n\ge 0} X^n/\Sigma_n)/\sim$ on a based space $X$
instead of the free infinite loop space $Q(X)$.

Recall from remark \ref{rem:BC-PCM}, that the partial commutative monoidal structure on $\Cobn$ (as in example \ref{ex:Cob-PCM}) induces a partially defined commutative monoid structure on the space $B(\Cobn)$ by
\begin{align*}
    &[(M_0 \xrightarrow{[W_1]} \dots \xrightarrow{[W_n]} M_n), (t_0, \dots, t_n)]
     +
    [(N_0 \xrightarrow{[V_1]} \dots \xrightarrow{[V_n]} N_n], (t_0, \dots, t_n)] \\
    &=[(M_0 \cup N_0 \xrightarrow{[W_1 \amalg V_1]} \dots \xrightarrow{[W_n \amalg V_n]} M_n \cup  N_n], (t_0, \dots, t_n)] 
\end{align*}
whenever $M_i \cap N_i = \emptyset$, and undefined otherwise.
We will construct a continuous map of partially defined 
commutative topological monoids as described in figure \ref{fig:mu} in the introduction:
\[
    \mu: B\left(\Cobn\right) \longrightarrow
    \SP^\infty\left( S^2(\Delta_2) \vee S^2(\Delta_3) \vee \dots) \right) .
\]
Here $S^2(-)$ denotes the unreduced double-suspension.
The goal of this section is to show that $\mu$ corresponds,
up to rational equivalence, to the projection onto the third factor
in theorem \ref{theorem:BICobn}; in the following sense:

\begin{thm}\label{thm:mu-fits}
    The map $\mu$ fits into a commutative diagram 
    of partially defined commutative topological monoids:
\[
\begin{tikzcd}[column sep = 0pc]
    B\left(\ICobn\right) \ar[d, "\simeq_\IQ"] \ar[rr, "\simeq", "\text{Thm \ref{theorem:BICobn}}"'] &{\hphantom{\dots}}&
    {B\IZ \times |B\Cut_\cd^{(g=1)}| \times |B\Cut_\cd^{(g\ge2)}|} \ar[r, "\text{project}"] &
    {|B\Cut_\cd^{(g \ge 2)}|} \ar[d, "\simeq_\IQ"] \\
    B\left(\Cobn\right) \ar[rrr, "\mu"] &&&
    \SP^\infty\left( S^2(\Delta_2) \vee S^2(\Delta_3) \vee \dots) \right) 
\end{tikzcd}
\]
    where the maps labelled by $\simeq$ and $\simeq_\IQ$ 
    are (rational) homotopy equivalences.
\end{thm}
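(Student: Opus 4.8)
The plan is to construct the map $\mu$ explicitly on the simplicial/topological model level and then to identify it, rationally, with the composite across the top of the diagram. First I would give the precise definition of $\mu$ following figure \ref{fig:mu}: on a $k$-simplex $[(M_0 \to \dots \to M_k), \bbt]$ of $B(\Cobn)$, one enumerates the closed components $V$ of the total diagram $W_{0\le k}$ (discarding components touching $M_0$ or $M_k$, which do not contribute a closed surface), and to each such $V$ one assigns a point of $\Sigma^2\gD_{g(V)}$ by the recipe described in the figure: the stable metric graph whose edges record the intersections of $V$ with the vertical lines (with lengths $t_i/k_i$, deleting valence-$2$ genus-$0$ vertices), together with suspension coordinates $(a,b)$ given by the partial sums of the $t_i$ before and after $V$. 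Summing over such $V$ in $\SP^\infty$ gives $\mu$. The first task is to check $\mu$ is well-defined and continuous — i.e.\ compatible with the face and degeneracy maps of $B(\Cobn)$, compatible with the identifications to the basepoint when $a=0$ or $b=0$ (which happens exactly when a closed component slides off the end, i.e.\ becomes attached to $M_0$ or $M_k$), and a map of partially defined commutative topological monoids. These are routine but somewhat fiddly verifications: additivity of the edge-length assignment under composition of cobordisms, and invariance under deleting identity morphisms, are what make it descend to the geometric realisation. I would relegate these checks to the definition (referenced as \ref{defn:mu}) and just cite them here.

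Next I would recall the chain of identifications already established. By Theorem \ref{theorem:BICobn} we have $B(\ICobn) \simeq S^1 \times Q(\gS^2 BO_2) \times Q(\bigvee_{g\ge2} \gS^2 BJ_g)$, and tracing through the proof, the third factor arises as $Q(\bigvee_{g\ge2} S^2(B\mcF_g(\Cobn)))$ via Corollary \ref{cor:Cut=Q(SBF)}, hence is modelled by $|B\Cut_\cd^{(g\ge2)}|$ where $\Cut_\cd^{(g\ge2)} = \coprod_{g\ge2}\Cut_\cd^{F,(g)}$. On the bottom row, $B(\ICobn) \to B(\Cobn)$ is a rational equivalence by Proposition \ref{prop:decomposition}, and $BJ_g \to \gD_g$ is a rational homology equivalence (constructed in the part of section \ref{sec:J=gD} preceding this theorem, via $BJ_g \simeq B\mcJ_g$ and a cell-structure comparison with $\gD_g$); since $\SP^\infty$ and $Q$ agree rationally and $\SP^\infty(-)$ takes rational homology equivalences to rational equivalences, the right-hand vertical arrow $|B\Cut_\cd^{(g\ge2)}| \simeq Q(\bigvee_{g\ge2}S^2 B\mcF_g) \to \SP^\infty(\bigvee_{g\ge2}S^2\gD_g)$ is a rational equivalence. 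So the content is purely the commutativity of the square up to homotopy.

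To prove commutativity I would avoid working with the zig-zags abstractly and instead build a direct simplicial comparison. Recall from section \ref{sec:decomposition} that the projection onto the $Q(\bigvee_g S^2(B\mcF_g))$-factor is, at the level of the decomposition, induced by the retraction $R_\cd\colon D_\cd \to \Cut_\cd$ that discards all components meeting $A_0$ or $A_n$, followed by the splitting $|B\Cut_\cd| \simeq Q(|B\Cut_\cd^\con|)$ and the homeomorphism of Lemma \ref{lem:Cut-con=gS} identifying $|B\Cut_\cd^\con|$ with $\bigvee_g S^2|B\Cut_\cd^{F,(g)}|$, and finally $|B\Cut_\cd^{F,(g)}| \simeq B\mcF_g$ from Lemma \ref{lem:CutF=F} and $B\mcF_g \simeq BJ_g$ from section \ref{sec:mcF-and-mcJ}. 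I would observe that $\mu$ is, by construction, \emph{manifestly} this composite read off geometrically: discarding components touching the ends is exactly passing from a diagram to its ``$\Cut$ part''; the free-commutative-monoid structure of $\SP^\infty$ corresponds to the $Q$ (equivalently $\SP^\infty$, rationally) obtained in Lemma \ref{lem:Cut-free-on-con}, with the summands indexed by connected closed components matching the $\Cut^\con$-decomposition; the suspension coordinates $(a,b)$ in $\Sigma^2\gD_g$ are literally the $(x,y)$-coordinates in the homeomorphism $\alpha$ of Lemma \ref{lem:Cut-con=gS} (sum of the $t_i$ before/after the component); and the assignment of a stable metric graph to a closed surface component is precisely the composite $\Cut_\cd^{F,(g)} \to N_\cd^{\rm rel}\mcF_g \to (\text{graphs}) \to \gD_g$ unwound through the equivalences of section \ref{sec:mcF-and-mcJ} — the representing space $|W|$ of definition \ref{defn:|W|}, with its weights recording genus, \emph{is} the metric graph, and its filtration by the vertical lines is the filtration used in $\int_\mcJ\Sub'$. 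Thus I would prove the theorem by constructing a single commuting diagram of (partially defined commutative monoid-valued) simplicial spaces whose realisation is the square in question, each arrow of which is one of the maps just listed, and then invoke the already-established fact that each of those arrows is a (rational) equivalence.

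The main obstacle will be the bookkeeping in the last step: making the passage ``closed component of a cobordism diagram $\rightsquigarrow$ point of $\Sigma^2\gD_g$'' strictly simplicial and strictly compatible with the normalisation factors $t_i/k_i$ (needed so that edge lengths sum to the volume $1-a-b$) and with the deletion of genus-$0$ valence-$2$ vertices — i.e.\ checking that $\mu$ as defined really does equal, on the nose and not just up to homotopy, the composite of the structure maps from section \ref{sec:decomposition} and section \ref{sec:mcF-and-mcJ}, rather than merely being homotopic to it. A secondary subtlety is that the equivalence $BJ_g \to \gD_g$ only exists after passing to coarse moduli (killing finite automorphisms), so one must be slightly careful that the diagram genuinely commutes rationally and not claim more; working throughout with $\SP^\infty$ rather than $Q$ and with $\Cobn$ rather than $\ICobn$ on the bottom row is exactly what buys this, and I would emphasise that point.
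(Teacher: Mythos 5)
Your proposal follows essentially the same route as the paper: the paper proves the theorem by assembling a single commuting diagram of partially defined commutative topological monoids (its diagram (\ref{eqn:mu-diagram})), whose top row is the composite through $|B\Cut_\cd^{(g\ge2)}|$, whose bottom row is $\mu$, and whose vertical maps pass to isomorphism classes ($B\Cut_n \to \pi_0\Cut_n$) — exactly the ``direct simplicial comparison'' you outline. One small point you elide: the assertion that the explicit sum-over-closed-components map $|B\Cut_\cd^{(g\ge2)}| \to \SP^\infty(|B\Cut_\cd^{\con,(g\ge2)}|)$ is a rational equivalence is not quite immediate from ``$Q$ and $\SP^\infty$ agree rationally''; the paper isolates this as Lemma~\ref{lem:map-that-sums} and proves it by checking that the restriction to $|B\Cut_\cd^{\con,(g\ge2)}|$ is the canonical inclusion and then invoking the freeness of rational homology from Corollary~\ref{cor:HBCut-is-free}. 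Also, a minor imprecision: $\Cut_\cd^{(g\ge 2)}$ is not $\coprod_{g\ge2}\Cut_\cd^{F,(g)}$; the former is the sub-simplicial-groupoid of $\Cut_\cd$ on cuts all of whose connected components have genus $\ge 2$, and only its connected part $\Cut_\cd^{\con,(g\ge2)}$ is identified, via Lemma~\ref{lem:Cut-con=gS}, with $\bigvee_{g\ge2}S^2|B\Cut_\cd^{F,(g)}|$. Neither of these affects the correctness of your approach.
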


\begin{rem}
    To compare this to the formulation of theorem \ref{theorem:BICobn}
    given in the introduction, 
    note that by combining corollary \ref{cor:Cut=Q(SBF)} with theorem \ref{thm:computing-mcF} we get:
    \[
        |B\Cut_\cd^{(g=1)}| \simeq Q(\gS^2 BO(2)) 
        \qand
        |B\Cut_\cd^{(g\ge2)}| \simeq Q\big({\bigvee}_{g \ge 2} \gS^2 B\J_g\big) .
    \]
    The specific map $B(\ICobn) \to B\IZ \times |B\Cut_\cd^{(g=1)}| \times |B\Cut_\cd^{(g \ge 2)}|$ 
    in theorem \ref{thm:mu-fits} is obtained as follows.
    The maps to the second and third factor come from the simplicial map $N_\cd\ICobn \to D_\cd \to \Cut_\cd$ 
    (denoted $R_\cd$ in lemma \ref{lem:Cut-is-simplicial})
    that discards all non-closed components of the $n$-simplex. 
    The map to the first factor comes from the functor $F:\ICobn \to \IZ$ 
    that sends a cobordism $W:M \to N$ to the integer 
    $\oh(|\pi_0(N)| - |\pi_0(M)| - \chi(W))$.
    This is the homotopy splitting of $\Cobnpb \subset \Cobn$ constructed in 
    proposition \ref{prop:computing-pb}.
    Note that $B\IZ$ is a topological abelian group and $BF:B(\ICobn) \to B\IZ$
    sends disjoint unions to sums.
\end{rem}

The Dold-Thom theorem \cite{DT58} states that there is a natural isomorphism
$\pi_*\SP^\infty(X) \cong \tilde{H}_*(X)$ between the homotopy groups 
of the infinite symmetric power of $X$ and the reduced homology of $X$. 
Hence $\mu$ induces a map from $\pi_n B(\Cobn)$ to 
$\bigoplus_{g \ge 2} H_{n-2}(\Delta_g; \IZ)$.
\begin{cor}
    On rational homotopy groups $\mu$ induces a surjection:
    \[
        \pi_*^\IQ(\mu): \pi_*^\IQ B(\Cobn) \twoheadrightarrow 
        \bigoplus_{g \ge 2} H_{*-2}(\Delta_g; \IQ)
    \]
    the kernel of which is spanned by a single class $\alpha$ in degree one
    and a class $\rho_i$ in each degree $4i+2$.
\end{cor}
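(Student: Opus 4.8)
The plan is to deduce the corollary formally from Theorem~\ref{thm:mu-fits} together with the rational homotopy computation of $B(\Cobn)$. First I would observe that, since the vertical arrows in the diagram of Theorem~\ref{thm:mu-fits} are rational homotopy equivalences and the top horizontal composite there is the projection of the product
\[
    B(\ICobn) \simeq B\IZ \times |B\Cut_\cd^{(g=1)}| \times |B\Cut_\cd^{(g \ge 2)}|
\]
onto its third factor, the map $\pi_*^\IQ(\mu)$ is identified, under this splitting, with the corresponding projection of graded $\IQ$-vector spaces. Since $\pi_*^\IQ$ of a finite product of these infinite loop spaces is the direct sum of the $\pi_*^\IQ$ of the factors, this projection is
\[
    \pi_*^\IQ B\IZ \ \oplus\ \pi_*^\IQ |B\Cut_\cd^{(g=1)}| \ \oplus\ \pi_*^\IQ |B\Cut_\cd^{(g \ge 2)}| \ \longrightarrow\ \pi_*^\IQ |B\Cut_\cd^{(g \ge 2)}|,
\]
which is split surjective; this already gives the surjectivity of $\pi_*^\IQ(\mu)$. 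To identify the target with $\bigoplus_{g\ge2} H_{*-2}(\gD_g;\IQ)$ I would use Corollary~\ref{cor:Cut=Q(SBF)} and Theorem~\ref{thm:computing-mcF} to write $|B\Cut_\cd^{(g\ge2)}| \simeq Q\big(\bigvee_{g\ge2} S^2 BJ_g\big)$, feed in the rational homology equivalence $BJ_g \to \gD_g$ constructed in this section, and apply the Dold--Thom theorem~\cite{DT58} together with the fact that $Q(X)$ and $\SP^\infty(X)$ have the same rational homotopy groups for connected $X$; this yields $\pi_*^\IQ |B\Cut_\cd^{(g\ge2)}| \cong \bigoplus_{g\ge2}\tilde{H}_{*}(S^2\gD_g;\IQ) \cong \bigoplus_{g\ge2} H_{*-2}(\gD_g;\IQ)$, and by the last assertion of Theorem~\ref{theorem:the-map} this makes $\pi_*^\IQ(\mu)$ agree with the projection onto the third summand of the decomposition of $\pi_*^\IQ B(\Cobn)$ in Theorem~\ref{theorem:rational-homotopy}.

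Next I would identify the kernel, which is the direct sum of the first two summands above. The factor $B\IZ \simeq S^1$ contributes $\IQ$ concentrated in degree $1$, spanned by a generator of $\pi_1 B(\Cobn)\cong\IZ$; this is the class $\alpha$. For the second factor I would use $|B\Cut_\cd^{(g=1)}| \simeq Q\big(S^2 B\mcF_1^{\chilez}\big)$ from Corollary~\ref{cor:Cut=Q(SBF)} (noting that $\mcF_1^{\chilez}$ is nonempty, e.g.\ $\gS_1$ factors through $S^1 \amalg S^1$ via two copies of the cylinder $\gS_{0,2}$, so the unreduced and reduced suspensions agree) and $B\mcF_1^{\chilez} \simeq BO(2)$ from Theorem~\ref{thm:computing-mcF}, so that $\pi_*^\IQ |B\Cut_\cd^{(g=1)}| \cong \tilde{H}_{*}(S^2 BO(2);\IQ) \cong \tilde{H}_{*-2}(BO(2);\IQ)$. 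Writing $O(2) = SO(2) \rtimes \IZ/2$ with the $\IZ/2$ acting on the Euler class by $e \mapsto -e$ gives $H^*(BO(2);\IQ) = H^*(BSO(2);\IQ)^{\IZ/2} = \IQ[e^2]$ with $|e^2| = 4$, hence $\tilde{H}_{*-2}(BO(2);\IQ)$ is one-dimensional in degrees $6, 10, 14, \dots$; writing $\rho_i$ for a generator in degree $4i+2$ ($i \ge 1$) finishes the claim. Alternatively, the kernel can be read off directly from the direct-sum decomposition of $\pi_*^\IQ B(\Cobn)$ in Theorem~\ref{theorem:rational-homotopy}, since that decomposition is proved via the same splitting.

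I do not expect a real obstacle; the corollary is essentially a translation of Theorem~\ref{thm:mu-fits}, and the work is bookkeeping. The one point deserving a word of care is that $\mu$ is only a partially defined commutative-monoid map, so I would first record that $B(\Cobn)$ is already group-complete. This follows from Lemma~\ref{lem:BmcC-group-like}: $\Cobn$ has, up to isomorphism, a single connected object $S^1$, and there is a morphism $S^1 \amalg S^1 \to 1_\mcC$ in $\Cobn$ given by a connected genus-zero surface with two incoming boundary circles, which has Euler characteristic $0$. Hence $B(\Cobn)$ is an infinite loop space, all the identifications above take place in the homotopy category of infinite loop spaces, and the square of Theorem~\ref{thm:mu-fits} may be evaluated on rational homotopy groups exactly as claimed.
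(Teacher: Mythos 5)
Your proof is correct and takes essentially the same route the paper envisions: the corollary is meant as a direct translation of Theorem~\ref{thm:mu-fits} into rational homotopy groups, and you supply exactly the bookkeeping the paper leaves implicit. You correctly reduce to the splitting of $B(\ICobn)$ into three factors, identify the third factor with $\bigoplus_{g\ge 2}\tilde{H}_{*-2}(\gD_g;\IQ)$ via Corollary~\ref{cor:Cut=Q(SBF)}, Theorem~\ref{thm:computing-mcF}, the rational homology equivalence $BJ_g\to\gD_g$, and Dold--Thom, and you compute the kernel from the first two factors. The calculation $H^*(BO(2);\IQ)\cong\IQ[e^2]$, $|e^2|=4$, giving classes in degrees $6,10,14,\dots$, is exactly what produces the claimed degrees $|\rho_i|=4i+2$. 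Your closing remark about group-completeness (via Lemma~\ref{lem:BmcC-group-like}, using the genus-zero pair of pants $S^1\amalg S^1\to\emptyset$ in $\Cobn$) is a reasonable point of care and is also correct. The only minor discrepancy is cosmetic: the corollary as stated uses unreduced $H_{*-2}(\gD_g)$, which in degree $*=2$ should really be read as reduced homology $\tilde{H}_0(\gD_g)=0$; your proof implicitly works with reduced homology throughout, which is the right thing.
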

\begin{proof}
    We have $\pi_{*>0}^\IQ(S^1) \cong \IQ\langle \alpha \rangle$, $|\alpha| = 1$ and $\pi_{*>0}^\IQ(BO(2)) \cong \IQ\langle \rho_1, \rho_2, \dots,\rangle$, $|\rho_i|=4i$.
\end{proof}

\subsection{Constructing the maps}
We now recall the definitions used above, starting with 
the moduli space $\gD_g$, which we define
as a colimit indexed by $\J_g$ following \cite{CGP16}

\begin{defn}
    Consider the functor $\gD^E: \J^\op \to \Top$ that sends $(G,w)$ to the space
    \[
        \gD^{E_G} = \{ d: E_G \to [0,1] \;|\; \sum_{e \in E_G} d(e) = 1\}
    \]
    and where the functoriality is by extension by $0$.
    For $g \ge 2$ we define the 
    \emph{moduli space of tropical curves of genus $g$ and volume $1$}
    $\gD_g$ as the colimit $\colim_{(G,w) \in \J_g^\op} \gD^E(G,w)$.
\end{defn}

A point in $\gD_g$ is represented by a stable graph $G \in \J$ of genus $g$
together with a function $d:E_G \to [0,1]$, which we think of as recording 
the lengths of the edges. Every point in $\gD_g$ has a representative where 
the edge lengths are strictly positive, and in this case there is a unique metric 
on the topological space representing $G$ such that any edge $e \subset G$
is isometric to $[0, d(e)] \subset \IR$.
The underlying set of $\gD_g$ can hence be identified with the set of
of stable metric graphs up to weight-preserving isometry.

\begin{rem}\label{rem:our-gD_g}
    In \cite{CGP16} the authors first define a moduli space $M_g^{\rm trop}$
    as the colimit over the functor $\gs:J_g \to \Top$ 
    that sends $G$ to $\gs(G) = \IR_{\ge 0}^{E_G} = \{l:E_G \to \IR_{\ge 0}\}$.
    This colimit has a natural map to $\IR_{\ge 0}$ defined by taking the
    sum of all edge-lengths: this is the volume of the tropical curve.
    The moduli space $\gD_g$ is then defined as the subspace of $M_g^{\rm trop}$
    containing the tropical curves of volume $1$.
    This is canonically homeomorphic to our definition via:
    \begin{align*}
        \gD_g 
        & \stackrel{\text{\cite{CGP16}}}{=} 
        \{1\} \times_{\IR_{\ge0}} \left(\colim_{G \in J_g} \IR_{\ge 0}^{E_G}\right)
        \cong \colim_{G \in J_g} 
        \left(\{1\} \times_{\IR_{\ge0}} \IR_{\ge 0}^{E_G}\right)\\
        & \cong \colim_{G \in J_g} \gD^{E_G}
        \cong \colim_{G \in \J_g} \gD^{E_G}.
    \end{align*}
    Here the last homeomorphism uses that $\J_g \simeq J_g \setminus \{\cd_g\}$ (see remark \ref{rem:our-J_g})
    and $\gD^E(\cd_g) = \emptyset$.
\end{rem}

\begin{defn}\label{defn:Phi}
    The map $\Phi: B\mcF_{\Sigma_g}(\Cobn) \to \gD_g$ is defined as follows;
    compare figure \ref{fig:Phi}.
    A point in $B\mcF_{\Sigma_g}(\Cobn)$ is represented by a tuple 
    \[
        [(\emptyset \xrightarrow{[W_0]} M_0 \xrightarrow{[W_1]} \dots M_n \xrightarrow{[W_{n+1}]} \emptyset), (t_0, \dots, t_n)]
    \]
    whose first entry is an element of $N_n\mcF_{\Sigma_g}(\Cobn) \subset N_{n+2}\Cobn$
    and whose second entry is a point in the topological $n$-simplex $\gD^n$.
    Define a graph $G_0$ with vertices $\coprod_{i=0}^{n+1} \pi_0(W_i)$
    and half-edges $\{\uparrow, \downarrow\} \times \coprod_{i=0}^n \pi_0(M_i)$.
    The involution $i$ swaps $\uparrow$ and $\downarrow$.
    For $a \in \pi_0(M_i)$ the root of $(\uparrow, a)$ is the image of $a$ under $\pi_0(M_i) \to \pi_0(W_{i+1})$ and the root of $(\downarrow, a)$ is the image of $a$ under $\pi_0(M_i) \to \pi_0(W_{i})$.
    Define $w:V_{G_0} \to \IN$ by sending $[U] \in \pi_0(W_i)$ to the genus of the subsurface $U \subset W_i$.
    Moreover, we define an edge length function $d:E_{G_0} \to \IR_{\ge0}$
    by $d(a) = t_i \cdot |\pi_0(M_i)|^{-1}$ for $a \in \pi_0(M_i)$.
    The $\chilez$ condition ensures that every univalent vertex in $G_0$ has label at least $1$, but there can still be bivalent vertices labelled by $0$.
    We let $(G,w,d)$ denote the stable metric graph obtained deleting the bivalent vertices of weight $0$ from $(G_0,w)$, glueing the adjacent edges, and adding their lengths in the process.
    The graph $(G,w)$ lies in $\J_g$ because it is stable by construction and it has total genus $g$ because $G_0$ did.
    Now we set $\Phi((M_\cd, [W_\cd]), t_\cd) := [G,w,d]$ in $\gD_g$.
\end{defn}

\begin{figure}[ht]
    \centering
    \def\svgwidth{\figurerescalefactor\linewidth}
    \small
    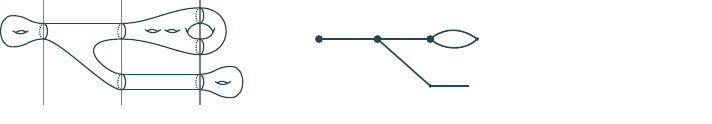
    \caption{An example of how the map $\Phi: B(\mcF_{\Sigma_5}(\Cobn)) \to \gD_5$ 
    from definition \ref{defn:Phi} can be evaluated on a $2$-simplex.
    The $2$-simplex is parametrised by $(t_0, t_1, t_2) \in [0,1]^3$ with $t_0+t_1+t_2=1$.
    $\Phi$ sends this to the weighted metric graph with one vertex per component 
    in each morphism $W_i$, weighted by the genus of this component,
    and one edge of length $\tfrac{1}{|\pi_0M_i|}t_i$ per circle in each object $M_i$.
    Afterwards, all valence $2$ and genus $0$ vertices are deleted 
    and the length of their adjacent edges is added.}
    \label{fig:Phi}
\end{figure}

We briefly check that this map is well-defined.
For each $n$ simplex in the nerve $N_\cd\mcF_{\Sigma_g}(\Cobn)$ the process in definition \ref{defn:Phi} yields a continous map $|\Delta^n| \to \Delta_g$, but we need to check that these glue under face and degeneracy maps.
For degeneracy maps, we need to show that if we write $t_i = t_i^0 + t_i^1$, then
    \[
        [(\emptyset \xrightarrow{[W_0]} M_0  \to \dots \to M_i \xrightarrow{\id_{M_i}} M_i \to  \dots \to M_n \xrightarrow{[W_{n+1}]} \emptyset), (t_0, \dots, t_i^0, t_i^1, \dots, t_n)]
    \]
is still sent to the same point $[G,w,d] \in \Delta_g$.
This is indeed the case, because inserting the identity morphism only results in several bivalent vertices of weight $0$ in $G_0$, which we delete in passing to $G$.
When deleting these vertices we also add adjacent edge-lengths which recovers the same function $d$ since $t_i^0 + t_i^1=t_i$.
For face maps, we need to check that if we have $t_i = 0$ in $[(M_\cd,[W_\cd]),t_\cd)]$, then
    \[
        [(\emptyset \xrightarrow{[W_0]} M_0  \to \dots \to M_{i-1} \xrightarrow{[W_i \cup W_{i+1}]} M_{i+1} \to  \dots \to M_n \xrightarrow{[W_{n+1}]} \emptyset), (t_0, \dots, t_{i-1}, t_{i+1}, \dots, t_n)]
    \]
is still sent to $[G,w,d] \in \Delta_g$.
The graph $G_0'$ that we get here is obtained from the original $G_0$ by collapsing all edges corresponding to $\pi_0(M_i)$, and we get a map $f:G \to G'$ in $\J_g$.
Because $t_i=0$ we also have that that some edges in $G_0$ are of length $0$, and the resulting edge length function on $G'$ satisfies $d' = f_*d$.
Therefore, $[G',w',d']$ and $[G,w,d]$ represent the same point in $\Delta_g = \colim_{G \in \J} \Delta^{E_G}$.

\begin{defn}
    For a space $X$ let $S^2(X)$ denote the unreduced double-suspension of $X$.
    As we saw in example \ref{ex:cone-and-unreduced-suspension}, this is homeomorphic to $\Sigma\mathrm{Cone}(X \to *)$, so if $X$ is non-empty, we can parametrise the unreduced double-suspension as 
    \[
        S^2(X) = \{ (x, a, b) \in X \times [0,1]^2 \;|\; a+ b \le 1\} / \sim
    \]
    where $(x,a,0) \sim (x,0,0) \sim (x,0,a)$ and $(x,a,1-a) \sim (y,a,1-a)$ for all $a \in [0,1]$ and $x,y \in X$.
\end{defn}

\begin{defn}
    For a based space $(Y, y_0)$ let $\SP^\infty(Y, y_0)$ denote
    the infinite symmetric power of $(Y, y_0)$.
    The underlying set of $\SP^\infty(Y, y_0)$ is the free abelian monoid on the set $Y$
    modulo the relation $[y_0] = 0$.
    This is topologised with the quotient topology with respect to the map
    \[
        \coprod_{n \ge 0} Y^n \to \SP^\infty(Y, y_0), \quad
        (y_1, \dots, y_n) \mapsto \sum_{i=1}^n [y_i].
    \]
\end{defn}

\begin{defn}\label{defn:mu}
    We construct a continuous map:
    \begin{align*}
        &\mu: B(\Cobn) \longrightarrow
        \SP^\infty\left( S^2( \Delta_2 \amalg \Delta_3 \amalg \dots) \right) \\
        &[(M_0 \xrightarrow{[W_1]} \dots \xrightarrow{[W_{n}]} M_n), (t_0, \dots t_n)]  \\
        &\longmapsto
        \sum_{U \subset W_1 \cup \dots \cup W_n \atop \text{ closed of genus }\ge 2}
        [[G_U,w_U,d_U], t_0 + \dots + t_{a_U-1}, t_{b_U+1} + \dots + t_n ] 
    \end{align*}
    The sum runs over all connected components $U \subset W_1 \cup \dots \cup W_n$
    that are closed surfaces of genus at least $2$.
    Here $1 \le a_U \le b_U \le n$ are the smallest and largest number, respectively, 
    such that $U \cap M_{a_U} \neq \emptyset \neq U \cap M_{b_U}$
    and $[G_U, w_U, d_U] \in \Delta_{g(U)}$ is defined using the map $\Phi$
    from definition \ref{defn:Phi}:
    \begin{align*}
        &[G_U,w_U,d_U] := \\
        &\Phi\left(
        \big(\emptyset \xrightarrow{[W_{a_U} \cap U]} M_{a_U}\cap U
        \xrightarrow{[W_{a_U+1} \cap U]} 
        \dots \to M_{b_U} \cap U \xrightarrow{[W_{b_U+1}\cap U]} \emptyset\big),
        (t_{a_U}, \dots, t_{b_U})
        \right) \in \Delta_{g(U)}
    \end{align*}
\end{defn}

One can check that this yields a well-defined continous map by combining the arguments below definition \ref{defn:Phi} with the arguments in lemma \ref{lem:Cut-con=gS}.
Alternatively, when concluding the proof of theorem \ref{thm:mu-fits} at the end of this section, we will obtain this map as the composite of other maps, so it suffices to check the well-definedness of those.

\subsection{The map \texorpdfstring{$B\mcF_{\Sigma_g}(\Cobn) \to \gD_g$}{BF_g to Delta_g} is an isomorphism on rational homology}

In this subsection we will define maps $\Phi_1$, $\Phi_2$ and $\Phi_3$ 
making the following diagram commute up to homotopy:
\begin{equation}\label{eqn:Phi-maps}
\begin{tikzcd}
    &{B\left( \int_{\J_g} \Sub\right)} \ar[d, "\simeq"] \ar[r, "\simeq"] &
    B\J_g \ar[d, "{\Phi_2}"] &
    {B\left(\int_{\J_g} \Delta_{\rm top}^E\right)} \ar[l, "\simeq"'] \ar[dl, "{\Phi_3}"] \\
    B\mcF_{\Sigma_g}(\Cobn) \ar[r, "\simeq"] \ar[rr, "\Phi", bend right = 20] &
    B\mcF_g\left(\Csp(\IN, \IN_{>0}, 1)\right) \ar[r, "\Phi_1"]&
    \Delta_g &
\end{tikzcd}
\end{equation}
The maps labelled by $\simeq$ are equivalences. 
For the bottom left map this follows from the equivalence $\Cobn \simeq \Csp(\IN, \IN_{>1},1)$
constructed in lemma \ref{lem:Cob2=labelled-csp}.
For the two maps in the middle this was shown in our proof of theorem \ref{thm:computing-mcF},
which stated that $B\mcF_g\left(\Csp(\IN, \IN_{>0}, 1)\right) \simeq B\J_g$ via this zig-zag.
For the horizontal map on the right it follows from the observation that each
$\Delta_{\rm top}^{E_G}$ is contractible and that hence $\int_{\J_g} \Delta_{\rm top}^E \to \J_g$
is a level-wise equivalence on nerves.
Once the diagram is established we show that $\Phi_3$ is an isomorphism on rational homology by applying arguing that it compares a homotopy colimit with a strict colimit in a setting where the difference can be described in terms of classifying spaces of finite groups.
It then it follows that $\Phi_1$ and $\Phi_2$ are also isomorphisms on rational homology.

\begin{defn}
    The maps $\Phi_i$ in diagram \ref{eqn:Phi-maps} are defined as follows.
\begin{itemize}
    \item 
    The map $\Phi_3: B\left(\int_{\J_g} \Delta_{\rm top}^E\right) \to \Delta_g$ is defined by 
    \[
        [(G_0 \to G_1 \to \dots \to G_n), d \in \Delta_{\rm top}^{E_{G_n}}, (t_0, \dots, t_n)]
        \longmapsto
        [G_n, d].
    \]
    This is the canonical comparison map from the homotopy colimit
    to the strict colimit.
    
    \item
    The map $\Phi_2:B\J_g \to \Delta_g$ is defined by
    \[
        [(G_0 \xrightarrow{f_1} G_1 \to \dots \xrightarrow{f_n} G_n), (t_0, \dots, t_n)]
        \longmapsto
        [G_0, d^{f}]
    \]
    where the edge length function $d^{f} \in \Delta_{\rm top}^{E_{G_0}}$ is given as
    \[
        d^{f}(e) = \sum_{i=0}^{m_e} \frac{t_i}{|E_{G_i}|}
        \quad \text{ where } 
        m_e = \max\{i \le n \;|\; G_0 \to G_i \text{ does not collapse } e\}.
    \]
    
    \item
    The map $\Phi_1: B\mcF_g\left(\Csp(\IN, \IN_{>0}, 1)\right) \to \gD_g$ is defined 
    in analogy with definition \ref{defn:Phi} as
    \[
        [([X_0] \leftarrow A_0 \to [X_1] \leftarrow \dots \leftarrow A_n \to [X_{n+1}]), (t_0, \dots t_n)]
        \longmapsto [G, d].
    \]
    Here we first build a graph $G_0$ with vertices $\coprod_{i=0}^{n+1} X_i$ and half-edges $\{\uparrow,\downarrow\} \times \coprod_{i=0}^n A_i$, so that for each $a \in A_i$ there is an edge connected to the two vertices corresponding to the images of $a$ under $X_i \leftarrow A_i \to X_{i+1}$.
    The edge length function $d:E_G \to \IR_{\ge0}$ is 
    $d(a) = t_i \cdot |A_i|^{-1}$ for $a \in A_i$.
    Then $[G,d]$ is obtained by deleting bivalent vertices and adding the adjacent edge-lengths.
\end{itemize}
\end{defn}

We briefly check that $\Phi_2$ is well-defined.
Firstly, the total volume of $d^f$ is
\[
    \sum_{e \in E_{G_0}} d^f(e)
    = \sum_{e \in E_{G_0}} \sum_{i=0}^{m_e} \frac{t_i}{|E_{G_i}|}
    = \sum_{i=0}^{n} \sum_{e \in E_{G_i}} \frac{t_i}{|E_{G_i}|}
    = \sum_{i=0}^{n} t_i = 1.
\]
Secondly, if one of the $t_j$ is $0$, then the $j$th summand in $\sum_{i=0}^{m_e} \tfrac{t_i}{|E_{G_i}|}$ never contributes to the sum, so it does not change the end result if we forget about $G_i$ and compose $f_{i+1}$ with $f_i$.
Finally, if one of the $f_j$ is the identity, then $\tfrac{t_i}{|E_{G_i}|}$ contributes to $d^f(e)$ if and only if $\tfrac{t_{i-1}}{|E_{G_{i-1}}|}$ does, so it does not change the end result if we forget about $f_i$ and add $t_i+t_{i-1}$.

\begin{lem}
    The maps $\Phi_1$, $\Phi_2$, and $\Phi_3$ make the diagram on page \pageref{eqn:Phi-maps} commute up to homotopy.
\end{lem}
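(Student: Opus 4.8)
The plan is to check the three elementary regions of the diagram in \eqref{eqn:Phi-maps} separately --- the ``$\Phi$''-triangle at the bottom left, the square on the left, and the triangle on the right --- observing first that all four maps into $\gD_g$ are defined simplex by simplex, so that each region can be analysed one simplex at a time and the only real content is to produce the comparing homotopies in a way that is continuous and compatible with the (bi-)simplicial structure. Throughout I would use two standing facts about $\gD_g = \colim_{(G,w)\in J_g^\op}\gD^{E_G}$: each $\gD^{E_G}$ is a convex simplex, and a morphism $G\to G'$ in $J_g$ induces the zero-extension inclusion $\gD^{E_{G'}}\hookrightarrow\gD^{E_G}$, with $[G,d]=[G',d']$ in $\gD_g$ precisely when $d$ is the zero-extension of $d'$. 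The bottom-left triangle I expect to commute strictly: under the equivalence $\Cobn\simeq\Csp(\IN,\IN_{>0},1)$ of Lemma \ref{lem:Cob2=labelled-csp} a connected surface cobordism corresponds to the one-point cospan labelled by its genus, so on an $n$-simplex of $N_n\mcF_g^\chilez\subset N_{n+2}\Cobn$ the weighted metric graph produced by $\Phi$ in Definition \ref{defn:Phi} (vertices $\coprod_i\pi_0(W_i)$, edges $\coprod_i\pi_0(M_i)$, weights the genera, edge-lengths $t_i/|\pi_0(M_i)|$, bivalent genus-$0$ vertices deleted) is literally the graph produced by $\Phi_1$ on the corresponding simplex.

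For the triangle on the right I would first note that the functor $u\colon\int_{J_g}\gD^E\to J_g$ forgetting the edge-length datum has the barycentre section $b\colon J_g\to\int_{J_g}\gD^E$, with $Bb$ and $Bu$ mutually inverse equivalences, so that it suffices to exhibit a homotopy $\Phi_3\circ Bb\simeq\Phi_2$. On a $k$-simplex $[(G_0\to\cdots\to G_k),t_\cd]$ of $BJ_g$ these two maps give $[G_k,\mathrm{bary}(E_{G_k})]$ and $[G_0,d^{f}]$ respectively; identifying the former with $[G_0,\iota(\mathrm{bary}(E_{G_k}))]$ via zero-extension along $G_0\to G_k$ and interpolating affinely in the convex simplex $\gD^{E_{G_0}}$ gives
\[
s\longmapsto \big[G_0,\ (1-s)\,\iota(\mathrm{bary}(E_{G_k}))+s\,d^{f}\big]\in\gD_g,\qquad s\in[0,1].
\]
Continuity is clear, and compatibility with the face map $d_0$ (which under realisation forces $t_0=0$) holds because for an edge $e$ of $G_0$ that is collapsed by $G_0\to G_1$ one has $m_e=0$, so $d^{f}(e)=t_0/|E_{G_0}|$ and $\iota(\mathrm{bary}(E_{G_k}))(e)$ both vanish at $t_0=0$; the other faces and the degeneracies are handled in exactly the same way.

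For the square on the left I would use the presentation $B(\int_{J_g}\Sub')\simeq\|\,[m]\mapsto B(\int_{J_g}N_m\Sub')\,\|$ from the proof of Theorem \ref{thm:mcF-weighted-cospans}, in which each $B(\int_{J_g}N_m\Sub')$ has contractible components, so that up to homotopy $\Phi_1\circ BS$ is computed at the $\pi_0$-level, i.e.\ on a graph $(X,w)$ with a filtration $U_0\subset\cdots\subset U_m$ (and boundary orderings) together with barycentric coordinates $t$. Unwinding Definition \ref{defn:S} and the formula for $\Phi_1$ then identifies $\Phi_1\circ BS$ on this datum with the point $[X,\delta]\in\gD_g$, where $\delta(e)=\sum_i k_i(e)\,t_i/|\partial U_i|$ and $k_i(e)$ is the number of points of $\partial U_i$ lying on the edge $e$ of $X$ (an edge of $X$ contained in a single stratum gets length $0$ and is collapsed, its genus passing to the adjacent vertex weight --- exactly the relation defining $\gD_g$), while $\Phi_2\circ Bq$ on the same datum is $[X,d^{f}]$ for the function $d^{f}$ coming from forgetting the filtration. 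Both $\delta$ and $d^{f}$ lie in the convex simplex $\gD^{E_X}$ (each sums to $\sum_i t_i=1$), so $s\mapsto[X,(1-s)\delta+s\,d^{f}]$ is a homotopy; the point for its compatibility with the face maps, now in both simplicial directions, is that $\partial U_i=f^{-1}(\partial U_i^Y)$ never meets an edge collapsed by a morphism $f\colon(X,w)\to(Y,w')$ (since collapsed edges map to vertices, while $\partial U_i^Y$ avoids vertices), so $\delta$ and $d^{f}$ both vanish on such edges, and whenever a coordinate $t_i$ is set to $0$ by a face map the discrepancy $d^{f}-\delta$ vanishes on the edges affected. Assembling the three regions then yields the asserted homotopy-commutativity.

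The step I expect to be the main obstacle is precisely this last one: pinning down the explicit description $[X,\delta]$ of $\Phi_1\circ BS$ on a general simplex, and checking that the interpolating homotopy descends to $\gD_g$ compatibly with \emph{all} face and degeneracy maps, including those in the filtration-length direction of the bi-simplicial model and those which change the initial object $X^0$ of a chain. None of this is conceptually deep, but it requires carefully tracking how the various collapse maps, the boundary orderings, and the normalisation factors $1/|\partial U_i|$, $1/|E_{G_i}|$ interact; once that bookkeeping is in place the three homotopies combine to the required diagram.
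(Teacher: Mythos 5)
Your overall plan coincides with the paper's proof: verify the three elementary regions of diagram~\eqref{eqn:Phi-maps} separately, observe that the $\Phi$/$\Phi_1$ bigon commutes strictly, and for each of the two remaining regions rewrite both composites as a metric $[G_0,\cdot]$ on a common graph and interpolate affinely inside the convex simplex $\gD^{E_{G_0}}$. The left square and the bigon are handled in your proposal just as in the paper.

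The one genuine gap is in the right triangle: you propose to use a barycentre ``section'' $b\colon J_g \to \int_{J_g}\gD^E$ and reduce to showing $\Phi_3 \circ Bb \simeq \Phi_2$. But $b$ is not a functor. For a morphism $f\colon G\to G'$ in $J_g$, a morphism $(G,\mathrm{bary}_G)\to(G',\mathrm{bary}_{G'})$ in the Grothendieck construction over $f$ requires $\mathrm{bary}_G = f^*(\mathrm{bary}_{G'})$, i.e.\ $\mathrm{bary}_G(e)=0$ for every edge $e$ collapsed by $f$; since $\mathrm{bary}_G$ is everywhere nonzero this fails whenever $f$ collapses an edge, which is the generic case. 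The paper avoids this by comparing the two maps $B(\int_{J_g}\gD^E)\to\gD_g$ directly (i.e.\ $\Phi_3$ versus $\Phi_2$ precomposed with the projection), using the identification $[G_n,d]=[G_0,f^*d]$ in $\gD_g$ to bring both into the form $[G_0,\cdot]$ before interpolating. Your interpolation formula is otherwise the same; you only need to drop the appeal to $b$ and perform the comparison on $B(\int_{J_g}\gD^E)$ itself, so the fix is minor.
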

\begin{proof}
    First, consider the bigon involving $\Phi$ and $\Phi_1$.
    This commutes on-the-nose, basically because $\Phi$ was defined by combining $\Phi_1$ 
    with the functor $\Cobn \to \Csp(\IN, \IN_{>0}, 1)$ from lemma \ref{lem:Cob2=labelled-csp}.

    Next, consider the triangle on the right. The two maps 
    $B\left(\int_{\J_g} \Delta_{\rm top}^E\right) \to \Delta_g$ are given by
    \[
        [(G_0 \to G_1 \to \dots \to G_n), d \in \Delta_{\rm top}^{E_{G_n}}, (t_0, \dots, t_n)]
        \longmapsto
        [G_n, d] 
        \text{ or }
        [G_0, d^f]
        \text{, respectively.}
    \]
    We can rewrite $[G_n, d] = [G_0, f^*d]$ where $f^*d \in \Delta_{\rm top}^{E_{G_0}}$ 
    assigns $0$ to each edge that is collapsed by $f:G_0 \to G_n$
    and $d(f(e))$ to any other edge.
    The homotopy can then be defined as 
    \[
        H_\gl: [(G_0 \to G_1 \to \dots \to G_n), d \in \Delta_{\rm top}^{E_{G_n}}, (t_0, \dots, t_n)]
        \longmapsto
        [G_0, \gl f^*d + (1-\gl) d^f],
    \]
    which yields a family of maps
    $H_\gl: B\left(\int_{\J_g} \Delta_{\rm top}^E\right) \to \Delta_g$ 
    depending continuously on $\gl \in [0,1]$.
    
    Finally, we consider the square on the left. 
    The map 
    $B\left(\int_{\J_g} \Sub\right) \to B\mcF_g\left(\Csp(\IN, \IN_{>0}, 1)\right)$
    induced by the functor $S$ from definition \ref{defn:S} is given by (here we are suppressing the $\varphi_i$)
    \begin{align*}
        &[(G_0 \xrightarrow{f_1} G_1 \to \dots \xrightarrow{f_n} G_n), (U_i \in \Subpi(G_i)), (t_0, \dots, t_n)]\\
        & \longmapsto [(\emptyset \xrightarrow{[\pi_0(U_0)]} \partial U_0 \xrightarrow{[\pi_0(f_1^{-1}(U_1) \setminus U_0)]} \dots \to \partial U_n \xrightarrow{[\pi_0(|G_n|\setminus U_n)]} \emptyset), (t_0, \dots, t_n)] .
    \end{align*}
    If we further apply $\Phi$, then we first get a graph with vertices $\coprod_{i=-1}^n \pi_0(f_i^{-1}(U_i) \setminus U_0)$ and edges $\coprod_{i=0}^n \partial U_i$, where an edge in $\partial U_i$ has length $\tfrac{t_i}{|\partial U_i|}$.
    (We can think of this graph as being obtained by subdividing the topological graph $|G_0|$.)
    This graph might still have some bivalent genus $0$ vertices and after deleting those and gluing the adjacent edges, we in fact recover a graph canonically isomorphic to $G_0$.
    The resulting edge length function $d^U$ on $G_0$ is then obtained by taking a weighted sum of how often a given edge (though of as an open interval $e \subset |G_0|$) intersects each $\partial U_i$:
    \[
        d^U(e) = \sum_{i=0}^n t_i \frac{|e \cap (f_i \circ \dots \circ f_1)^{-1}(\partial U_i)|}{|\partial U_i|} .
    \]
    The composite $B\left(\int_{\J_g} \Sub\right) \to B\mcF_g\left(\Csp(\IN, \IN_{>0}, 1)\right) \to \Delta_g$ hence sends $[G_\cd, U_\cd, t_\cd]$ to $[G_0, d^U]$.
    The other composite in the square simply sends $[(G_\cd), (U_\cd), (t_\cd)]$
    to $[G_0, d^f]$ with $d^f$ as in the definition of $\Phi_2$.
    Now that we have written the value of both maps as $G_0$ with a certain metric,
    we can again use affine linear interpolation of metrics to construct the desired homotopy:
    $[G_0, \lambda d^U + (1-\lambda) d^f]$.
\end{proof}

The map $\Phi_3$ can be thought of as the comparison between the homotopy colimit and the colimit of a diagram.
We now establish a general criterion for when such a comparison map is an isomorphism on rational homology.

\begin{lem}\label{lem:rational-colim}
    Let $\mcI$ be a category with a conservative functor $d:\mcI \to (\IN, \le)$ and such that the automorphism group $\Aut(i_0)$ is finite for all $i_0 \in \mcI$.
    Suppose that $F: \mcI \to \sSet$ is a diagram such that for all $i_0 \in \mcI$ the map
    \[
        \colim_{\substack{i \in \mcI_{/i_0},  i \not\cong i_0}} F(i) \to F(i_0)
    \]
    is a monomorphism, i.e.~it is level-wise injective.
    Then the comparison map 
    \[
        \hocolim_{i \in \mcI} |F(i)| \to \colim_{i \in \mcI} |F(i)|
    \]
    of topological spaces is an isomorphism on rational homology.
\end{lem}
\begin{proof}
    We can equip $\mcI$ with the structure of a generalized Reedy category in the sense of \cite{Berger2010-generalized-Reedy}:
    its degree function is $d$, we set $\mcI^- = \mcI^{\cong}$ and $\mcI^+ = \mcI$.
    (This satisfies the axioms of a generalized Reedy category \cite[Definition 1.1]{Berger2010-generalized-Reedy}, but it is not necessary ``dualizable''.)
    Let $\sSet$ be the category of simplicial sets with the Quillen model structure, $\Top$ the category of compactly generated weak Hausdorff spaces with its Serre model structure, and $\Ch_\IQ$ the category of rational chain complexes with the model structure where the weak equivalences are the quasi-isomorphisms and where a morphism is a cofibration if it is level-wise injective.
    Taking rational chains defines a left Quillen functor $C_*:\sSet \to \Ch_\IQ$ and taking geometric realization defines a left Quillen functor $|-|:\sSet \to \Top$.
    In fact, we have a commutative square of left adjoints
    \[\begin{tikzcd}
        {\Top^\mcI}\dar["\colim"'] & {\sSet^{\mcI}} \lar["|-|"'] \rar["C_*"] \dar["\colim"']& \Ch_{\IQ}^\mcI \dar["\colim"] \\
        {\Top} & {\sSet} \lar["|-|"'] \rar["C_*"] \rar & \Ch_\IQ
    \end{tikzcd}\]
    where we use the Reedy model structures in the top row.
    All of these functors are Quillen left adjoints; for the vertical functors this is the content of \cite[Corollary 1.7]{Berger2010-generalized-Reedy}, and for the top horizontal functors it follows from definition of the model structure \cite[above Theorem 1.6]{Berger2010-generalized-Reedy} that they in fact preserve all weak equivalences and cofibrations.

    The diagram $F$ defines an object in $\sSet^{\mcI}$, and we would like to show that for any cofibrant resolution $w:F' \to F$ the resulting map $\colim |F'| \to \colim |F|$ is an isomorphism on rational homology.
    By the above square it suffices to check that $\colim C_*(F') \to \colim C_*(F)$ is a weak equivalence in $\Ch_\IQ$.
    The functor $C_*(-):\sSet^\mcI \to \Ch_\IQ^\mcI$ preserves all weak equivalences, so $C_*(w)$ is a weak equivalence and $C_*(F')$ is cofibrant because $F'$ was.
    By Ken Brown's lemma the left Quillen functor $\colim$ preserves weak equivalences between cofibrant objects, so we just have to argue that $C_*(F)$ is cofibrant. (Recall that $F$ itself was not necessarily cofibrant!)

    The cofibrancy condition for $C_*(F)$ requires that for all $i \in \mcI$
    \[
        \colim_{\substack{i \in \mcI_{/i_0} , i \not\cong i_0}} C_*F(i) \to C_*F(i_0)
    \]
    is a cofibration in the projective model structure on $\Ch_\IQ^{\Aut(i_0)}$.
    This projective model structure is characterized by its weak equivalences (quasi-isomorphisms after forgetting the group action) and fibrations (surjections) and hence we see that it agrees with the projective model structure on $\Ch_{\IQ[\Aut(i_0)]}$ of chain complexes over the group ring.
    Recall that a morphism $A \to B$ of bounded-below $\IQ[\Aut(i_0)]$ chain complexes is a cofibration if each $A_n \to B_n$ is a split injection with projective kernel \cite[Lemma 2.3.6 and Proposition 2.3.9]{Hovey2007-model-categories}.
    By Maschke's theorem this is true for any injective map (of bounded below chain complexes).
    Therefore, it suffices to show that the map
    \[
        C_*\big(\colim_{\substack{i \in \mcI_{/i_0} , i \not\cong i_0}} F(i)\big)
        \cong \colim_{\substack{i \in \mcI_{/i_0} , i \not\cong i_0}} C_*F(i) 
        \to C_*F(i_0)
    \]
    is degree-wise injective, which follows from our hypothesis.
\end{proof}

\begin{lem}\label{lem:Phi3}
    The map $\Phi_3:B\left(\int_{\J_g} \Delta_{\rm top}^E\right) \to \Delta_g$ is an isomorphism on rational homology.
\end{lem}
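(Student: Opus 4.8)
The plan is to realize $\Phi_3$ as the augmentation of a simplicial space and apply the simplicial Vietoris--Begle lemma \ref{lem:simplicial-VB}. Recall that $B(\int_{J_g}\Delta^E)$ is the realization of the simplicial space whose $n$-simplices are chains $(G_0 \to \dots \to G_n)$ in $J_g$ together with a point $d \in \Delta^{E_{G_n}}$, and that $\Phi_3$ sends such a simplex to $[G_n, d] \in \Delta_g$. So we should view $B(\int_{J_g}\Delta^E)$ as an augmented simplicial space over $\Delta_g$, where the augmentation is exactly $\Phi_3$. To be careful about which variable indexes the simplicial direction: the functor $\Delta^E\colon J^{\op} \to \Top$ is \emph{contravariant}, so a morphism $G_0 \to G_1$ induces $\Delta^{E_{G_1}} \to \Delta^{E_{G_0}}$ by ``extending by zero''. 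Thus it is cleaner to reindex so that the metric lives on $G_0$ and the chain runs $G_0 \to \dots \to G_n$; then the last face map forgets $G_0$ and pushes the metric forward along $G_0 \to G_1$. Either way, each $X_n$ is a disjoint union over isomorphism classes of chains of spaces of the form $\Delta^{E}/\!\Aut$, which are finite CW complexes (quotients of simplices by finite groups), so the finiteness hypothesis of lemma \ref{lem:simplicial-VB} holds.

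Next I would identify the fiber of the augmentation over a point $[G_*, d_*] \in \Delta_g$. A point of $\Delta_g$ is represented by a stable metric graph; choosing a representative $(G_*, d_*)$ where all edge lengths are strictly positive, $G_*$ is determined up to weight-preserving isomorphism and its automorphism group $\Aut(G_*, d_*)$ (isometries preserving weights) is finite. The fiber $Y_\cd$ in simplicial degree $n$ consists of chains in $J_g$ ending (or beginning) at a graph isomorphic to $G_*$, carrying the prescribed metric, together with the identification data. Unwinding this, $Y_\cd$ is the nerve of the \emph{over-category} (or under-category, depending on the reindexing) of $J_g$ at the object $G_*$, but decorated by the (discrete) torsor of isomorphisms to $G_*$: concretely $Y_\cd \simeq N_\cd\big( (J_g)_{G_*/} \big)$ up to the action of $\Aut(G_*)$, so that $|Y_\cd| \simeq B\big((J_g)_{G_*/}\big)/\!\Aut(G_*,d_*)$ — or even more simply $|Y_\cd| \simeq B\Aut(G_*, d_*)$, since the comma category $(J_g)_{G_*/}$ has an initial object (namely $\mathrm{id}_{G_*}$) and hence contractible nerve, leaving only the finite automorphism group acting. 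Therefore $|Y_\cd|$ is a $K(\pi,1)$ for the finite group $\pi = \Aut(G_*, d_*)$, and $H_*(|Y_\cd|;\IQ) = \IQ$ concentrated in degree $0$; since lemma \ref{lem:simplicial-VB} requires \emph{trivial} reduced rational homology of the fiber, I should check that the augmentation is surjective on $\pi_0$ (it is, since every point of $\Delta_g$ lifts) and apply the lemma to the \emph{based} version, or equivalently observe that each fiber is rationally connected with $\widetilde H_*(|Y_\cd|;\IQ) = 0$, which is exactly what the finite $K(\pi,1)$ computation gives.

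Granting the fiber computation, lemma \ref{lem:simplicial-VB} yields that $\Phi_3\colon |X_\cd| = B(\int_{J_g}\Delta^E) \to \Delta_g$ is an isomorphism on rational homology, which is the claim. I expect the main obstacle to be the careful bookkeeping in the fiber identification: one must check that the point-set fiber $\Phi_3^{-1}([G_*,d_*])$ is really the nerve of a comma category twisted by the automorphism torsor — in particular that the boundary maps $\partial_n^X\colon X_n \to X_0^{n+1}$ interact well with the augmentation so that the strict fiber (not just the homotopy fiber) has the stated homotopy type, and that edge lengths equal to zero on the boundary do not cause pathologies (this is handled by the standard fact that every point of $\Delta_g$ has a representative with all positive edge lengths, and the face maps that collapse zero-length edges are absorbed by the colimit defining $\Delta_g$). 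A secondary point is making precise that $\Delta^{E}/\!\Aut$ is a finite CW complex so that the hypotheses of lemma \ref{lem:simplicial-VB} literally apply; this is routine since it is a finite quotient of a simplex.

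Once this lemma is in hand, the two-out-of-three property applied to the homotopy-commutative diagram \eqref{eqn:Phi-maps} shows that $\Phi_2\colon BJ_g \to \Delta_g$ and $\Phi_1\colon B\mcF_g(\Csp(\IN,\IN_{>0},1)) \to \Delta_g$, and hence $\Phi\colon B\mcF_g^{\chilez} \to \Delta_g$, are all rational homology isomorphisms — the horizontal equivalences on the top row and the bottom-left being genuine homotopy equivalences already established, so composing with the rational equivalence $\Phi_3$ and using that $B(\int_{J_g}\Delta^E) \to BJ_g$ is an equivalence transfers the property across.
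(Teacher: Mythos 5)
Your proposal follows the same overall strategy as the paper: realize $\Phi_3$ as the augmentation of the simplicial space $X_\cd$ with $X_n = N_n\int_{J_g}\Delta^E$ and $X_{-1} = \Delta_g$, check that the finiteness hypotheses of lemma~\ref{lem:simplicial-VB} hold, and show that each point-set fiber is a finite $K(\pi,1)$. The interesting difference is in how you identify the fiber $\mcC := \Phi_3^{-1}([G_*,d_*])$ as $B\Aut(G_*,d_*)$. The paper introduces the reflective full subcategory $\mcC_0 \subset \mcC$ of objects with all positive edge lengths: the ``collapse zero-length edges'' functor is a left adjoint to the inclusion, so $B\mcC_0 \simeq B\mcC$, and $\mcC_0$ is a connected groupoid whose automorphism group is the (finite) isometry group. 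You instead present $\mcC$ as a quotient of a comma category: objects of $\mcC$ are graphs that collapse onto $G_*$, and adjoining the collapse map $q\colon G' \to G_*$ gives a category isomorphic to the \emph{over}-category $(J_g)_{/G_*}$, with $\Aut(G_*,d_*)$ acting freely by postcomposition and quotient $\mcC$; since the over-category has a terminal object $\id_{G_*}$, its nerve is contractible and the quotient is $B\Aut(G_*,d_*)$. Both routes are valid. The paper's left-adjoint argument is slightly cleaner because it sidesteps the need to verify freeness of the group action; your torsor argument is essentially correct but relies on that freeness (the strict quotient of a contractible space is a $K(\pi,1)$ only when the action is free), which you do not verify.

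Two small slips worth fixing in your write-up. First, you repeatedly write the \emph{under}-category $(J_g)_{G_*/}$ and call $\id_{G_*}$ its initial object, but the relevant comma category is the \emph{over}-category $(J_g)_{/G_*}$ (graphs admitting a collapse $G' \to G_*$, not maps out of $G_*$), for which $\id_{G_*}$ is terminal. Both have contractible nerves, so the conclusion survives the slip, but the direction matters for the torsor argument to make literal sense. Second, you silently switch from $\Aut(G_*)$ to $\Aut(G_*,d_*)$; the group for which the fibers of $(J_g)_{/G_*} \to \mcC$ are torsors is precisely the isometry group $\Aut(G_*,d_*) \subseteq \Aut(G_*,w_*)$, since only metric-preserving automorphisms stabilize the point $[G_*,d_*]$ of $\Delta_g$. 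Finally, the remarks about reindexing the metric onto $G_0$ and ``pushing forward'' add confusion rather than clarity: in $\int_{J_g}\Delta^E$ the metric is pulled back, and all the $(G_i,d_i)$ in a chain over the fiber already represent the same point of $\Delta_g$, so no reindexing is needed to identify the fiber with the nerve of the full subcategory $\mcC \subset \int_{J}\Delta^E$.
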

\begin{proof}
    The classifying space $B(\int_{\J_g^\op} \Delta_{\rm top}^E)$ is exactly the Bousfield--Kan formula for the homotopy colimit in $\Top$ and the map
    \[
        \Phi_3: B\left(\int_{\J_g} \Delta_{\rm top}^E\right) \cong \hocolim_{G \in \J_g^\op} \Delta_{\rm top}^{E_G} \to \colim_{G \in \J_g^\op} \Delta_{\rm top}^{E_G} = \Delta_g
    \]
    is the canonical comparison map.

    We would like to apply lemma \ref{lem:rational-colim}.
    For this, we can equip $\J_g^\op$ with the conservative functor $\J_g^\op \to \IN$ that sends $G$ to its number of edges.
    This is functorial because graph maps reduce the number of edges, and it is conservative because if $f:G \to G'$ is a morphism between two graphs with the same number of edges, then $f$ must be a bijection on edges and thus an isomorphism.
    Consider the functor
    \begin{align*}
        F\colon \J_g^\op &\to \sSet\\
        G& \mapsto N_\cd((\J_g^\op)_{/G}).
    \end{align*}
    For each $G \in \J_g$ the category $(\J_g^\op)_{/G}$ can be identified with the poset of non-empty subsets of $E_G$ via the functor that sends $f:G\to G'$ to the subset of edges that are not collapsed by $f$ -- equivalently this is the poset of simplices of the topological simplex $\Delta_{\rm top}^{E_G}$.
    Its nerve $F(G) = N_\cd((\J_g)_{G/})$ is exactly the edgewise subdivision $\mrm{sd}(\Delta_{\rm top}^{E_G})$ of this simplex and in fact $|F(G)| \cong \Delta_{\rm top}^{E_G}$ as functors of $\J_g^\op \to \Top$.

    To see that the functor $F$ satisfies the condition of lemma \ref{lem:rational-colim} we need to check that for all $G \in \J_g$ the map
    \[
        \colim_{G' \in (\J_g^\op)_{/G}^{\not\cong G}} N_\cd((\J_g^\op)_{/G'})
        \to N_\cd((\J_g^\op)_{/G})
    \]
    is level-wise injective.
    The left side is the colimit of $\mrm{sd}(\Delta_{\rm top}^{I})$ over all non-empty proper subsets $I \subset E_G$.
    This exactly yields the edge-wise subdivision of the boundary $\partial \Delta_{\rm top}^{E_G}$, which indeed injects into the edge-wise subdivision of the simplex $\Delta_{\rm top}^{E_G}$.
\end{proof}

\begin{rem}
    Alternatively, one prove lemma \ref{lem:Phi3} using the Vietoris--Begle theorem \cite{Vietoris1927,Begle1950}.
    The key idea here is that it is not so difficult to see that the fibers of $\Phi_3$ are classifying spaces of finite groups.
    The Vietoris--Begle theorem says that if the fibers of a closed map between paracompact Hausdorff spaces have the rational Alexander-cohomology of a point in a range, then the map induces an isomorphism on rational Alexander-cohomology in that range, see \cite[p. 344, Theorem 15]{Spanier1981}.
    (While $\Phi_3$ is not closed, we can argue that for all $n \gg 0$ the map $|N_\cd^{(n)} \int_{\J_g^\op} \Delta_{\rm top}^E| \to \Delta_g$ from the $n$-skeleton is an isomorphism on rational homology groups up to degree $n-1$.)
    To obtain the statement about singular (co)homology that we want, we can for example check that all the spaces involved (including the fibers) are locally contractible, see \cite[p. 339f, Theorem 1 and Example 2]{Spanier1981}.
\end{rem}

\subsection{The map into the symmetric power}

In this section we prove theorem \ref{thm:mu-fits} by constructing 
the following commutative diagram of partially defined commutative 
topological monoids:
\begin{equation}\label{eqn:mu-diagram}
\begin{tikzcd}
    {B\ICobn} \ar[r] \ar[d, "\simeq_\IQ"] & 
    {|B\Cut_\cd^{(g \ge 2)}|} \ar[r, "\simeq_\IQ"] \ar[d] & 
    \SP^\infty(|B\Cut_\cd^{\con, (g \ge 2)}|) \ar[d] \ar[rd, bend left, "\simeq_\IQ"] & \\
    {B\Cobn} \ar[r] & 
    {|\pi_0\Cut_\cd^{(g \ge 2)}|} \ar[r] & 
    \SP^\infty(|\pi_0\Cut_\cd^{\con, (g \ge 2)}|) \ar[r, dashed] &
    \SP^\infty(\bigvee_{g \ge 2} S^2(\Delta_g))
\end{tikzcd}
\end{equation}
To clarify the notation used in the diagram, recall that 
$\Cut_\cd$ is a simplicial object in partial commutative monoidal groupoids.
This means that each $B\Cut_n$ is a partially defined commutative topological
monoid and the set of isomorphism classes $\pi_0\Cut_n := \pi_0(B\Cut_n)$ is a commutative monoid.
We therefore have a canonical map $|B\Cut_\cd| \to |\pi_0\Cut_\cd|$,
which is compatible with the partially defined multiplication.
This describes the three vertical maps in the diagram.

In the left-hand square the top vertical map is the composite 
\[
    B\ICobn \hookrightarrow |BD_\cd| \xrightarrow{R_\cd} |B\Cut_\cd^{(g\ge2)}|
\]
(where $R_\cd$ is the quotient map from definition \ref{defn:Cut})
and the bottom map is defined by passing to the quotient where isomorphic
$n$-simplices are identified. 
One checks that this is well-defined, and the square commutes automatically.

For the square in the middle the top vertical map is defined by summing over connected 
components, as described in the following lemma, where we also show that it
is a rational equivalence. The bottom vertical map is again defined by 
passing to the quotient.
\begin{lem}\label{lem:map-that-sums}
    The map
    \begin{align*}
        &\qquad \qquad \qquad |B\Cut_\cd^{(g \ge 2)}|  \longrightarrow 
        \SP^\infty(|B\Cut_\cd^{\con, (g \ge 2)}|) \\
    & [(\emptyset \xrightarrow{[W_0]} M_0 \to \dots \to M_n\xrightarrow{[W_{n+1}]} \emptyset),
    (t_0,\dots,t_n)]  \\
    & \longmapsto 
    \sum_{U \subset W_0 \cup \dots \cup W_{n+1}} 
    [(\emptyset \xrightarrow{[W_0\cap U]} M_0\cap U \to \dots \to M_n\cap U\xrightarrow{[W_{n+1}\cap U]} \emptyset),
    (t_0,\dots,t_n)] 
    \end{align*}
    is a rational homotopy equivalence.
    Here the sum runs over all connected components $U$
    of the closed surface $W_0\cup\dots\cup W_{n+1}$.
\end{lem}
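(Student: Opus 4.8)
The plan is to show that the displayed map is nothing but the composite of the free-symmetric-monoid-splitting for $\Cut_\cd$ established earlier with the obvious projection, so that it inherits its rational equivalence from lemma \ref{lem:Cut-free-on-con}. First I would recall that $\Cut_\cd$ is a simplicial PCM groupoid and that $\Cut_\cd^{(g\ge 2)}$ denotes the union of path-components of $\Cut_\cd$ corresponding to total genus at least $2$; since the genus of a closed surface is additive under disjoint union and $0,1$ cannot be written as a sum involving a summand $\ge 2$, the assignment $W \mapsto W^{(g\ge 2)}$ that discards all closed components of genus $0$ or $1$ is a morphism of simplicial PCM groupoids, and $\Cut_\cd^{(g\ge 2)}$ is closed under disjoint union with $\Cut_\cd^{\con,(g\ge 2)}$ sitting inside as the connected (or empty) objects. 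Thus lemma \ref{lem:Cut-free-on-con}, applied verbatim to $\Cut_\cd^{(g\ge 2)}$ in place of $\Cut_\cd$ (its proof only uses that the subgroupoid is closed under disjoint union, isomorphism, and the face maps, which hold here), gives a zig-zag of $\gC$-equivalences
\[
    |B\Cut_\cd^{(g\ge 2)}| \simeq \dots \simeq Q\bigl(|B\Cut_\cd^{\con,(g\ge 2)}|\bigr).
\]

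Next I would identify the displayed formula with the composite of this equivalence followed by the canonical map $Q(X)\to \SP^\infty(X)$ from the free infinite loop space to the free topological commutative monoid. Concretely: a point of $|B\Cut_\cd^{(g\ge 2)}|$ is represented by a simplex $(W,\bbt)$ whose every closed component has genus $\ge 2$; the unique decomposition $W = \coprod_U (W\cap U)$ into connected components, which underlies the definition of the $\gC$-structure and of the equivalence with $\gS_{\Cut^{\con}}$ in lemma \ref{lem:Cut-free-on-con}, is exactly the sum appearing in the statement, now read in $\SP^\infty$ rather than in $Q$. The map $Q(X)\to \SP^\infty(X)$ is a rational homotopy equivalence for any connected $X$: on homotopy groups it is the Hurewicz-type map $\pi_*^{\rm st}(X) \to \tilde H_*(X;\IZ)$ (Dold--Thom, \cite{DT58}), which is a rational isomorphism since the stable homotopy groups of a connected space are rationally just its rational homology. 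Here $X = |B\Cut_\cd^{\con,(g\ge 2)}|$ is connected by construction (the basepoint $\emptyset$ lies in the same component as everything, since $\Cut_\cd^{\con}$ is built to contain $\emptyset$ and be closed under the face maps that delete components), so the composite
\[
    |B\Cut_\cd^{(g\ge 2)}| \xrightarrow{\ \simeq_\IQ\ } Q\bigl(|B\Cut_\cd^{\con,(g\ge 2)}|\bigr) \xrightarrow{\ \simeq_\IQ\ } \SP^\infty\bigl(|B\Cut_\cd^{\con,(g\ge 2)}|\bigr)
\]
is a rational homotopy equivalence, and it is manifestly the map in the statement.

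The one genuinely checkable point — and the step I expect to be the main obstacle — is verifying that the formula in the statement really does agree, as a map of spaces and not just on $\pi_0$, with this categorically-defined composite, including continuity and compatibility with the simplicial and face structure. The issue is that the summands on the right-hand side involve renormalising or simply re-reading the weight tuple $(t_0,\dots,t_n)$ for each component $U$, and one has to check that deleting identity morphisms (components of $W_i$ that do not meet $U$) is handled consistently so that the total expression descends to the geometric realisation and is continuous there; this is the same bookkeeping that appears in the proof of lemma \ref{lem:Cut-con=gS} and in the definition of $\mu$. I would dispatch it by noting that the assignment $U \mapsto (W\cap U, \bbt)$ is precisely the component of the inverse to the disjoint-union functor $\amalg: \mcE_n^{\gle{1}} \to \Cut_n^{\gle{1}}$ constructed in the proof of lemma \ref{lem:Cut-free-on-con} (essential surjectivity step), so continuity and simpliciality are already established there, and the only new content is to post-compose with $Q\to\SP^\infty$, which is continuous and natural. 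Finally, the map is $\amalg$-compatible because both the $\gC$-equivalence and $Q\to\SP^\infty$ are, completing the proof that it is a rational homotopy equivalence of partially defined commutative topological monoids.
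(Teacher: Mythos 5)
Your core ideas are the same ones the paper uses (freeness of the $\Cut_\cd$ homotopy type over $\Cut_\cd^\con$, and the rational triviality of the difference between the ``free $E_\infty$'' and ``free strictly commutative'' constructions), and a correct proof can certainly be built along your lines. But your route and the paper's differ in a way that matters, and your route has a gap.

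The paper does not try to exhibit the displayed map as a composite through $Q$. Instead it observes that the map is a map of $\gC$-spaces and hence it suffices to check it is an isomorphism on \emph{rational homology}. By corollary~\ref{cor:HBCut-is-free}, $H_*(|B\Cut_\cd^{(g\ge2)}|;\IQ)$ is the free graded-commutative algebra on $H_*(|B\Cut_\cd^{\con,(g\ge2)}|;\IQ)$; the same is classically true of $H_*(\SP^\infty(X);\IQ)$ for $X$ simply connected. An algebra map between two such free algebras is an isomorphism as soon as it is the identity on the generating subspace, so the whole lemma collapses to checking that the composite $|B\Cut_\cd^{\con,(g\ge2)}| \to |B\Cut_\cd^{(g\ge2)}| \to \SP^\infty(|B\Cut_\cd^{\con,(g\ge2)}|)$ is the canonical inclusion. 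That is immediate from the formula, because when $W_0\cup\dots\cup W_{n+1}$ is connected the sum has a single summand.

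Your proposal instead wants to factor the displayed map as a zig-zag equivalence $|B\Cut_\cd^{(g\ge2)}| \simeq Q(|B\Cut_\cd^{\con,(g\ge2)}|)$ followed by $Q(X)\to \SP^\infty(X)$, and then invoke that the latter is a rational equivalence for connected $X$. That last fact is correct (Dold--Thom plus rational stable Hurewicz), and so is your claim that lemma~\ref{lem:Cut-free-on-con} applies verbatim with $\Cut_\cd$ replaced by $\Cut_\cd^{(g\ge2)}$. The problem is the factorisation itself: lemma~\ref{lem:Cut-free-on-con} only gives a \emph{zig-zag} $|B\mcD_\cd| \leftarrow |B\mcE_\cd| \to |B\Cut_\cd|$ of $\gC$-equivalences; there is no canonical direct map $|B\Cut_\cd^{(g\ge2)}|\to Q(|B\Cut_\cd^{\con,(g\ge2)}|)$ to compose with $Q\to \SP^\infty$. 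So the phrase ``manifestly the map in the statement'' is not justified: you would need to check that the displayed formula, the arrows of the zig-zag, and $Q\to\SP^\infty$ all fit into a homotopy-commuting diagram, and you acknowledge this but then wave it through as ``the same bookkeeping.'' That is precisely the content one needs to verify, and it is not literally the computation done in the proof of lemma~\ref{lem:Cut-free-on-con}. The paper's formulation avoids this entirely: after passing to rational homology the only thing to compare is the restriction to the subspace of connected elements, where there is nothing to unwind. If you prefer your phrasing, the most efficient fix is to drop the attempt to literally factor the map and instead say: both source and target have rational homology freely generated (as commutative algebras) by $H_*(|B\Cut_\cd^{\con,(g\ge2)}|;\IQ)$, the map respects the multiplications, and it restricts to the identity on generators; hence it is a rational homology isomorphism and, since both sides are simply connected, a rational homotopy equivalence. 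That is the paper's argument, and it makes the $Q\to\SP^\infty$ comparison and the zig-zag factorisation unnecessary.
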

\begin{proof}
    The map described is compatible with the partially defined commutative monoid 
    structure and hence induces a map of $\gC$-spaces.
    It suffices to check that the map is an isomorphism on rational homology
    (because both sides are equivalent to infinite loop spaces, 
    or alternatively because both sides are simply connected).
    By corollary \ref{cor:HBCut-is-free} the rational homology of 
    $|B\Cut_\cd^{(g\ge2)}|$ is freely generated by the rational 
    homology of $|B\Cut_\cd^{\con,(g\ge2)}|$.
    (While corollary \ref{cor:HBCut-is-free} does not have the $g \ge 2$ restriction, imposing this restriction defines a simplicial subspace of both $B\Cut_\cd$ and $B\Cut_\cd^\con$, and the same arguments apply to these subspaces.)
    The same is true for $\SP^\infty(|B\Cut_\cd^{\con,(g\ge2)}|)$,
    so all we need to check is that the following composite
    \[
        {|B\Cut_\cd^{\con,(g \ge 2)}|}  \longrightarrow
        {|B\Cut_\cd^{(g \ge 2)}|}  \longrightarrow
        \SP^\infty(|B\Cut_\cd^{\con, (g \ge 2)}|) 
    \]
    is the canonical inclusion.
    This follows from inspection: the sum $\sum_{U\subset W_0 \cup \dots}$ only has a single summand
    as $W_0 \cup \dots \cup W_{n+1}$ is connected.
\end{proof}

To complete diagram \ref{eqn:mu-diagram} we only have to construct
the commutative triangle on the right in such a way 
that the diagonal map is a rational equivalence.
This will be obtained by applying $\SP^\infty$ to the following diagram.
\[
\begin{tikzcd}
    {|B\Cut_\cd^{\con, g\ge 2}|} \ar[r, "\cong"] \ar[d] & 
    {\bigvee_{g \ge 2} S^2|B\Cut_\cd^{F, (g)}|} \ar[r, "\simeq"] & 
    {\bigvee_{g \ge 2} S^2|BN_\cd^{\rm R} \mcF_{\Sigma_g}(\Cobn)|} \ar[dl, dashed, "\simeq_\IQ"'] \\
    {|\pi_0\Cut_\cd^{\con, (g \ge 2)}|} \ar[r, dashed]
    & \bigvee_{g \ge 2} S^2(\Delta_g) &
    \bigvee_{g \ge 2} S^2B\mcF_{\Sigma_g}(\Cobn) \ar[u, "\simeq"'] \ar[l, "S^2(\Phi)", "\simeq_\IQ"'] 
\end{tikzcd}
\]
The horizontal maps in the top row and the right vertical map are the equivalences constructed 
in lemma \ref{lem:Cut-con=gS} and \ref{lem:CutF=F}.
The map $\Phi$ is taken from definition \ref{defn:Phi}.
The diagonal dashed map exists because $\Phi:|N_\cd \mcF_{\Sigma_g}(\Cobn)| \to \Delta_g$
already maps any two isomorphic $n$-simplices in $N_\cd^{\rm R} \mcF_{\Sigma_g}(\Cobn)$
to $\Delta_g$ in the same way.
The horizontal dashed map exists for the same reason.

To conclude the proof of theorem \ref{thm:mu-fits} we combine the formula
of lemma \ref{lem:map-that-sums} and lemma \ref{lem:Cut-con=gS} observe that the composite
\[
    {B\Cobn} \longrightarrow
    {|\pi_0\Cut_\cd^{(g \ge 2)}|} \longrightarrow
    \SP^\infty(|\pi_0\Cut_\cd^{\con, (g \ge 2)}|) \longrightarrow
    \SP^\infty(\bigvee_{g \ge 2} S^2(\Delta_g))
\]
is exactly the $\mu$ described in definition \ref{defn:mu}.

\subsection*{Conflicts of Interest and Data Availability statements}
There are no conflicts of interest.
This manuscript has no associated data.

\printbibliography[heading=bibintoc]

  \bigskip
  \footnotesize

  Jan Steinebrunner, 
  \textsc{
    Gonville \& Caius College,
    University of Cambridge,
    UK.
    }\par\nopagebreak
\textit{E-mail address}: \texttt{js2675@cam.ac.uk}

\end{document}